\tikzset{math3d/.style=
	{x= {(-0.353cm,-0.353cm)}, z={(0cm,1cm)},y={(1cm,0cm)}}}
\tikzset{JLL3d/.style=
	{x= {(0.4cm,-0.2cm)}, z={(0cm,1cm)},y={(-1cm,0cm)}}}
\definecolor{Chocolat}{rgb}{0.36, 0.2, 0.09}
\definecolor{BleuTresFonce}{rgb}{0.215, 0.215, 0.36}
\definecolor{BleuMinuit}{RGB}{0, 51, 102}
\definecolor{armygreen}{rgb}{0.29, 0.33, 0.13}
\newcommand{\defi}[1]{\emph{#1}}
\newcommand{\Nn}{\mathbb{N}} 
\newcommand{\Zz}{\mathbb{Z}} 
\newcommand{\Qq}{\mathbb{Q}} 
\newcommand{\Rr}{\mathbb{R}} 
\newcommand{\Cc}{\mathbb{C}}
\newcommand{\Tt}{\mathbb{T}}
\newcommand{\Ll}{\mathbb{L}}
\newcommand{\id}{\mathrm{id}}
\newcommand{\tx}[1]{\mathrm{#1}}
\def\G_#1{\mathfrak{#1}} 
\def\t_#1{\widetilde{#1}}
\DeclareMathOperator{\Sym}{Sym}
\DeclareMathOperator{\Map}{Map}
\DeclareMathOperator{\Hom}{Hom}
\DeclareMathOperator{\End}{End}
\DeclareMathOperator{\Mod}{Mod}
\newcommand{\iHom}{\underline{\mathrm{Hom}}}
\newcommand{\Op}{\mathbf{Op}}
\newcommand{\coOp}{\mathbf{coOp}}
\newcommand{\chk}{\mathrm{Mod}_k}
\newcommand{\cdga}{\mathbf{Alg}_{\mathbf{Com}}}
\newcommand{\dga}{\mathbf{Alg}_{\mathbf{Ass}}}
\newcommand{\T}{\G_T}
\newcommand{\Tc}{\G_T^{\tx{co}}}
\newcommand{\symseq}{\mathbf{Seq}_{\Sigma}}
\newcommand{\antishriek}{\text{\raisebox{\depth}{\textexclamdown}}}
\newcommand{\art}{\tx{Art}}
\newcommand{\dccomaug}{\_A_{\mathbf{Com}}^{\mathrm{aug}}}
\newcommand{\dcP}{\_A_{\_P}}
\renewcommand{\small}{\mathbf{Art}_{\dccomaug}}
\newcommand{\smallp}{\mathbf{Art}_\_P}
\newtheorem{Lem}{Lemma}[section]
\newtheorem{Th}[Lem]{Theorem}
\newtheorem*{Th*}{Theorem}
\newtheorem{Cor}[Lem]{Corollary}
\newtheorem{Prop}[Lem]{Proposition}
\newtheorem{Conv}[Lem]{Convention}
\theoremstyle{definition}
\newtheorem{Def}[Lem]{Definition}
\newtheorem{RQ}[Lem]{\sc Remark}
\newtheorem{Ex}[Lem]{\sc Example}
\newtheorem{Que}[Lem]{\sc Question}
\newtheorem*{Que*}{\sc Question}
\newtheorem{Refo}[Lem]{\sc Reformulation}
\newtheorem{Cons}[Lem]{\sc Construction}
\newtheorem{War}[Lem]{\sc Warning}
\author{Ricardo Campos and Albin Grataloup}
\title{Operadic Deformation Theory}
\date{\today}
\address{Ricardo Campos, Institut de Mathématiques de Toulouse, UMR5219, Université de Toulouse, CNRS, UPS, F-31062 Toulouse Cedex 9, France}
\email{\href{mailto:ricardo.campos@math.univ-toulouse.fr}{ricardo.campos@math.univ-toulouse.fr}}
\address{Albin Grataloup, IMAG, Univ. Montpellier, CNRS, Montpellier, France}
\email{\href{mailto:albin.grataloup@umontpellier.fr}{albin.grataloup@umontpellier.fr}}
\thanks{
	2020 \emph{Mathematics Subject Classification.} 18M70, 18N40, 18N50, 18N60, 22E60, 55P62.\\ 
	\indent$\,\! $  The first author was supported by ANR-20-CE40-0016 HighAGT. The second author received funding from the European Research Council (ERC) under the European Union’s Horizon 2020 research and innovation programme (grant agreement No 768679)}
\keywords{Algebraic Operads, Deformation Theory, Maurer--Cartan equation, Algebras up to homotopy, Deligne groupoid, Gauge equivalences, Deformation groupoids, $C_\infty$ and $A_\infty$ structures, Deformation Context, Koszul Duality, Formal Moduli Problems}
\begin{document}
	
	\begin{abstract}
		This is a survey on recent progress in algebraic deformation theory and the application of algebraic operads to its study. We review the classical homotopical tools in the theory of algebraic operads, namely Koszul duality. We give concrete examples of applications of such tools to various flavours of problems related to deformations of algebraic structures. We also study formal moduli problems and related notions from the operadic point of view. 
	\end{abstract}
	
	\maketitle
	
	\makeatletter
	\def\@tocline#1#2#3#4#5#6#7{\relax
		\ifnum #1>\c@tocdepth % then omit
		\else
		\par \addpenalty\@secpenalty\addvspace{#2}%
		\begingroup \hyphenpenalty\@M
		\@ifempty{#4}{%
			\@tempdima\csname r@tocindent\number#1\endcsname\relax
		}{%
			\@tempdima#4\relax
		}%
		\parindent\z@ \leftskip#3\relax \advance\leftskip\@tempdima\relax
		\rightskip\@pnumwidth plus4em \parfillskip-\@pnumwidth
		#5\leavevmode\hskip-\@tempdima
		\ifcase #1
		\or\or \hskip 1em \or \hskip 2em \else \hskip 3em \fi%
		#6\nobreak\relax
		\hfill\hbox to\@pnumwidth{\@tocpagenum{#7}}\par% <---- \dotfill -> \hfill
		\nobreak
		\endgroup
		\fi}
	\makeatother
	
\vspace{-1cm}
	
	\setcounter{tocdepth}{2}
	\tableofcontents

		\section*{Introduction}
	
	Deformation theory is classically understood to be the study of variations of an object $X$, to infinitesimally nearby objects $X_\epsilon$. 
	One of the most salient examples, dating back to the 50's, are the works of Kodaira--Spencer \cite{KS58,KS60} and Fr\"ohlicher--Nijenhuis \cite{FN57} on the deformation theory of complex manifolds and algebraic varieties where it is shown that ``first-order'' deformations of an algebraic variety are related to the tangent cohomology.
	
	The approach to deformation theory by Grothendieck, Mumford, Schlessinger (see \cite{Ar76}) consists of associating to a deformation problem, a ``deformation functor''
	\begin{align*}
		F\colon \tx{Parametrizations} &\to \tx{Sets}\\
		\text{Infinitesimal }\epsilon &\mapsto \{X_\epsilon\}\\
	\end{align*}
	
	such that when $\epsilon = 0$ we get back the original object $X_0 = X$. To make some sense of the association above, we can replace the source by the category of local Artinian commutative algebras (or their spectra). The prototypical example are the algebras $k[\epsilon]/(\epsilon^n)$.  From the point of view of algebraic geometry, these represent infinitesimal thickenings of a point. 
	
	If we want to capture the essence of a deformation problem, we should have a way to encode when and how two deformations can be equivalent, which suggests that we should take groupoids instead of sets to be the target of $F$. 
	%A more refined but not yet fully satisfactory would thus be to define a deformation problem a functor from local Artinian algebras into groupoids, satisfying some glueing conditions and such that $F(k)$ is a singleton.

	It turns out that in characteristic zero one can often associate to a deformation problem a differential graded (dg) Lie algebra $\mathfrak g$ encoding the problem in the sense that the associated deformation functor takes the form
	\begin{equation}\label{eq:intro}
		R \mapsto \text{Maurer--Cartan elements of }\mathfrak g \otimes \mathfrak m_R,
	\end{equation}

where $\mathfrak m_R$ is the unique maximal ideal of $R$. These are degree $1$ solutions of the Maurer--Cartan equation $d\mu + \frac{1}{2}[\mu,\mu] =0$ and in fact there is a natural \emph{gauge group} acting on them such that the functor naturally lands in groupoids.

	To exemplify, let us consider a very classical algebraic situation:
	\begin{Ex}\label{Ex:intro}
		%Let $M$ be a module over an algebra $A$. A first-order deformation of $M$ is a module $M_\epsilon$ over the algebra $A_\epsilon \coloneqq A[\epsilon]/(\epsilon^2)$, such that setting $\epsilon =0$ we recover $M$ i.e. $A\otimes_{A_\epsilon}  M_\epsilon = M$. Such deformations are classified by the Lie algebra $\Hom_A(P,P)$, where $P$ is a projective resolution of $M$.\ricardo{delete this?}
		
		Let $A$ be an associative algebra. A first-order deformation of $A$ is given by a map $\mu\colon A\otimes A \to A$, such that the formula, $a\cdot b = ab + \mu_1(a,b)\epsilon$, defines an $\epsilon$-linear associative product on $A[\epsilon]/(\epsilon^2)$.
	The requirement that the product be associative is equivalent to
		\[
		\mu_1(a,b)c + \mu_1(ab,c) = a\mu_1(b,c) + \mu_1(a,bc), \quad \forall a,b,c \in A
		\]
More generally, a higher order deformation along $k[\epsilon]/(\epsilon^{n+1})$ is an associative product on $A[\epsilon]/(\epsilon^{n+1})$, which amounts to the choice of $\mu_1,\dots,\mu_n$ satisfying generalisations of the equation above. 

		The dg Lie algebra encoding this deformation problem is the Hochschild Lie algebra given in degree $n$ by $\mathfrak g^n = \Hom(A^{\otimes n+1},A)$ (with appropriate differential and bracket). A deformation of order $n$, $\mu= \epsilon\mu_1 + \dots +\epsilon^n\mu_n$, is precisely a Maurer--Cartan element of $g \otimes \epsilon k[\epsilon]/(\epsilon^{n+1})$.
		Two noteworthy takeaways are that  the Maurer--Cartan equation only involves dg Lie algebraic data and that the dg Lie algebra is the same regardless of the type of deformation.
	\end{Ex}

Examples such as the previous one led Deligne, Drinfeld and Quillen \cite{DrinfeldtoSchechtman}  to formulate the following informal slogan:
\begin{center}
	\noindent\fbox{%
		\parbox{9cm}{%
			\begin{center}
				Any deformation problem over a field $k$ of characteristic zero is
				controlled by a dg Lie $k$-algebra.
			\end{center}
		}%
	}
\end{center}
	\vspace{.2cm}
This slogan was fruitfully used in works of Kontsevich \cite{kontsevichdefquant}, Hinich \cite{Hi01}, Goldman--Millson \cite{GM88}, Manetti \cite{Ma02}. This leads us to the central questions that we wish to address:

	\begin{Que*}\label{Que:intro}\phantom{jump line}
		
		\begin{enumerate}
			\item Given a deformation problem, how to obtain the dg Lie algebra encoding it?
			\item Which deformations are ``equivalent''?
			\item What is the structure of the moduli space of deformations up to equivalence? 
		\end{enumerate}
	\end{Que*}

	The notion of equivalence tends to come naturally associated to the problem. For instance, take two deformations $\widetilde{X}$ and $\widetilde{X}'$ of a complex manifold or more generally a $k$-scheme, $X$. Then they are called equivalent if there is an isomorphism (or more generally a weak equivalence) that reduces to the identity on $X$ (up to homotopy). In the case of associative algebras, two deformations are equivalent if there is an isomorphism intertwining $\mu$ and $\mu'$.

The main purpose of this text is to present how the theory of algebraic operads gives us very efficient tools to address these questions.
Let us consider again the illustrative example of deforming associative algebras. An object of interest is the set (ideally a ``space'') of  associative algebra structures on a vector space, or more generally a cochain complex, $A$. These are precisely all possible structures of left modules on $A$ (with respect to a monoidal product $\circ$) over the operad $\mathbf{Ass}$  of associative algebras
$\mathbf{Ass} \circ A \to A$.

Crucially, the symmetric monoidal category of cochain complexes is closed which allows us to see $A$ as a representation of the operad $\mathbf{Ass}$: There is an endomorphisms operad of $A$, $\mathbf{End}_A$, such that the set of associative algebra structures on $A$ is

$$\{\text{Associative algebra structures on }A\} \cong \Hom_{\mathrm{Operads}}(\mathbf{Ass}, \mathbf{End}_A).$$

It follows that to understand deformations of a particular algebraic structure on $A$, we can equivalently study deformations of the corresponding map of operads $f\colon \mathbf{Ass} \to \mathbf{End}_A$. 

A homotopical perspective gives us some insight in how to progress further.  Indeed, using a model structure on operads, this Hom-set can be naturally upgraded to a mapping space. The deformations we care about live in an infinitesimal neighbourhood of the point $f$ in the mapping space. 
Here, Koszul duality plays a key role in constructing for us a simple cofibrant replacement of $\mathbf{Ass}$, which is the operad governing $A_\infty$-algebras (algebras only associative up to homotopy). 

Furthermore, this procedure expresses the mapping space naturally as the Maurer--Cartan space of a dg Lie algebra $\mathfrak g$, seen as the \emph{deformation complex of }$f$. In the present case $\mathfrak g$ is exactly the Hochschild dg Lie algebra of Example \ref{Ex:intro}.\\

In fact, the slogan itself that dg Lie algebras control deformation problems should be interpreted in an appropriate homotopical sense. On one hand, quasi-isomorphic dg Lie algebras encode the same deformation problem. On the other hand, rather than a set or a groupoid, the target of a deformation functor should be a space, concretely an $\infty$-groupoid.
A formalisation of a deformation functor was only achieved in the 2010's in the works of Lurie \cite{Lu11} which he called a \emph{formal moduli problem}, which is an $\infty$-categorical.

A formalisation of the slogan was by Lurie \cite{Lu11} and Pridham \cite{Pr10} in the form  an equivalence between the $\infty$-categories of formal moduli problems and dg Lie algebras. One of the equivalences is, informally speaking, given by the assignment \eqref{eq:intro}. 
In the other direction, the equivalence is given by the tangent complex functor $\Tt$ whose dg Lie algebra structure can be quite inexplicit.

Here too, the theory of algebraic operads provides a conceptual reason for why deformations are encoded by Lie algebras. It is an instance of Koszul duality between dg commutative algebras (the local Artinian algebras) and dg Lie algebras. \\

This survey splits into three chapters. In the first one, we review the theory of algebraic operads, with the goal of understanding Koszul duality and the structure of the model category of operads and of algebras over a fixed operad. 
In the second chapter we focus on the tools to treat deformation problems associated to a given dg Lie algebra, namely the Deligne groupoid associated to a Lie algebra and the more general problem of integrating homotopy Lie algebras. We review $\infty$-algebras and $\infty$-morphisms under this perspective as well as the homotopy transfer theorem. We will see how these methods apply to specific problems, such as Kontsevich's formality theorem or to rational homotopy theory.
In the final chapter we address the more (derived) geometrical approach to deformation theory, namely the notion of formal moduli problems and the formalization of the deformation slogan, under the algebraic operadist perspective.

 We will review the construction of the (co)tangent complex and understand problems that can be infinitesimally deformed along non-commutative algebras.
At the very end, we will review recent developments, that fall outside of the framework of this such as deformation theory in positive characteristic.
\subsection*{Acknowledgments}

This survey was based on a mini-course the first author gave on the Workshop on Higher Structures and Operadic Calculus organized at the CRM in Barcelona. We would like to thank all participants and organizers and Bruno Vallette in particular. We are deeply grateful to Vladimir Dotsenko and Joost Nuiten for a very thorough review of the paper. We would also like to thank Pelle Steffens for helpful discussions during the writing of this review and Miguel Barata and José Moreno-Fernandez for comments on the first ArXiV version.

This project has received funding from  the grant %\albin{To ask Bruno: there is a thanks environment earlier where the grants etc... would maybe be better.} 
ANR-20-CE40-0016 HighAGT and from the European Research Council (ERC) under the
European Union’s Horizon 2020 research and innovation programme (grant agreement No
768679).

\subsection*{Conventions}
Throughout the text, unless otherwise specified we work over a field $k$ of characteristic zero.
We always work over cochain complexes and use cohomological conventions, regardless of whether a subscript or a superscript is used for the degrees. For example, both Hochschild homology and cohomology arise from a complex whose differential has degree $+1$.

All algebraic objects are considered differential graded by assumption and we will thus not use the prefix ``dg''. For instance, what we will call an associative algebra is what is more commonly called a differential graded algebra elsewhere. The few instances we will work with non-dg objects, we will explicitly use the phrasing ``graded'' to mean that there is no differential and ``concentrated in degree $0$'' for objects such as vector spaces, having nothing in non-zero degree. In particular, $k$-module and cochain complex are synonyms.

Whenever applicable, we distinguish the internal Hom from the Hom set by using $\iHom(V,W)$ for the first and $\Hom(V,W)$ for the latter.

A list of notations is present at the end.%\ricardo{Bruno:say something if this is a problem}

\needspace{6\baselineskip}

	\section{Algebraic Operad Theory} \

	The goal of this section is to introduce the important elements of the theory of algebraic symmetric operads.  In Section \ref{Sec_Algebraic Operads, Cooperads and their (co)-Algebras}, we introduce the definitions of symmetric operads, cooperads and the main examples of (co)operads we are going to use in this review. In Section \ref{Sec_(Co)AlgebraicStructureOn(Co)Operads}, we discuss the notion of algebra and coalgebra over operads and cooperads. We also give the construction of the (co)free (co)algebra adjunctions. In Section \ref{Sec_Some Classical Constructions for Algebraic Operads} we describe the convolution operad, the bar-cobar constructions and the notion of twisting morphisms. 
	These three sections are meant to be a shortcut for the reader to the content of \cite{LV} up to Section 6, which is generally the reference we recommend for details.
	Finally, Sections \ref{Sec_Model Categorical Aspects}, \ref{Sec_KoszulDuality} and \ref{sec:algebraic-bar-cobar-adjunction} are devoted to model categorical aspects of operad theory, describing the model structure on operads and on algebras over an operad. We also discuss the notion of Koszul duality as a means to obtain simple cofibrant replacements of operads. \\

	\subsection{Algebraic Operads and Cooperads}\
	\label{Sec_Algebraic Operads, Cooperads and their (co)-Algebras}

	\medskip

%\subsubsection{A motivating example}
Let us start with a motivating example.
A (differential graded or dg) associative algebra is a cochain complex $(A,d)$ together with a bilinear product $\cdot$ such that for all $a,b,c\in A$
\begin{enumerate}
	\item\label{operadic} $(a\cdot b)\cdot c = a\cdot (b \cdot c)$ (associativity),
	\item\label{leibniz} $d(a\cdot b) = da \cdot b + (-1)^{\deg a}a \cdot db$  (compatibility with the differential).	
\end{enumerate}
The operadic approach consists in removing the spotlight from the elements $a,b,c$ (which are arbitrary) and rather focus on the multiplication operation itself. One could instead define an associative algebra structure on a cochain complex $A$ to be a map $\mu \colon A\otimes A \to A$, such that $\mu(\mu,\id_A) = \mu(\id_A,\mu)$. Notice that this is an equality of maps $A\otimes A \otimes A \to A$.
An immediate advantage of this perspective is that we no longer need to encode property \eqref{leibniz}. Indeed, it is hidden in the assumption that $\mu \colon A\otimes A \to A$ is a map of complexes.
Property \ref{leibniz} is ``categorical'', it follows formally from working with the underlying category of cochain complexes and this is the reason why we choose to drop the prefix dg: a dg associative algebra is the same as an associative algebra in cochain complexes. 
From this perspective it also follows immediately from applying the homology functor to $\mu$ that $H(A)$ is an associative algebra (in graded vector spaces).
Property \ref{operadic} on the other hand is ``structural'' and is the one suitable to be studied with the operadic machinery. 
\\

Even if we start with an operation taking two inputs, by composing it, we can write identities that take place in $\End_A(n)\coloneqq \iHom(A^{\otimes n},A)$. We think of these as multiendomorphisms or \emph{arity} $n$ endomorphisms of $A$.
We could have considered instead \emph{unital} associative algebras, which would amount to furthermore choosing an arity $0$ endomorphism $1\colon k \to A$, such that $\mu(1,\id_A) = \mu(\id_A,1) = \id_A \colon A\to A$.\\

An operad $\_P$ will be the kind of object such that its representations, i.e. maps of operads $\_P \to \End_A$, correspond precisely to $\_P$-algebra structures on $A$. The important properties to retain are the existence of an \emph{arity}, representing the number of inputs of the operation, as well as a way to \emph{compose} operations.

Furthermore, a way to encode symmetries is necessary: if we want to consider commutative algebras this amounts to require $\mu\colon A\otimes A \to A$ to factor through the $\Sigma_2$-coinvariants of the source. Or in a Lie algebra, even ignoring the anti-symmetry of the bracket, if we wish to encode the Jacobi identity, we need to permute the inputs of $[a,[b,c]]$.\\

	The objects encoding the collection of $n$-ary operations of an operad will be taken in a fixed category. We will use the category $\chk$ of cochain complexes over $k$ in this survey, although much of what is discussed can be extended to any closed  symmetric monoidal model category such as the category of simplicial sets $\tx{sSet}$ or $A$-modules $\Mod_A$ for example. The first thing we define is a category whose objects are given by collections of $n$-ary operations. 

	\begin{Def}  \label{Def_SymmetricSequences} 
		
A \defi{symmetric sequence} is a collection of elements in $\chk$,  $M := \lbrace M(n) \rbrace_{n \geq 0}$ together with right actions of the symmetric groups $M(n)\curvearrowleft \Sigma_n$  for all $n \geq 0$. A morphism $f \colon M \rightarrow N$ between symmetric sequences is the data of a collection of maps in $\chk$, $f_n : M(n) \rightarrow N(n)$ that are invariant with respect to the actions of $\Sigma_n$ on $M(n)$ and $N(n)$.
The category of symmetric sequences is denoted $\symseq$.

\end{Def}

	\begin{RQ}
		
		Let $\Nn^\sim$ denote the category with objects the sets  $\{1,\dots,n\}$ for all $n \in \Nn$ and morphisms given by the permutations of these sets. 
		
	The category of symmetric sequences in $\chk$ is isomorphic to the category of functors, $\tx{Fun}\left( \mathbb N^\sim, \chk \right)$, that is, we have an equivalence\footnote{Sending a functor $F$ to the collection $(F(n))_{n \in \Nn}$ and the permutation of $\lbrace 1, \cdots, n\rbrace$ are the morphisms in $\Nn^\sim$ that induce the action of $\Sigma_n$ on $F(n)$.} of categories:
	\[\symseq\simeq \tx{Fun}\left( \Nn^\sim, \chk \right).\]
\end{RQ}

{\begin{RQ} \label{RQ_NS and Colored Operads}
		
		The most common variations on the previous definition is to consider: \begin{itemize}
			\item $\Nn^o \subset \Nn^\sim$ the sub-category with the same objects but with only identities as morphisms.
			
			\item Given a set $S$ of \emph{colors}, denote $\Nn_S$ the category whose objects are $k$-tuples of elements on $S$ for all $k \in \Nn$, and whose morphisms are given by all permutations of these tuples, $(a_1, \cdots, a_k) \to (a_{\sigma_1}, \cdots , a_{\sigma_k})$ that preserve the colors.

		\end{itemize} 
		We can use the functorial construction with these alternative categories, we obtain:
		\begin{itemize}
			\item For $\Nn^o$, the notion of \defi{non-symmetric operads} (with no action of $\Sigma_n$).
			\item For $\Nn_S$, the notion of \defi{colored operad}.    
		\end{itemize}  
\end{RQ}}

	\begin{RQ}
	We think of the cochain complex $M(n)$ as the space of all $n$-ary operations of our operad. An operation $m \in M(n)$ will be depicted by a rooted planar tree as follows:
		
		 \begin{equation}\label{GR_operation}
		  \begin{tikzpicture}[scale=0.7, baseline=-1ex]
		 \GraphInit[vstyle=Classic]
		 
		 \tikzset{VertexStyle/.style = {shape = circle,fill = black,minimum size = 1pt,inner sep=0pt}}
		 
		 \SetVertexNoLabel
		 
		 \Vertex[empty, x=0,y=-1]{2}
		 
		 \SetVertexLabel

		 \Vertex[LabelOut,Lpos=-20, L=$m$, x=0,y=0]{0}
		 \Vertex[LabelOut, L= $\hdots$,Lpos=90, x=0,y=1]{3}
		 \Vertex[LabelOut,Lpos=90, L= $2$, x=-1, y=1]{B}
		 \Vertex[LabelOut,Lpos=90, L= $1$,x=-2,y=1]{A}
		 \Vertex[LabelOut,Lpos=90, L= $n-1$, x=1, y=1]{C}
		 \Vertex[LabelOut,Lpos=90, L= $n$, x=2,y=1]{D}
		 
		 \Edge(A)(0)
		 \Edge(B)(0)
		 \Edge(3)(0)
		 \Edge(C)(0)
		 \Edge(D)(0)
		 \Edge(0)(2)
		 \end{tikzpicture}  
		 \end{equation}  
	
	The action of $\Sigma_n$ is the action that permutes the $n$ leaves of the tree and sends $m$ to an other operation $\sigma.m$.
	\end{RQ}
	
	The idea behind algebraic operads is that these trees correspond to operations with $n$ inputs and $1$ output. We will need to make sense of how to compose these trees by concatenating them as depicted by the following graph:
	
	\begin{equation*}
	\begin{tikzpicture}[scale=0.6, baseline=-1ex]
	\GraphInit[vstyle=Classic]
	
	\tikzset{VertexStyle/.style = {shape = circle,fill = black,minimum size = 1pt,inner sep=0pt}}
	
	\SetVertexNoLabel

	\Vertex[empty, x=0,y=-2]{O}
%\Vertex[empty, x=-2,y=1]{TG1}
%	\Vertex[empty, x=2,y=1]{TG2}
	
	\Vertex[empty,  x=0,y=-1]{TF}
	
	\SetVertexLabel
	
		\Vertex[LabelOut,Lpos=-180, L=$ M(i_1)\ni$, x=-2,y=1]{TG1}  %new
			\Vertex[LabelOut,Lpos=0, L=$\in M(i_k)$,  x=2,y=1]{TG2} %new
	
	\Vertex[LabelOut, L= $\cdots$, Lpos=90, x=0,y=0]{0}
	\Vertex[LabelOut, L= $1$,  Lpos=180,  x=-2,y=0]{1}
	\Vertex[LabelOut, L= $k$, x=2,y=0]{2}

	\Vertex[LabelOut, L= $1$,Lpos=90, x=-3,y=2]{E1}
	
	\Vertex[LabelOut, L= $i_1$,Lpos=90, x=-1,y=2]{E3}
	\Vertex[LabelOut, L= $1$,Lpos=90, x=1,y=2]{E4}
	
	\Vertex[LabelOut, L= $i_k$,Lpos=90, x=3,y=2]{E6}
	
	\Vertex[LabelOut, L= $\cdots$,Lpos=90, x=-2,y=2]{E2}
	\Vertex[LabelOut, L= $\cdots$,Lpos=90, x=2,y=2]{E5}
	
	\Edge(TF)(O)
	
	\Edge(0)(TF)
	\Edge(1)(TF)
	\Edge(2)(TF)
	
	\Edge(TG1)(1)
	\Edge(TG2)(2)
	
	\Edge(E1)(TG1)
	\Edge(E2)(TG1)
	\Edge(E3)(TG1)

	\Edge(E4)(TG2)
	\Edge(E5)(TG2)
	\Edge(E6)(TG2)

	\end{tikzpicture} 
	\end{equation*}  
	
	 To make sense of having the set of all $k$-tuples of trees that we want as input for an other $k$-ary tree, we define a symmetric tensor product on $\symseq$.     		 In the tree above, the upper level should be interpreted as an arity $i_1+\dots +i_k$ element of $M(i_1)\otimes \dots \otimes M(i_k)$.
	
	\begin{Def}\label{Def_DayConvolution}
		
		Given $\_D$, a $\_V$-enriched category and $\_V$, a closed symmetric monoidal category with small limits and colimits. Then the \defi{Day convolution} of two functors $F,G : \_D \rightarrow \_V$ is defined by the coend construction (see \cite[Proposition 6.2.1]{Coend}):
		\[ F \otimes_{\tx{Day}} G (-) := \int^{(c,c') \in \_D \times \_D} \_D (c \otimes_\_D c', -) \otimes_\_V F(c) \otimes_\_V G(c') \]
		
	\end{Def}

Will will now describe the Day convolution using concrete formulas in the context we are interested in, that is when $\_D = \Nn^\sim$ and $\_V = \chk$.

%To do so, we first recall that given a morphism of associative algebras $f\colon A\to B$, we have an extension of scalars functor $B\otimes_A - \colon \Mod_A \to \Mod_B$, left adjoint to the restriction of scalars along $f$.
%When $f$ arises from a group morphism, such as $\Sigma_p \times \Sigma_{q} \hookrightarrow \Sigma_n$ for $p+q=n$, we refer to the corresponding extension as the induced representation $$k[\Sigma_n]\otimes_{k[\Sigma_p\times \Sigma_{q}]} (-) = \tx{Ind}_{\Sigma_p \times \Sigma_{q}}^{\Sigma_n}\colon \left(\Sigma_p\times \Sigma_q\right)\tx{-Rep} \to \Sigma_n\tx{-Rep}.$$ 

We then have the following formula for $\otimes_\tx{Day}$ which can equally been taken as a definition for the reader less confortable with coend calculus.
	
	\begin{Prop}
 				For $\_D = \Nn^\sim$ and $\_V = \chk$, we obtain:
		\[  F \otimes_{\tx{Day}} G (n) =  \bigoplus_{p+q = n}\tx{Ind}_{\Sigma_p \times \Sigma_q}^{\Sigma_n} (F(p) \otimes G(q) ).\]
	{where $\tx{Ind}_{\Sigma_p \times \Sigma_q}^{\Sigma_n}$ denotes the induction\footnote{{Recall that given a morphism of associative algebras $f\colon A\to B$, we have an induction/extension of scalars functor $B\otimes_A - \colon \Mod_A \to \Mod_B$, left adjoint to the restriction of scalars along $f$.
			When $f$ arises from a group morphism, such as $\Sigma_p \times \Sigma_{q} \hookrightarrow \Sigma_n$ for $p+q=n$, we refer to the corresponding induction as the induced representation $$k[\Sigma_n]\otimes_{k[\Sigma_p\times \Sigma_{q}]} (-) = \tx{Ind}_{\Sigma_p \times \Sigma_{q}}^{\Sigma_n}\colon \left(\Sigma_p\times \Sigma_q\right)\text{-}\tx{Rep} \to \Sigma_n\text{-}\tx{Rep}.$$} }
		 from $\Sigma_p \times \Sigma_q$ representations to $\Sigma_n$-representations.}\nopagebreak
	\end{Prop}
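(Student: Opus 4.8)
The plan is to compute the defining coend directly, exploiting the fact that $\Nn^\sim$ is a groupoid. Since the Day convolution formula presupposes a $\chk$-enrichment of the source, we use the free one: $\Nn^\sim(c,c') \coloneqq k\big[\Hom_{\Nn^\sim}(c,c')\big]$, the $k$-module on the finite set of permutations, placed in degree $0$. The monoidal structure $\otimes_{\Nn^\sim}$ is addition on objects, $p \otimes_{\Nn^\sim} q = p+q$, and on morphisms it is the block-sum homomorphism $\Sigma_p \times \Sigma_q \hookrightarrow \Sigma_{p+q}$. With this, the coend defining $F \otimes_{\tx{Day}} G(n)$ becomes
\[
F \otimes_{\tx{Day}} G(n) \;=\; \int^{(p,q)\in \Nn^\sim\times\Nn^\sim} k\big[\Hom_{\Nn^\sim}(p+q,\,n)\big] \otimes_k F(p) \otimes_k G(q).
\]

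First I would use that $\Nn^\sim\times\Nn^\sim$ is a groupoid, namely the disjoint union over $(p,q)\in\Nn^2$ of the one-object groupoids $B(\Sigma_p\times\Sigma_q)$. A coend over a disjoint union of categories is the direct sum of the coends over the pieces, so it suffices to analyse $\int^{B(\Sigma_p\times\Sigma_q)}$ of the integrand for each fixed pair $(p,q)$. Now $\Hom_{\Nn^\sim}(p+q,n)$ is empty unless $p+q=n$, in which case it is exactly the underlying set of $\Sigma_n$; hence every summand with $p+q\neq n$ vanishes and we are left with
\[
F\otimes_{\tx{Day}}G(n) \;=\; \bigoplus_{p+q=n}\ \int^{B(\Sigma_p\times\Sigma_q)} k[\Sigma_n]\otimes_k F(p)\otimes_k G(q).
\]

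Next I would identify each remaining coend. The coend of a functor out of a one-object groupoid $B(\Sigma_p\times\Sigma_q)$ is the coequalizer of the two evident maps $\bigoplus_{g}M\rightrightarrows M$ indexed by $g\in\Sigma_p\times\Sigma_q$; concretely it is the quotient of $M\coloneqq k[\Sigma_n]\otimes F(p)\otimes G(q)$ that identifies $\sigma\cdot(\tau_1\oplus\tau_2)\otimes v$ with $\sigma\otimes (\tau_1,\tau_2)\cdot v$, where $\Sigma_p\times\Sigma_q$ acts on $k[\Sigma_n]$ on the right through the block inclusion (the contravariant action coming from $\Hom_{\Nn^\sim}(-,n)$) and on $F(p)\otimes G(q)$ through the symmetric-sequence actions. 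This is precisely the relative tensor product $k[\Sigma_n]\otimes_{k[\Sigma_p\times\Sigma_q]}\big(F(p)\otimes G(q)\big)$, i.e.\ $\tx{Ind}_{\Sigma_p\times\Sigma_q}^{\Sigma_n}\big(F(p)\otimes G(q)\big)$ as recalled in the footnote. Summing over $p+q=n$ yields the claimed formula; finally one observes that the residual left $\Sigma_n$-action on $k[\Sigma_n]$, coming from post-composition in $\Hom_{\Nn^\sim}(p+q,n)$ and untouched by the coequalizer, is the induced-representation action, so the equality is one of symmetric sequences.

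The only genuinely delicate point is the bookkeeping of variances: $\Hom_{\Nn^\sim}(c\otimes c',n)$ is contravariant in $c$ and $c'$ and covariant in $n$, so along the diagonal of the coend it contributes a \emph{right} $\Sigma_p\times\Sigma_q$-module — the one to be coequalized against the \emph{left} action on $F(p)\otimes G(q)$ — while retaining the $\Sigma_n$-action that passes to the output. Once this is arranged correctly the computation is essentially formal, precisely because $\Nn^\sim$ is a groupoid: the coend is a plain coequalizer rather than a colimit over a diagram with non-invertible arrows, so there are no further coherences to worry about. One could alternatively phrase the whole argument as computing the pointwise left Kan extension $F\otimes_{\tx{Day}}G=\mathrm{Lan}_{\otimes_{\Nn^\sim}}(F\boxtimes G)$ along the block-sum functor, which comes down to the same coend.
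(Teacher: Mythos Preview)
Your argument is correct and follows essentially the same route as the paper: both compute the coend directly, use that $\Hom_{\Nn^\sim}(p+q,n)$ vanishes unless $p+q=n$, and then identify the remaining coend over the one-object groupoid $B(\Sigma_p\times\Sigma_q)$ with the relative tensor product $k[\Sigma_n]\otimes_{k[\Sigma_p\times\Sigma_q]}(F(p)\otimes G(q))$. Your write-up is slightly more careful about the variance bookkeeping and the residual $\Sigma_n$-action on the output, which the paper leaves implicit.
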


	\begin{proof}
		For $\_D = \Nn^\sim$ we get\footnote{With $p \otimes q := p+q$.}: 
	\[ F \otimes_{\tx{Day}} G (d) = \int^{p,q \in \Nn} \iHom (p \otimes q, d) \otimes F(p) \otimes G(q)\]
		
		We obtain that $F \otimes_{\tx{Day}} G (d)$ is equal to:\\
		 
			\adjustbox{scale=0.85,center}{$ \tx{coeq} \left(\begin{tikzcd}
		\coprod\limits_{p,q \in \Nn} \iHom(p+q, d) \otimes F(p) \otimes G(q)  & \arrow[l, shift left] \arrow[l, shift right] \coprod\limits_{f: p \rightarrow p' , \ g: q \rightarrow q'}  \iHom(p + q, d) \otimes F(p') \otimes G(q') 
		\end{tikzcd} \right) $}\\
		
		$\iHom(p+q, d)$ is non-zero only if $p+q = d$, $p=p'$ and $q=q'$. Therefore we get: 
		 \[ \tx{coeq} \left(\begin{tikzcd}
		 \coprod\limits_{p+q =d} k[\Sigma_d] \otimes F(p) \otimes G(q)  & \arrow[l, shift left] \arrow[l, shift right] \coprod\limits_{p+q=d}\coprod\limits_{\Sigma_p \times \Sigma_q}  k[\Sigma_d] \otimes F(p) \otimes G(q) 
		 \end{tikzcd} \right) \]
		 \[=\coprod\limits_{p+q =d}\tx{coeq} \left(\begin{tikzcd}
		  k[\Sigma_d] \otimes F(p) \otimes G(q)  & \arrow[l, shift left] \arrow[l, shift right] \coprod\limits_{\Sigma_p \times \Sigma_q}  k[\Sigma_d] \otimes F(p) \otimes G(q) 
		 \end{tikzcd} \right) \]

		 We now need to relate this coequalizer to the tensor product over $k[\Sigma_p \times \Sigma_q]$. To do that we notice that for any $p,q \in \Nn$ with $p+q =d$, $N \in k[\Sigma_d]- \Mod$, a map	\[\Hom_{k[\Sigma_d]} \left(\tx{coeq} \left(\begin{tikzcd}[column sep = tiny]
		 	k[\Sigma_d] \otimes F(p) \otimes G(q)  & \arrow[l, shift left] \arrow[l, shift right] \coprod\limits_{\Sigma_p \times \Sigma_q}  k[\Sigma_d] \otimes F(p) \otimes G(q) 
		 	\end{tikzcd} \right), N \right) 
		  \]
		  
		  is exactly given by a map $f: k[\Sigma_d] \otimes F(p) \otimes G(q) \rightarrow N$ such that for all $(\alpha_p, \alpha_q) \in \Sigma_p \times \Sigma_q$, $\sigma \in \Sigma_d$, $f_p \in F(p)$ and $g_q \in G(q)$ we have:
		 \[ f(\sigma.(\alpha_p, \alpha_q) \otimes f_p \otimes g_q) = f(\sigma \otimes \alpha_p .f_p \otimes \alpha_q. g_q)\]
		  
		  But this is exactly the data of a map:
		   \[\Hom_{\Mod_{k[\Sigma_d]}}\left( k[\Sigma_d]\otimes_{k[\Sigma_p \times \Sigma_q]} F(p)\otimes G(q), N \right)\]
		   
		    This shows that:
		\[\begin{split}
		F \otimes_{\tx{Day}} G(d) &= \coprod_{p+q = d} (F(p) \otimes G(q)) \otimes_{k[\Sigma_p \times \Sigma_q]} k[\Sigma_d] \\
		&=   \coprod_{p+q = d}\tx{Ind}_{\Sigma_p \times \Sigma_q}^{\Sigma_d} (F(p) \otimes G(q) )
		\end{split}\]
	\end{proof}

	\begin{RQ}
		Taking successive Day tensor powers of a functor leads to the following formula: 
		\[ G^{\otimes_{\tx{Day}}k}(n)  = \bigoplus_{i_1 + \cdots + i_k = n}\tx{Ind}_{\Sigma_{i_1} \times \cdots \times \Sigma_{i_k}}^{\Sigma_n} (F(i_1) \otimes \cdots \otimes F(i_k))  \]
		
		Objects of arity $n$ in this symmetric sequence can be graphically understood as having $k$ trees in parallel with total arity $i_1 + \cdots + i_k = n$, with labels of different trees potentially permuted among them:
		 \[ \begin{tikzpicture}[scale=1, baseline=-1ex]
			\GraphInit[vstyle=Classic]
			
			\tikzset{VertexStyle/.style = {shape = circle,fill = black,minimum size = 1pt,inner sep=0pt}}
			
			\SetVertexNoLabel
			
			\Vertex[ x=0,y=0]{0}
			\Vertex[empty, x=0,y=-1]{2}
			
			\SetVertexLabel

			\Vertex[LabelOut, L= $\hdots$,Lpos=90, x=0,y=1]{3}
			\Vertex[LabelOut,Lpos=90, L= $\scriptstyle\sigma(1)$, x=-1, y=1]{B}
			\Vertex[LabelOut,Lpos=90, L= $\scriptstyle \sigma(i_1)$, x=1,y=1]{D}
		
			\Edge(B)(0)
			\Edge(3)(0)
			\Edge(D)(0)
			\Edge(0)(2)
		\end{tikzpicture} \qquad \cdots \qquad \begin{tikzpicture}[scale=1, baseline=-1ex]
		\GraphInit[vstyle=Classic]
		
		\tikzset{VertexStyle/.style = {shape = circle,fill = black,minimum size = 1pt,inner sep=0pt}}
		
		\SetVertexNoLabel
		
		\Vertex[ x=0,y=0]{0}
		\Vertex[empty, x=0,y=-1]{2}
		
		\SetVertexLabel

		\Vertex[LabelOut, L= $\hdots$,Lpos=90, x=0,y=1]{3}
		\Vertex[LabelOut,Lpos=90, L= $\scriptstyle \sigma(i_1+\dots +i_{k-1} +1)  \qquad \qquad$, x=-1, y=1]{B}
		\Vertex[LabelOut,Lpos=90, L= $\qquad \qquad \scriptstyle\sigma(i_1+\dots +i_k)$, x=1,y=1]{D}
		
		\Edge(B)(0)
		\Edge(3)(0)
		\Edge(D)(0)
		\Edge(0)(2)
	\end{tikzpicture}   \]

		The induction, $\tx{Ind}_{\Sigma_{i_1} \times \cdots \times \Sigma_{i_k}}^{\Sigma_n}$, describes the action of $\Sigma_n$ on the $n$ entries in a way that is compatible with the actions given on each individual tree,  $\Sigma_{i_1} \times \cdots \times \Sigma_{i_k}$.
	This consists in adding the $i_1,\dots, i_k$ shuffles $\mathrm{Sh}(i_1,\dots, i_k)$ which are canonical representatives of the cosets $\Sigma_n/{\Sigma_{i_1} \times \cdots \times \Sigma_{i_k}}$.	
	\end{RQ}

We want now to describe all the ways we can put $k$ trees (elements in $M^{\otimes_{\tx{Day}}k} $) as input for a $k$-ary tree so that we can compose them. This defines a new symmetric sequence described by the following non-symmetric monoidal product: 
  
	\begin{Def}\label{def:circ of SS}
		We define the \defi{composition of symmetric sequences} to be the monoidal structure (see \cite[Section 6.3]{Coend}) on $\symseq$ given by:
		\[ F \circ G  := \int^m F(m) \otimes G^{\otimes_{\tx{Day}}m}  \]
		
		We obtain the formula: 
		\[ F \circ G (n) = \bigoplus_{k\geq 0} F(k) \otimes_{k[\Sigma_k]} \left(\bigoplus_{i_1 + \cdots +i_k = n } \tx{Ind}_{\Sigma_{i_1} \times \cdots \times \Sigma_{i_k}}^{\Sigma_n} ( G(i_1) \otimes \cdots \otimes G(i_n) )\right)\]
		
	\end{Def}

		Graphically $F \circ G $ corresponds to the symmetric sequence of all possible concatenation of a $F$-tree with $k$-entries with $k$ $G$-trees on top on it: 
		
	  \begin{equation}\label{GR_ConcatenationOfTrees}
	  	\begin{tikzpicture}[scale=0.8, baseline=-1ex]
			\GraphInit[vstyle=Classic]
			
			\tikzset{VertexStyle/.style = {shape = circle,fill = black,minimum size = 1pt,inner sep=0pt}}
			
			\SetVertexNoLabel

			\Vertex[empty, x=0,y=-2]{O}

			\SetVertexLabel
			
		    \Vertex[LabelOut,Lpos=-10, L= $F$, x=0,y=-1]{TF}
			
		    \Vertex[LabelOut, L= $\cdots$, Lpos=90, x=0,y=0]{0}
		    	\Vertex[LabelOut, L= $1$,  Lpos=180,  x=-2,y=0]{1}
		    \Vertex[LabelOut, L= $k$, x=2,y=0]{2}

			\Vertex[LabelOut, L= $G_1$, x=-2,y=1]{TG1}
			\Vertex[LabelOut, L= $G_k$, x=2,y=1]{TG2}
			
			\Vertex[LabelOut, L= $1$,Lpos=90, x=-3,y=2]{E1}
			
			\Vertex[LabelOut, L= $i_1$,Lpos=90, x=-1,y=2]{E3}
			\Vertex[LabelOut, L= $1$,Lpos=90, x=1,y=2]{E4}
			
			\Vertex[LabelOut, L= $i_k$,Lpos=90, x=3,y=2]{E6}
			
			\Vertex[LabelOut, L= $\cdots$,Lpos=90, x=-2,y=2]{E2}
			\Vertex[LabelOut, L= $\cdots$,Lpos=90, x=2,y=2]{E5}
			
			\Edge(TF)(O)
			
			\Edge(0)(TF)
			\Edge(1)(TF)
			\Edge(2)(TF)
			
			\Edge(TG1)(1)
			\Edge(TG2)(2)
			
			\Edge(E1)(TG1)
			\Edge(E2)(TG1)
			\Edge(E3)(TG1)

			\Edge(E4)(TG2)
			\Edge(E5)(TG2)
			\Edge(E6)(TG2)

		\end{tikzpicture} \in (F\circ G)(i_1 + \cdots + i_k)
		\end{equation}

	Such concatenation will be called a $2$-leveled tree. In the picture above we display the unconcatenated labels, but keep in mind that the bottom labels disappear and the top level is now labeled by a permutation of $i_1+\dots +i_k$. In general, a $n$-fold product for $\circ$ will give a $n$-leveled tree. 
	
	The unit of this monoidal product will be denoted by $I$ and is the symmetric sequence given by $I(1)= k$ and $I(n)=0$ for $n\neq 1$. 
	
	The notion of composition of an operad amounts to sending a $2$-leveled tree, given by an element in $F \circ F$, to a new operation in $F$ viewed as the ``composition'' of these leveled trees. 
	
	\begin{Def}\label{Def_Operad}
				
		The category of \defi{operads} $\_P$ valued in $\chk$, denoted by $\Op$, is the category of monoid objects in $\symseq$ for $\circ$. Concretely an object $\_P \in \Op$ is a symmetric sequence $\_P$ together with an associative multiplication and a unit morphisms: 
		\[ \mu : \_P \circ \_P \rightarrow \_P \qquad \eta : I \rightarrow \_P \] 
		
		$\mu$ corresponds to composing the operations by sending any concatenation of  trees as depicted in \ref{GR_ConcatenationOfTrees} (with $G = F = \_P$) to a new element in $\_P$. In particular for each $n \in \Nn$ and $i_1, \cdots , i_n \in \Nn$, we get the composition map: 
		\[ \_P (n) \otimes \left( \_P(i_1) \otimes \cdots \otimes \_P (i_k) \right) \rightarrow \_P (i_1 + \cdots + i_k) \]
		
		Which pictorially gives: 
		 \[ \begin{tikzpicture}[scale=1, baseline=-1ex]
			\GraphInit[vstyle=Classic]
			
			\tikzset{VertexStyle/.style = {shape = circle,fill = black,minimum size = 1pt,inner sep=0pt}}
			
			\SetVertexNoLabel
			
			\Vertex[empty, x=0,y=-2]{O}

			\SetVertexLabel
			
			\Vertex[LabelOut, L= $F$, Lpos=-5, x=0,y=-1]{TF}
			
			\Vertex[LabelOut, L= $\cdots$, Lpos=90, x=0,y=0]{0}
			\Vertex[LabelOut, L= $1$,  Lpos=180,  x=-2,y=0]{1}
			\Vertex[LabelOut, L= $k$, x=2,y=0]{2}
	
			\Vertex[LabelOut, L= $F_1$, x=-2,y=1]{TG1}
			\Vertex[LabelOut, L= $F_k$, x=2,y=1]{TG2}
			
			\Vertex[LabelOut, L= $1$,Lpos=90, x=-3,y=2]{E1}
			
			\Vertex[LabelOut, L= $i_1$,Lpos=90, x=-1,y=2]{E3}
			\Vertex[LabelOut, L= $1$,Lpos=90, x=1,y=2]{E4}
			
			\Vertex[LabelOut, L= $i_k$,Lpos=90, x=3,y=2]{E6}
			
			\Vertex[LabelOut, L= $\cdots$,Lpos=90, x=-2,y=2]{E2}
			\Vertex[LabelOut, L= $\cdots$,Lpos=90, x=2,y=2]{E5}
			
			\Edge(TF)(O)
			
			\Edge(0)(TF)
			\Edge(1)(TF)
			\Edge(2)(TF)
			
			\Edge(TG1)(1)
			\Edge(TG2)(2)
			
			\Edge(E1)(TG1)
			\Edge(E2)(TG1)
			\Edge(E3)(TG1)

			\Edge(E4)(TG2)
			\Edge(E5)(TG2)
			\Edge(E6)(TG2)

		\end{tikzpicture}   \mapsto 	\begin{tikzpicture}[scale=1, baseline=-1ex]
		\GraphInit[vstyle=Classic]
		
		\tikzset{VertexStyle/.style = {shape = circle,fill = black,minimum size = 1pt,inner sep=0pt}}
		
		\SetVertexNoLabel
	
		\Vertex[empty, x=0,y=-1]{O}
		
		\SetVertexLabel
		
		\Vertex[LabelOut, L= $\mu(F \otimes \left(F_1 \otimes \cdots \otimes F_k\right) )$, Lpos=-5, x=0,y=0]{TF}
		
		\Vertex[LabelOut, L= $1$,Lpos=90, x=-2,y=1]{E1}
		\Vertex[LabelOut, L= $i_1+ \cdots + i_k$,Lpos=90, x=2,y=1]{E3}
		\Vertex[LabelOut, L= $\cdots$,Lpos=90, x=0,y=1]{E2}
		
		\Edge(TF)(O)
		\Edge(E1)(TF)
		\Edge(E2)(TF)
		\Edge(E3)(TF)
	\end{tikzpicture} \]

		These maps satisfy the natural associativity and unitality conditions coming from the definition of monoids. In particular this means that given a $n$-level tree in $\_P^{\circ n}$, it is canonically sent to a tree $\_P$ no matter in which order we chose to compose each levels. 
	\end{Def}

	\begin{Def}\label{def:partial comp}	Equivalently, an operadic structure can be completely described via the \defi{partial compositions}. For $\alpha \in \_P(n)$, $\beta \in \_P(m)$ and for all $1 \leq i \leq n$ we defined $\circ_{i}$ as: 
		\[ \alpha \circ_i \beta = \mu_{\_P} (\alpha, (I, \cdots, I, \beta, I, \cdots, I )) \in \_P (n+m-1)\]
		
		where $\beta$ is in the i-th position in the $n$-tuple $(I, \cdots, I, \beta, I, \cdots, I )$. This can be depicted as:   
		
		 \begin{equation*}
		\begin{tikzpicture}[scale=0.8, baseline=-1ex]
		\GraphInit[vstyle=Classic]
		
		\tikzset{VertexStyle/.style = {shape = circle,fill = black,minimum size = 1pt,inner sep=0pt}}
		
		\SetVertexNoLabel
		
		\Vertex[empty, x=0,y=-1]{2}

		\Vertex[LabelOut,Lpos=-20, L=$\mu$, x=-1,y=2]{E1}
		\Vertex[LabelOut,Lpos=-20, L=$\mu$, x=0,y=2]{E2}
		\Vertex[LabelOut,Lpos=-20, L=$\mu$, x=1,y=2]{E3}
		\SetVertexLabel

		\Vertex[LabelOut,Lpos=-20, L=$\alpha$, x=0,y=0]{0}
		\Vertex[LabelOut,Lpos=0, L= $\beta$, x=0,y=1]{3}
		\Vertex[LabelOut,Lpos=90, L= $i-1$, x=-1, y=1]{B}
		\Vertex[empty, x=-2,y=1]{A}
		\Vertex[LabelOut,Lpos=90, L= $i+1$, x=1, y=1]{C}
		\Vertex[empty, x=2,y=1]{D}
		
		\Edge(A)(0)
		\Edge(B)(0)
		\Edge(3)(0)
		\Edge(C)(0)
		\Edge(D)(0)
		\Edge(0)(2)
		\Edge(E1)(3)
		\Edge(E2)(3)
		\Edge(E3)(3)
		
		\end{tikzpicture}  
		\end{equation*}  

	\end{Def}
In the picture above we display the original label, but keep in mind that relabeling after composition will make the $i+1$ label into $i+m$.

As a consequence of this equivalent description, one can interpret an operad structure on a symmetric sequence $\_P$, to be a rule to assign to any tree $T$ (rooted and planar, but without any leveling), with internal vertices decorated by elements of $\_P$ in appropriate arity, a single element of $\_P(l)$, where $l$ is the number of leaves of $T$.

	\begin{Cons}\label{Cons_Operad}\
			
	\begin{itemize}
		\item \textbf{Free operad:} 		
The forgetful functor $\Op \rightarrow \symseq$ has a left adjoint $\T$. Given a symmetric sequence $E$, we denote by $\T E$ the \emph{free operad} generated by $E$.
		
	As a symmetric sequence, $\T E$ is spanned by all possible trees whose {internal} vertices are decorated by elements of $E$ of the appropriate arity and by $I$ of arity $1$ (which we interpret as having no {internal} vertices). The (cohomological) degree of the tree is the sum of the degrees of all {internal} vertices and the differential acts vertex by vertex. The operadic structure is done by grafting as depicted in the following example:

\[ \begin{tikzpicture}[scale=0.78, baseline=-1ex]
\GraphInit[vstyle=Classic]

\tikzset{VertexStyle/.style = {shape = circle,fill = black,minimum size = 1pt,inner sep=0pt}}

\SetVertexNoLabel

\Vertex[ x=0,y=0]{0}
\Vertex[empty, x=0,y=-1]{S}

\SetVertexLabel

\Vertex[LabelOut, L= $2$,Lpos=90, x=0,y=2]{2}
\Vertex[LabelOut,Lpos=90, L= $1$, x=-1, y=2]{1}
\Vertex[LabelOut,Lpos=90, L= $3$, x=1,y=2]{3}

\Edge(1)(0)
\Edge(3)(0)
\Edge(2)(0)
\Edge(0)(S)
\end{tikzpicture} \circ_2 \begin{tikzpicture}[scale=0.78, baseline=-1ex]
\GraphInit[vstyle=Classic]

\tikzset{VertexStyle/.style = {shape = circle,fill = black,minimum size = 1pt,inner sep=0pt}}

\SetVertexNoLabel

\Vertex[ x=0,y=0]{0}
\Vertex[empty, x=0,y=-1]{S}
\Vertex[empty, x=-1,y=1]{I1}

\SetVertexLabel

\Vertex[LabelOut, L= $2$,Lpos=90, x=0,y=2]{2}
\Vertex[LabelOut,Lpos=90, L= $1$, x=-2, y=2]{1}
\Vertex[LabelOut,Lpos=90, L= $3$, x=2,y=2]{3}

\Edge(1)(I1)
\Edge(3)(0)
\Edge(2)(I1)
\Edge(I1)(0)
\Edge(0)(S)
\end{tikzpicture} =  \begin{tikzpicture}[scale=0.55, baseline=-1ex]
\GraphInit[vstyle=Classic]

\tikzset{VertexStyle/.style = {shape = circle,fill = black,minimum size = 1pt,inner sep=0pt}}

\SetVertexNoLabel

\Vertex[ x=0,y=0]{0}
\Vertex[empty, x=0,y=-1]{S}
\Vertex[empty, x=0,y=1]{I1}
\Vertex[empty, x=-1,y=2]{I2}

\SetVertexLabel

\Vertex[LabelOut,Lpos=90, L= $1$, x=-3, y=3]{1}
\Vertex[LabelOut, L= $2$,Lpos=90, x=-2,y=3]{2}
\Vertex[LabelOut,Lpos=90, L= $3$, x=0,y=3]{3}
\Vertex[LabelOut,Lpos=90, L= $4$, x=2,y=3]{4}
\Vertex[LabelOut,Lpos=90, L= $5$, x=3,y=3]{5}

\Edge(1)(0)
\Edge(2)(I2)
\Edge(3)(I2)
\Edge(4)(I1)
\Edge(5)(0)
\Edge(I2)(I1)
\Edge(I1)(0)
\Edge(0)(S)
\end{tikzpicture}  \]

 We will denote by $\T^{(n)}E\subset \T E$ the subcomplex spanned by trees with exactly $n$ internal vertices.		
\\

		\item \textbf{Operad defined by generators and relations:} 		
		 Consider an operadic ideal\footnote{ $\_I \subset \_P$ is called an \defi{operadic ideal} if $\mu(l \otimes \left(l_1\otimes \cdots \otimes l_k\right)) \in \_I$ as soon as one of $l,l_1 , \cdots, l_k$ is in $\_I$.} $\_I \subset \_P$. In this situation, the cokernel of the map $\_I \rightarrow \_P$, denoted by $\faktor{\_P}{\_I}$, is called the \defi{quotient operad}. 
		 
		 If we consider $E \in \symseq $, $R \subset \T E$ and $(R)$ the operadic ideal generated by $R$ in $\T E$. Then the operad generated by $E$ with relations $R$ is defined by:
		\[ \_P(E,R) := \faktor{\T E}{(R)}. \]

			\end{itemize}
	\end{Cons}

\begin{Def}\label{def:quadratic}
	An operad presented in terms of generators and relations $\_P(E,R)$ as above is said to be \defi{binary} if the generating symmetric sequence is concentrated in arity $2$, i.e. $E(\ne 2)=0$.
	It is said to be \defi{quadratic} if the $R\subset \T^{(n)}E$, i.e. if relations always involve two operations. 
\end{Def}

Loosely speaking, operads are naturally given in terms of generators and relations, whenever the  algebras they describe are defined in terms of their defining operations and the properties they must satisfy.
	
	\begin{RQ}\label{warning}
		Let $E$ be a symmetric sequence such that $E(\ne 2) = 0$. While there are only two possible arity $3$ trees one can write using two binary vertices, due to the symmetries, $\T^{(2)} E \cong (E(2) \otimes E(2))^{\oplus 3}$. Indeed, there are exactly three $(2,1)$ shuffles. Pictorially the three types of elements of $\T^{(2)} E$ are: 
		
		\[ \begin{tikzpicture}[scale=0.5, baseline=-1ex]
		\GraphInit[vstyle=Classic]
		
		\tikzset{VertexStyle/.style = {shape = circle,fill = black,minimum size = 1pt,inner sep=0pt}}
		
		\SetVertexNoLabel
		
		\Vertex[ x=0,y=0]{0}
		\Vertex[empty, x=0,y=-1]{S}
		\Vertex[empty, x=-1,y=1]{I1}

		\SetVertexLabel

		\Vertex[LabelOut, L= $2$,Lpos=90, x=0,y=2]{2}
		\Vertex[LabelOut,Lpos=90, L= $1$, x=-2, y=2]{1}
		\Vertex[LabelOut,Lpos=90, L= $3$, x=2,y=2]{3}
		
		\Edge(1)(0)
		\Edge(3)(0)
		\Edge(2)(I1)
		\Edge(I1)(0)
		\Edge(0)(S)
		\end{tikzpicture}, \qquad \begin{tikzpicture}[scale=0.5, baseline=-1ex]
		\GraphInit[vstyle=Classic]
		
		\tikzset{VertexStyle/.style = {shape = circle,fill = black,minimum size = 1pt,inner sep=0pt}}
		
		\SetVertexNoLabel
		
		\Vertex[ x=0,y=0]{0}
		\Vertex[empty, x=0,y=-1]{S}
		\Vertex[empty, x=1,y=1]{I1}

		\SetVertexLabel

		\Vertex[LabelOut, L= $2$,Lpos=90, x=0,y=2]{2}
		\Vertex[LabelOut,Lpos=90, L= $1$, x=-2, y=2]{1}
		\Vertex[LabelOut,Lpos=90, L= $3$, x=2,y=2]{3}
		
		\Edge(1)(0)
		\Edge(3)(0)
		\Edge(2)(I1)
		\Edge(I1)(0)
		\Edge(0)(S)
		\end{tikzpicture} = \begin{tikzpicture}[scale=0.5, baseline=-1ex]
		\GraphInit[vstyle=Classic]
		
		\tikzset{VertexStyle/.style = {shape = circle,fill = black,minimum size = 1pt,inner sep=0pt}}
		
		\SetVertexNoLabel

		\Vertex[empty, x=0,y=-1]{S}
		\Vertex[empty, x=-1,y=1]{I1}

		\SetVertexLabel
		
		\Vertex[LabelOut, L= ${}_{(12)}$, x=0,y=0]{0}
		\Vertex[LabelOut, L= $3$,Lpos=90, x=0,y=2]{2}
		\Vertex[LabelOut,Lpos=90, L= $2$, x=-2, y=2]{1}
		\Vertex[LabelOut,Lpos=90, L= $1$, x=2,y=2]{3}
		
		\Edge(1)(0)
		\Edge(3)(0)
		\Edge(2)(I1)
		\Edge(I1)(0)
		\Edge(0)(S)
		\end{tikzpicture}, \qquad \begin{tikzpicture}[scale=0.5, baseline=-1ex]
		\GraphInit[vstyle=Classic]
		
		\tikzset{VertexStyle/.style = {shape = circle,fill = black,minimum size = 1pt,inner sep=0pt}}
		
		\SetVertexNoLabel
		
		\Vertex[ x=0,y=0]{0}
		\Vertex[empty, x=0,y=-1]{S}
		\Vertex[empty, x=-1,y=1]{I1}

		\SetVertexLabel

		\Vertex[LabelOut, L= $3$,Lpos=90, x=0,y=2]{2}
		\Vertex[LabelOut,Lpos=90, L= $1$, x=-2, y=2]{1}
		\Vertex[LabelOut,Lpos=90, L= $2$, x=2,y=2]{3}
		
		\Edge(1)(0)
		\Edge(3)(0)
		\Edge(2)(I1)
		\Edge(I1)(0)
		\Edge(0)(S)
		\end{tikzpicture}\]
	with internal vertices labeled by elements of $E(2)$.
			\end{RQ}

		\begin{Ex}\label{Ex_Operad}\ 
			
				\begin{itemize}
		\item \textbf{Trivial operad:}		
		The symmetric sequence $I$ defined by $I(1) = k$ and $I(k)=0$ for $k \neq 1$ is the unit for $\circ$ and possesses a natural operad structure given by the identity as monoidal product and unit:
		\[ I = I \circ I \to I \qquad I \to I\]

		\item  \textbf{Associative operad:}		
		The associative operad is the operad generated by two binary operations, $m$ and its permutation $m' = m \cdot (12)$, subject to the quadratic associativity relations which is the smallest {$\Sigma_3$-submodule} of $\T^{(2)} E$ containing: 
		 \[ 	\begin{tikzpicture}[scale=0.8, baseline=-1ex]
			\GraphInit[vstyle=Classic]
			
			\tikzset{VertexStyle/.style = {shape = circle,fill = black,minimum size = 1pt,inner sep=0pt}}
			
			\SetVertexNoLabel

			\Vertex[empty, x=0,y=-2]{O}

			\SetVertexLabel
			
			\Vertex[LabelOut, L= $m$, x=0,y=-1]{1}
			
			\Vertex[empty, x=-1,y=0]{A}
			\Vertex[LabelOut, L= $m$, x=1,y=0]{B}

			\Vertex[empty, x=-2,y=1]{E1}
			\Vertex[empty, x=0,y=1]{E2}
			\Vertex[empty, x=2,y=1]{E3}

			\Edge(1)(O)
			
			\Edge(B)(1)
			
			\Edge(E1)(1)
			\Edge(E2)(B)
			\Edge(E3)(B)

		\end{tikzpicture} - 	\begin{tikzpicture}[scale=0.8, baseline=-1ex]
		\GraphInit[vstyle=Classic]
		
		\tikzset{VertexStyle/.style = {shape = circle,fill = black,minimum size = 1pt,inner sep=0pt}}
		
		\SetVertexNoLabel

		\Vertex[empty, x=0,y=-2]{O}

		\SetVertexLabel
		
		\Vertex[LabelOut, L= $m$, x=0,y=-1]{1}
		
		\Vertex[LabelOut,L= $m$, x=-1,y=0]{A}
		\Vertex[empty, x=1,y=0]{B}

		\Vertex[empty, x=-2,y=1]{E1}
		\Vertex[empty, x=0,y=1]{E2}
		\Vertex[empty, x=2,y=1]{E3}

		\Edge(1)(O)
		
		\Edge(A)(1)
		
		\Edge(E1)(A)
		\Edge(E2)(A)
		\Edge(E3)(1)

	\end{tikzpicture} \]

The complex $\T^{(2)} E= (E(2) \otimes E(2))^{\oplus 3}$ is generated by all permutations of the partial compositions $m \circ_{1} m$ and $m \circ_{2} m$. The relations imposed for the associative algebra are generated by all the permutations of $m \circ_{1} m - m \circ_{2} m$:
	 \[ 	\begin{tikzpicture}[scale=0.8, baseline=-1ex]
	\GraphInit[vstyle=Classic]
	
	\tikzset{VertexStyle/.style = {shape = circle,fill = black,minimum size = 1pt,inner sep=0pt}}
	
	\SetVertexNoLabel

	\Vertex[empty, x=0,y=-2]{O}

	\SetVertexLabel
	
	\Vertex[LabelOut, L= $m$, x=0,y=-1]{1}
	
	\Vertex[empty, x=-1,y=0]{A}
	\Vertex[LabelOut, L= $m$, x=1,y=0]{B}

	\Vertex[LabelOut,L= $x_{\sigma(1)}$, x=-2,y=1]{E1}
    \Vertex[LabelOut,L= $x_{\sigma(2)}$, x=0,y=1]{E2}
    \Vertex[LabelOut,L= $x_{\sigma(3)}$, x=2,y=1]{E3}

	\Edge(1)(O)
	
	\Edge(E1)(1)
	\Edge(B)(1)
	
	\Edge(E2)(B)
	\Edge(E3)(B)

	\end{tikzpicture} - 	\begin{tikzpicture}[scale=0.8, baseline=-1ex]
	\GraphInit[vstyle=Classic]
	
	\tikzset{VertexStyle/.style = {shape = circle,fill = black,minimum size = 1pt,inner sep=0pt}}
	
	\SetVertexNoLabel

	\Vertex[empty, x=0,y=-2]{O}

	\SetVertexLabel
	
	\Vertex[LabelOut, L= $m$, x=0,y=-1]{1}
	
	\Vertex[LabelOut,L= $m$, x=-1,y=0]{A}
	\Vertex[empty, x=1,y=0]{B}

\Vertex[LabelOut,L= $x_{\sigma(1)}$, x=-2,y=1]{E1}
\Vertex[LabelOut,L= $x_{\sigma(2)}$, x=0,y=1]{E2}
\Vertex[LabelOut,L= $x_{\sigma(3)}$, x=2,y=1]{E3}

	\Edge(1)(O)
	
	\Edge(A)(1)
	
	\Edge(E1)(A)
	\Edge(E2)(A)
	\Edge(E3)(1)

	\end{tikzpicture} \]

	  The quotient vector space we obtain in arity $3$ is therefore $6$-dimensional and is isomorphic to $k[\Sigma_3 ]$. In general, we have that $\mathbf{Ass}(0)=0$ and for $n\geq 1$, $\mathbf{Ass}(n) = k[\Sigma_n]$ together with the natural action of $\Sigma_n$.\\
		
		\item \textbf{Commutative operad:} 		
		The commutative operad is defined as the operad generated by a single binary operation, invariant by the $\Sigma_2$ action (i.e. commutative) with quadratic relations encoding associativity. The free operad satisfies $\T^{(2)} E = (E(2)\otimes E(2))^{\oplus 3}$ and is therefore $3$-dimensional. If we write $xy$ for the product in $E(2)$, $\T^{(2)}E$ is freely generated by  $x(yz)$, $(xz)y$ and $(xy)z$. 
		
		So we only have to add the associativity relations $x(yz)-y(zx)$, $y(zx)-z(xy)= (zx)y-z(xy)$. We give a pictorial description for the first relation:

		\[	\begin{tikzpicture}[scale=0.75, baseline=-1ex]
		\GraphInit[vstyle=Classic]
		
		\tikzset{VertexStyle/.style = {shape = circle,fill = black,minimum size = 1pt,inner sep=0pt}}
		
		\SetVertexNoLabel

		\Vertex[empty, x=0,y=-2]{O}

		\SetVertexLabel
		
		\Vertex[LabelOut, L= $$, x=0,y=-1]{1}
		
		\Vertex[LabelOut,L= $$, x=-1,y=0]{A}
		\Vertex[empty, x=1,y=0]{B}

		\Vertex[LabelOut,L= $x$, x=-2,y=1]{E1}
		\Vertex[LabelOut,L= $y$, x=0,y=1]{E2}
		\Vertex[LabelOut,L= $z$, x=2,y=1]{E3}

		\Edge(1)(O)
		
		\Edge(A)(1)
		\Edge(B)(1)
		
		\Edge(E1)(A)
		\Edge(E2)(B)
		\Edge(E3)(1)

		\end{tikzpicture}  
%	= \begin{tikzpicture}[scale=0.75, baseline=-1ex]
%		\GraphInit[vstyle=Classic]
%		
%		\tikzset{VertexStyle/.style = {shape = circle,fill = black,minimum size = 1pt,inner sep=0pt}}
%		
%		\SetVertexNoLabel
%		
%		
%		\Vertex[empty, x=0,y=-2]{O}
%		
%		
%		\SetVertexLabel
%		
%		\Vertex[LabelOut, L= $$, x=0,y=-1]{1}
%		
%		\Vertex[empty, x=-1,y=0]{A}
%		\Vertex[LabelOut, L= $$, x=1,y=0]{B}
%		
%		
%		\Vertex[LabelOut,L= $y$, x=-2,y=1]{E1}
%		\Vertex[LabelOut,L= $z$, x=0,y=1]{E2}
%		\Vertex[LabelOut,L= $x$, x=2,y=1]{E3}
%		
%		
%		\Edge(1)(O)
%		
%		\Edge(A)(1)
%		\Edge(B)(1)
%		
%		\Edge(E1)(1)
%		\Edge(E2)(A)
%		\Edge(E3)(B)
%		
%		
%		\end{tikzpicture}  
-\begin{tikzpicture}[scale=0.75, baseline=-1ex]
		\GraphInit[vstyle=Classic]
		
		\tikzset{VertexStyle/.style = {shape = circle,fill = black,minimum size = 1pt,inner sep=0pt}}
		
		\SetVertexNoLabel

		\Vertex[empty, x=0,y=-2]{O}

		\SetVertexLabel
		
		\Vertex[LabelOut,L= $$, x=0,y=-1]{1}
		
		\Vertex[empty, x=-1,y=0]{A}
		\Vertex[LabelOut, L= $$, x=1,y=0]{B}

		\Vertex[LabelOut,L= $y$, x=-2,y=1]{E1}
		\Vertex[LabelOut,L= $z$, x=0,y=1]{E2}
		\Vertex[LabelOut,L= $x$, x=2,y=1]{E3}

		\Edge(1)(O)
		
		\Edge(A)(1)
		\Edge(B)(1)
		
		\Edge(E1)(1)
		\Edge(E2)(B)
		\Edge(E3)(B)

		\end{tikzpicture}  \]

		  Since there are $3$ generators and $2$ relations, the commutative operad is therefore $1$-dimensional in arity $3$. In general, we have that $\mathbf{Com}(n) = k$ for all $n \geq 1$ with the trivial action.  \\
		  
		  		\item \textbf{Unital commutative operad:} 
		  		
The unital commutative operad $\mathbf{uCom}$ is an example of an operad which is neither binary nor quadratic. Besides the binary generator there is one additional generator, corresponding to the unit, which is an arity zero element $E(0) = k$. The relation involving the unit involves a quadratic and a constant term

	\[ \begin{tikzpicture}[scale=0.7, baseline=-1ex]
	
	\GraphInit[vstyle=Simple]
	\tikzset{VertexStyle/.style = {shape = circle,fill = black,minimum size = 4pt,inner sep=0pt}}

	\SetVertexNoLabel
	
	\Vertex[x=-1,y=2]{T}
	\tikzset{VertexStyle/.style = {shape = circle,fill = black,minimum size = 1pt,inner sep=0pt}}
	
	\Vertex[x=1,y=1]{E2}

	\Vertex[x=0,y=-1]{O}
	
	\SetVertexLabel
	\Vertex[LabelOut, L = $1$, x=-1,y=1]{E1}
	\Vertex[LabelOut, L = $\mu$, x=0,y=0]{M}
	\Edge(M)(O)
	
	\Edge(E1)(M)
	\Edge(E2)(M)
	\Edge(T)(E1)
	
	%\coordinate (1) at (-1,2)
	
	%\fill (1) circle[radius=2pt];

\end{tikzpicture} 
-\quad
  \begin{tikzpicture}[scale=0.7, baseline=-1ex]
	
	\GraphInit[vstyle=Simple]
	\tikzset{VertexStyle/.style = {shape = circle,fill = black,minimum size = 4pt,inner sep=0pt}}

	\SetVertexNoLabel
	
	\Vertex[empty, x=0,y=1]{T}
	\tikzset{VertexStyle/.style = {shape = circle,fill = black,minimum size = 1pt,inner sep=0pt}}
	
	\Vertex[ x=0,y=-.8]{O}

	\SetVertexLabel
	\Vertex[LabelOut, L = $\tx{id}_{\mathbf{uCom}}$,x=0,y=0]{E1}
	
	\Edge(E1)(O)
	\Edge(T)(E1)
	
	%\coordinate (1) at (-1,2)
	
	%\fill (1) circle[radius=2pt];

\end{tikzpicture}\]

which lives in $\T^{(2)}  E\oplus\T^{(0)} E$. The operad $\mathbf{uCom}$ differs from $\mathbf{Com}$ only in arity $0$, $\mathbf{uCom}(0) = k$. Similarly, there is an operad of unital associative algebras which only differs from $\mathbf{Ass}$ in that $\mathbf{uAss}(0) = k$.
		  			  		\\
		\item \textbf{Lie operad:} 	
		The Lie operad, denoted by $\mathbf{Lie}$, is generated by a binary operation $[\text{-},\text{-}]$ acted upon by $\Sigma_2$ via $[\text{-},\text{-}]\cdot \tau = -[\text{-},\text{-}]$. It satisfies the associativity relation given by the Jacobiator. We have $\mathbf{Lie}(2) = k$ but the description of $\mathbf{Lie}(n)$ for $n>2$ is more complicated. We refer to  \cite[Section 13.2.3 and 13.2.4]{LV} for more details on the description of $\mathbf{Lie}$.    \\

		\item \textbf{Endomorphisms operad:} 		
		We will now see the first operad not naturally presented in terms of generators and relations.		
		 Take $V \in \chk$. We define the endomorphism operad of $V$, denoted $\mathbf{End}_V$, by:
		\[\mathbf{End}_V(n) := \iHom_{k}\left(V^{\otimes n}, V\right) \]	
		where the action of $\Sigma_n$ on $\mathbf{End}(n)$ is obtained from the natural action of $\Sigma_n$ on $V^{\otimes n}$. The composition is naturally given by the composition for $\iHom$ defined by $\mu(f, f_1, \cdots , f_m) = f \circ (f_1 \otimes \cdots \otimes f_m)$ with $f \in \mathbf{End}_V (m)$. \\ 
		
		Loosely speaking, $\mathbf{End}_V$ is the operad in which all possible operations on $V$ live. We will see in Section \ref{Sec_(Co)AlgebraicStructureOn(Co)Operads} that we can define a $\_P$-algebra structure on $V$ as  a morphism of operads $\mu_V : \_P \rightarrow \mathbf{End}_V$.
%	A \defi{$\_P$-algebra structure on $V$} is a morphism of operads $\mu_V : \_P \rightarrow \mathbf{End}_V$. In particular, any $m \in \_P(n)$ induces an $n$-ary operation on $V$ given by $\mu_V(m): V^{\otimes n} \rightarrow V$. Further details on $\_P$-algebra structures will be given in Section \ref{Sec_(Co)AlgebraicStructureOn(Co)Operads}. \\
	
	\item \textbf{Coendomorphisms operad:}		
	Take $V \in \chk$. We define the coendomorphism operad of $V$ to be $\mathbf{coEnd}_V$ where:
	\[\mathbf{coEnd}_V(n) := \iHom(V, V^{\otimes n}) \]
	
	and the action of $\Sigma_n$ on $\mathbf{coEnd}(n)$ is obtained from the natural action of $\Sigma_n$ on $V^{\otimes n}$. The composition is naturally given by the composition for $\iHom$ given by $\mu(f, f_1, \cdots , f_m) = f_1\otimes \cdots \otimes  f_m \circ f$ with $f \in \mathbf{coEnd}_V (m)$.\\  
		Similar to the  previous example, $\mathbf{coEnd}_V$ can be used to define $\_P$-coalgebra structures on $V$.

%	
%	A \defi{$\_P$-coalgebra structure on $V$} is the same as a morphism of operads $\_P \rightarrow \mathbf{coEnd}_V$. In particular, any $m \in \_P(n)$ induces an $n$-ary cooperation on $V$, $\mu_V(m): V \rightarrow V^{\otimes n}$. Further details on $\_P$-coalgebra structures will be given in Section \ref{Sec_(Co)AlgebraicStructureOn(Co)Operads}.
	\end{itemize}
\end{Ex} 

	Now that we have discussed the notion of operad, which amounts to describing how to compose $n$-ary operations, we will describe the notion of cooperad, where instead of composing we will decompose an $n$-ary operation into a sum of $i$-leveled trees. 
	In a similar fashion to Definition \ref{def:circ of SS}, we define the \defi{complete composition} of symmetric sequences to be
		\[ F \hat{\circ} G = \int_m F(m) \otimes G^{\otimes_{\tx{Day}}m}.\]

\begin{Prop}\label{Prop_FormulaDualMonoidalStructure}	
	For $F$ and $G$ in $\symseq$, then we have the following formula:  
	\[ F \hat{\circ} G (n) = \prod_{k\geq 0} \left( F(k) \otimes_{k} \left(\bigoplus_{i_1 + \cdots +i_k = n } \tx{Ind}_{\Sigma_{i_1} \times \cdots \Sigma_{i_k}}^{\Sigma_n} ( G(i_1) \otimes \cdots \otimes G(i_k) )\right)\right)^{\Sigma_k} \]
\end{Prop}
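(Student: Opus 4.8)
The plan is to compute the end $F \hat\circ G (n) = \int_m F(m) \otimes G^{\otimes_{\tx{Day}} m}(n)$ directly, by dualizing the argument already carried out for the composition product $F \circ G$ in Definition~\ref{def:circ of SS} and the proof of the Day convolution formula. First I would recall that an end over $\Nn^\sim$ is an equalizer: by definition of the end,
\[
F \hat\circ G(n) = \tx{eq}\left(\prod_{m \in \Nn} \iHom\bigl(F(m), G^{\otimes_{\tx{Day}}m}(n)\bigr) \rightrightarrows \prod_{\phi\colon m \to m'} \iHom\bigl(F(m), G^{\otimes_{\tx{Day}}m'}(n)\bigr)\right),
\]
but since the only morphisms in $\Nn^\sim$ are the automorphisms $\Sigma_m$, this is simply
\[
F\hat\circ G(n) = \prod_{m \geq 0} \left(F(m) \otimes G^{\otimes_{\tx{Day}}m}(n)\right)^{\Sigma_m},
\]
the product of the $\Sigma_m$-invariants of the diagonal action. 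Here I am using that over a field of characteristic zero, the coinvariants coming from the coend computation and the invariants coming from the end agree, or rather more carefully: the end computes invariants of the diagonal $\Sigma_m$-action on $F(m)\otimes G^{\otimes_{\tx{Day}}m}(n)$, just as the coend in Definition~\ref{def:circ of SS} computed the coinvariants. So the passage from $\circ$ to $\hat\circ$ amounts precisely to replacing the direct sum $\bigoplus_{k}$ and the coinvariants $\otimes_{k[\Sigma_k]}$ by a product $\prod_k$ and the invariants $(-)^{\Sigma_k}$.

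Next I would substitute the already-established iterated Day convolution formula from the Remark following the Day convolution proposition:
\[
G^{\otimes_{\tx{Day}}k}(n) = \bigoplus_{i_1 + \cdots + i_k = n} \tx{Ind}_{\Sigma_{i_1}\times\cdots\times\Sigma_{i_k}}^{\Sigma_n}\bigl(G(i_1)\otimes\cdots\otimes G(i_k)\bigr).
\]
Plugging this into each factor of the product and noting that $F(k)\otimes(-)$ commutes with the direct sum, one obtains for each $k$
\[
\left(F(k)\otimes_k \bigoplus_{i_1+\cdots+i_k=n}\tx{Ind}_{\Sigma_{i_1}\times\cdots\times\Sigma_{i_k}}^{\Sigma_n}\bigl(G(i_1)\otimes\cdots\otimes G(i_k)\bigr)\right)^{\Sigma_k},
\]
and taking the product over all $k \geq 0$ yields exactly the claimed formula. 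The one point requiring a word of justification is the identification of the $\Sigma_m$-equivariant structure on $G^{\otimes_{\tx{Day}}m}(n)$: the group $\Sigma_m$ acts on the iterated Day power by permuting the $m$ tensor factors $G(i_1),\dots,G(i_m)$ (reindexing the sum over compositions $i_1+\cdots+i_m=n$ accordingly and conjugating the induced representations), and this is precisely the action with respect to which we take invariants in the end; this is the dual picture to the $2$-leveled trees of \eqref{GR_ConcatenationOfTrees}, where now the bottom vertex is a \emph{co}operation being decomposed.

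The main obstacle — really the only subtlety — is bookkeeping the interaction between the $\Sigma_k$-action on $F(k)$, the $\Sigma_k$-action permuting the $k$ Day factors, and the induction functors $\tx{Ind}_{\Sigma_{i_1}\times\cdots\times\Sigma_{i_k}}^{\Sigma_n}$, in order to see that the invariants are being taken with respect to the correct diagonal action. This is handled exactly as in the proof of the Day convolution proposition above: one writes the end/equalizer explicitly, uses that $\iHom(k+q, d)$-type hom-objects in $\Nn^\sim$ vanish unless the arities match, and identifies the resulting (co)equalizer with an invariants/tensor-over-group-algebra construction via the universal property of induction/restriction adjunctions. Since we work in characteristic zero everything is exact and no derived subtleties intervene, so I would present this as a short dualization of the earlier computation rather than repeating all details. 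I would also remark that when $F$ and $G$ are concentrated in finitely many arities (or more generally when the relevant sums are finite), the product $\prod_k$ reduces to a direct sum and $\hat\circ$ agrees with $\circ$, which is the typical situation for cooperads of finite type.
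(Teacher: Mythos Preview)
The paper states this proposition without proof; it is presented as the immediate dual of the coend computation in Definition~\ref{def:circ of SS}, and your strategy of dualizing that argument (replacing coequalizer by equalizer, direct sum by product, coinvariants by invariants) is exactly what is intended.

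One slip to fix: in your first displayed equalizer you write $\iHom\bigl(F(m), G^{\otimes_{\tx{Day}}m}(n)\bigr)$, but the end being computed is $\int_m F(m)\otimes G^{\otimes_{\tx{Day}}m}(n)$, so the factors in the equalizer diagram must be tensor products $F(m)\otimes G^{\otimes_{\tx{Day}}m}(n)$, not internal homs. The $\iHom$ formula you wrote is the end that computes natural transformations, which is a different object. Your very next line already has the correct tensor product, so this looks like a typo rather than a real confusion, but as written the passage from the first display to the second is a non sequitur.
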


Compared with $\circ$, $\hat\circ$ carries some disadvantages, one of which being that it is not monoidal (since direct products do not distribute along tensor products). 

In order to circumvent that problem, we define another monoidal product $\bar{\circ}$ as the sub-symmetric sequence of $\int_m F(m) \otimes G^{\otimes_{\tx{Day}}m}$ given by the direct sum instead. Using the formula obtained in Proposition \ref{Prop_FormulaDualMonoidalStructure} we can define the following monoidal product by replacing the product by a direct sum:
\begin{equation}\label{Eq_cocompostionsymmetricsequences}
	\resizebox{.9\hsize}{!}{$F \bar{\circ} G (n) = \bigoplus\limits_{k\geq 0} \left( F(k) \otimes_{k} \left(\bigoplus\limits_{i_1 + \cdots +i_k = n } \tx{Ind}_{\Sigma_{i_1} \times \cdots \Sigma_{i_k}}^{\Sigma_n} ( G(i_1) \otimes \cdots \otimes G(i_k) )\right)\right)^{\Sigma_k}$}
\end{equation}

\begin{RQ}\label{RQ_NormalisationIsomorphism}
	The normalization morphism $ V^G \rightarrow V_G$ from invariants to coinvariants induces a natural transformation $\bar{\circ} \rightarrow \circ $ sending the direct sum of invariants to the direct sum of coinvariants. Since we work in characteristic $0$ this is a natural isomorphism and therefore we will not worry too much about the distinction. In particular, $\bar{\circ}$ is indeed monoidal.
\end{RQ}

%	
%		\begin{RQ}\label{RQ_MonoidalAndCompletedMonoidalProducts}
%			There is a natural transformation $ \bar{\circ} \rightarrow \hat{\circ}$ induced by the inclusion of a direct sum into the product.  
%		\end{RQ}
		
			\begin{Def}\label{Def_Cooperads}
				
			The category of \defi{cooperads in }$\chk$, denoted $\coOp$ is defined a the category of comonoids in $\symseq$ for  $\bar{\circ}$. Concretely an object $\_C \in \coOp$ is a symmetric sequence $\_C$ together with cocomposition and counit morphisms: 
			\[ \Delta : \_C \rightarrow \_C \bar{\circ} \_C \qquad \epsilon : \_C \rightarrow I\]

		\end{Def}

		\begin{RQ}	
				
		This pictorial description also involves trees for the $n$-ary operations in $M(n)$. This does not change compared to operad because of the natural isomorphism of Remark \ref{RQ_NormalisationIsomorphism}. However, the difference with cooperads is that the coalgebra structure will not compose trees but rather decompose them into a sum of concatenations of trees built via $\bar{\circ}$. 
		
		For example:
		\[ \begin{tikzpicture}[scale=0.7, baseline=-1ex]
		\GraphInit[vstyle=Classic]
		
		\tikzset{VertexStyle/.style = {shape = circle,fill = black,minimum size = 1pt,inner sep=0pt}}
		
		\SetVertexNoLabel

		[label="I"]\Vertex[empty, x=-1,y=1]{E1}
		\Vertex[empty, x=0,y=1]{E2}
		\Vertex[empty, x=1,y=1]{E3}
		
		\Vertex[empty, x=0,y=0]{M}
		
		\Vertex[empty, x=0,y=-1]{O}
		
		\SetVertexLabel

		\Edge(M)(O)
		
		\Edge(E1)(M)
		\Edge(E2)(M)
		\Edge(E3)(M)

		\end{tikzpicture} \mapsto \begin{tikzpicture}[scale=0.7, baseline=-1ex]
		\GraphInit[vstyle=Classic]
		
		\tikzset{VertexStyle/.style = {shape = circle,fill = black,minimum size = 1pt,inner sep=0pt}}
		
		\SetVertexNoLabel

		\Vertex[empty, x=-1,y=2]{E1'}
		\Vertex[empty, x=0,y=2]{E2'}
		\Vertex[empty, x=1,y=2]{E3'}
		
		\Vertex[empty, x=-1,y=1]{E1}
		\Vertex[empty, x=0,y=1]{E2}
		\Vertex[empty, x=1,y=1]{E3}
		
		\Vertex[empty, x=0,y=0]{M}
		
		\Vertex[empty, x=0,y=-1]{O}
		
		\SetVertexLabel

		\Edge(M)(O)
		
		\Edge(E1)(M)
		\Edge(E2)(M)
		\Edge(E3)(M)
		\Edge[label=$I$](E1')(E1)
		\Edge[label=$I$](E2')(E2)
		\Edge[label=$I$](E3')(E3)
		
		\end{tikzpicture} + \begin{tikzpicture}[scale=0.7, baseline=-1ex]
		\GraphInit[vstyle=Classic]
		
		\tikzset{VertexStyle/.style = {shape = circle,fill = black,minimum size = 1pt,inner sep=0pt}}
		
		\SetVertexNoLabel

		\Vertex[empty, x=-2,y=2]{E1}
		\Vertex[empty, x=0,y=2]{E2}
		\Vertex[empty, x=2,y=2]{E3}
		
		\Vertex[empty, x=-1,y=1]{M1}
		\Vertex[empty, x=0,y=0]{M2}
				
		\Vertex[empty, x=0,y=-1]{O}
		
		\SetVertexLabel

		\Edge(M)(O)
		
		\Edge(E1)(M1)
		\Edge(E2)(M1)
		\Edge(E3)(M2)
		\Edge(M1)(M2)
		
		\end{tikzpicture}+ \begin{tikzpicture}[scale=0.7, baseline=-1ex]
		\GraphInit[vstyle=Classic]
		
		\tikzset{VertexStyle/.style = {shape = circle,fill = black,minimum size = 1pt,inner sep=0pt}}
		
		\SetVertexNoLabel

		\Vertex[empty, x=-2,y=2]{E1}
		\Vertex[empty, x=0,y=2]{E2}
		\Vertex[empty, x=2,y=2]{E3}
		
		\Vertex[empty, x=0,y=0]{M1}
		\Vertex[empty, x=1,y=1]{M2}
		
		\Vertex[empty, x=0,y=-1]{O}
		
		\SetVertexLabel

		\Edge(M)(O)
		
		\Edge(E1)(M1)
		\Edge(E2)(M2)
		\Edge(E3)(M2)
		\Edge(M1)(M2)
		
		\end{tikzpicture} + \cdots\]

		Concretely, it is the data of maps:
		
		\[ \_C(n) \rightarrow  \bigoplus_{k\geq 0} \left(\_C (k) \otimes \left(\bigoplus_{i_1 + \cdots  +i_k = n}\tx{Ind}_{\Sigma_{i_1} \times \cdots \Sigma_{i_k}}^{\Sigma_n} \left( \_C(i_1) \otimes \cdots \otimes \_C(i_k) \right)\right)\right)^{\Sigma_k} \]

		together with a counit, satisfying some coassociativity and counitality conditions derived from the comonoid axioms. 
		\end{RQ}

Similar to the case of operads, cooperads also have a notion of partial cocomposition, which consists in cocomposing along a single edge $i$ into two vertex trees. This is given by maps $\Delta_i\colon \_C(n) \to \_C(n-k+1) \otimes \_C(k)$, for all $n\geq 0$, $i=1,\dots, n$ and $k\leq n$.

\begin{Def}\label{Def_ConilpotentCooperad}
	
	A cooperad $\_C$ is \defi{conilpotent} if for every $m \in \_C(n)$, there exists an $N$ such that the iteration of any choice of $N$ partial cocomposition maps is equal to $0$. The category of conilpotent cooperads will be denoted by $\mathbf{coOp}^{\tx{conil}}$
	
\end{Def}

All cooperads we will consider in this text will be conilpotent. Notice that any cooperad such that $\_C(0) = 0$ and $\_C(1) = k$ is conilpotent ($N$ can be taken to be $n-1$ for a given $m \in \_C(n)$).

	\begin{Cons}\label{Cons_Cooperads}\ 
	
	\begin{itemize}
		 \item \textbf{Cofree conilpotent cooperad:}		 
		 	The cofree conilpotent cooperad associated to a symmetric sequence $E$ is denoted $\Tc E$ is the image on $E$ via the cofree functor defined as the right adjoint of the forgetful functor $F : \coOp^{\tx{conil}} \rightarrow \symseq$. The underlying symmetric sequence of $\Tc E$ is isomorphic to the one of $\T E$ and the cocomposition of a tree is obtained by ``cutting'' along an internal edge.
Such a cooperad is conilpotent since a given tree with $e$ edges can be only non-trivially partially cocomposed up to $e$ times. \\

		\item \textbf{Cooperad by generators and relations:} 		
		The cooperad cogenerated by a symmetric sequence $E$ with corelations $R \subset \Tc E$, denoted $\_C(E, R)$, is defined to be the smallest subcooperad of $\Tc E$ containing $R$. In the quadratic binary case, we have $\_C(E, R)(2)= E$ and $\_C(E, R)(3)=R$.
%			the fiber in the following exact sequence:
%		\[ \_C (E, R) \rightarrow \Tc E \rightarrow \faktor{\Tc E}{(R)} \] 
%		
%		where $(R)$ is the operadic ideal\ricardo{This is wrong the way it's stated. Fix or restrict to quadratic case like LV} of $\Tc E$ generated by $R$. This is equivalently, the smallest subcooperad of $\Tc E$ containing $R$.
		 \\
		
		\item \label{Ex_DualOperad}\textbf{Dual of an operad:}		
		 The linear dual of a symmetric sequence $M$ is a symmetric sequence $M^\vee$ defined in each arity by the linear dual $M^\vee(n)$. For any cooperad $\_C$, $\_C^\vee$ has the structure of an operad. Since $\bar{\circ}$ is not exactly the dual version of $\circ$, the converse is not always true. For an operad $\_P$ we only get a map, $\_P^\vee \rightarrow \_P^\vee \hat{\circ} \_P^\vee$ which might not factor through a comonoid $\_P^\vee \rightarrow \_P^\vee \bar{\circ} \_P^\vee$. If we further suppose that $\_P(0) = 0$  and that the pre-image of each element in $\_P$ by $\mu_{\_P}$ is finite dimensional then we get a cooperad structure on $\_P^\vee$ (see \cite[Section 5.8.2]{LV}).
		 	
	\end{itemize}
\end{Cons}

\subsection{Algebraic Structures over an Operad or Cooperad}\
\label{Sec_(Co)AlgebraicStructureOn(Co)Operads}

\medskip

In this section we are going to define the notion of algebra and coalgebra both over operads and cooperads. We already gave some definitions in the case of operads in Example \ref{Ex_Operad}, but there are definitions in terms of modules and comodules over the operads viewed as monoids. Proposition \ref{Prop_AgebraicStructureOperad} shows that both notions coincide. More generally, in this section will define several adjunctions involving the monoidal products defined earlier. This will allow us to describe the free $\_P$-algebra and cofree $\_P$-coalgebra functors associated to an operad $\_P$ (see Proposition \ref{Prop_FreeAlgebraOverAnOperad}). Finally we will discuss the naturality of the category of algebras with respect to morphisms of operads.

	\begin{Def} \label{Def_AlgebraCoalgebraOperad}
		
		Let $A \in \chk$ and $\_P \in \Op$. 
		\begin{itemize}
			\item	The structure of a \defi{$\_P$-algebra} on $A$ is the structure of a $\_P$-module on $A$ with respect to $\circ$, where we interpret $A$ as a symmetric sequence concentrated in arity $0$: 
			\[ \mu\colon \_P \circ A \rightarrow A \]
					
			Unraveling the 	definition, this amounts to associating to every $p\in \_P(n)$ and $a_1,\dots,a_n$ a single element in $A$, which we can shorten to $p(a_1,\dots,a_n)$ if the map $\mu$ is implicit.
			The category of $\_P$-algebras is denoted \[ \mathbf{Alg}_\_P := \_P\text{-}\Mod_\circ, \]
			using the convention that modules will always refer to sequences concentrated in arity $0$.
			We will see in Proposition \ref{Prop_AgebraicStructureOperad} that a $\_P$-algebra structure on $A$ is equivalent to the choice of a morphism of operads $\_P\to \mathbf{End}_A$.

			\item Perhaps the expected way to define a $\_P$-coalgebra would be in terms of a map $A \to \_P^\vee \hat{\circ} A$. However, as $\hat{\circ}$ is not monoidal, it makes no sense to ask for this map to be a comodule. As such, a \defi{$\_P$-coalgebra} structure on $A$ is defined as a map of operads: 
			\[ \_P \to \mathbf{coEnd}_A \]
			The category of $\_P$-coalgebras is denoted 
			 \[\mathbf{coAlg}_\_P := \Hom_{\Op}\left( \_P, \mathbf{coEnd}_A \right)\]
			
			\item 	We will see with Lemma \ref{Lem_closedmonoidalstructure} that a coalgebra $\_P \to \mathbf{coEnd}_A$ induces a map $A \to \_P^\vee \hat{\circ} A$, assuming $\_P$ is arity-wise dualizable\footnote{A symmetric sequence $M$ is called arity-wise dualizable if for each $n \in \Nn$, $M(n)$ is dualizable, which in our setting corresponds to bounded and finite dimensional in every degree.} and $\_P^\vee$ is a cooperad.
			
			 Therefore we can define a \defi{conilpotent coalgebra} as a comodule\footnote{This comodule extends to a map $A \to \_P^\vee \hat{\circ} A$ which is equivalent, thanks to Lemma \ref{Lem_closedmonoidalstructure}, to a map of symmetric sequences $\_P \to \mathbf{End}_A$. This turns out to also be a map of operad that defines the underlying coalgebra structure (by forgetting the conilpotency).}: \[A \to \_P^\vee \bar{\circ} A\]

			The category of conilpotent $\_P$-coalgebras is denoted by $\mathbf{coAlg}_\_P^{\tx{conil}}$ and we have:
			 \[ \mathbf{coAlg}_\_P^{\tx{conil}} = \_P^\vee-\tx{coMod}_{\bar{\circ}}\]
		\end{itemize}
 
	\end{Def}

	\begin{Def}
		\label{Def_AlgebraCoalgebraCoOperad}
		
		Let $A \in \chk$ and $\_C$ be a cooperad. The structure of a \defi{$\_C$-(co)algebra on $A$} is defined as a $\_C^\vee$-(co)algebra structure on $A$. We denote by $\mathbf{Alg}_{\_C}$ and $\mathbf{coAlg}_{\_C}$ the categories of all $\_C$-algebras and $\_C$-coalgebras respectively. We have:
	 \[ \mathbf{Alg}_\_C := \_C^\vee\text{-}\Mod_{\circ} \qquad \mathbf{coAlg}_\_C := \Hom_{\Op}\left(\_C^\vee, \mathbf{coEnd}_A \right)\]

Moreover, a conilpotent $\_C$-coalgebra can be defined as a comodule in $\_C-\tx{coMod}_{\bar{\circ}}$. We get:
\[ \mathbf{coAlg}_\_C^{\tx{conil}} := \_C-\tx{coMod}_{\bar{\circ}}\]

	\end{Def}

	\begin{RQ}\label{Def_AlgebraicStructureInRMod}
		If we replace $\chk$ by $\Mod_R$ (with $R$ a commutative algebra), to obtain (co)operads valued in $\Mod_R$, we obtain the notion of (co)algebra in $R$-modules, that is $R$-linear algebraic structures.  
	\end{RQ}
	
	In the Example \ref{Ex_Operad}, we described the endomorphism operad, $\mathbf{End}_A$, and defined a $\_P$-algebra structure on $A$ as a morphism of operad $\_P \rightarrow \mathbf{End}_A$. It turns out that this coincides with the previous definition as we will explain with Proposition \ref{Prop_AgebraicStructureOperad}. We start with a fundamental Lemma expressing some adjunctions between the monoidal product and the (co)endomorphism symmetric sequences. 
	
	\begin{Def}\label{Def_InternalHomSymSeq}
		Given $M, N \in \chk$, we define the following symmetric sequences: 
		\[ \mathbf{End}_N^M(n) = \iHom_{k}\left( M^{\otimes n}, N \right) \]
		\[ \mathbf{coEnd}_N^M(n) = \iHom_{k}\left( M, N^{\otimes n} \right). \]
	
%		
%		More generally, if $M, N \in \symseq$, we can define:
%		\[ \widetilde{\mathbf{End}}_N^M(n) := \int_n \iHom_{k} \left(  M^{\otimes_{\tx{Day}} n}(m), N(m)\right) \]
%		\[ \widetilde{\mathbf{coEnd}}_N^M (n) := \int_m \iHom_{k} \left(  M(m), N^{\otimes_{\tx{Day}} n}(m) \right)\]
%		
%		If $M$ and $N$ are concentrated in arity $0$, then $\widetilde{\mathbf{(co)End}}_N^M(0)$ is concentrated in arity $0$ and $\widetilde{\mathbf{(co)End}}_N^M(0) = \mathbf{(co)End}_{N(0)}^{M(0)}$.
	\end{Def} 
	
	An application of some end-coend gymnastics, together with the hom-tensor adjunction, give the following lemma.
	
	\begin{Lem}\label{Lem_closedmonoidalstructure} For all $\_P \in \symseq$ and for all $M,N \in \chk$, there is an isomorphism:
		\[ \Hom_{k} \left( \_P \circ M, N \right) \cong \Hom_{\symseq}\left( \_P, \mathbf{End}_N^M \right) \]		
		Morover if $\_P$ is arity-wise dualizable then we have an isomorphism: 
		\[ \Hom_{k} \left( M, \_P^\vee \hat{\circ} N \right) \cong \Hom_{\symseq}\left( \_P, \mathbf{coEnd}_N^M \right) \]
		
	\end{Lem}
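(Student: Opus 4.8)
The plan is to unwind both sides into coend expressions and apply the hom--tensor (closed monoidal) adjunction in $\chk$ arity by arity. First I would recall the defining coend formulas: by Definition \ref{def:circ of SS}, $\_P \circ M = \int^m \_P(m) \otimes M^{\otimes_{\tx{Day}} m}$, and since $M$ is concentrated in arity $0$ we have $M^{\otimes_{\tx{Day}} m}(0) = M^{\otimes m}$ and $M^{\otimes_{\tx{Day}} m}(n) = 0$ for $n \neq 0$; hence the only surviving arity is $n=0$ and $(\_P \circ M)(0) = \bigoplus_{k \geq 0} \_P(k) \otimes_{k[\Sigma_k]} M^{\otimes k}$. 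Now apply $\Hom_k(-, N)$. Using that $\Hom_k$ takes colimits in the first variable to limits, that a coequalizer/coend becomes an equalizer/end, and the adjunction $\Hom_k(X \otimes Y, N) \cong \Hom_k(X, \iHom_k(Y, N))$, one gets
\[
\Hom_k(\_P \circ M, N) \cong \int_k \Hom_k\bigl(\_P(k), \iHom_k(M^{\otimes k}, N)\bigr) = \int_k \Hom_k\bigl(\_P(k), \mathbf{End}_N^M(k)\bigr),
\]
and this end over $\Nn^\sim$ is exactly $\Hom_{\symseq}(\_P, \mathbf{End}_N^M)$ — the $\Sigma_k$-invariance built into a morphism of symmetric sequences matches the $\Sigma_k$-coinvariants quotient on the source, just as in the proof of the Day convolution formula above. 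So the first isomorphism is a formal consequence of the hom--tensor adjunction plus the interchange of (co)ends with $\Hom_k$.

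For the second isomorphism I would argue dually. By definition $\_P^\vee \hat\circ N = \int_m \_P^\vee(m) \otimes N^{\otimes_{\tx{Day}} m}$, which by Proposition \ref{Prop_FormulaDualMonoidalStructure} and the concentration of $N$ in arity $0$ collapses to $(\_P^\vee \hat\circ N)(0) = \prod_{k \geq 0} \bigl(\_P^\vee(k) \otimes N^{\otimes k}\bigr)^{\Sigma_k}$. The key point is the arity-wise dualizability hypothesis: when $\_P(k)$ is dualizable, $\_P^\vee(k) \otimes N^{\otimes k} \cong \iHom_k(\_P(k), N^{\otimes k})$ naturally, and $\Hom_k(M, -)$ commutes with the product and with $\Sigma_k$-invariants, so
\[
\Hom_k\bigl(M, \_P^\vee \hat\circ N\bigr) \cong \prod_{k \geq 0} \Hom_k\bigl(M, \iHom_k(\_P(k), N^{\otimes k})\bigr)^{\Sigma_k} \cong \prod_{k \geq 0} \Hom_k\bigl(\_P(k), \iHom_k(M, N^{\otimes k})\bigr)^{\Sigma_k},
\]
where the last step again uses the hom--tensor adjunction (and the symmetry $\iHom_k(M, \iHom_k(P, Q)) \cong \iHom_k(P, \iHom_k(M,Q))$, valid since dualizability makes everything reflexive). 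The right-hand side is precisely $\prod_k \Hom_k(\_P(k), \mathbf{coEnd}_N^M(k))^{\Sigma_k} = \Hom_{\symseq}(\_P, \mathbf{coEnd}_N^M)$.

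The main obstacle I anticipate is bookkeeping rather than conceptual: one must be careful that the $\Sigma_k$-actions line up correctly on both sides — the action on $\_P(k)$, the permutation action on $M^{\otimes k}$ (resp.\ $N^{\otimes k}$), and the diagonal action under which one takes invariants/coinvariants — and that in characteristic zero the invariants-to-coinvariants normalization (Remark \ref{RQ_NormalisationIsomorphism}) is used consistently so that the two descriptions of a morphism of symmetric sequences agree. The dualizability hypothesis in the second statement is exactly what is needed to pass from $\otimes$ to $\iHom_k$ and to interchange the infinite product with $\Hom_k(M,-)$; without it the natural map would only go one way. I would also note that the first isomorphism needs no finiteness assumption precisely because $\circ$ uses a direct sum (a colimit), which $\Hom_k(-,N)$ converts to a product automatically, whereas $\hat\circ$ already involves a product and one must move it past $\Hom_k(M,-)$, a left-exact but not colimit-preserving functor, hence the asymmetry in the hypotheses.
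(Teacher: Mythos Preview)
Your proposal is correct and matches exactly what the paper indicates: the lemma is stated without proof, preceded only by the remark that it follows from ``some end-coend gymnastics, together with the hom-tensor adjunction,'' which is precisely the argument you give. One small imprecision: the swap $\Hom_k(M,\iHom_k(P,Q)) \cong \Hom_k(P,\iHom_k(M,Q))$ and the commutation of $\Hom_k(M,-)$ with products both hold unconditionally (the former by the closed symmetric monoidal structure, the latter because $\Hom_k(M,-)$ is a right adjoint), so the dualizability hypothesis is only genuinely needed for the identification $\_P^\vee(k)\otimes N^{\otimes k}\cong \iHom_k(\_P(k),N^{\otimes k})$.
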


	\begin{Prop}\label{Prop_AgebraicStructureOperad}
		
		Given $A \in \chk$ and $\_P \in \Op$, we have:  
		\[ \_P\text{-}\Mod_{\circ}(A) \cong \Hom_{\Op}\left( \_P, \mathbf{End}_A \right) \]
	\end{Prop}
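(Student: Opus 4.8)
The plan is to unwind the definition of a $\_P$-module structure on $A$ (viewed as a symmetric sequence concentrated in arity $0$) and show it is precisely the data of a morphism of operads $\_P \to \mathbf{End}_A$. First I would apply \cref{Lem_closedmonoidalstructure} with $M = N = A$: since $A$ is concentrated in arity $0$, one has $\mathbf{End}_A^A = \mathbf{End}_A$ as symmetric sequences (because $\iHom_k(A^{\otimes n}, A) = \mathbf{End}_A(n)$), and the lemma yields a natural bijection
\[
\Hom_k\left(\_P \circ A, A\right) \cong \Hom_{\symseq}\left(\_P, \mathbf{End}_A\right).
\]
This already matches the underlying maps of symmetric sequences on both sides; what remains is to check that this bijection identifies the \emph{associativity and unitality} constraints defining a $\_P$-module on the left with the \emph{operad morphism} constraints on the right.

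The core of the argument is therefore a compatibility check. On the left, a module structure $\mu\colon \_P \circ A \to A$ must satisfy $\mu \circ (\id_\_P \circ \mu) = \mu \circ (\mu_\_P \circ \id_A)$ as maps $\_P \circ \_P \circ A \to A$, together with the unit axiom $\mu \circ (\eta \circ \id_A) = \id_A$. On the right, a map of symmetric sequences $g\colon \_P \to \mathbf{End}_A$ is a morphism of operads when it intertwines the multiplications $\mu_\_P$ and $\mu_{\mathbf{End}_A}$ and preserves units. I would prove these two conditions correspond by tracking what the adjunction isomorphism of \cref{Lem_closedmonoidalstructure} does to composites: the key point is that the operad multiplication $\mu_{\mathbf{End}_A}$ on $\mathbf{End}_A$ is itself \emph{defined} via the evaluation/composition map $\mathbf{End}_A \circ \mathbf{End}_A \circ A \to A$ under the same closed-monoidal adjunction, so that the square expressing "$g$ is an operad map" is the adjunct (transpose) of the square expressing "$\mu$ is associative". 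This is most cleanly done by noting that $\circ$ with an arity-$0$ object in the last slot is, by the Day-convolution formulas of \cref{def:circ of SS}, an endofunctor for which precomposition and the hom-tensor adjunction interact coherently; one then invokes naturality of the bijection in \cref{Lem_closedmonoidalstructure} in the argument $\_P$ (applied successively to $\_P$, $\_P \circ \_P$, and $I$) to transport each axiom across.

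Concretely I would proceed in three steps. Step 1: establish the set-level bijection above and record that it is natural in $\_P \in \symseq$. Step 2: apply naturality to the operad multiplication $\mu_\_P\colon \_P \circ \_P \to \_P$ (which is a map of symmetric sequences) and to the unit $\eta\colon I \to \_P$, to get commuting squares relating the transpose of $\mu$ on the $\_P$ side to the transpose on the $\mathbf{End}_A$ side; then identify the resulting maps out of $\mathbf{End}_A \circ \mathbf{End}_A$ and out of $I$ with $\mu_{\mathbf{End}_A}$ and $\eta_{\mathbf{End}_A}$ respectively, using the explicit description of the endomorphism operad structure from \cref{Ex_Operad}. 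Step 3: conclude that $\mu$ satisfies the module axioms if and only if its transpose $g$ satisfies the operad-morphism axioms, hence the bijection restricts to the claimed isomorphism $\_P\text{-}\Mod_\circ(A) \cong \Hom_{\Op}(\_P, \mathbf{End}_A)$.

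The main obstacle I anticipate is bookkeeping rather than conceptual: one must be careful that $\circ$ is \emph{not} symmetric monoidal and only associative, so the iterated composites $\_P \circ \_P \circ A$ must be parenthesized consistently, and the identification $\mathbf{End}_A^A(n) = \mathbf{End}_A(n)$ together with the comparison of the two a priori different composition maps (the one coming from the monoid structure on $\mathbf{End}_A$ and the one implicitly used in \cref{Lem_closedmonoidalstructure}) must be verified to genuinely coincide, including the $\Sigma_n$-equivariance. Once that identification is pinned down, everything else is a diagram chase powered by naturality of the adjunction.
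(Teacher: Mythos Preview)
Your proposal is correct and follows exactly the paper's approach: invoke \cref{Lem_closedmonoidalstructure} to obtain the bijection of underlying maps, then verify that the module axioms for $\mu\colon \_P\circ A\to A$ correspond under this adjunction to the operad-morphism axioms for $\_P\to\mathbf{End}_A$. The paper compresses your Steps~1--3 into the phrase ``straightforward verification,'' so your outline is simply a more detailed version of the same argument.
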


	\begin{proof}
Following Lemma \ref{Lem_closedmonoidalstructure} it is a straightforward verification that the condition of $\_P\circ A \to A$ being a $\_P$-module structure on $A$ is equivalent to the map of symmetric sequences $\_P \to \mathbf{End}_A$ being a map of operads.	
	\end{proof}

	The previous results and Lemma \ref{Lem_closedmonoidalstructure} give us some adjunctions using the monoidal product $\circ$ and the $\mathbf{End}$ constructions. In particular, we obtain the following constructions:

	\begin{Prop}\label{Prop_FreeAlgebraOverAnOperad}\
			\begin{itemize}[itemsep=.2cm]
			\item		Given $\_P$ an operad, we have the following free-forgetful adjunction:
		\[ \_P(-) : \begin{tikzcd}
		\chk \arrow[r, shift left] & \arrow[l, shift left]  \mathbf{Alg}_{\_P}
		\end{tikzcd} : (-)^\sharp\]
		The left adjoint $\_P(-): \chk \rightarrow \mathbf{Alg}_{\_P}$ is the \defi{free $\_P$-algebra} functor that sends $A \in \chk$ to $\_P(A) := \_P \circ A$.		
		
	\item	Given $\_C$ an arity-wise dualizable cooperad, we have the following adjunction:
		\[  (-)^\sharp: \begin{tikzcd}
		\mathbf{coAlg}^\mathrm{conil}_{\_C} \arrow[r, shift left] & \arrow[l, shift left] \chk  
		\end{tikzcd} : \_C(-)\]
		The right adjoint $\_C(-): \chk \rightarrow \mathbf{coAlg}^\mathrm{conil}_{\_C}$ is the \defi{cofree conilpotent coalgebra} functor that sends $A \in \chk$ to $\_C(A) := \_C \bar{\circ} A$.
		
		In particular, if $\_P$ is arity-wise dualizable we get: 
		\[  (-)^\sharp: \begin{tikzcd}
		\mathbf{coAlg}^\mathrm{conil}_{\_P^\vee} \simeq \mathbf{coAlg}^\mathrm{conil}_\_P \arrow[r, shift left] & \arrow[l, shift left] \chk  
		\end{tikzcd} : \_P^\vee(-)\]
		this gives us the cofree $\_P$-coalgebra. 
		
	\end{itemize}
	
	\end{Prop}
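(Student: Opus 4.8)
The plan is to treat both halves of the statement as instances of the standard free/cofree adjunction for (co)modules over a (co)monoid in a monoidal category: on the module side this is purely formal, and on the comodule side it is the formal dual. Concretely, since $A$ is regarded as a symmetric sequence concentrated in arity $0$, the formula of \cref{def:circ of SS} degenerates -- only the summand with $i_1=\cdots=i_k=0$ contributes -- so that $\_P\circ A=\bigoplus_{k\ge 0}\_P(k)\otimes_{k[\Sigma_k]}A^{\otimes k}$ is again concentrated in arity $0$ and $\_P\circ-$ restricts to an endofunctor of $\chk$. Using the associator of $(\symseq,\circ,I)$ together with $\mu_{\_P}$ and $\eta_{\_P}$, this endofunctor is a monad, and unravelling \cref{Def_AlgebraCoalgebraOperad} one sees that a $\_P$-module structure $\_P\circ A\to A$ is exactly an algebra over it. Thus $\mathbf{Alg}_{\_P}$ is the Eilenberg--Moore category of $\_P\circ-$, the forgetful functor has left adjoint $A\mapsto\_P\circ A$ with structure map $\mu_{\_P}\circ\id_A$, and the unit is $\eta_{\_P}\circ\id_A\colon A=I\circ A\to\_P\circ A$. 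I would verify the bijection $\Hom_{\mathbf{Alg}_{\_P}}(\_P\circ A,B)\cong\Hom_{\chk}(A,B^\sharp)$ by hand: restriction along the unit in one direction, and $g\mapsto\bigl(\_P\circ A\xrightarrow{\id_{\_P}\circ g}\_P\circ B\xrightarrow{\gamma_B}B\bigr)$ in the other, then check these are mutually inverse and that the latter composite respects the module structures -- everything following from associativity and unitality of $\gamma_B$. (Equivalently, one may combine \cref{Lem_closedmonoidalstructure} and \cref{Prop_AgebraicStructureOperad}.)

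For the cofree conilpotent $\_C$-coalgebras I would run the formal dual with $\bar\circ$ in place of $\circ$. By \cref{RQ_NormalisationIsomorphism}, $\bar\circ$ is a monoidal product on $\symseq$ with unit $I$ -- it is essential here to use $\bar\circ$ rather than $\hat\circ$, which fails to be monoidal -- and for $A$ concentrated in arity $0$ the formula \eqref{Eq_cocompostionsymmetricsequences} collapses to $\_C\bar\circ A=\bigoplus_{k\ge 0}\bigl(\_C(k)\otimes A^{\otimes k}\bigr)^{\Sigma_k}$, again in arity $0$, with moreover every element lying in only finitely many summands. Dually to the previous paragraph, $\_C\bar\circ-$ is a comonad on $\chk$, and by \cref{Def_AlgebraCoalgebraCoOperad} the category $\mathbf{coAlg}^{\mathrm{conil}}_{\_C}=\_C\text{-}\mathrm{coMod}_{\bar\circ}$ is precisely its category of coalgebras. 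Hence the forgetful functor $(-)^\sharp$ has right adjoint $A\mapsto\_C\bar\circ A$ with coaction $\Delta_{\_C}\bar\circ\id_A$, counit $\epsilon_{\_C}\bar\circ\id_A\colon\_C\bar\circ A\to I\bar\circ A=A$, and the bijection $\Hom_{\chk}(C^\sharp,A)\cong\Hom_{\mathbf{coAlg}^{\mathrm{conil}}_{\_C}}(C,\_C\bar\circ A)$ is given by corestriction along this counit one way and by $h\mapsto\bigl(C\xrightarrow{\delta_C}\_C\bar\circ C\xrightarrow{\id_{\_C}\bar\circ h}\_C\bar\circ A\bigr)$ the other, verified as the formal dual of the module computation.

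Finally, when $\_P$ is arity-wise dualizable -- so that, under the standing hypotheses of \cref{Cons_Cooperads}, $\_P^\vee$ is a cooperad -- the identification $\mathbf{coAlg}^{\mathrm{conil}}_{\_P^\vee}\simeq\mathbf{coAlg}^{\mathrm{conil}}_{\_P}$ is \cref{Def_AlgebraCoalgebraOperad}, and the cofree $\_P$-coalgebra functor is the previous construction applied to $\_C=\_P^\vee$. None of the steps is deep; the main obstacle is purely one of care on the coalgebra side: one must work with $\bar\circ$ (so the comonad exists at all), and it is precisely the direct-sum -- rather than product -- definition of $\bar\circ$ that guarantees $\_C\bar\circ A$ is a \emph{conilpotent} comodule, hence an object of $\mathbf{coAlg}^{\mathrm{conil}}_{\_C}$. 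The remaining work -- matching the monad/comonad associativity and unitality diagrams with the (co)module axioms of \cref{Def_AlgebraCoalgebraOperad}, and the two ``mutually inverse'' verifications -- is routine diagram chasing.
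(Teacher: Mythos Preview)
Your proposal is correct and follows essentially the same approach as the paper: both construct the bijection $\Hom_{\mathbf{Alg}_{\_P}}(\_P\circ A,B)\cong\Hom_{\chk}(A,B^\sharp)$ by restriction along the unit $A=I\circ A\to\_P\circ A$ in one direction and by $g\mapsto(\_P\circ A\xrightarrow{\id\circ g}\_P\circ B\to B)$ in the other, and then declare the coalgebra case dual. Your additional monadic framing (Eilenberg--Moore category of $\_P\circ-$) and the explicit remarks on why $\bar\circ$ is needed and why $\_C\bar\circ A$ is conilpotent are not in the paper's proof but are correct and helpful elaborations of the same argument.
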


	\begin{proof}
Let us consider the first adjunction.
Notice that $\_P(A)$ is naturally a $\_P$-algebra via the structure maps of $\_P$:  $(\_P\circ \_P)\circ A \to \_P \circ A$.

There is an inclusion of chain complexes $A = I \circ A \to \_P \circ A$ induced by the unit $I\to \_P$. Pulling back along this inclusion yields the ``restriction to generators'' map:
\[\Hom_{\mathbf{Alg}_\_P} \left( \_P(A), B \right) \rightarrow \Hom_{k}\left( A , B^\sharp \right)\] 
		
We shall show that this map is a bijection by constructing its inverse. Let $f\in\Hom_{k}\left( A , B^\sharp \right)$. We consider 

\[
\_P \circ A \stackrel{\id\circ f}{\to} \_P \circ B \to B,
\] 
	where the second map is the $\_P$-algebra structure on $B$. It is easy to check that this defines a map of $\_P$-algebras and establishes the inverse map.
	
The proof of the second bullet point is dual.
	\end{proof}

	\begin{Ex}\
		
		\begin{itemize}
			\item For $\mathbf{uAss}$, the free associative algebra functor is the tensor algebra functor. Indeed, given $V \in \chk$, we have that:
			\[\mathbf{Ass}(V) = \bigoplus_{n \geq 0} k[\Sigma_n] \otimes_{k[\Sigma_n]} V^{\otimes n} \simeq  \bigoplus_{n \geq 0} V^{\otimes n} = TV \]

			\item For $\mathbf{Com}$, the free commutative algebra functor is the  non-unital  symmetric algebra functor:
			\[\mathbf{Com}(V) = \bigoplus_{n \geq 1} k \otimes_{k[\Sigma_n]} V^{\otimes n} \simeq  \bigoplus_{n \geq 1} (V^{\otimes n})_{\Sigma_n}= \Sym_k^{\geq 1} V \]

		\end{itemize}
	\end{Ex}
\begin{Prop}\label{Prop_NaturalityAlgebraAdjunction}
	Given $f : \_P \rightarrow \_Q$ a morphism of operads we obtain an adjunction: 
	\[  \begin{tikzcd}
	 f_! : \mathbf{Alg}_\_P \arrow[r, shift left] & \arrow[l, shift left] \mathbf{Alg}_\_Q : f^*
	\end{tikzcd} \] 
	
	where $f^*$ sends the $\_Q$-structure $\_Q \rightarrow \mathbf{End}_A$ to the composition $\_P \to \_Q \to \mathbf{End}_A$ and $f_! (B)$, with $B \in \mathbf{Alg}_\_P$, is defined as the reflexive coequalizer: 
	
	\[ \_Q \circ_\_P B := \tx{coeq} \left( \begin{tikzcd}
	\_Q \circ \_P \circ B \arrow[r, shift left, "\mu_\_Q \circ f"] \arrow[r, shift right, "\mu_B"'] & \_Q \circ B
	\end{tikzcd}\right)\]

\end{Prop}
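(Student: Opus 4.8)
The plan is to regard $\mathbf{Alg}_{\_P}$ and $\mathbf{Alg}_{\_Q}$ as the categories of left modules over the monoids $\_P$ and $\_Q$ in $(\symseq,\circ)$ concentrated in arity $0$ (Proposition~\ref{Prop_AgebraicStructureOperad}) and to recognise $f_!$ as extension of scalars along the monoid map $f$. First I would check that $f^*$ is a well-defined functor: restricting an action $\_Q\circ A\to A$ along $\id\circ f\colon\_P\circ A\to\_Q\circ A$ (equivalently, precomposing $\_Q\to\mathbf{End}_A$ with $f$) is functorial and strictly commutes with the forgetful functors to $\chk$. Next I would make $\_Q$ a right $\_P$-module via $\_Q\circ\_P\xrightarrow{\id\circ f}\_Q\circ\_Q\xrightarrow{\mu_{\_Q}}\_Q$, so that for $B\in\mathbf{Alg}_{\_P}$ the two arrows $\_Q\circ\_P\circ B\rightrightarrows\_Q\circ B$ in the statement are this right action tensored with $\id_B$ and $\id_{\_Q}$ tensored with $\mu_B$; composing either with the section $\_Q\circ B=\_Q\circ I\circ B\xrightarrow{\id\circ\eta_{\_P}\circ\id}\_Q\circ\_P\circ B$ gives the identity (by right unitality of $\_Q$, unitality of the $\_P$-algebra $B$, and the fact that $f$ preserves units), so the pair is reflexive and $\_Q\circ_{\_P}B$ exists in $\chk$.

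The one substantive point is that $\_Q\circ_{\_P}B$ carries a canonical $\_Q$-algebra structure, and for this I would use that the endofunctor $\_Q\circ(-)$ of $\symseq$ preserves reflexive coequalizers. Through the explicit formula for $\circ$, this reduces to the classical facts that the $k$-linear tensor powers $V\mapsto V^{\otimes m}$ preserve reflexive coequalizers (they even preserve sifted colimits) and that $\Sigma_m$-coinvariants are exact in characteristic zero, together with the commutation of direct sums with colimits. Granting this, applying $\_Q\circ(-)$ to the defining coequalizer identifies $\_Q\circ(\_Q\circ_{\_P}B)$ with $\tx{coeq}(\_Q\circ\_Q\circ\_P\circ B\rightrightarrows\_Q\circ\_Q\circ B)$, and $\mu_{\_Q}$ induces a compatible morphism of coequalizer diagrams, hence a map $\_Q\circ(\_Q\circ_{\_P}B)\to\_Q\circ_{\_P}B$; its associativity and unitality follow from those of $\mu_{\_Q}$ upon applying the same preservation to $\_Q\circ\_Q\circ(-)$ and invoking uniqueness of maps out of these iterated coequalizers. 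This makes $f_!=\_Q\circ_{\_P}(-)$ a functor, and the same preservation statement shows that the forgetful functor $\mathbf{Alg}_{\_Q}\to\chk$ creates reflexive coequalizers, so $f_!B$ is also the coequalizer, computed inside $\mathbf{Alg}_{\_Q}$, of the two maps of free $\_Q$-algebras $\_Q(\_P(B^\sharp))\rightrightarrows\_Q(B^\sharp)$.

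It then remains to verify $\Hom_{\mathbf{Alg}_{\_Q}}(f_!B,C)\cong\Hom_{\mathbf{Alg}_{\_P}}(B,f^*C)$ naturally. Using that last presentation, a morphism $f_!B\to C$ in $\mathbf{Alg}_{\_Q}$ amounts to a $\_Q$-algebra map $\_Q(B^\sharp)\to C$ equalising the two maps $\_Q(\_P(B^\sharp))\rightrightarrows\_Q(B^\sharp)\to C$; by the free--forgetful adjunction of Proposition~\ref{Prop_FreeAlgebraOverAnOperad} such a map is a chain map $g\colon B^\sharp\to C^\sharp$, and computing the two composites on the generators $\_P\circ B^\sharp$ shows that they send $g$ to $g\circ\mu_B$ and to $\mu_{f^*C}\circ(\id_{\_P}\circ g)$ in $\Hom_\chk(\_P\circ B^\sharp,C^\sharp)$ respectively. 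These two agree precisely when $g$ intertwines the $\_P$-actions on $B$ and on $f^*C$, i.e.\ when $g$ is a morphism $B\to f^*C$ in $\mathbf{Alg}_{\_P}$; naturality in both variables is immediate. The main obstacle --- and the only step using the good behaviour of $\chk$ in characteristic zero --- is the preservation of reflexive coequalizers by $\_Q\circ(-)$; everything else is a diagram chase. One could alternatively bypass the explicit construction: $f^*$ preserves limits and is accessible between locally presentable categories, so $f_!$ exists by the adjoint functor theorem and is then identified with the displayed coequalizer.
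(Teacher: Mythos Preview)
The paper states this proposition without proof, so there is no argument to compare against. Your proof is the standard extension-of-scalars argument for modules over a monoid morphism and is correct in outline and in detail.

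One small overstatement: you flag the preservation of reflexive coequalizers by $\_Q\circ(-)$ as ``the only step using the good behaviour of $\chk$ in characteristic zero'', invoking exactness of $\Sigma_m$-coinvariants. In fact no characteristic hypothesis is needed here. The functor $V\mapsto \_Q(k)\otimes_{k[\Sigma_k]}V^{\otimes k}$ is a composite of $V\mapsto V^{\otimes k}$, which preserves sifted colimits (hence reflexive coequalizers) because $\otimes$ preserves colimits in each variable separately, followed by $\_Q(k)\otimes_{k[\Sigma_k]}(-)$, which is itself a colimit and therefore commutes with all colimits. Exactness of coinvariants would only be relevant if you needed to commute them with a \emph{limit} or with homology; for reflexive coequalizers it is automatic. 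So your argument in fact establishes the proposition over an arbitrary commutative ground ring, which is consistent with the standard references (e.g.\ Fresse, Rezk) that the paper implicitly relies on.
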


\begin{RQ}\label{RQ_RelativeMonoidalProductAndUniversalEnvAlgebra}
	The notation $\circ_\_P$ is suggestive. It is a relative version of the usual operation $\circ$. In fact for $f : I \rightarrow \_P$ this adjunction recovers the free-forget adjunction, we get $f^* A = A^\sharp$ and $f_! V = \_P \circ_I V = \_P \circ V$. 
\end{RQ}

\begin{RQ}\label{RQ_TrivalPAlgebra}
	When $\_P$ is an augmented operad, that is an operad together a retract $\_P \to I$ of the unit, we can define the trivial algebra functor given as the right adjoint $\tx{Triv} : \mathbf{Alg}_I = \chk \to \mathbf{Alg}_{\_P}$ coming from Proposition \ref{Prop_NaturalityAlgebraAdjunction} applied to the augmentation.  
\end{RQ}

\subsubsection{A short digression on the universal enveloping algebra}\
\label{sec:Lie digression}

\medskip

Recall that the universal enveloping algebra of a Lie algebra $\G_g$ is classically defined to be $\mathfrak U \G_g \coloneqq T(\G_g)/[x,y] = x\otimes y - y\otimes x$. This is exactly the induction functor along the operad morphism $f\colon\mathbf{Lie} \to \mathbf{Ass}$,
\[  \begin{tikzcd}
	\mathbf{Alg}_{\mathbf{Lie}} \arrow[r, shift left, "\G_U"] & \arrow[l, shift left, "f^*"] \mathbf{Alg}_{\mathbf{Ass}} 
\end{tikzcd} \]
where $f^* (A)$ is the Lie algebra structure on $A$ given by the Lie bracket $[x,y] = xy - yx$. 

It will be important later on to notice that there is a natural coproduct on $\G_U \G_g$ given by
$$\Delta(x) = 1\otimes x + x\otimes 1 \in \G_U \G_g \otimes \G_U \G_g, \tx{ for }\ x\in \G_g,$$

which can be extended to the whole $\G_U \G_g$ by making $\Delta(xy)=\Delta(x)\Delta(y)$.
Defining an antipodal map $S(x) = -x$, it follows that the universal enveloping algebra functor extends to a functor into (cocommutative) Hopf algebras.

\begin{Def}
Given a Hopf algebra $H$, we say that $g\in H\setminus\{0\}$ is \defi{grouplike} if $\Delta(g) = g\otimes g$. We say that $x\in H$ is \defi{primitive} if $\Delta(x) = 1\otimes x + x\otimes 1$.
\end{Def}

One can observe that for any Hopf algebra, the grouplike elements form a group, with inverse given by the antipode. As we will make precise in Section \ref{Sec_MaurerCartanSpace}, under some completion assumptions, when $H$ is a universal enveloping algebra $\G_U \G_g$, the grouplike elements can be interpreted as the exponential group of $\G_g$.

It is generally difficult to explicitly describe the cochain complex (in other words provide a basis) underlying the induction $f_!$ along an arbitrary morphism of operads $f\colon \_P \to \_Q$. In this particular case $f\colon \mathbf{Lie} \to \mathbf{Ass}$, the PBW (Poincar\'e--Birkoff--Witt) theorem gives a complete answer to this problem (in characteristic zero).

\begin{Th}[PBW, {\cite[Theorem B.2.3]{Qu69}}]\label{Th:PBW}
There is an isomorphism of cochain complexes: 
$$\mathfrak U \mathfrak g\cong \Sym \mathfrak g.$$
Furthermore, interpreting $\Sym \mathfrak g$ as the cofree cocommutative coalgebra generated by $\mathfrak g$, this is an isomorphism of coalgebras. 
\end{Th}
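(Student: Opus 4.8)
The plan is to build a comparison map and show it is an isomorphism by a filtration argument, which is the standard strategy going back to Quillen. First I would equip the tensor algebra $T\mathfrak g$, and hence $\mathfrak U\mathfrak g$, with the increasing filtration $F_p$ where $F_p$ is spanned by products of at most $p$ elements of $\mathfrak g$; since $[x,y]=x\otimes y - y\otimes x$ lowers nothing but is congruent to a single relation moving one $\mathfrak g$-factor past another, the associated graded $\mathrm{gr}\,\mathfrak U\mathfrak g$ is a quotient of $\Sym\mathfrak g$, the quotient map $\pi\colon \Sym\mathfrak g \twoheadrightarrow \mathrm{gr}\,\mathfrak U\mathfrak g$ being induced by symmetrization $x_1\cdots x_n \mapsto \tfrac1{n!}\sum_{\sigma\in\Sigma_n} x_{\sigma(1)}\otimes\cdots\otimes x_{\sigma(n)}$ (here characteristic zero is essential). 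The content of the theorem is that $\pi$ is injective. I would prove this by exhibiting a $\mathfrak g$-module structure on $\Sym\mathfrak g$: one checks that the formula $x\cdot(x_1\cdots x_n) = x\,x_1\cdots x_n + (\text{lower-degree correction involving brackets})$ defines a Lie algebra action of $\mathfrak g$ on $\Sym\mathfrak g$ — the Jacobi identity for $\mathfrak g$ is exactly what is needed for associativity of the action — and hence by the universal property of $\mathfrak U\mathfrak g$ a $\mathfrak U\mathfrak g$-module structure. Applying this action to $1\in\Sym\mathfrak g$ produces a section $\mathfrak U\mathfrak g \to \Sym\mathfrak g$ of the obvious surjection in the other direction, forcing $\dim F_p/F_{p-1} \geq \dim\Sym^{\leq p}\mathfrak g$, which combined with the surjectivity of $\pi$ pins down the dimensions and makes $\pi$ an isomorphism. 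Unwinding, the symmetrization map $\Sym\mathfrak g \to \mathfrak U\mathfrak g$ is the desired isomorphism of cochain complexes; it is a chain map because the differential on $\mathfrak g$ is a derivation compatible with the bracket and commutes with symmetrization.

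For the coalgebra statement, I would proceed as follows. We already noted in the digression that $\mathfrak U\mathfrak g$ carries a cocommutative Hopf algebra structure with $\mathfrak g\subset\mathfrak U\mathfrak g$ primitive. On the other side, $\Sym\mathfrak g$ is the cofree conilpotent cocommutative coalgebra on $\mathfrak g$, whose coproduct is the ``unshuffle'' $\Delta(x_1\cdots x_n) = \sum_{I\sqcup J = \{1,\dots,n\}} (x_I)\otimes(x_J)$. Since the symmetrization map $\Sym\mathfrak g \to \mathfrak U\mathfrak g$ sends generators to primitives and both coproducts are determined by being coalgebra maps that are the identity on the generating primitives (using cofreeness/conilpotency of $\Sym\mathfrak g$ on one side and the multiplicativity of $\Delta$ together with the fact that primitives generate $\mathfrak U\mathfrak g$ as an algebra on the other), the isomorphism of complexes automatically intertwines the two coproducts. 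Concretely: $\Delta\colon \mathfrak U\mathfrak g \to \mathfrak U\mathfrak g\otimes\mathfrak U\mathfrak g$ composed with symmetrization$^{-1}\otimes$symmetrization$^{-1}$ is a coalgebra-type comparison that agrees with the cofree coproduct on $\mathfrak g$, hence everywhere by the universal property.

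The main obstacle is the verification that $\Sym\mathfrak g$ really is a $\mathfrak g$-module, i.e. writing down the correction terms and checking the action axiom — this is the only genuinely non-formal computation, and it is where the Jacobi identity is consumed. Everything else (compatibility with differentials, the filtration/dimension bookkeeping, and the coalgebra compatibility) is either formal or follows from universal properties; in particular the coalgebra part requires essentially no new work once one knows $\Sym\mathfrak g$ is cofree conilpotent cocommutative and that $\mathfrak g$ sits inside $\mathfrak U\mathfrak g$ as primitives generating it as an algebra. One should be slightly careful that the paper's $\mathbf{Com}$, and hence its $\Sym$, is the \emph{non-unital} functor $\Sym^{\geq 1}$; so strictly the statement should be read with the unit adjoined, or equivalently as an isomorphism of the augmentation ideals, and I would remark on this to avoid an off-by-one discrepancy with the $n\geq 1$ convention fixed earlier in the text.
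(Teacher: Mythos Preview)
The paper does not give its own proof of this theorem: it is stated with a citation to Quillen \cite[Theorem B.2.3]{Qu69} and no argument is supplied. Your outline is the standard filtration-plus-module-structure proof (essentially the one in Quillen's appendix, adapted to the dg setting), and it is correct; the only substantive step, as you identify, is verifying that the formula for the $\mathfrak g$-action on $\Sym\mathfrak g$ really is an action, which consumes the Jacobi identity. Your remark about the $\Sym^{\geq 1}$ versus unital $\Sym$ discrepancy is apt and worth keeping.
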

	
%\ricardotext{
%\begin{Cor}
%	The universal enveloping algebra functor preserves quasi-isomorphisms.
%\end{Cor}\ricardo{maybe this corollary could go elsewhere. Either we make quasi-iso remarks already before or maybe this can show up in the model cat part, where we show that not always induction preserves quasi-isomorphisms}
%}\albi{Would that fit in section 2.3 when discussing the fact that $\G_U$ reflect q.i. between conilpotent coalgebras?}
%

	\subsection{Classical Constructions for Algebraic Operads}\
	\label{Sec_Some Classical Constructions for Algebraic Operads}
	
	\medskip
	
	This section is a recollection of classical constructions and definitions on operads. We start by defining the convolution operad (Definition \ref{Def_ConvolutionOperad}) and the totalization of an operad (Definition \ref{Def_TotatlizationOfAnOperad}). These will be used to define the notion of (Koszul) twisting morphism (Definition \ref{Def_KoszulTwistingMorphisms}), which provides a convenient alternative description for the bar-cobar constructions (Definition \ref{Def_BarCobarConstruction}). The goal of this section is to give the main definitions around the bar-cobar adjunction, which will be used later in order to take good cofibrant replacements (Proposition \ref{Prop_BarCoBarCofibrantReplacement}), describe homotopy algebras (Section \ref{Sec_HomotopyAlgebrasAndMorphism}) and study deformations of algebraic structures (Section \ref{Sec_DeformationofAlgebraicStructures}).
 	
 	\begin{Def}
 		\label{Def_ConvolutionOperad}
		
		Given an operad $\_P$ and a cooperad $\_C$, we can construct the \defi{convolution operad}:
		\[ \tx{Conv}(\_C, \_P) (n) := \iHom_{k} \left( \_C (n), \_P (n) \right) \]

		The symmetric sequence structure on $\tx{Conv}(\_C, \_P)$ is given, for each $n\geq 0$, by the action by conjugation: $\sigma.f := x \in \_C(n) \mapsto \sigma. f(\sigma^{-1}.x)$. 
		
		The operadic structure is given by the map that sends
		\[ \phi := f \otimes(g_1 \otimes \cdots \otimes g_k) \in \tx{Conv}(\_C, \_P) \circ \tx{Conv}(\_C, \_P) (n),\] 
		with $g_l \in \tx{Conv}(\_C, \_P)(i_l)$ and $i_1 + \cdots + i_k = n$, to the element in $\tx{Conv}(\_C, \_P)$ of arity $n$ defined by the following composition:
				
		\adjustbox{scale=0.85,center}{
		\begin{tikzcd}[column sep= tiny]
		\_C \arrow[r, "\Delta"] & \_C \hat{\circ} \_C \arrow[r] & \_C(k) \otimes \_C(i_1) \otimes \cdots \otimes \_C (i_k) \arrow[r, "\phi"' ] &  \_P(k) \otimes \_P(i_1) \otimes \cdots \otimes \_P(i_k) \arrow[r, hookrightarrow] & \_P \circ \_P \arrow[r, "\mu"] & \_P 
		\end{tikzcd} }
	\end{Def}

	The object we are going to be interested in is not the convolution operad itself, but rather its totalization. The totalization of an operad is a pre-Lie algebra whose associated Lie algebra will  be important in defining twisting morphisms and describing deformations of algebraic structures. 

\begin{Def}[Pre-Lie and Permutative Operads]\label{Def_PermAndPreLie}\
	
\begin{itemize}
	\item 	A (right) \defi{permutative algebra} is an associative algebra $A$ with a binary operation satisfying:
	\[ x \cdot (y \cdot z) =(-1)^{|y||z|} x \cdot (z \cdot y)\]
	We denote by $\mathbf{Perm}$ the quadratic operad encoding permutative algebras.
	
	\item	A (right) \defi{pre-Lie algebra} is given by $\mathfrak g\in \chk$ with a bilinear operation $\star$ such that: 
	\[ (x\star y)\star z - x\star (y\star z)=(-1)^{|y||z|}\big( (x\star z)\star y - x\star (z\star y)\big)\]

	We denote by $\mathbf{preLie}$ the quadratic operad encoding pre-Lie algebras. $\mathbf{Perm}$ and $\mathbf{preLie}$ are related by the fact that they are Koszul dual of each other (Definition \ref{Def_KoszulDualOperad}).
\end{itemize}
\end{Def}

Pre-Lie algebras are so called since, similarly to associative algebras, the formula $[p,q] = p \star q - q \star p$ defines a Lie algebra structure on $\mathfrak g$. Moreover, there is a map $\mathbf{preLie} \to \mathbf{Ass}$ so that all associative algebras are pre-Lie algebras. In this text, the fundamental instances of pre-Lie algebras that do not come from an associative algebra will arise as totalizations of   operads.

\begin{Def}\label{Def_TotatlizationOfAnOperad}
	Given $\_P$ an operad we can produce a pre-Lie algebra called the \defi{totalization of $\_P$}: 
\[ \tx{Tot}(\_P) := \prod_{n\geq 0} \_P(n)^{\Sigma_n} \in \mathbf{Alg}_{\mathbf{preLie}} \]

The pre-Lie product $\alpha\star\beta $ is defined as the sum of all possible ways to compose $\alpha$ and $\beta$. Pictorially, if $\alpha\in \_P(3)^{\Sigma_3}$ and $\beta\in \_P(2)^{\Sigma_2}$,

	\[
\begin{tikzpicture}[scale=1, baseline=-1ex]
	\GraphInit[vstyle=Classic]
	
	\tikzset{VertexStyle/.style = {shape = circle,fill = black,minimum size = 1pt,inner sep=0pt}}
	
	\SetVertexNoLabel
	
	\Vertex[empty, x=0,y=-1]{1}
	
	\Vertex[empty, x=-1,y=1]{E1}
	\Vertex[empty, x=0,y=1]{E2}
	\Vertex[empty, x=1,y=1]{E3}
	
	\SetVertexLabel

	\Vertex[LabelOut, L=$\alpha$, x=0,y=0]{A}

	\Edge(A)(1)
	
	\Edge(E1)(A)
	\Edge(E2)(A)
	\Edge(E3)(A)

\end{tikzpicture} \star  \begin{tikzpicture}[scale=1, baseline=-1ex]
	\GraphInit[vstyle=Classic]
	
	\tikzset{VertexStyle/.style = {shape = circle,fill = black,minimum size = 1pt,inner sep=0pt}}
	
	\SetVertexNoLabel
	
	\Vertex[empty, x=0,y=-1]{1}
	
	\SetVertexLabel

	\Vertex[LabelOut, L=$\beta$, x=0,y=0]{A}

	\Vertex[empty, x=-1,y=1]{E1}
	
	\Vertex[empty, x=1,y=1]{E3}

	\Edge(A)(1)
	
	\Edge(E1)(A)
	
	\Edge(E3)(A)

\end{tikzpicture} =    \begin{tikzpicture}[scale=0.90, baseline=-1ex]
	\GraphInit[vstyle=Classic]
	
	\tikzset{VertexStyle/.style = {shape = circle,fill = black,minimum size = 1pt,inner sep=0pt}}
	
	\SetVertexNoLabel
	
	\Vertex[empty, x=0,y=-1]{1}

	\Vertex[empty, x=0,y=1]{E2}
	\Vertex[empty, x=1,y=1]{E3}
	
	\Vertex[empty, x=-2,y=2]{I1}
	\Vertex[empty, x=0,y=2]{I2}
	
	\SetVertexLabel
	
	\Vertex[LabelOut, L=$\alpha$, x=0,y=0]{A}
	
	\Vertex[LabelOut, L=$\beta$, x=-1,y=1]{E1}

	\Edge(A)(1)
	
	\Edge(E1)(A)
	\Edge(E2)(A)
	\Edge(E3)(A)
	
	\Edge(I1)(E1)
	\Edge(I2)(E1)
\end{tikzpicture} +   \begin{tikzpicture}[scale=0.90, baseline=-1ex]
	\GraphInit[vstyle=Classic]
	
	\tikzset{VertexStyle/.style = {shape = circle,fill = black,minimum size = 1pt,inner sep=0pt}}
	
	\SetVertexNoLabel
	
	\Vertex[empty, x=0,y=-1]{1}
	
	\Vertex[empty, x=-1,y=1]{E1}
	
	\Vertex[empty, x=1,y=1]{E3}
	
	\Vertex[empty, x=-1,y=2]{I1}
	\Vertex[empty, x=1,y=2]{I2}
	
	\SetVertexLabel
	
	\Vertex[LabelOut, L=$\alpha$, x=0,y=0]{A}
	\Vertex[LabelOut, L=$\beta$, x=0,y=1]{E2}

	\Edge(A)(1)
	
	\Edge(E1)(A)
	\Edge(E2)(A)
	\Edge(E3)(A)
	
	\Edge(I1)(E2)
	\Edge(I2)(E2)
\end{tikzpicture} +  \begin{tikzpicture}[scale=0.90, baseline=-1ex]
	\GraphInit[vstyle=Classic]
	
	\tikzset{VertexStyle/.style = {shape = circle,fill = black,minimum size = 1pt,inner sep=0pt}}
	
	\SetVertexNoLabel
	
	\Vertex[empty, x=0,y=-1]{1}
	
	\Vertex[empty, x=-1,y=1]{E1}
	\Vertex[empty, x=0,y=1]{E2}
	
	\Vertex[empty, x=0,y=2]{I1}
	\Vertex[empty, x=2,y=2]{I2}
	
	\SetVertexLabel
	
	\Vertex[LabelOut, L=$\alpha$, x=0,y=0]{A}
	\Vertex[LabelOut, L=$\beta$, x=1,y=1]{E3}

	\Edge(A)(1)
	
	\Edge(E1)(A)
	\Edge(E2)(A)
	\Edge(E3)(A)
	
	\Edge(I1)(E3)
	\Edge(I2)(E3)
\end{tikzpicture} 	\]

To be precise, the incoming edges should be labeled by all possible shuffles (this is similar to Remark \ref{warning}), we refer to \cite[Section 5.4.3]{LV} for details.

 For non-symmetric operads the formula is exactly the sum of partial compositions: \[\alpha \star \beta = \sum\limits_{i=1}^n \alpha \circ_i \beta \]
\end{Def}

It should not be surprising that one can construct pre-Lie algebras out of ``tree-shaped compositions''. Indeed, the pre-Lie operad is isomorphic to the operad of rooted trees \cite{CL01}.

	\begin{Def} 
	\label{Def_MC}
	
	Given a Lie algebra $\G_g$, we define its \defi{set of Maurer--Cartan elements} of $\G_g$ to be:
	\[ \mathrm{MC}(\G_g) := \left\lbrace x \in \G_g_1 \vert \ dx + \frac{1}{2}[x,x]=0 \right\rbrace \]
	
	Moreover, $\mathrm{MC}$ defines a functor from Lie algebras to sets. 
	
\end{Def}

\begin{Def}[Twisting Morphisms]\label{Def_TwistingMorphisms}
	Given a cooperad $\_C$ and an operad $\_P$, the set of \defi{twisting morphisms} is the set of Maurer--Cartan elements in $\tx{Tot}(\tx{Conv}(\_C, \_P))$:
	\[\tx{Tw}(\_C, \_P) := \mathrm{MC}(\tx{Tot}(\tx{Conv}(\_C, \_P)))\]
	It corresponds to the set of morphisms $\alpha : \_C \dashrightarrow \_P$ of symmetric sequences of degree $1$ satisfying the Maurer--Cartan equation, $d\alpha + \alpha \star \alpha =0$. 
	
%	\ricardoline{Delete what comes next in the definition and introduce it whenever we need it?}\albin{could do, or mention that we will need these definitions later.}
%	
%	We will denote by $\mathbf{Tw}$ the full sub-category of the arrow category of $\mathbf{Op}$, whose objects are given by twisting morphisms. Concretely, a morphism from $\alpha : \_C \dashrightarrow \_P$ to $\beta : \_D \dashrightarrow \_Q$ is given by a commutative square,	
%	\[ \begin{tikzcd}
%		\_C \arrow[r, dashed, "\alpha"] \arrow[d]& \_P \arrow[d] \\
%		\_D \arrow[r, dashed, "\beta"] & \_Q
%	\end{tikzcd}\]	 
\end{Def}

\begin{Def}[{\cite[Section 6.5.3]{LV}}] \label{Def_BarCobarConstruction}
	
	There is an adjunction called the \defi{Bar-cobar adjunction}: 
	\[ \Omega :\begin{tikzcd} \coOp^{\tx{conil}} \arrow[r, shift left] & \arrow[l, shift left] \Op^{\tx{aug}}
	\end{tikzcd}: \mathbf{B}\]

	between coaugmented conilpotent cooperads and augmented operads defined by $\Omega (\_C) = \left( \T (\overline{\_C}[-1]), d_\_C+d_{\Omega} \right)$ with $\overline{\_C}:= \faktor{\_C}{I}$ for $\Omega$, and with $\mathbf{B}\_P :=  \left( \Tc (\overline{\_P}[1]), d_\_P+d_\mathbf{B} \right)$.
	
\end{Def}

The underlying symmetric sequence of $\Omega (\_C)$ depends only on the underlying symmetric sequence of $\_C$ and the cooperad structure only plays a role for the differential $d_\Omega$. The differential $d_\Omega$ is defined on the generators, i.e. $1$-vertex trees labeled by $c\in \overline{\_C}[-1]$, as the sum over all possible partial cocompositions of $c$, and then extended to an arbitrary tree by derivations, which amounts to sum this formula for all vertices. The ``coassociativity'' property of partial cocomposition, together with the signs produced by the degree shifts, guarantees that $d_\Omega$ squares to zero (see \cite[Section 6.5.2]{LV} for more details).

Similarly, $d_\mathbf{B}$ sums over all possible ways of contracting an edge on a tree using the partial composition of $\_P$, see \cite[Section 6.5.1]{LV}.
 We will see in Sections \ref{Sec_Model Categorical Aspects} and \ref{Sec_KoszulDuality} that this adjunction can provide a cofibrant replacement of an operad and will be used to define homotopy algebras. \\
	
In fact, maps $\Omega \_C \to \_P$ (or equivalently $\_C \to \mathbf{B}\_P$) can be described in terms of twisting morphisms. This point of view will prove to be very important when it comes to the description of the deformations of algebraic structures (see Section \ref{Sec_MaurerCartanSpace}).

\begin{Prop}[Rosetta Stone]\label{Prop_RosettaStone}
	There are natural equivalences: 
	\[ \Hom_{\Op}(\Omega \_C, \_P) \simeq \tx{Tw}(\_C, \_P) \simeq \Hom_{\coOp^{\tx{conil}}}(\_C, \mathbf{B}\_P)  \]
\end{Prop}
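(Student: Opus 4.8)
The plan is to prove the two bijections separately but by the same mechanism: both $\Omega$ and $\mathbf{B}$ are, on underlying symmetric sequences, free (resp. cofree) constructions with a twisted differential, so a morphism out of $\Omega\_C$ (resp. into $\mathbf{B}\_P$) is determined by a map on generators, and the condition that this map be a \emph{chain} map of (co)operads is precisely the Maurer--Cartan equation in $\tx{Tot}(\tx{Conv}(\_C,\_P))$. First I would recall that $\Omega\_C = (\T(\overline{\_C}[-1]), d_{\_C}+d_\Omega)$, so by the free operad adjunction of Construction \ref{Cons_Operad}, a morphism of graded operads $\Omega\_C \to \_P$ is the same datum as a morphism of symmetric sequences $\overline{\_C}[-1] \to \_P$, equivalently a degree $+1$ map $\overline{\_C} \to \_P$, equivalently (extending by zero on the counit $I$) an element $\alpha$ of $\tx{Tot}(\tx{Conv}(\_C,\_P))$ of degree $1$. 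Dually, using that $\mathbf{B}\_P = (\Tc(\overline{\_P}[1]), d_{\_P}+d_{\mathbf B})$ is cofree conilpotent on $\overline{\_P}[1]$ (Construction \ref{Cons_Cooperads}), a morphism of graded cooperads $\_C \to \mathbf{B}\_P$ is the same as a map of symmetric sequences $\_C \to \overline{\_P}[1]$, again a degree $1$ element of $\tx{Tot}(\tx{Conv}(\_C,\_P))$. So on the level of underlying data all three sets are canonically in bijection with degree $1$ elements of the convolution pre-Lie (hence Lie) algebra; it remains to match the differential-compatibility conditions.

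The core computation is to show that, for a degree $1$ map $\alpha\colon\overline{\_C}\dashrightarrow\_P$, the induced derivation $D_\alpha$ of the free operad $\T(\overline{\_C}[-1])$ satisfies $(d_{\_C}+d_\Omega) + D_\alpha$ being a \emph{differential} on $\Omega\_C$-extended-by-$\alpha$ --- i.e. the extension of $\alpha$ to an operad map commutes with differentials --- if and only if $d\alpha + \alpha\star\alpha = 0$. The key point is that $d_\Omega$ is by construction the derivation induced by the partial cocomposition of $\_C$, and composing with $\mu_\_P$ turns the quadratic part of the bar-cobar differential into exactly the pre-Lie square $\alpha\star\alpha$ from Definition \ref{Def_TotatlizationOfAnOperad}; meanwhile the internal differentials $d_\_C$, $d_\_P$ assemble into the differential $d$ on $\tx{Conv}(\_C,\_P)$, contributing the $d\alpha$ term. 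This is the computation carried out in \cite[Section 6.5.4]{LV}, and since we are working over a field of characteristic zero the normalization isomorphism of Remark \ref{RQ_NormalisationIsomorphism} lets us pass freely between $\bar\circ$ and $\circ$, so there is no subtlety in identifying invariants with coinvariants when writing $\tx{Tot}$. The dual statement, that a graded cooperad map $\_C \to \mathbf{B}\_P$ is a chain map precisely when the corresponding $\alpha$ is a twisting morphism, follows by the formally dual computation with $d_{\mathbf B}$ in place of $d_\Omega$.

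The main obstacle --- really the only place where care is needed --- is bookkeeping the signs introduced by the degree shifts $[-1]$ and $[1]$ and by the Koszul sign rule when extending $\alpha$ to a derivation of a free operad: one must check that the sign conventions are chosen so that the quadratic term is genuinely $+\alpha\star\alpha$ (and not, say, $\tfrac12[\alpha,\alpha]$ with a spurious factor, although over $k$ of characteristic zero these agree since $\alpha$ has odd degree). I would handle this by fixing conventions once at the start, citing \cite[Section 6.5]{LV} for the detailed sign verification, and emphasizing that the content of the statement is conceptual: $\Omega$ and $\mathbf B$ are adjoint, and both Hom-sets corepresent the functor of twisting morphisms, which is what makes the middle term $\tx{Tw}(\_C,\_P)$ the natural bridge. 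One should also note that the three bijections are natural in $\_C$ and $\_P$, which is immediate since every step --- the free/cofree adjunctions, the identification with $\tx{Tot}(\tx{Conv}(\_C,\_P))$, and the Maurer--Cartan condition --- is manifestly functorial.
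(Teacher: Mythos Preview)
Your proposal is correct and follows essentially the same approach as the paper: use the free (resp.\ cofree) property of $\Omega\_C$ (resp.\ $\mathbf{B}\_P$) to identify graded (co)operad maps with degree~$1$ elements of the convolution Lie algebra, and then check that compatibility with the bar/cobar differential is precisely the Maurer--Cartan equation $\partial\alpha+\alpha\star\alpha=0$. The paper's proof is terser---it writes out only the $\Omega\_C\to\_P$ side explicitly and leaves the dual statement implicit---but your treatment of both bijections, the sign discussion, and the naturality remark are all consistent with and slightly more expansive than what the paper does.
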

	\begin{proof}
		Take $\alpha : \Omega \_C \rightarrow \_P$. Ignoring differentials, this corresponds to a map from the free operad on $\overline{\_C}[-1]$ and is thus equivalent to having a (co)unit preserving map $\tilde{\alpha} : \_C  \dashrightarrow \_P $ of degree $1$ of symmetric sequences.  Moreover $\alpha$ is compatible with the differentials if and only if $\tilde{\alpha}$ is a Maurer--Cartan element. 
		
		Indeed, we have the following equivalences: 
		\[ \begin{split}
		\alpha (d_{\Omega \_C}) =d_{\_P} \circ \alpha  &\Leftrightarrow \alpha (d_{\Delta} + d_\_C  ) =d_{\_P} \circ \alpha \\
		& \Leftrightarrow  (\alpha \circ d_{\Delta} + \alpha \circ d_\_C ) -d_{\_P} \circ \alpha =0\\
		& \Leftrightarrow \alpha \star \alpha + \partial \alpha = 0 \\
		& \Leftrightarrow \frac{1}{2}[\alpha, \alpha] + \partial \alpha = 0
		\end{split}\]
	\end{proof}

\begin{RQ}[A remark on augmentations]\label{rem:A remark on augmentations}
		
	In Definition \ref{Def_BarCobarConstruction} we consider (co)operads that are (co)augmented which forces us to remove the (co)unit. Instead, we could have worked from the beginning with the equivalent categories of non-(co)unital (co)operads, but this makes the descriptions of (co)algebras, namely the (co)free one, more complicated. 
		
While we will mostly hide such details under the rug, in practice this means that strictly speaking for propositions such as the Rosetta Stone to hold, one should require the (co)augmentation ideals the twisting morphisms to be maps compatible with the (co)augmentation, should they exist.
\end{RQ}

	\subsection{Model Categorical Aspects}\
	\label{Sec_Model Categorical Aspects}

\medskip

Given an operad $\_P$, we are interested in understanding the category of $\_P$-algebras up to homotopy, i.e. together with the notion of quasi-isomorphisms as ``weak equivalences''. The theory of model categories gives us a way to handle these weak equivalences.
In this section we will see that algebras over operads, and operads themselves form model categories.
\subsubsection{Model categories and homotopical algebra} \ \label{sec:model-categories-and-homotopical-algebra}

\medskip

Model categories are a tool introduced by Quillen in \cite{Qu67} in order to study homotopy theory. The main problem is to understand objects of a category up to a notion of weak equivalences which are typically non-invertible morphisms. 

A naive way to proceed is to formally invert all the weak equivalences (see \cite[Definition 1.2.1]{Ho07}). For a category $\_C$ with a class of morphisms which we call weak equivalences $\_W$, this formal localization will be denoted $\tx{h}\_C := \_C [\_W^{-1}]$. But in general, descriptions of $\_C [\_W^{-1}]$ are very difficult to deal with. The category $\tx{h}\_C$ can however be described via a universal property: There is a natural functor $\_C \to \tx{h}\_C$ such that any functor $\_C \to \_D$ sending morphisms in $\_W$ to isomorphisms in $\_D$ factors uniquely through $\tx{h}\_C$. This universal property characterizes $\tx{h}\_C$ (see \cite[Lemma 1.2.2]{Ho07}) up to  equivalence of categories. 
Model structures are a tool to get workable descriptions of $\tx{h}\_C$. In our framework, we will consider $\_C$ a category having all small limits and small colimits which, besides a class of \defi{weak equivalences} is equipped with two other classes of morphisms called \defi{fibrations} and \defi{cofibrations} and satisfying certain axioms making $\_C$ a \defi{model category}. For a detailed account we refer to \cite{Ho07}. 
We use the phrase \emph{trivial (co)fibration} to refer to a (co)fibration which is also a weak equivalence. Let us present the main features of model categories:
\begin{itemize}
	\item Any morphism $A \to B$ can be factorized both as a cofibration followed by a trivial fibration or as a fibration followed by a trivial cofibration: 
	\[ \begin{tikzcd}
	A \arrow[r, "\tx{cof.}"] & B' \arrow[r,"\tx{triv. fib.}"]& B
	\end{tikzcd} \]
	\[ \begin{tikzcd}
	A \arrow[r, "\tx{triv. cof.}"] & A' \arrow[r,"\tx{fib.}"]&B
	\end{tikzcd} \]  
	\item An object $A$ is called \defi{fibrant} (resp. \defi{cofibrant}) if the map to (resp. from) the terminal (resp. initial) object is a fibration (resp. cofibration). The factorization axioms ensure that all objects are weakly equivalent to a fibrant-cofibrant object (such an object is called a fibrant-cofibrant replacement). 
	
\item {The class of all cofibrations (respectively fibrations) is completely determined by the classes of weak equivalences and  fibrations (respectively weak equivalences and  cofibrations) (see \cite[Lemma 1.1.10]{Ho07}).  For example, a morphism $f : A \to B$ is a cofibration if and only if it has the \emph{left lifting property} with respect all trivial fibrations. This means that for all trivial fibrations $g : A' \to B'$ and all commutative squares: }
\[ \begin{tikzcd}
A\arrow[d, "f"] \arrow[r] & A' \arrow[d, "g"] \\
B \arrow[r] \arrow[ur, dashed, "h"] & B' 
\end{tikzcd}\]
there exists a lift $h : B \dashrightarrow A'$ making the diagram commute. In particular, in many cases in this survey we will describe a model structure by describing the class of weak equivalences and fibrations, knowing that cofibrations are determined by them, even though they are usually more difficult to describe.

Note that similarly, fibrations and trivial fibrations are determined by trivial cofibrations and cofibrations via the right lifting property.
\end{itemize}

A priori, a weak equivalence may not have an inverse or even a quasi-inverse\footnote{A quasi-inverse (or homotopy inverse) is a map in the opposite direction inducing an inverse in the homotopy category.}. This means that to define the notion of ``being weak equivalent'', the only sensible to do is to ask for two objects, $A$ and $B$, to be weakly equivalent if there is a zig-zag of weak equivalences connecting them:

\[ \begin{tikzcd}
 &\arrow[dl] \arrow[dr] A_1 && \arrow[dl] \arrow[dr] \cdots  && \arrow[dl] \arrow[dr] A_n& \\
 A & & \cdots & & \cdots && B
\end{tikzcd} \]

We denote this equivalence by $A \sim B$.
On the other hand, {a consequence of the axioms of a model category is that} a weak equivalence between objects which are both fibrant and cofibrant admits a ``homotopy-inverse'' which makes ``being weakly equivalent'' a simpler equivalence relation, and it means that we have a nicer description of the homotopy category, $\faktor{\_C^\mathrm{cf}}{\sim} \simeq \tx{h}\_C$ (\cite[Theorem 1.2.10]{Ho07}) where $\_C^\mathrm{cf}$ denote the full sub-category of $\_C$ given by objects that are both fibrant and cofibrant. 

The most basic example in this survey is the model structure on cochain complexes.
\begin{Th}[{\cite[Theorem 2.3.11]{Ho07}}]\label{Th_ModelStructureModR}
	Let $R$ be a ring. There is a model structure on $\Mod_R$ such that a map is:
	\begin{itemize}
		\item a weak-equivalence if it is a quasi-isomorphism.
		\item a fibration if it is degree-wise surjective.
		%\item a cofibration if it is a degreewise monomorphism with degreewise projective cokernel.
	\end{itemize}
\end{Th}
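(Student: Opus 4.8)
The plan is to realize this as a \emph{cofibrantly generated} model structure and invoke the Kan recognition theorem \cite[Theorem 2.1.19]{Ho07}. First I would fix the generating sets. For $n\in\Zz$ let $S^n$ be the cochain complex with $R$ in degree $n$ and $0$ elsewhere, and let $D^n$ be the acyclic complex with $R$ in degrees $n$ and $n+1$ and identity differential (so $D^n$ has vanishing cohomology, since $d$ is the identity there). Set
\[ I := \{\, S^{n+1}\hookrightarrow D^n \,\}_{n\in\Zz}, \qquad J := \{\, 0 \to D^n \,\}_{n\in\Zz}, \]
and take the class $\mathcal W$ of weak equivalences to be the quasi-isomorphisms, which obviously satisfy two-out-of-three and are closed under retracts.

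Next I would run the three computations that feed the recognition theorem. (1) A map $f\colon X\to Y$ is $J$-injective if and only if $f_n$ is surjective for all $n$: a lift against $0\to D^n$ amounts to choosing, for each $y\in Y_n$, a preimage $x\in X_n$ (with the degree $n+1$ generator forced to go to $dx$), so the $J$-injectives are exactly the degree-wise surjections announced as fibrations. (2) A map is $I$-injective if and only if it is a degree-wise surjective quasi-isomorphism; surjectivity is formal, and the quasi-isomorphism part comes from an explicit element-level chase lifting cocycles together with chosen primitives (this is the content of the argument in \cite[Theorem 2.3.11]{Ho07}). In particular $I\text{-inj}\subseteq\mathcal W\cap J\text{-inj}$. (3) Every map in $J\text{-cell}$ is a cofibration and a quasi-isomorphism: each pushout of $0\to D^n$ has the form $X\to X\oplus D^n$, hence is a quasi-isomorphism since $D^n$ is acyclic, and such split inclusions with acyclic cokernel are stable under coproducts and transfinite composition because cohomology commutes with filtered colimits and direct sums; moreover $J\subseteq I\text{-cof}$, so $J\text{-cell}\subseteq\mathcal W\cap I\text{-cof}$.

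For the remaining hypotheses I would note that $\Mod_R$ is locally presentable (the domains $0$ and $S^{n+1}$ of the maps in $I\cup J$ are finitely presentable), so every object is small and the small object argument applies to both $I$ and $J$. With (1)--(3) established, together with the compatibility condition $\mathcal W\cap J\text{-inj}\subseteq I\text{-inj}$ — which is exactly the converse half of computation (2) — all hypotheses of \cite[Theorem 2.1.19]{Ho07} hold, and it produces the desired cofibrantly generated model structure in which the weak equivalences are the quasi-isomorphisms, the fibrations are the degree-wise surjections, and $I$, $J$ are the generating (trivial) cofibrations.

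The main obstacle is computation (2): characterizing the $I$-injectives as the degree-wise surjective quasi-isomorphisms. Surjectivity and the verification that trivial fibrations are fibrations are formal, but deducing that $f$ induces isomorphisms on cohomology from the lifting property against $I$ (and conversely, producing the lifts from surjectivity plus acyclicity of the fibre) requires the careful diagram chase with cocycles and bounding elements; everything else in the proof is bookkeeping with the small object argument and stability properties of quasi-isomorphisms.
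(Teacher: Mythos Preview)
The paper does not give its own proof of this statement: it simply cites \cite[Theorem 2.3.11]{Ho07} and moves on. Your proposal is correct and is precisely the argument Hovey gives at that reference---constructing generating sets $I$ and $J$ from the sphere and disk complexes, identifying the $J$-injectives as degreewise surjections and the $I$-injectives as trivial fibrations, and then invoking the recognition theorem \cite[Theorem 2.1.19]{Ho07}. So there is nothing to compare: you have supplied the proof the paper chose to outsource.
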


In $\chk$, all objects are both fibrant and cofibrant (because all objects are projective) and therefore all quasi-isomorphisms are quasi-invertible. Later on, we will transfer this model structure into other categories (such as the category of algebras over an operad) which \emph{will not} satisfy this property. \\

Even though fibrant and cofibrant objects have good homotopical properties, they can be quite big and complicated. For an arbitrary object in a model category, we may want to consider a class of objects that are \defi{minimal}, in which any weak equivalence, $f : A \to B$, between two minimal objects must be an isomorphism. For $\chk$ we will consider the class of objects given by the cohomologies of the complexes, that is the minimal objects are graded complexes with no differentials. 

\begin{Def}\label{Def_FormalityGeneral}
	Take $\_C$ a  model category and a class of minimal objects. Then we say that
 an object $A$ is \defi{formal} if there is a weak equivalence between $A$ and a minimal object $\tilde{A}$.
\end{Def}

We did not make the class of minimal objects explicit in that definition since we will always work with model categories where the class of minimal objects is clear.  

\begin{Prop}\label{Prop_FormalityOfCHKAndDeformationRetractToCohomology}
Every object in $\chk$ is formal. Moreover, every cochain complex has a \emph{deformation retract} to its cohomology, i.e. there exists a diagram
	\[\begin{tikzcd}
	\arrow[loop left, "h"] (A, d_A) \arrow[r, "p", shift left] & \arrow[l, shift left, "i"] (H(A), 0)
	\end{tikzcd}\] 
 such that $pi =\tx{id}_{H(A)}$ and $h$ is a \defi{homotopy} between $\tx{Id}_A $ and $i \circ p$ of degree $-1$, meaning that $\tx{Id}_A - i p = d_A h - h d_A$.
\end{Prop}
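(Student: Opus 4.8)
The plan is to reduce the whole statement to the single fact that $k$ is a field, so that every subspace of a $k$-vector space admits a complement and every short exact sequence of cochain complexes splits. First I would fix, in each cohomological degree $n$, the cocycles $Z^n = \ker(d_A\colon A^n \to A^{n+1})$ and the coboundaries $B^n = \tx{im}(d_A\colon A^{n-1}\to A^n) \subseteq Z^n$, and choose complements so as to obtain decompositions $A^n = Z^n\oplus K^n$ and $Z^n = B^n\oplus \mathcal H^n$. Together these give a ``Hodge-type'' splitting $A^n = B^n \oplus \mathcal H^n \oplus K^n$ in which $d_A$ vanishes on $B^n\oplus \mathcal H^n$ and restricts to an isomorphism $K^n \xrightarrow{\sim} B^{n+1}$, while the composite $\mathcal H^n \hookrightarrow Z^n \twoheadrightarrow Z^n/B^n = H^n(A)$ is an isomorphism.

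From this splitting I would read off the three maps directly. Let $i\colon (H(A),0)\to (A,d_A)$ be the inclusion identifying $H^n(A)$ with $\mathcal H^n\subseteq A^n$; let $p\colon (A,d_A)\to (H(A),0)$ be the projection of $A^n = B^n\oplus\mathcal H^n\oplus K^n$ onto $\mathcal H^n\cong H^n(A)$; and let $h\colon A^n\to A^{n-1}$ be zero on $\mathcal H^n\oplus K^n$ and the inverse isomorphism $(d_A|_{K^{n-1}})^{-1}\colon B^n\xrightarrow{\sim}K^{n-1}\subseteq A^{n-1}$ on $B^n$. The remaining work is then a routine check, performed summand by summand on $B^n$, $\mathcal H^n$ and $K^n$: that $i$ and $p$ are chain maps (indeed $d_A i = 0$ because $i$ lands in cocycles, and $p\,d_A = 0$ because $d_A(A^n)\subseteq B^{n+1}$, on which $p$ vanishes); that $pi = \tx{id}_{H(A)}$ by construction; and that $\tx{Id}_A - ip = d_A h + h d_A$, since on $\mathcal H^n$ all terms vanish (matching $\tx{Id}-ip = 0$), on $B^n$ only $d_A h$ contributes and equals the identity, and on $K^n$ only $h d_A$ contributes and equals the identity. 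The same formulas also yield the ``side conditions'' $hi = 0$, $ph = 0$, $h^2 = 0$, so the deformation retract is in fact a contraction in the strong sense used for homotopy transfer.

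Formality then follows immediately: $(H(A),0)$ is a minimal object of $\chk$ (a graded complex with vanishing differential), and $i$, being part of a deformation retract, is a homotopy equivalence with homotopy inverse $p$, hence a quasi-isomorphism, so $A$ is weakly equivalent to the minimal object $(H(A),0)$. I do not expect any genuine obstacle here: the complements $K^n$ and $\mathcal H^n$ in different degrees are chosen completely independently, their interaction with $d_A$ being forced by the definitions of $Z^n$ and $B^n$, so the only point requiring care is keeping track of signs in the homotopy identity (which depends on the chosen conventions). If one only wants formality and not the explicit contraction, one may instead argue abstractly from \cref{Th_ModelStructureModR}: every object of $\chk$ is both fibrant and cofibrant, so every quasi-isomorphism is invertible up to homotopy, and any quasi-isomorphism $(H(A),0)\to (A,d_A)$ constructed as above exhibits $A$ as formal; but it is precisely the explicit splitting that produces the honest deformation retract demanded by the statement.
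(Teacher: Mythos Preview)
Your proof is correct and follows essentially the same approach as the paper: both use that $k$ is a field to split $A$ degreewise as coboundaries $\oplus$ harmonic part $\oplus$ complement of cocycles, and read off $i$, $p$, $h$ from this splitting (the paper writes this as $A \cong B[1]\oplus B\oplus H(A)$ with $h = \mathrm{shift}^{-1}$, which is exactly your $K$, $B$, $\mathcal H$ and your definition of $h$). Your version is in fact more detailed, carrying out the summand-by-summand verification and noting the side conditions; just be aware that the statement as written uses the convention $\tx{Id}_A - ip = d_A h - h d_A$ rather than the $+$ you wrote, so your remark about sign conventions is well placed.
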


\begin{proof}
Using the fact that $k$ is a field we can non-canonically decompose $A$ as a direct sum $A \cong B[1] \oplus \underbrace{B\oplus H(A)}_{\ker d_A}$, where the only non-zero piece of the differential is the ``identity'' $\mathrm{shift}\colon B[1]\to B$.
The maps $p,i$  are defined in the obvious way and $h\coloneqq \mathrm{shift}^{-1}$.
\end{proof}

The natural notion of morphisms between different model categories is the notion of \defi{Quillen adjunctions}. Given $\_C$ and $\_D$ two model categories, a Quillen adjunction is an adjunction:
\[ \begin{tikzcd}
L : \_C \arrow[r, shift left] & \arrow[l, shift left] \_D : R
\end{tikzcd}\]
such that either $L$ preserves cofibrations and trivial cofibrations or equivalently $R$ preserves fibrations and trivial fibrations. This condition is enough to ensure that $L$ and $R$ induce an adjunction between the homotopy categories. Moreover a Quillen adjunction is called a \defi{Quillen equivalence} if it induces an equivalence between the homotopy categories. We refer to \cite[Corollary 1.3.16]{Ho07} for a more practical description of Quillen equivalences. \\

Given an ordinary adjunction:
\[ \begin{tikzcd}
L : \_C \arrow[r, shift left] & \arrow[l, shift left] \_D : R
\end{tikzcd}\]
{
such that $\_D$ (respectively $\_C$) is a model category. We ask whether we can \emph{transfer} the model structure on $\_C$ (respectively $\_D$) such that the adjunction is Quillen. To do so, we define the following classes on $\_C$ (respectively $\_D$):
\begin{itemize}
	\item $f$ is a cofibration in $\_C$ if and only if $L(f)$ is a cofibration in $\_D$ (respectively  $f$ is a fibration in $\_D$ if and only if $R(f)$ is a fibration in $\_C$).

	\item $f$ is a weak equivalence in $\_C$ if and only if $L(f)$ is a weak equivalence in $\_D$ (respectively  $f$ is a weak equivalence in $\_D$ if and only if $R(f)$ is a weak equivalence in $\_C$).
	\item The fibrations are determined by the lifting properties.
\end{itemize} 
If these classes define a model structure (which might not always be the case), then the model structure on $\_C$ is called the \defi{left transferred model structure} (respectively the model structure on $\_D$ is called the \defi{right transferred model structure}). Note that for these choices of classes of maps, the adjunction is automatically Quillen.}\\

Even given a Quillen adjunction $L \dashv R$  the functors $L$ and $R$ might not have good homotopical properties (such as preserving  weak equivalences). However, the properties of a Quillen adjunction ensure that these functors can be ``derived'' to a new adjunction
\[ \begin{tikzcd}
\Ll(L) : \_C \arrow[r, shift left] & \arrow[l, shift left] \_D : \Rr(R)
\end{tikzcd}\]
where $\Ll(L)$ is the \defi{left derived functor of $L$} and  $\Rr(R)$ is the \defi{right derived functor of $R$}. These derived functors preserve weak-equivalences and are obtained by precomposing $L$ and $R$ by an appropriate weakly equivalent replacement (a cofibrant replacement for $L$ and a fibrant replacement for $R$). In the rest of this survey, whenever a functorial construction is ``derived'' or when taking a ``homotopy'' pullback or pushout, the reader may keep in mind that it is the classical construction applied to an appropriate replacement. \\

Although model structures are very convenient tools to handle weak equivalences, a model structure is not always available. In such situations, namely those coming up in Section \ref{Sec_Operadic Deformation Theory}, we need to use $\infty$-categories (see \cite{Lu09}).

\subsubsection{Model structures for (co)algebras}\

\medskip

\begin{Th}[{\cite[Theorem 4.1.1]{Hi97}}]\label{thm:hinich model str}
		
	For an operad $\_P \in \Op$, consider the adjunction of Proposition \ref{Prop_FreeAlgebraOverAnOperad}, 
	\[ \_P(-): \begin{tikzcd}
	 \chk \arrow[r, shift left] & \arrow[l, shift left] \mathbf{Alg}_\_P 
	\end{tikzcd} :(-)^\sharp  \]
	
	Then we can right transfer the model structure on $\chk$ (Theorem \ref{Th_ModelStructureModR}) to a model structure on $\mathbf{Alg}_\_P$ such that a map $f$ is weak equivalence (respectively a fibration) in $\mathbf{Alg}_\_O (\chk)$ if and only if $f^\sharp$ is a quasi-isomorphism (respectively a fibration). With this model structure this adjunction is Quillen.   	
\end{Th}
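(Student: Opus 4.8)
\emph{Proof strategy.} The plan is to obtain this model structure by \emph{right transfer} along the free--forgetful adjunction $\_P(-)\dashv(-)^\sharp$, via the standard transfer criterion (the one used in \cite{Hi97}, cf. \cite{Ho07}): if $\chk$ is cofibrantly generated, $\mathbf{Alg}_\_P$ is complete and cocomplete, the domains of $\_P(I)$ and $\_P(J)$ are small, and every relative $\_P(J)$-cell complex is a weak equivalence, then the transferred classes form a model structure and the adjunction is automatically Quillen. Recall from \cref{Th_ModelStructureModR} that $\chk$ is cofibrantly generated with generating cofibrations $I=\{S^{n-1}\hookrightarrow D^{n}\}_{n\in\Zz}$ and generating trivial cofibrations $J=\{0\to D^{n}\}_{n\in\Zz}$, where $D^{n}$ is the acyclic two-term complex with identity differential. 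Since $\mathbf{Alg}_\_P$ is the category of algebras over the monad $\_P\circ-$ on $\chk$, and this monad preserves filtered colimits, $\mathbf{Alg}_\_P$ is complete and cocomplete, the functor $(-)^\sharp$ preserves filtered colimits, and every object of $\chk$ being small, the domains $\_P(S^{n-1})$ and $\_P(0)$ are small in $\mathbf{Alg}_\_P$. So everything reduces to the one genuinely non-formal point: that relative $\_P(J)$-cell complexes are weak equivalences.

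For this I would first note that, by adjunction, each $\_P(0\to D^{n})$ has the \LLP\ \wrt\ every fibration of $\mathbf{Alg}_\_P$: indeed $g$ is a fibration iff $g^\sharp$ is degreewise surjective iff $g^\sharp$ has the \RLP\ \wrt\ $\{0\to D^{n}\}_{n\in\Zz}$, and $\_P(0\to D^n)$ lifts against $g$ iff $0\to D^n$ lifts against $g^\sharp$. Since having the \LLP\ \wrt\ a fixed class is closed under coproducts, pushouts and transfinite composition, the whole class of relative $\_P(J)$-cell complexes has the \LLP\ \wrt\ all fibrations. It therefore suffices to show that any $i\colon A\to A'$ with the \LLP\ \wrt\ all fibrations is a quasi-isomorphism on underlying complexes. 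Every object of $\mathbf{Alg}_\_P$ is fibrant, since the terminal object is the zero complex and the unique map to it is degreewise surjective; filling the square with left map $i$, top map $\id_A$ and right map $A'\to 0$ produces a retraction $r\colon A'\to A$ with $r\circ i=\id_A$, so $i^\sharp$ is a split monomorphism.

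It remains to promote this to a chain homotopy equivalence, and this is where characteristic zero enters. For a $\_P$-algebra $B$ set $\mathrm{Path}(B):=B\otimes k[t,dt]$, the tensor product of $B$ with the commutative algebra $k[t,dt]$ of polynomial de Rham forms on the line; this is again a $\_P$-algebra, with $p\in\_P(n)$ acting on $(B\otimes k[t,dt])^{\otimes n}$ by $p$ on the $B$-factors and iterated multiplication on the $k[t,dt]$-factors. The unit $B\to\mathrm{Path}(B)$ is a weak equivalence because $k\to k[t,dt]$ is a quasi-isomorphism --- this fails in positive characteristic, since $f\mapsto f'$ is not surjective there --- while $(\mathrm{ev}_0,\mathrm{ev}_1)\colon\mathrm{Path}(B)\to B\times B$ is a fibration since $k[t,dt]\to k\oplus k$ is degreewise surjective; thus $\mathrm{Path}(B)$ is a functorial path object. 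Applying this with $B=A'$, and using that $\id_{A'}$ and $i\circ r$ agree after precomposition with $i$, a lift against $(\mathrm{ev}_0,\mathrm{ev}_1)$ of the evident square yields a homotopy $\id_{A'}\simeq i\circ r$; hence $i^\sharp$ is a chain homotopy equivalence, in particular a quasi-isomorphism. This completes the verification, and the transfer criterion yields the asserted model structure together with the fact that the adjunction is Quillen (with generating (trivial) cofibrations $\_P(I)$ and $\_P(J)$). The main obstacle is precisely this last step --- controlling the homotopy type of a pushout along a free map --- which we circumvent by means of the path object $k[t,dt]$; the alternative is an explicit filtration argument on $\_P\circ(A\oplus D^{n})$, which is considerably more computational.
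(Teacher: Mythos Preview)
The paper does not actually give a proof of this theorem: it is stated with a citation to \cite[Theorem 4.1.1]{Hi97} and no argument is supplied. So there is no in-paper proof to compare against.

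That said, your argument is correct and is essentially Hinich's original one. The key non-formal step --- verifying that maps with the \LLP\ against all fibrations are weak equivalences --- is handled exactly as in \cite{Hi97}, via the functorial path object $B\otimes k[t,dt]$ (which is what makes characteristic zero relevant). Your retraction-plus-path-object maneuver to produce a chain homotopy equivalence is the standard trick, and the lifting square you describe commutes precisely because $r\circ i=\id_A$ forces $(\id_{A'},i\circ r)\circ i=(i,i)$, matching the constant path. One cosmetic remark: you could shortcut slightly by observing that the transfer criterion only requires relative $\_P(J)$-cell complexes to be weak equivalences, and these can be analyzed directly as pushouts of the form $A\to A\amalg \_P(D^n)$; but your route via ``\LLP\ against all fibrations $\Rightarrow$ weak equivalence'' is cleaner and is what Hinich does.
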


\begin{RQ}\label{RQ_NonFormalityPAlgebra}
Let us note that if $\_P$ has no differential, then $H(A)$ naturally has a $\_P$-algebra structure.
	While every cochain complex is formal, we can ask whether a $\_P$-algebra is formal (Definition \ref{Def_FormalityGeneral}), that is, do we have a zig-zag of quasi-isomorphisms from $A$ to its homology \emph{in the category of $\_P$-algebras}.  %In that case, we can ask whether there is a weak equivalence of $\_P$-algebra between $A$ and $H(A)$. \albi{More generally for an arbitrary $\_P$ can we ask for an weak equivalence between $A$ and $H(A)$ of $\_P$-algebras for \underline{a} $\_P$-algebra structure on $H(A)$.}	\ricardo{I very much disagree with this sentence.}
			It turns out that the answer is that not all $\_P$-algebras are formal. %\cite{}\ricardo{Add citation to Alexander's minicourse once they have the final version.}.	 \\
		 On the other hand, we will see that the homotopy transfer theorem (Section \ref{Sec_HTT}) will provide a \emph{homotopy} $\_P$-algebra structure on $H(A)$, which in a way quantifies the defect of formality to be satisfied. This extended structure on $H(A)$ will be weakly equivalent to $A$.

\end{RQ}

%\oldtex{\begin{RQ}	\label{RQ_NonQuasiInvertiblityPAlgebra}\ricardo{this remark is now looking too elementary compared with the text. Why not before all the model cat business? Should at least appear before formality I suppose.}\albi{I would delete this remark. RK: This remark is not cited anywhere so feel free to delete it if you want. }
%		In the category of chain complexes (over a field), every quasi-isomorphism $A \stackrel{\sim}{\to} B$ is quasi-invertible, in the sense that there exists $B \stackrel{\sim}{\to} A$ such that the two composites induce the identity in homology. This is essentially due to the fact that in $\chk$, all objects are both fibrant and cofibrant. Furthermore, every cochain complex def retracts into their homology, as explained in Proposition \ref{Prop_FormalityOfCHKAndDeformationRetractToCohomology}.
%		This is no longer true for algebras over an operad $\_P$. For instance, take the commutative algebra $A=(k[x,y],dy=x^2)$, with $\deg x=0$, and notice that $A$ projects via a quasi-isomorphism to its homology $A \to k[x]/(x^2)$, but there is no quasi-isomorphism in the other direction.
%\end{RQ}}

The model structures on algebras over operads are compatible with change of operads in the following sense: Recall from Proposition \ref{Prop_NaturalityAlgebraAdjunction} that a morphism of operads $f\colon \_P \to \_Q$ induces an adjunction:
	\[  \begin{tikzcd}
	f_! : \mathbf{Alg}_\_P \arrow[r, shift left] & \arrow[l, shift left] \mathbf{Alg}_\_Q : f^*
\end{tikzcd} \] 

\begin{Th}[{\cite[Theorem 4.6.4]{Hi97}}]\label{thm:q.i. operads}
	
	The adjunction above is Quillen. Furthermore, if $f\colon \_P \to \_Q$ is a quasi-isomorphism, this adjunction is a Quillen equivalence.
\end{Th}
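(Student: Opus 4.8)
The plan is to reduce the statement to a claim about free algebras and then propagate it to all cofibrant algebras along the cellular filtration. For the assertion that $f_!\dashv f^*$ is Quillen: by \cref{thm:hinich model str} both model structures are right-transferred from $\chk$, so a morphism of $\_P$-algebras (resp.\ $\_Q$-algebras) is a fibration, resp.\ a weak equivalence, exactly when the underlying map of cochain complexes is degreewise surjective, resp.\ a quasi-isomorphism. Since $f^*$ is the identity on underlying complexes it preserves fibrations and trivial fibrations, which is precisely the condition for $f_!\dashv f^*$ to be Quillen. Two further consequences of the same observation will be used below: $f^*$ not only preserves but also \emph{reflects} all weak equivalences, and every object of $\mathbf{Alg}_\_P$ and $\mathbf{Alg}_\_Q$ is fibrant (the terminal object is $0$ and every map of complexes to $0$ is degreewise surjective), so no fibrant replacement is ever needed.

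Now suppose $f$ is a quasi-isomorphism. By the recognition criterion for Quillen equivalences (\cite[Corollary 1.3.16]{Ho07}), using that $f^*$ reflects weak equivalences and that all objects are fibrant, it suffices to prove that the unit $\eta_B\colon B\to f^*f_!B$ is a quasi-isomorphism for every \emph{cofibrant} $\_P$-algebra $B$, where $f^*f_!B$ has underlying complex the relative composition product $\_Q\circ_\_P B$ of \cref{Prop_NaturalityAlgebraAdjunction}. I would first check this on a free algebra $B=\_P(V)$, $V\in\chk$: composing left adjoints gives $f_!\_P(V)\cong\_Q(V)$, and $\eta_{\_P(V)}$ is the map $f\circ\id_V\colon\_P\circ V\to\_Q\circ V$, which in arity $n$ is $(f(n)\otimes\id_{V^{\otimes n}})_{\Sigma_n}$. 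Since $\mathrm{char}\,k=0$, the functor of $\Sigma_n$-coinvariants is exact, so by the Künneth formula $H(\_P\circ V)\cong\bigoplus_{n}(H(\_P(n))\otimes H(V)^{\otimes n})_{\Sigma_n}$ and similarly for $\_Q$; as each $f(n)$ is a quasi-isomorphism, so is $\eta_{\_P(V)}$.

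To pass from free algebras to arbitrary cofibrant ones, consider the class of $\_P$-algebras $B$ for which $\eta_B$ is a quasi-isomorphism. It contains every free algebra (in particular the initial object $\_P(0)=\_P\circ 0$), and it is closed under retracts and under transfinite composition along cofibrations: this is because $f_!$ is a left adjoint, $f^*$ commutes with filtered colimits, the underlying maps of cofibrations of algebras are degreewise split monomorphisms (again using $\mathrm{char}\,k=0$), and filtered colimits of complexes are exact. The one remaining and genuinely substantial point is closure under cobase change along a generating cofibration: if $B$ lies in this class and $B'=B\sqcup_{\_P(M)}\_P(N)$ for a cofibration $M\hookrightarrow N$ of complexes, then $B'$ lies in it. Granting this, the cellular presentation of cofibrant objects closes the argument.

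That pushout step is the main obstacle. I would treat it with the classical filtration $B=B_0\hookrightarrow B_1\hookrightarrow\cdots$, $B'=\colim_i B_i$, whose subquotient $B_i/B_{i-1}$ is given by an explicit coend/tree formula built functorially from $\_P$, the algebra $B$, and the cokernel $N/M$ (schematically: $\_P$-decorated trees with exactly $i$ leaves labelled by $N/M$, the other inputs fed from $B$). Applying the left adjoint $f_!$ to this filtration produces the analogous filtration with $\_P$ replaced by $\_Q$, and the induced comparison on $i$-th subquotients is once more a quasi-isomorphism, by exactness of symmetric-group (co)invariants in characteristic $0$, the Künneth formula, the hypothesis that $f$ is an arity-wise quasi-isomorphism, and the inductive hypothesis that $\eta_B$ is a quasi-isomorphism. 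Taking the colimit over $i$ then gives that $\eta_{B'}$ is a quasi-isomorphism. Setting up this filtration and identifying its subquotients precisely is the technical heart of the proof; everything else is formal model-categorical bookkeeping, all of which is carried out in \cite{Hi97}.
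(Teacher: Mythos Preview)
Your argument for the Quillen adjunction is exactly what the paper gives: it observes in one line that $f^*$ preserves the underlying complex, hence fibrations and weak equivalences, and concludes. For the Quillen equivalence the paper provides no argument at all, simply citing \cite[Theorem 4.6.4]{Hi97}, whereas you actually sketch the proof. Your sketch is correct and follows the standard line of Hinich's original argument: reduce to showing the unit is a quasi-isomorphism on cofibrant algebras, verify this on free algebras via K\"unneth and exactness of $\Sigma_n$-coinvariants in characteristic zero, and propagate along the cell filtration of a cofibrant object using the tree/coend description of the associated graded of a cell attachment. So you have supplied strictly more than the paper does, and what you supply is the argument the cited reference carries out.
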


The first assertion follows from the fact that the right adjoint preserves the underlying complex. Therefore it sends fibrations to fibrations and weak equivalences to weak equivalences.

\begin{War}\label{war:abelianization}
While the right adjoint will always preserve quasi-isomorphisms, the left adjoint has no reason to do so.

As an example, take the canonical map $f\colon \mathbf{Ass} \to \mathbf{Com}$. The induction $f_!$ takes an associative algebra to its abelianisation. Let $A$ be the quasi-free\footnote{Free as a graded algebra, if we forget the differential.} associative algebra,
\[
A= T_k(x,y,z), dx=dy=0, dz=xy-yx.
\]
The algebra $A$ is quasi-isomorphic to its own homology $H(A) = \Sym_k(x,y)$. But the (homology of the) abelianization of $A$ is $\Sym_k(x,y,z)$.
\end{War}

\subsubsection{Model structure on operads}\

\medskip

In this Section, we introduce a model structure on operads. This model structure is obtained via transfer from the model structure on $\symseq$. {This discussion would also extend when replacing $\chk$ by a ``good enough'' model  category such as $\mathbf{sSet}$ or $\Mod_A$ for example (see \cite{BM03}). }

%\oldtex{The first step is to notice that there is a natural model structure on $\symseq$ obtained from the one on $\chk$:
%
%\begin{Prop}\label{Prop_ModelStructureSymmetricSequences}\albi{I am not sure this is needed at all. Except for Remark 1.49}
%	There is a model structure on $\symseq=\tx{Fun}\left( \mathbb N^\sim, \chk \right)$ called the projective model structure on the functor category. A natural transformation is a a weak equivalence (resp. fibration) if it is so object-wise. In particular $f : M \rightarrow N$  is a weak equivalence (resp. fibration) if and only if for all $n \in \Nn$, $f_n : M(n) \rightarrow N(n)$ are quasi-isomorphisms (resp. surjections).
%\end{Prop}
%}

\begin{Th}[{\cite[Theorem 3.2]{BM03}}] \label{Th_ModelStructureOnOperads}

	There is a a cofibrantly generated model structure on the category $\Op$ such that a  morphism $\_P \rightarrow \_Q$ is a weak equivalence (respectively fibration) if for all $n \in \Nn$ the maps $\_P(n) \rightarrow \_Q(n)$ are weak-equivalences (respectively fibrations) in the projective model structure on $\chk$.
\end{Th}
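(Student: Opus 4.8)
The plan is to produce this model structure by \emph{transfer} along the free--forgetful adjunction $\T\colon\symseq\rightleftarrows\Op\colon U$ of Construction~\ref{Cons_Operad}. First I would endow $\symseq$ with its projective model structure: since $\Nn^\sim$ is a groupoid whose components have automorphism groups $\Sigma_n$, there is an equivalence $\symseq\simeq\prod_{n\geq 0}\Mod_{k[\Sigma_n]}$, so Theorem~\ref{Th_ModelStructureModR} applied to each group ring $k[\Sigma_n]$ gives a cofibrantly generated model structure on $\symseq$ in which $f\colon M\to N$ is a weak equivalence (resp. fibration) precisely when each $f_n\colon M(n)\to N(n)$ is a quasi-isomorphism (resp. degreewise surjection). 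Write $I$ and $J$ for the sets of generating cofibrations and generating trivial cofibrations, obtained arity by arity from the standard generators of $\Mod_{k[\Sigma_n]}$.

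Next I would record that $\Op$ is the category of algebras over the free-operad monad $\T$ on $\symseq$, that $\T$ is assembled from colimits indexed by finite trees and hence preserves filtered colimits, and so $\Op$ is locally presentable; in particular it is complete and cocomplete and $U$ creates filtered colimits. We are therefore in the setting of the transfer theorem (Kan's lemma on transferred model structures): declaring $g$ in $\Op$ to be a weak equivalence (resp. fibration) exactly when $U(g)$ is one in $\symseq$, it suffices to check that the domains of $\T(I)$ and $\T(J)$ are small (automatic by local presentability) and that every transfinite composite of pushouts of maps in $\T(J)$ is a weak equivalence. Granting this, one obtains a cofibrantly generated model structure on $\Op$ with generating (trivial) cofibrations $\T(I)$ (resp. $\T(J)$), and the characterization of weak equivalences and fibrations in the statement holds by definition.

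The heart of the matter---and the step I expect to be the main obstacle---is this acyclicity condition. To verify it I would analyze a single pushout
\[\begin{tikzcd}
\T(M) \ar[r]\ar[d] & \_P \ar[d]\\
\T(N) \ar[r] & \_P'
\end{tikzcd}\]
with $M\to N$ in $J$ (hence a trivial cofibration in $\symseq$) by exhibiting $\_P'$ as the colimit of a filtration $\_P=P_0\to P_1\to P_2\to\cdots$, where $P_r$ is obtained from $P_{r-1}$ by a pushout along a map built, via operadic composition in $\_P$, from an $r$-fold pushout-product of $M\to N$ over trees carrying $r$ new vertices, after passing to coinvariants for the symmetric group permuting those vertices together with their inputs. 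The key point is that each of these building maps is again a trivial cofibration in $\symseq$: tensoring with a trivial cofibration preserves trivial cofibrations by monoidality of $\chk$, and---crucially---taking $\Sigma_m$-(co)invariants is exact and preserves (trivial) cofibrations because $k$ has characteristic zero, so every group ring $k[\Sigma_m]$ is semisimple. Hence each $P_{r-1}\to P_r$, and therefore the composite $\_P\to\_P'$, is a trivial cofibration in $\symseq$; the same argument is stable under further transfinite composition, which establishes the acyclicity condition.

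Assembling these ingredients via the transfer theorem yields the model structure and its cofibrant generation, completing the proof. I would close by remarking that the characteristic-zero hypothesis---in force throughout---is genuinely needed here: it is exactly what makes the symmetric-group coinvariants appearing in the tree filtration homotopically harmless, i.e. what makes $\chk$ an \emph{admissible} base in the sense of \cite{BM03}; in positive characteristic one must instead restrict to $\Sigma$-cofibrant operads.
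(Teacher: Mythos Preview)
Your proposal is correct and follows the standard transfer argument of \cite{BM03}; note, however, that the paper does not actually supply a proof of this theorem---it simply cites \cite[Theorem 3.2]{BM03} and records in the subsequent remark that the model structure is right-transferred along the free--forgetful adjunction $\T\dashv U$ from the projective model structure on $\symseq$. Your sketch fills in exactly the details behind that remark (tree filtration of a pushout of free operads, plus the characteristic-zero semisimplicity of $k[\Sigma_m]$ to handle coinvariants), so there is nothing to compare: you have written out the argument that the paper only points to.
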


\begin{RQ}
	The free operad functor is left adjoint adjoint to the forgetful functor $\Op \rightarrow \symseq$ where the model structure $\symseq$ is the projective model structure on the functor category $\tx{Fun}\left( \mathbb N^\sim, \chk \right)$, where fibrations and weak-equivalences are defined object-wise. This forgetful functor clearly preserves fibrations and trivial fibrations which makes it a right Quillen functor. In fact the model structure on $\Op$ is left transferred from the model structure on $\symseq$ via the free-forget adjunction. 
\end{RQ}

\begin{RQ} The original version on Theorem 3.2 in \cite{BM03} is a more general version of this theorem giving a model structure on the category of operads valued in any  closed model category (with some additional technical assumptions).  
\end{RQ}

\begin{RQ}\label{RQ_ModelStructureOnCooperad}
	There is no natural model structure on cooperads having weak equivalences given by all quasi-isomorphisms and making the bar-cobar adjunction Quillen. This comes from the fact that the cobar construction does not preserve quasi-isomorphisms. 
\end{RQ}

Despite this last remark, the bar-cobar adjunction is well behaved with respect to the model structure on operads. 

\begin{Th}[{\cite[Theorem 6.6.3]{LV}}] \label{Th_BarCobarUnitCounitQI}

	Both the unit $\epsilon : \Omega \mathbf{B} \_P \rightarrow \_P$ and the counit $\eta : \_C \rightarrow \mathbf{B} \Omega \_C$ of the bar-cobar adjunction are quasi-isomorphisms. 
\end{Th}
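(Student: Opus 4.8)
The plan is to prove that both the unit $\epsilon\colon \Omega\mathbf{B}\_P \to \_P$ and counit $\eta\colon \_C \to \mathbf{B}\Omega\_C$ are quasi-isomorphisms by exhibiting an explicit contracting homotopy, following the classical Koszulness argument for the bar-cobar resolution (as in \cite[Section 6.6]{LV}). The key observation is that $\Omega\mathbf{B}\_P$ and $\mathbf{B}\Omega\_C$ carry a natural extra grading — a \emph{syzygy degree} or \emph{weight} coming from the number of vertices in the underlying trees — and the internal differential splits with respect to a filtration by this grading. First I would treat the unit $\epsilon\colon \Omega\mathbf{B}\_P \to \_P$. The underlying symmetric sequence of $\Omega\mathbf{B}\_P$ is $\T(\overline{\T^{\mathrm{co}}(\overline{\_P}[1])}[-1])$, i.e. trees of trees; the differential has three pieces: $d_\_P$ (the internal differential of $\_P$), $d_\mathbf{B}$ (contracting an edge inside a $\mathbf{B}$-tree via $\mu_\_P$) and $d_\Omega$ (splitting an $\Omega$-vertex via the cocomposition of $\mathbf{B}\_P$).

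The main step is to filter $\Omega\mathbf{B}\_P$ by a suitable weight (e.g. total number of vertices of the inner $\mathbf{B}$-trees, or the opposite) so that the associated graded differential is exactly $d_\mathbf{B}$, and then show that $(\Omega\mathbf{B}\_P, d_\mathbf{B})$, as an augmented complex over $\_P$, is acyclic. Concretely, I would build a contracting homotopy $h$ on the augmentation-reduced complex: given a tree of $\mathbf{B}$-trees, $h$ should ``create a new $\Omega$-level by splitting off the bottom $\mathbf{B}$-vertex'', the classical trick being that $d_\mathbf{B} h + h d_\mathbf{B} = \id$ up to the augmentation. One checks this by a direct but careful sign computation using the coassociativity of cocomposition and the associativity of $\mu_\_P$. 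Since $k$ has characteristic zero and all complexes involved are, arity by arity, bounded complexes of $k$-vector spaces (so the filtration is exhaustive and bounded below in each arity and weight), the spectral sequence of the filtration converges, and acyclicity on the $E_1$-page (after accounting for $d_\_P$, which acts compatibly) forces $\epsilon$ to be a quasi-isomorphism in every arity. The counit $\eta\colon \_C \to \mathbf{B}\Omega\_C$ is then handled by the dual argument: $\mathbf{B}\Omega\_C = \T^{\mathrm{co}}(\overline{\T(\overline{\_C}[-1])}[1])$, filter by the number of $\Omega$-vertices, identify the associated graded differential with the piece coming from $d_\Omega$, and exhibit the dual contracting homotopy that absorbs a vertex; conilpotency of $\_C$ (Definition \ref{Def_ConilpotentCooperad}) is what guarantees the relevant filtration is exhaustive and the spectral sequence converges.

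The hard part will be the bookkeeping: getting the signs right through the degree shifts $[-1]$ and $[1]$ and verifying that the homotopy $h$ is well-defined on coinvariants/invariants and compatible with the $\Sigma_n$-actions, so that everything takes place in $\symseq$ rather than just arity-wise in $\chk$. A secondary subtlety is checking convergence of the spectral sequences: one must argue that in each fixed arity $n$ the extra grading is bounded (only finitely many trees with a bounded number of leaves), so the filtration is finite and no $\lim^1$ issues arise — this is where conilpotency of $\_C$ and the hypothesis $\_P(0)=0$-type conditions (or at least reduced (co)operads, cf.\ Remark \ref{rem:A remark on augmentations}) enter. Rather than reproduce the full computation, I would organize the proof as: (1) set up the two weight gradings and filtrations; (2) state the associated-graded complexes; (3) write down the contracting homotopies and verify the homotopy identity on generators, extending by (co)derivation; (4) invoke the convergence of the spectral sequence together with Proposition \ref{Prop_FormalityOfCHKAndDeformationRetractToCohomology} to pass from $E_1$-acyclicity to a genuine quasi-isomorphism; and (5) refer to \cite[Sections 6.5--6.6]{LV} for the details of the sign verification.
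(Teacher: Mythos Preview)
Your overall strategy --- filter $\Omega\mathbf{B}\_P$ by a weight grading, identify the associated graded differential, exhibit a contracting homotopy there, and conclude by convergence of the spectral sequence --- is exactly the paper's approach. However, your identification of the surviving differential is backwards, and this is not cosmetic: with your choice the argument does not close.

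With the filtration you name (total number of $\_P$-vertices inside the inner $\mathbf{B}$-trees), the piece $d_\mathbf{B}$ \emph{strictly decreases} that weight (it contracts an inner edge, killing a vertex), while $d_\Omega$ \emph{preserves} it (splitting a $\mathbf{B}$-tree into two along an inner edge turns one inner edge into one outer edge but keeps the total number of $\_P$-vertices fixed). Hence on the associated graded you are left with $d_\_P + d_\Omega$, not $d_\mathbf{B}$. This matters: if you instead filter so that only $d_\_P+d_\mathbf{B}$ survives (e.g.\ by the number of outer edges), the associated graded is a free operad on copies of $(\mathbf{B}\_P,d_\mathbf{B})$, whose homology is highly nontrivial (it computes the Koszul dual), so no simple contracting homotopy exists and you would be forced into a much harder higher-page argument.

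The correct homotopy, and the one the paper sketches, is therefore the inverse of the ``inner edge $\to$ outer edge'' move: given a tree of $\mathbf{B}$-trees, pick an \emph{outer} edge and make it \emph{inner} (merge the two adjacent $\mathbf{B}$-clusters into one). This is well-defined because inner and outer edges live in the same underlying $\_P$-labelled tree and differ only by the clustering data; it gives $d_\Omega h + h d_\Omega = \id$ on the reduced complex. Your description ``create a new $\Omega$-level by splitting off the bottom $\mathbf{B}$-vertex'' points in the opposite direction and would not invert $d_\mathbf{B}$ anyway (you cannot un-multiply in $\_P$). Once you swap $d_\mathbf{B}$ for $d_\Omega$ and use the outer-to-inner homotopy, the rest of your outline (convergence, $\Sigma_n$-equivariance, dual argument for $\eta$ using conilpotency) is fine and matches the paper.
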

\begin{proof}[Sketch of proof]
Basis elements of $\Omega \mathbf B \_P$ can be seen as trees whose vertices are themselves (``inner'') trees whose vertices are labeled by $\_P$. Filtering by the number of inner edges we recover at the level of the associated graded only the piece of the differential corresponding to the one from $\_P$ and a second one making an inner edges into an outer edge. 
One can check that the associated graded retracts into $\_P$ by constructing a homotopy that makes an outer edge into an inner edge.
A similar argument shows that $\_C \to \mathbf{B} \Omega \_C$ is a quasi-isomorphism.
\end{proof}

\begin{Prop}[{\cite[Proposition 6.5.3]{LV}}]\label{Prop_BPreservesQI} 	
	The functor $\mathbf{B}$ preserve quasi-isomorphisms.
\end{Prop}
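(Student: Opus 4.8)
The plan is to use the standard filtration of the bar construction by the number of vertices of a tree and to reduce the statement to a Künneth argument on the associated graded, in the spirit of the proof sketch of \cref{Th_BarCobarUnitCounitQI}.

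Let $f\colon \_P \to \_Q$ be a quasi-isomorphism of augmented operads. First I would note that $f$ restricts to a quasi-isomorphism $\overline{f}\colon \overline{\_P} \to \overline{\_Q}$ of augmentation ideals --- the only arity touched by the quotient $-/I$ is arity $1$, where quasi-isomorphy persists upon quotienting out the copy of $k$ spanned by the unit --- and hence the shift $\overline{f}[1]\colon \overline{\_P}[1] \to \overline{\_Q}[1]$ is a quasi-isomorphism as well. On underlying symmetric sequences, $\mathbf{B}f\colon \Tc(\overline{\_P}[1]) \to \Tc(\overline{\_Q}[1])$ is simply the map of cofree conilpotent cooperads induced by $\overline{f}[1]$; the content of the proposition is that it stays a quasi-isomorphism once the extra differential $d_{\mathbf{B}}$, which contracts one internal edge of a tree via a partial composition of $\_P$ (resp.\ $\_Q$), is taken into account.

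Next I would equip $\mathbf{B}\_P$ with the increasing filtration where $F_p\mathbf{B}\_P$ is spanned by trees with at most $p$ internal vertices. The internal differential $d_{\_P}$ preserves the number of vertices, while $d_{\mathbf{B}}$ strictly decreases it by one; therefore $F_\bullet\mathbf{B}\_P$ is an exhaustive filtration by subcomplexes, plainly compatible with $\mathbf{B}f$, and in the reduced setting relevant here (where $\overline{\_P},\overline{\_Q}$ are concentrated in arities $\geq 2$) a tree with $n$ leaves has at most $n-1$ vertices, so it is bounded in each arity and the associated spectral sequence converges arity by arity. On the associated graded $\mathrm{gr}_p\mathbf{B}\_P = \faktor{F_p\mathbf{B}\_P}{F_{p-1}\mathbf{B}\_P}$ only $d_{\_P}$ survives, so as a complex $\mathrm{gr}\,\mathbf{B}\_P$ is the direct sum, over isomorphism classes of trees $T$, of $\bigotimes_{v\in T}\overline{\_P}[1]$ with its vertexwise differential and the appropriate induced $\Sigma_n$-actions, and $\mathrm{gr}\,\mathbf{B}f$ is $\bigoplus_T \bigotimes_{v\in T}\overline{f}[1]$.

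Since we work over a field, the Künneth theorem shows each $\bigotimes_{v\in T}\overline{f}[1]$ is a quasi-isomorphism, and a direct sum of quasi-isomorphisms is again one; hence $\mathrm{gr}\,\mathbf{B}f$ is a quasi-isomorphism, i.e.\ $\mathbf{B}f$ is an isomorphism on the $E^1$-page. From here I would conclude either by convergence of the arity-wise bounded spectral sequence, or, more elementarily, by induction on $p$: $\mathbf{B}f$ is already a quasi-isomorphism on $F_0\mathbf{B}\_P = I$, and the long exact cohomology sequences of $0 \to F_{p-1}\mathbf{B}\_P \to F_p\mathbf{B}\_P \to \mathrm{gr}_p\mathbf{B}\_P \to 0$ together with the five lemma propagate being a quasi-isomorphism from $F_{p-1}$ and $\mathrm{gr}_p$ to $F_p$; passing to the colimit over $p$ (exact on cochain complexes) yields that $\mathbf{B}f$ is a quasi-isomorphism. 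The main obstacle is purely bookkeeping about convergence of the filtration: this is automatic for reduced operads, while in the general coaugmented conilpotent setting one must appeal to the same finiteness/conilpotency hypotheses under which $\mathbf{B}$ is defined, after which the ``quasi-isomorphism on associated graded implies quasi-isomorphism'' principle applies verbatim.
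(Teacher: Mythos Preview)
Your argument is correct and is essentially the standard proof from \cite[Proposition 6.5.3]{LV}, which the paper cites without reproducing: filter $\mathbf{B}\_P$ by the number of internal vertices so that on the associated graded only the internal differential survives, apply K\"unneth vertexwise, and conclude either by the bounded spectral sequence or by the five-lemma/colimit induction you spell out. One small inaccuracy: the paper does \emph{not} assume $\overline{\_P}$ is concentrated in arities $\geq 2$ at this point (that reducedness hypothesis only enters later, in the Koszul duality section), so the filtration need not be bounded in each arity; but as you correctly note, the five-lemma plus exactness-of-filtered-colimits route handles the general case without any boundedness, so this does not affect the validity of your proof.
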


In general $\Omega$ will not preserve all quasi-isomorphisms. It preserves quasi-isomorphisms between non-negatively graded cooperads such that $\_C(0) = 0$ and $\_C(1) = k$ (see \cite[Proposition 6.5.6]{LV}). 

\begin{Def} \label{Def_KoszulTwistingMorphisms}\ 
	\begin{itemize}
		\item A twisting morphism (Definition \ref{Def_TwistingMorphisms}) $\alpha : \_C \dashrightarrow \_P$ is said to be \defi{Koszul} if the induced map $\Omega \_C \rightarrow \_P$ is a quasi-isomorphism. The full sub-category of $\mathbf{Tw}$ generated by Koszul twisting morphisms will be denoted by $\mathbf{Kos}$.
		\item A twisting morphism $\alpha : \_C \dashrightarrow \_P$ is said to be \defi{weakly Koszul} if the induced map $ \_C \rightarrow \mathbf{B}\_P$ is a quasi-isomorphism.
		\item The counit morphism $\Omega \mathbf{B} \_P \rightarrow \_P$ is a quasi-isomorphism (see Theorem \ref{Th_BarCobarUnitCounitQI}) and induces the \defi{universal twisting morphism} $ \mathbf{B}\_P \rightarrow \_P \in \mathbf{Tw}$ (see Section \ref{sec:algebraic-bar-cobar-adjunction}).
	\end{itemize}
\end{Def}

Since the bar construction preserves quasi-isomorphisms (see Proposition \ref{Prop_BPreservesQI}), Koszul morphisms induce a quasi-isomorphism $\mathbf{B}\Omega \_C \rightarrow \mathbf{B}\_P $ and are therefore weakly Koszul. There is also a version of the bar-cobar adjunction between (co)algebras associated to a given twisting morphism.  

\begin{Prop}\label{Prop_BarCoBarCofibrantReplacement}
	If $\_C \in \coOp$ satisfies $\_C(0)=0$ and $\_C(1)= k$ then $\Omega \_C$ is cofibrant. In particular, if $\_P \in \Op$ satisfies the same conditions, the unit map $\Omega \mathbf{B} \_P \rightarrow \_P$ is a cofibrant resolution of $\_P$.   
\end{Prop}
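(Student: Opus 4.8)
The plan is to reduce everything to showing that under the stated hypotheses, $\Omega\_C$ is a cofibrant object of $\Op$, i.e. that the unique map $I \to \Omega\_C$ is a cofibration. By Theorem \ref{Th_ModelStructureOnOperads} the model structure on $\Op$ is cofibrantly generated and left-transferred from the projective model structure on $\symseq$ along the free-operad adjunction $\T \dashv (\text{forget})$. So the first step is to recall that, by the general theory of cofibrantly generated transferred model structures, the generating cofibrations of $\Op$ are the maps $\T(f)$ for $f$ a generating cofibration of $\symseq$, and that a map built as a (possibly transfinite) composition of pushouts of such maps is a cofibration; in particular any operad obtained from $I$ by freely attaching cells is cofibrant.

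Next I would make precise the sense in which $\Omega\_C$ is ``quasi-free''. As recalled just after Definition \ref{Def_BarCobarConstruction}, the underlying symmetric sequence of $\Omega\_C = \bigl(\T(\overline{\_C}[-1]), d_\_C + d_\Omega\bigr)$ is the free operad on the generating sequence $E \coloneqq \overline{\_C}[-1]$; only the differential sees the cooperad structure. The key point is that the internal differential $d_\Omega$ is \emph{triangular} with respect to the filtration by number of internal vertices (equivalently by ``weight''): $d_\Omega$ strictly increases the number of vertices of a tree, since a partial cocomposition of a generator $c$ produces a two-vertex tree. Because $\_C(0)=0$ and $\_C(1)=k$, the generating sequence $E$ is concentrated in arities $\ge 2$ and in each arity $n$ only finitely many vertices can occur, so this filtration is exhaustive and bounded in each arity. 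The plan is therefore to present $I \to \Omega\_C$ as a transfinite composite of pushouts of free maps, one stage for each filtration degree $p$: at stage $p$ one attaches the generators living in $E$-trees with $p$ vertices, the attaching map being forced by $d_\Omega$ to land in the already-constructed sub-operad (which contains all strictly-lower-weight trees). Concretely, writing $F_p\Omega\_C$ for the sub-operad generated by trees of weight $\le p$, I would check that each inclusion $F_{p-1}\Omega\_C \hookrightarrow F_p\Omega\_C$ is a pushout of a coproduct of generating cofibrations of $\Op$ (attaching cells indexed by an additive basis of the weight-$p$ part of $E$-trees), using that $d_\Omega$ sends such a generator into $F_{p-1}\Omega\_C$. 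Taking the colimit over $p$ gives $\Omega\_C$, and a transfinite composite of cofibrations is a cofibration, so $\Omega\_C$ is cofibrant. This is essentially the standard argument that the cobar construction is cofibrant; one may alternatively cite \cite[Appendix B / Section 6.5]{LV} for the analogous statement, but the filtration argument above is the honest proof.

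Finally, the ``in particular'' clause is immediate: if $\_P$ satisfies $\_P(0)=0$ and $\_P(1)=k$, then by Proposition \ref{Prop_BPreservesQI}-adjacent bookkeeping the bar construction $\mathbf B\_P = (\Tc(\overline{\_P}[1]), d_\_P + d_\mathbf B)$ is a conilpotent cooperad with $(\mathbf B\_P)(0)=0$ and $(\mathbf B\_P)(1)=k$ (the only arity-$1$ trees have no internal vertices), so the first part applies to $\_C = \mathbf B\_P$ and $\Omega\mathbf B\_P$ is cofibrant; by Theorem \ref{Th_BarCobarUnitCounitQI} the counit $\Omega\mathbf B\_P \to \_P$ is a quasi-isomorphism, hence a weak equivalence in $\Op$, exhibiting $\Omega\mathbf B\_P$ as a cofibrant resolution of $\_P$. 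The main obstacle is the bookkeeping in the middle step: one must verify carefully that the weight filtration is a filtration by \emph{sub-operads} stable under the full differential $d_\_C + d_\Omega$ (the $d_\_C$ part preserves weight, the $d_\Omega$ part raises it, so $F_p$ is indeed a sub-complex and sub-operad), and that each graded stage is genuinely free on the expected cells — this is where the hypotheses $\_C(0)=0$, $\_C(1)=k$ are used to guarantee exhaustiveness of the filtration and the absence of pathological low-arity contributions.
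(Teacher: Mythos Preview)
The paper states this proposition without proof, so there is nothing to compare against; let me just assess your argument on its merits.

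Your overall strategy is the right one: exhibit $\Omega\_C$ as a quasi-free operad with a \emph{triangular} differential, hence as a sequential colimit of free cell attachments, hence cofibrant. The ``in particular'' clause is also handled correctly.

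However, the filtration you describe does not do the job. You filter $\T(E)$ by the number of internal vertices (weight) and note that $d_\Omega$ \emph{increases} weight. But for a cell-attachment presentation one needs a filtration on the \emph{generators} $E$ such that $d$ of a filtration-$p$ generator lands in the sub-operad generated by filtration $<p$; all elements of $E$ are weight~$1$, so your weight filtration on generators is trivial, and the ``sub-operad generated by trees of weight $\le p$'' is all of $\T(E)$ already for $p=1$. In short, $d_\Omega$ raising weight is the wrong direction for triangularity in the cell-complex sense.

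The filtration that actually works is the \emph{arity} filtration on $E=\overline{\_C}[-1]$: set $E_{\le p}=\bigoplus_{2\le m\le p}E(m)$ and $\_Q_p=(\T(E_{\le p}),d)$. This is where your hypotheses $\_C(0)=0$, $\_C(1)=k$ enter: they force $\overline{\_C}$ to start in arity $\ge 2$, so the reduced partial cocompositions send $\overline{\_C}(p)$ into $\bigoplus_{2\le i,j<p}\overline{\_C}(i)\otimes\overline{\_C}(j)$, whence $d_\Omega(E(p))\subset \T(E_{\le p-1})$. The internal differential $d_\_C$ preserves arity, so each $\_Q_p$ is a sub-dg-operad, the inclusion $\_Q_{p-1}\hookrightarrow\_Q_p$ is the pushout along a generating cofibration attaching the cofibrant $\Sigma_p$-module $(E(p),d_\_C)$, and $\Omega\_C=\colim_p\_Q_p$ exhibits the desired cell structure. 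Replace your weight filtration by this arity filtration and the argument goes through.
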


\subsection{Koszul Duality} \label{Sec_KoszulDuality}\

\medskip

{In Theorem \ref{thm:q.i. operads} we saw that from the homotopical point of view it is the same to consider algebras over an operad $\_P$ or over a cofibrant replacement $\_Q$  of such an operad. If $\_P$ has no differential, algebras over $\_Q$ have a typically more complicated structure. On the other hand, algebras over cofibrant operads, especially those of the form $\Omega \_C$, have very nice properties that we will discuss in the next section. One of such properties was foreshadowed in the Rosetta Stone \ref{Prop_RosettaStone}, where we saw that maps out of $\Omega \_C$ amount to solving the Maurer--Cartan equation. }

This section is all about finding a ``simple'' cofibrant replacement of operads using the methods of Koszul duality, originally introduced by Ginzburg and Kapranov \cite{GinzburgKapranov}. This is inspired in the more classical Koszul duality for associative algebras, introduced by Priddy \cite{Pr70} as a tool to produce small resolutions for a large class of algebras.
We will assume in this Section that $\_P(0)= 0$ and $\_P(1)= k$.

\begin{Def}	
	Given an operad $\_P$, we say that a cooperad $\_C$ is the \defi{generalized Koszul dual cooperad} of $\_P$ if there is a quasi-isomorphism $\Omega \_C \rightarrow \_P$. In particular it is given by a Koszul twisting morphism $\_C \dashrightarrow \_P$ (Definition \ref{Def_KoszulTwistingMorphisms}). 
\end{Def}

\begin{RQ}\label{RQ_ExistenceUniquenessOfGeneralisedKoszulDual}
	If $\_P(0)= 0$ and $\_P(1)= k$, a generalised Koszul dual always exists since, thanks to Proposition \ref{Prop_BarCoBarCofibrantReplacement}, $\mathbf{B}\_P$ is a generalized Koszul dual of $\_P$ (given by the universal twisting morphism from Definition \ref{Def_KoszulTwistingMorphisms}).  
	
	Moreover, the generalized Koszul dual is  unique up to weak equivalence. Given the cofibrant resolution $\Omega \_C \rightarrow \_P$, we can always find the following sequence of quasi-isomorphisms (thanks to Theorem \ref{Th_BarCobarUnitCounitQI} and Proposition \ref{Prop_BPreservesQI}): 
	\[ \begin{tikzcd}
	\_C \arrow[r, "\tx{counit}"] & \mathbf{B}\Omega \_C \arrow[r] & \mathbf{B}\_P
	\end{tikzcd} \]
	
\end{RQ}

    The main problem is that $\mathbf{B}\_P$ is typically a very big cooperad. We will therefore be interested in a class of operads who admit Koszul dual cooperads which are easy to compute. This is the class of Koszul operads, and these operads are in particular quadratic operads (Definition \ref{def:quadratic}).\\
    
    Using the notation from Constructions \ref{Cons_Operad} and \ref{Cons_Cooperads}, notice that the degree shift isomorphism $\T^{(1)} E[1] \to \T^{(1)}E$ induces a canonical twisting morphism  $\kappa : \_C \left( E[1], R[2] \right) {\dashrightarrow \_P(E,R)}$ (see \cite[Section 7.4.1]{LV}).

\begin{Def}\label{Def_KoszulOperad}
	
	A quadratic operad $\_P = \_P(E,R)$ is \defi{Koszul} if $$\Omega \_C \left( E[1], R[2] \right) \rightarrow \_P$$ is a quasi-isomorphism. Equivalently, $\_P$ is Koszul if  $\_C \left( E[1], R[2] \right)$ is a generalized Koszul dual of $\_P$.  In that case, we define the \defi{Koszul dual cooperad} of $\_P$ to be $\_P^\antishriek := \_C \left( E[1], R[2] \right)$.
\end{Def}

Classically, Koszul duality is a duality between operads and operads (rather than operads and cooperads). To recover such a duality we could in principle simply define the Koszul dual operad of a Koszul operad $\_P$ to be $(\_P^\antishriek)^\vee$, which is equivalent data if the generators are arity-wise finite dimensional.

In practice, since many important Koszul operads (for example $\mathbf{Ass}$, $\mathbf{Com}$ and $\mathbf{Lie}$) are binary and concentrated in degree zero, one typically introduces a degree shift so that their Koszul duals are also concentrated in degree zero. A way to achieve this is to tensor it in each arity with the endomorphisms operad of $k[1]$, which has the effect of raising the degree of the arity $n$ piece by $n-1$:

$$\_Q \lbrace -n \rbrace \coloneqq \_Q \otimes \mathbf{End}_{k[n]}$$

This is mostly for psychological reasons, since the categories of algebras are equivalent via the map shifting the degree of the underlying vector space.

\begin{Def}
	\label{Def_KoszulDualOperad}

	Let $\_P = \_P(E,R)$ be a Koszul operad. The \defi{ operad Koszul dual} to $\_P$ is:
	\[\_P^! \coloneqq 
	(\_P^\antishriek)^\vee\lbrace -1\rbrace
	  \]
	If $\_P$ is binary, this takes the form
	$\_P = \_P \left(\mathrm{sgn} \otimes E^\vee, R^{\perp}\right)$,
	where $\mathrm{sgn}$ denotes the sign representation of $\Sigma_2$ and $R^{\perp}$ denotes the orthogonal complement $R^{\perp}\cong \left(\faktor{\T^{(2)} E}{R}\right)^\vee$.
\end{Def}

\begin{Prop} %[{\cite[Proposition 7.2.2]{LV}}]
	
	We have that $(\_P^!)^! = \_P$ for $\_P$ an arity-wise dualizable Koszul operad. Furthermore, $\_P$ is Koszul if and only if $\_P^!$ is Koszul.
\end{Prop}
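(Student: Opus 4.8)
The plan is to deduce both statements from the defining property of the Koszul dual (Definition \ref{Def_KoszulDualOperad}) together with the finite-dimensionality hypothesis, essentially by chasing what happens to the data $(E,R)$ under the double dual. First I would record the purely linear-algebraic fact underpinning everything: if $V$ is a finite-dimensional graded vector space (or more generally an arity-wise dualizable symmetric sequence), then the canonical map $V \to (V^\vee)^\vee$ is an isomorphism, and moreover for a subspace $R \subseteq \T^{(2)}E$ of a finite-dimensional space the double orthogonal satisfies $(R^\perp)^\perp = R$ under the identification $\left(\faktor{\T^{(2)}E}{R}\right)^\vee \cong R^\perp \subseteq (\T^{(2)}E)^\vee$. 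Here one must be a little careful to match the two avatars of the quadratic data: the presentation $\_P = \_P(E,R)$ versus the cogenerators-and-corelations description $\_P^\antishriek = \_C(E[1],R[2])$, and then the shift $\{-1\}$ and linear dual producing $\_P^! = \_P(\mathrm{sgn}\otimes E^\vee, R^\perp)$ in the binary case. The shifts by $\mathbf{End}_{k[1]}$ are self-inverse up to canonical isomorphism since $\mathbf{End}_{k[1]}\otimes \mathbf{End}_{k[1]} \cong \mathbf{End}_{k[2]}$ acts trivially after one more shift in the appropriate bookkeeping; I would isolate this as a short lemma so the degree/sign conventions do not clutter the main argument.

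Granting this, the computation of $(\_P^!)^!$ is formal: starting from $\_P = \_P(E,R)$, one forms $\_P^! = \_P(\mathrm{sgn}\otimes E^\vee, R^\perp)$, and applying the construction again produces generators $\mathrm{sgn}\otimes(\mathrm{sgn}\otimes E^\vee)^\vee \cong \mathrm{sgn}\otimes\mathrm{sgn}\otimes E \cong E$ (using $\mathrm{sgn}\otimes\mathrm{sgn}\cong k$ as a $\Sigma_2$-representation and $(E^\vee)^\vee\cong E$ by arity-wise dualizability), and relations $(R^\perp)^\perp = R$. Tracking the degree shifts, the two copies of $\{-1\}$ cancel. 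Hence $(\_P^!)^! \cong \_P(E,R) = \_P$ as operads. For the general (not necessarily binary) case one runs the same argument directly at the level of cooperads: $(\_P^\antishriek)^\vee$ is an operad, its Koszul dual cooperad is built from the dual generators and corelations, and biduality of finite-dimensional symmetric sequences closes the loop; I would phrase it so the binary formula is just a special case.

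For the second assertion — $\_P$ is Koszul iff $\_P^!$ is Koszul — I would argue via the Koszul complex / the twisting morphism $\kappa\colon \_P^\antishriek \dashrightarrow \_P$ and its dual. The statement ``$\Omega \_C(E[1],R[2]) \to \_P$ is a quasi-isomorphism'' is, by the Rosetta Stone (Proposition \ref{Prop_RosettaStone}) and acyclicity of an associated Koszul complex, equivalent to a symmetric acyclicity condition on the pair $(\_P, \_P^\antishriek)$ that is manifestly invariant under interchanging the roles of $E$ and $\mathrm{sgn}\otimes E^\vee$, $R$ and $R^\perp$ — this is the standard fact that the Koszul complex of $\_P$ is acyclic iff that of $\_P^!$ is, which one can see by dualizing arity-wise (legitimate precisely because of the dualizability hypothesis, which is why it appears in the statement). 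The main obstacle, and the step I would spend the most care on, is exactly this last equivalence: making precise which acyclicity condition is symmetric under Koszul duality and checking that arity-wise linear duality turns the Koszul complex of $\_P$ into that of $\_P^!$ on the nose (up to the degree shifts), rather than merely an ``up to quasi-isomorphism'' statement that would require $\Omega$ to preserve quasi-isomorphisms — which, as Remark \ref{RQ_ModelStructureOnCooperad} and the discussion after Theorem \ref{Th_BarCobarUnitCounitQI} warn, it does not do in general. I would therefore reduce to the explicit Koszul complex rather than manipulate $\Omega$ and $\mathbf{B}$ directly, and cite \cite[Section 7.4]{LV} for the identification of Koszulness with its acyclicity.
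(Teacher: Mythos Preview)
Your argument for the first part is exactly the paper's ``immediate from the definition'', just spelled out more fully; nothing to add there.

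For the second part you take a genuinely different route. The paper does not pass through the Koszul complex at all: it simply observes that if $\Omega\_P^\antishriek\to\_P$ is a quasi-isomorphism, then applying $\mathbf{B}$ (which \emph{does} preserve quasi-isomorphisms, Proposition~\ref{Prop_BPreservesQI}) and precomposing with the unit $\_P^\antishriek\to\mathbf{B}\Omega\_P^\antishriek$ (Theorem~\ref{Th_BarCobarUnitCounitQI}) yields a quasi-isomorphism $\_P^\antishriek\to\mathbf{B}\_P$; arity-wise dualizing and shifting then gives $\Omega(\_P^!)^\antishriek\to\_P^!$. Your worry that one cannot manipulate $\Omega$ and $\mathbf{B}$ directly because $\Omega$ fails to preserve quasi-isomorphisms is precisely what the paper sidesteps by using only $\mathbf{B}$. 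Your Koszul-complex route is correct and is the classical argument of \cite[Section~7.4]{LV}, with the advantage that the symmetry is visible on the nose once one has the identification of dual Koszul complexes; the paper's route is shorter and stays entirely within the bar--cobar formalism already set up, at the cost of leaving the ``dualizing and shifting'' step to the reader.
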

\begin{proof}
	The first part is immediate from the definition. For the second statement, if $\Omega \_P^\antishriek \to \_P$ is a quasi-isomorphism, taking $\mathbf B$ on both sides yields a quasi-isomorphism $\_P^\antishriek \to \mathbf B \_P$. Dualizing and shifting gives the result.
\end{proof}

While not all quadratic operads are Koszul, this should be taken as a very common property for all operads appearing in practice. In particular, we have:

\begin{Th}\label{Th_ExampleKoszulDuality}
	
The operads $\mathbf{Ass},\mathbf{Com},\mathbf{Lie},\mathbf{preLie}$ and $\mathbf{Perm}$ are Koszul.
Furthermore, we have the following Koszul dualities: 
\[ \mathbf{Ass}^! = \mathbf{Ass} \qquad \mathbf{Com}^! = \mathbf{Lie} \qquad \mathbf{preLie}^! = \mathbf{Perm}\]

\end{Th}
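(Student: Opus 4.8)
The plan is to verify Koszulness operad by operad, then compute the Koszul duals. For the Koszulness of $\mathbf{Com}$, $\mathbf{Lie}$, and $\mathbf{Ass}$, the standard approach is to exhibit, for each of these quadratic operads $\_P = \_P(E,R)$, an explicit small chain complex resolving $\_P$ and compare it with $\Omega \_C(E[1],R[2])$. Concretely, one shows that the Koszul dual cooperad $\_P^\antishriek$ has the ``expected size'' in each arity—for instance $\mathbf{Com}^\antishriek(n)$ is one-dimensional (placed in the appropriate degree and sign), $\mathbf{Lie}^\antishriek(n)$ has dimension $(n-1)!$, and $\mathbf{Ass}^\antishriek(n)$ has dimension $n!$—and then that the cobar differential on $\Omega \_P^\antishriek$ has homology concentrated in arity-wise weight-$0$, equal to $\_P$. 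Rather than redo this by hand, I would invoke the classical computations (e.g. \cite[Sections 13.1--13.3]{LV}): the bar construction of $\mathbf{Com}$ computes the (reduced) homology of partition posets, that of $\mathbf{Ass}$ the homology of associahedra, etc., all of which are known to be concentrated in top degree. This is the cleanest route and avoids reproving acyclicity of any Koszul complex.

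For $\mathbf{preLie}$ and $\mathbf{Perm}$, the strategy is slightly different and exploits the last sentence of the excerpt's Proposition: since Koszulness of $\_P$ is equivalent to Koszulness of $\_P^!$, it suffices to prove one of the pair $\{\mathbf{preLie},\mathbf{Perm}\}$ is Koszul and identify the dual. The operad $\mathbf{Perm}$ is the easier of the two to handle directly because $\mathbf{Perm}(n)$ is $n$-dimensional with an extremely rigid structure, so its Koszul complex is small; alternatively one can cite \cite{ChapotonLivernet} where the Koszulness of $\mathbf{preLie}$ (and hence $\mathbf{Perm}$) is established via a rewriting-system/Gröbner-basis argument (a quadratic operad admitting a quadratic Gröbner basis, equivalently a PBW basis, is automatically Koszul). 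I would state this as the cited input and then only need to pin down the duality $\mathbf{preLie}^! = \mathbf{Perm}$.

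The computation of the Koszul duals themselves is the formal part and follows directly from Definition~\ref{Def_KoszulDualOperad}: for a binary operad $\_P = \_P(E,R)$ one has $\_P^! = \_P(\mathrm{sgn}\otimes E^\vee, R^\perp)$, so everything reduces to an orthogonality computation inside $\T^{(2)}E$. For $\mathbf{Com}$, $E$ is the trivial one-dimensional $\Sigma_2$-representation, so $\mathrm{sgn}\otimes E^\vee$ is the sign representation (an antisymmetric binary bracket), and one checks that the orthogonal complement of the one-dimensional space of associativity relations inside the three-dimensional $\T^{(2)}E$ is exactly the two-dimensional Jacobi relation—giving $\mathbf{Com}^! = \mathbf{Lie}$; dualizing back (using $(\_P^!)^!=\_P$) gives $\mathbf{Lie}^!=\mathbf{Com}$ and $\mathbf{Ass}^!=\mathbf{Ass}$ follows because $\mathbf{Ass}(2)=k[\Sigma_2]$ is self-dual and the associativity relation is self-orthogonal (a $3{-}3$ split in the $6$-dimensional $\T^{(2)}E$). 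For $\mathbf{preLie}^!=\mathbf{Perm}$ one runs the same $\T^{(2)}E$-orthogonality check with the (non-symmetric, i.e. regular-representation) generator.

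The main obstacle is the honest proof of acyclicity of the Koszul complexes—this is genuinely combinatorial (homology of partition lattices, associahedra, rooted-tree complexes) and is the content of the cited theorems; in a survey I would not reproduce it but merely assemble the references, flag that each reduces to a known acyclicity statement, and devote the written proof to the orthogonality computations, which are short and self-contained.
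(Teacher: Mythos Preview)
The paper does not actually prove this theorem: it is stated as a classical fact (this is a survey, and the surrounding section refers the reader to \cite{LV} for details), so there is no proof to compare against. Your outline is the standard route and is correct in substance; citing \cite[Sections~13.1--13.3]{LV} for the acyclicity of the Koszul complexes of $\mathbf{Ass}$, $\mathbf{Com}$, $\mathbf{Lie}$ and \cite{CL01} (Chapoton--Livernet) for $\mathbf{preLie}$/$\mathbf{Perm}$, together with the short orthogonality computations in $\T^{(2)}E$ via Definition~\ref{Def_KoszulDualOperad}, is exactly how one assembles this in the literature. One small caveat: for $\mathbf{Ass}$ you should say the relations form a $6$-dimensional subspace of the $12$-dimensional $\T^{(2)}E$ (since $E=k[\Sigma_2]$ is $2$-dimensional, cf.\ Remark~\ref{warning}), not a $3$--$3$ split in a $6$-dimensional space; the self-orthogonality claim is still correct.
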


When $\_P$ is a Koszul operad with no differential, it is easy to convince oneself that $\Omega \_P^\antishriek$ is the smallest possible quasi-free resolution of $\_P$. This motivates the following.

\begin{Conv}\label{Not_PinftyOperad}
	From here onwards, we will tacitly suppose that a Koszul operad  $\_P$ has no differential, even if it might be concentrated in non-zero degrees. We will use the notation
	$$\_P_\infty \coloneqq \Omega \_P^\antishriek.$$
 Algebras over $\_P_\infty$ are sometimes called \emph{homotopy $\_P$-algebras}. For the associative, Lie and commutative operads, we will rather use the classical notations $A_\infty$, $L_\infty$ and $C_\infty$ respectively.

\end{Conv}

\subsection{Bar-Cobar Adjunction for Algebras} \label{sec:algebraic-bar-cobar-adjunction}

\medskip

\begin{Def}[{\cite[Section 11.3]{LV}}] \label{Def_BarCobarAlgebra}
	
	Given a twisting morphism $\alpha : \_C \dashrightarrow \_P$, we can define a \defi{bar-cobar adjunction associated to $\alpha$}:
	\[ \begin{tikzcd}
	\Omega_\alpha : \mathbf{coAlg}_\_C^{\tx{conil}} \arrow[r, shift left] & \arrow[l, shift left] \mathbf{Alg}_\_P : \mathbf{B}_\alpha
	\end{tikzcd} \]
	
	where $\Omega_\alpha$ is defined as the quasi-free $\_P$-algebra, given on objects by $\Omega_\alpha(C) = (\_P\circ C,d_C+d_{\_P}+d_{\Omega})$ together with the cobar differential extending the differential on $\_P$, on $C$ and with $d_\Omega$ the unique differential extending the following composition:
	\[ \begin{tikzcd}
	C \arrow[r, "\Delta_C"] & \_C \circ C \arrow[r, "\alpha \circ \id"] & \_P \circ C
	\end{tikzcd} \]
	$\mathbf{B}_\alpha$ is the quasi-cofree $\_C$-coalgebra $\mathbf{B}_\alpha(A) = (\_C\circ A,d_A+d_{\_C}+d_{\mathbf{B}})$ with  $d_\mathbf{B}$ the unique codifferential on $\_C \circ A$ extending the following composition:
	\[ \begin{tikzcd}
	\_C \circ A \arrow[r, "\alpha \circ \id"] & \_P \circ A \arrow[r, "\mu_A"] & A
	\end{tikzcd}\]
		
\end{Def}

\begin{Ex}\
	%\cite[Proposition 12.1.1]{LV} gives a mean to describe $\mathbf{B}_\kappa A$, with $\kappa$ the canonical twisting morphism (Definition \ref{RQ_CanonicalTwistingMorphism}). For $A$ a $\_P$-algebra with $\_P$ a binary quadratic operad.
	
	\begin{itemize}
		\item For $\G_g$ a Lie algebra, the bar construction associated to the canonical twisting morphism gives the Chevalley--Eilenberg chain complex up to a degree shift (which is in fact a shifted cocommutative coalgebra): \[\mathbf{B}_\kappa \G_g = \left( \Sym_k^{\geq 1} \left(\G_g[1]\right)[-1], \delta_{\mathrm{CE}} \right)= \mathrm{CE}_{*}(\G_g). \]
		\item For $A$ an associative algebra, we obtain the Hochschild chain complex with   coefficients in itself (a shifted coassociative coalgebra): 
		\[ \mathbf{B}_\kappa A = \bigoplus_{n\geq 1} A^{\otimes n}[n-1]=\mathrm{CH}_{*} (A). \]
	\end{itemize}
\end{Ex}

\begin{RQ}
	There is also a notion of twisting morphism from a $\_C$-coalgebra $C$ to a $\_P$-algebra $A$, given by \cite[Definition 11.1.1]{LV}. This leads to a version of the Rosetta Stone for algebras (see \cite[Proposition 11.3.1]{LV}). 
\end{RQ}

\begin{RQ}\label{RQ_NaturalityAlgebraicBarCobar}
	The bar-cobar adjunction is natural with respect to morphisms of twisting morphism (Definition \ref{Def_TwistingMorphisms}) in the following sense.
	Given a morphism of twisting morphisms i.e., a commutative square: 
	\[ \begin{tikzcd}
	\_C \arrow[r,dashed, "\alpha"] \arrow[d,"f"]& \_P \arrow[d,"g"]\\
	\_D \arrow[r, dashed, "\beta"] & \_Q
	\end{tikzcd}\]    
	we have the following commutative diagrams {of left and right adjoint functors}.
%	Then \cite[Lemma 3.25]{CT20} tells us that there is a commutative diagram of right adjoint functors: 
	
\begin{center}
		\begin{tikzcd}
	\mathbf{coAlg}_{\_C} & \mathbf{Alg}_{\_P} \arrow[l, "\mathbf{B}_\alpha"]  \\
	\mathbf{coAlg}_{\_D} \arrow[u, "f^!"]  & \mathbf{Alg}_{\_Q} \arrow[l, "\mathbf{B}_\beta"] \arrow[u, "g_*"] 
	\end{tikzcd}\quad \quad
\begin{tikzcd}
	\mathbf{coAlg}_{\_C}\arrow[d, "f^*"] \arrow[r, "\Omega_\alpha"]& \mathbf{Alg}_{\_P}\arrow[d, "g_!"]    \\
	\mathbf{coAlg}_{\_D} \arrow[r, "\Omega_\beta"]  & \mathbf{Alg}_{\_Q}  
\end{tikzcd}
\end{center}

If we ignore the differentials, this is just follows from the fact that the composition of adjoints is an adjoint. One can then check that the differentials agree. See \cite[Lemma 3.25]{CT20} for a proof of the commutativity the first diagram (which is equivalent to saying that the diagram of right adjoints is also commutative).
\end{RQ}

	The condition of being Koszul gives extra properties to the adjunction, in particular Theorems 11.3.3 and 11.3.4 in \cite{LV} relate the criteria of being Koszul to having a unit and counit of the bar-cobar adjunction that are quasi-isomorphisms. 

\begin{Prop}[{\cite[Corollary 11.3.5]{LV}}]\label{RQ_UnitCouniQIAlgebraBarcoBar}
	
For $\_P$ as Koszul operad, the counit $\Omega_\kappa \mathbf{B}_\kappa A \rightarrow A$ is a quasi-free resolution of $A$ and in particular $\Omega_\kappa \mathbf{B}_\kappa$ gives a canonical cofibrant replacement on the category of $\_P$-algebras.
Furthermore, if $C$ is a conilpotent  $\_P^\antishriek$-coalgebra, the unit $C \rightarrow \mathbf{B}_\kappa \Omega_\kappa C$ is a quasi-isomorphism.
\end{Prop}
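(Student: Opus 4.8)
The plan is to reduce everything to the corresponding statements at the level of (co)operads, namely Theorem \ref{Th_BarCobarUnitCounitQI} and Proposition \ref{Prop_BarCoBarCofibrantReplacement}, together with a filtration argument analogous to the sketch of proof of Theorem \ref{Th_BarCobarUnitCounitQI}. First I would recall that since $\_P$ is Koszul, $\kappa\colon \_P^\antishriek \dashrightarrow \_P$ is a Koszul twisting morphism, so $\Omega \_P^\antishriek \to \_P$ is a quasi-isomorphism and, by Proposition \ref{Prop_BarCoBarCofibrantReplacement}, $\Omega \_P^\antishriek = \_P_\infty$ is cofibrant. The statement that $\Omega_\kappa \mathbf{B}_\kappa A \to A$ is a cofibrant resolution then splits into two claims: that the underlying $\_P$-algebra $(\_P \circ \_P^\antishriek \circ A, d)$ is cofibrant, and that the counit is a quasi-isomorphism. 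For cofibrancy I would observe that $\Omega_\kappa \mathbf{B}_\kappa A$ is quasi-free on the underlying complex $\_P^\antishriek \circ A$ (i.e. free as a graded $\_P$-algebra, forgetting the differential), and such quasi-free $\_P$-algebras with a suitable exhaustive filtration on the generators are cofibrant in Hinich's model structure (Theorem \ref{thm:hinich model str}); here the filtration comes from the weight/arity grading on $\_P^\antishriek$, which is well-behaved because $\_P^\antishriek(0)=0$ and $\_P^\antishriek(1)=k$.

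For the quasi-isomorphism statements, the key step is a spectral sequence / associated-graded computation. For the counit $\Omega_\kappa \mathbf{B}_\kappa A \to A$, I would filter $\_P \circ \_P^\antishriek \circ A$ by a quantity measuring the "total weight" coming from the $\_P^\antishriek$ factor (the analogue of counting inner edges in the sketch of Theorem \ref{Th_BarCobarUnitCounitQI}). On the associated graded, the only surviving part of the differential is the one coming from the internal differentials of $\_P$ and $A$ together with the "Koszul" piece $d_\Omega$ (resp.\ $d_{\mathbf B}$) built from $\kappa$; crucially the structure-map pieces $\mu_\_P$ and $\mu_A$ drop in filtration. One then recognizes the associated graded complex as $(\_P \circ \_P^\antishriek, d_\kappa) \circ A$ up to the relevant identifications, i.e.\ essentially $\Omega \mathbf B$ of $\_P$ applied to $A$ — more precisely, it is governed by the acyclicity of the Koszul complex $\_P \circ_\kappa \_P^\antishriek$, which is exactly the content of $\_P$ being Koszul (the Koszul complex being acyclic is equivalent to $\Omega \_P^\antishriek \to \_P$ being a quasi-isomorphism). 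Since tensoring (via $\circ$, which is $A$ placed in arity $0$) with the acyclic augmented Koszul complex leaves only $A$, the associated graded of the counit is a quasi-isomorphism, hence so is the counit, provided the filtration is exhaustive and bounded below in each arity — which holds because of the connectivity assumptions $\_P(0)=0$, $\_P(1)=k$. The argument for the unit $C \to \mathbf{B}_\kappa \Omega_\kappa C$ is dual: filter $\_P^\antishriek \circ \_P \circ C$ the same way, identify the associated graded with the cofree $\_P^\antishriek$-coalgebra on the acyclic Koszul complex applied to $C$, and conclude; here conilpotency of $C$ is what guarantees the filtration is exhaustive.

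The main obstacle I anticipate is making the filtration and its convergence rigorous: one must choose the grading (weight in $\_P^\antishriek$, or equivalently the number of "inner" generators) so that the spectral sequence of the filtered complex converges — this is where the hypotheses $\_P(0)=0$, $\_P(1)=k$, and conilpotency of $C$ are essential, since they ensure the filtration is bounded below in each arity and degree and is exhaustive. A secondary subtlety is bookkeeping the signs produced by the degree shifts $\overline{\_P^\antishriek}[-1]$ so that $d_\Omega$ and $d_{\mathbf B}$ genuinely square to zero and assemble into the claimed differentials; but this is exactly parallel to the operadic case treated in \cite[Sections 6.5.1--6.5.2]{LV} and in the sketch of Theorem \ref{Th_BarCobarUnitCounitQI}, so I would simply refer to \cite[Section 11.3]{LV} for these verifications rather than reproduce them. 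Finally, once the counit is known to be a quasi-isomorphism and the source is cofibrant, the "cofibrant replacement" assertion is immediate, and the last sentence follows by the same associated-graded argument applied to the unit.
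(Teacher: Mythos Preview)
The paper does not actually give a proof of this proposition: it simply cites \cite[Corollary 11.3.5]{LV} and moves on. Your sketch is essentially the standard argument from \cite[Section 11.3]{LV} (filtration by weight, reduction to the acyclicity of the Koszul complex $\_P \circ_\kappa \_P^\antishriek$), so there is nothing to compare --- you have supplied more detail than the paper itself, and your outline is correct.
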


Notice that the bar-cobar adjunction give us a direct connection between algebras over a Koszul operad $\_P$ and algebras over it's Koszul dual $\_P^!$. We may  define the \emph{Koszul dual algebra} of a $\_P$-algebra $A$ to be $\mathfrak D(A)$, where $\mathfrak D$ is given by:
	\begin{align*}
	\mathfrak D \colon	\mathbf{Alg}_\_P \stackrel{\mathbf B_\kappa}{\longrightarrow} \mathbf{coAlg}_{\_P^\antishriek}  \stackrel{\vee}{\to} {\mathbf{Alg}_{\_P^!\{-1\}}}^{\mathrm{op}} \stackrel{A\mapsto A[-1]}{\longrightarrow}{\mathbf{Alg}_{\_P^!}}^{\mathrm{op}}
	\end{align*}

	\begin{Ex}\label{Ex:CE and HH cochain cx}\
		
		\begin{itemize}
			\item 	The	Koszul dual commutative algebra of a Lie algebra $\mathfrak g$ is
			 \[\mathfrak{D} (\G_g) = \left( \Sym_k^{\geq 1} \left(\left(\G_g[1]\right)^\vee\right), d \right), \]
			where the differential $d$ is the unique derivation extending the dual of the Lie bracket $\ell^\vee \colon \G_g^\vee\to \G_g^\vee\otimes \G_g^\vee$.
		When $\G_g$ is a finite dimensional  Lie algebra in degree $0$, up to to the term $\Sym_k^0(\G_g^\vee)=0$, this is precisely the classical Chevalley--Eilenberg cochain complex of $\G_g$ with coefficients in the trivial module $k$, usually written: \[\mathrm{CE}^*(\G_g)= \left( \Hom(\Lambda^*\G_g,k) , d_{\mathrm{CE}} \right) \]
			
			\item Similarly, in the finite dimensional and degree $0$ case and up to a term $k$, the dual algebra of an associative algebra $A$ is its Hochschild cochain complex with coefficients in the trivial module $k$:
			\[\mathfrak{D} (A) = \mathrm{CH}^*(A)= \left( \Hom(A[1]^{\otimes *},k), d_{\mathrm{Hoch}} \right) \]

		\end{itemize}

	\end{Ex}
	
	\begin{War}\label{war:dual is not dual}
		Suppose the space of generators of $\_P$ is finite dimensional.
Despite the terminology ``dual'' there is no direct natural transformation between $\mathfrak D^2$ and $\id_{\mathbf{Alg}_{\_P}}$ in any direction. However, if $A$ is a degreewise finite dimensional $\_P$-algebra, we have ${\mathbf B (A)}^\vee = \Omega(A^\vee)$, and in that case, there is a map $\mathfrak D^2 A \to A$ which is equivalent to the bar-cobar resolution of $A$. In general,  $\mathfrak D^2 A$ and  $A$ might not even have isomorphic homology.
	\end{War}

The bar-cobar adjunction has nice properties with respect to a model structure on coalgebras.

\begin{Th}[{\cite[Theorem 3.11]{DGH16}} or  {\cite[Theorem 2.1]{Val20}}]
	\label{Th_ModelStructureCoalgebra}
	
	Given any twisting morphism $\alpha : \_C \dashrightarrow \_P$, we can define the $\alpha$-model structure on $\mathbf{coAlg}_\_C$ as the model structure obtain via the left transfer along the bar-cobar adjunction for $\_P$-algebras (see Definition \ref{Def_BarCobarAlgebra}): 
	\[ \Omega_\alpha :\begin{tikzcd}
	\mathbf{coAlg}_\_C \arrow[r, shift left] & \arrow[l, shift left] \mathbf{Alg}_\_P  
	\end{tikzcd} : \mathbf{B}_\alpha \]
	
	This construction gives us a Quillen adjunction which is an equivalence if the twisting morphism is Koszul. This adjunction is natural in the choice of $\alpha$, $\_P$ and $\_C$ (see \cite[Remark 3.12]{DGH16}).
\end{Th}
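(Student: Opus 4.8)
The plan is to obtain the $\alpha$-model structure by \emph{left transfer} along $\Omega_\alpha \dashv \mathbf{B}_\alpha$, in the sense recalled in \cref{sec:model-categories-and-homotopical-algebra}, and then to read off the Quillen equivalence in the Koszul case from the counit. Concretely one first declares a morphism $f$ of (conilpotent) $\_C$-coalgebras to be a weak equivalence, respectively a cofibration, exactly when $\Omega_\alpha(f)$ is a quasi-isomorphism, respectively a cofibration, of $\_P$-algebras, and takes the fibrations to be the maps with the right lifting property against all trivial cofibrations. With these definitions $2$-out-of-$3$, and stability under retracts and composition, are inherited immediately from $\mathbf{Alg}_{\_P}$ because $\Omega_\alpha$, being a left adjoint, preserves colimits and hence retracts; and one checks that the cofibrations are exactly the monomorphisms, so that every $\_C$-coalgebra is cofibrant (the $\_P$-algebra $\Omega_\alpha C = (\_P\circ C, d)$ is quasi-free, and the coradical filtration of $C$ exhibits the canonical map to it as a relative quasi-free cofibration). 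Granting the factorization axioms, the lifting axioms follow by the standard retract argument, and the adjunction is then Quillen by construction of a transfer.

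The real content is the existence of the two functorial factorizations, equivalently the \emph{acyclicity condition} that makes the left transfer legitimate, and \textbf{this is where I expect the main difficulty}. I see two approaches. The structural one invokes the recognition principle for left-induced model structures: $\mathbf{coAlg}_{\_C}$ is locally presentable---it is the category of coalgebras over the accessible cofree-coalgebra comonad $\_C\,\bar{\circ}\,(-)$---so the left-transferred structure exists provided one verifies that every morphism produced from the (trivial) fibrations by the relevant small-object construction is a weak equivalence; checking this acyclicity is exactly where the homological properties of the resolution $\mathbf{B}_\alpha\Omega_\alpha(-)$ are needed. The concrete approach, in the spirit of Vallette's proof, instead constructs the factorizations by hand by induction along the coradical filtration of a conilpotent $\_C$-coalgebra: the cofibration/trivial-fibration factorization is assembled filtration layer by filtration layer, and the trivial-cofibration/fibration factorization is built from a functorial path object on $\mathbf{coAlg}_{\_C}$---conveniently obtained by applying $\mathbf{B}_\alpha$ to a path object of $\Omega_\alpha C$---together with the fact that every object is cofibrant. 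Once the model structure is in place, naturality in $(\alpha,\_P,\_C)$ is formal from the commuting squares of adjoints in \cref{RQ_NaturalityAlgebraicBarCobar}: for a morphism of twisting morphisms $(f,g)$ one has $\Omega_\beta\circ f^{*} = g_{!}\circ\Omega_\alpha$, and since $g_{!}$ is left Quillen by \cref{thm:q.i. operads}, the composite is left Quillen, so $f^{*}$ carries (trivial) cofibrations to (trivial) cofibrations for the transferred structures.

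For the Quillen equivalence when $\alpha$ is Koszul I would use the criterion that a Quillen adjunction $F\dashv G$ is a Quillen equivalence once $G$ reflects weak equivalences between fibrant objects and, for every fibrant $X$, the derived counit is a weak equivalence (\cite[Corollary~1.3.16]{Ho07}). Here every $\_P$-algebra is fibrant (the model structure on $\mathbf{Alg}_{\_P}$ being right-transferred from $\chk$) and every $\_C$-coalgebra is cofibrant, so the derived counit at $A$ is just $\epsilon_A\colon \Omega_\alpha\mathbf{B}_\alpha A \to A$, and the claim that this is a quasi-isomorphism for every $A$ is precisely the Koszulness of $\alpha$---the algebraic bar--cobar criteria of \cite[Theorems~11.3.3 and 11.3.4]{LV}, generalizing \cref{RQ_UnitCouniQIAlgebraBarcoBar}. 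Finally $\mathbf{B}_\alpha$ reflects weak equivalences: if $\mathbf{B}_\alpha(g)$ is a weak equivalence for some $g\colon A\to A'$ of $\_P$-algebras, then applying $\Omega_\alpha$ and comparing with the naturality square of $\epsilon$ via $2$-out-of-$3$ forces $g$ to be a quasi-isomorphism. Hence $\Omega_\alpha\dashv\mathbf{B}_\alpha$ is a Quillen equivalence, as claimed.
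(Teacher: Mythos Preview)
The paper does not prove this theorem: it is stated with citations to \cite[Theorem 3.11]{DGH16} and \cite[Theorem 2.1]{Val20} and no argument is given. Your sketch is therefore not comparable to a ``paper's own proof'' but rather to those external references, and it correctly identifies both the architecture (left transfer) and the genuine difficulty (the acyclicity/factorization step), together with the two standard routes to it. Your Quillen-equivalence argument via \cite[Corollary 1.3.16]{Ho07} and the counit criterion is also the right one and matches the spirit of \cite[Theorem 11.3.6]{LV}.

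One cautionary remark: your claim that the transferred cofibrations are ``exactly the monomorphisms'' is true, but it is not as innocent as your parenthetical suggests. In Vallette's approach \cite{Val20} the cofibrations are \emph{defined} to be the degree-wise monomorphisms and one builds the model structure directly from that; the identification with ``$\Omega_\alpha(f)$ is a cofibration'' then comes a posteriori. If instead you start from the transferred definition, showing that $\Omega_\alpha(f)$ is a cofibration in $\mathbf{Alg}_\_P$ if and only if $f$ is a monomorphism requires separate work (the ``if'' direction uses the coradical filtration to exhibit $\Omega_\alpha(f)$ as a standard cofibration, but the ``only if'' direction is less immediate). This does not affect correctness, but be aware that the two references organize the argument differently and your sketch blends them.
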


\begin{RQ}
	If we take $\alpha : \_C \rightarrow I$ the augmentation on $\_C$, we obtain a model structure transferred from $\mathbf{Alg}_I = \chk$, where weak equivalences are quasi-isomorphisms. However we will not make use of this model structure, since it is not compatible with the one on $\_P$-algebras. In fact, we will only be interested in the case where $\alpha$ is Koszul.
\end{RQ}

\begin{Cor}\label{Prop:PooAlg==PAlg}
	Given a Koszul operad $\_P$, there are Quillen equivalences: 
	
	\[ \begin{tikzcd}
		\mathbf{coAlg}_{\_P^\antishriek}  \arrow[dr, shift left]  &\\
		&	\mathbf{Alg}_{\_P} \arrow[ul, shift left] \arrow[dl, shift left] \\
		\mathbf{Alg}_{\_P_\infty}  \arrow[ur, shift left] &
	\end{tikzcd}\]
\end{Cor}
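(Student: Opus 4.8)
The plan is to read this triangle as the juxtaposition of two Quillen equivalences that are already available, with the Koszul hypothesis on $\_P$ being exactly the single input both of them require: that the canonical map $\Omega\_P^\antishriek \to \_P$ is a quasi-isomorphism. First I would fix, once and for all, the model structure on $\mathbf{coAlg}_{\_P^\antishriek}$ to be the one provided by \cref{Th_ModelStructureCoalgebra}, left-transferred along the bar-cobar adjunction $\Omega_\kappa \dashv \mathbf{B}_\kappa$ attached to the canonical twisting morphism $\kappa \colon \_P^\antishriek \dashrightarrow \_P$ of the degree-shift construction preceding \cref{Def_KoszulOperad}.

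For the edge joining $\mathbf{coAlg}_{\_P^\antishriek}$ and $\mathbf{Alg}_{\_P}$, I would observe that \cref{Def_KoszulOperad} says precisely that $\Omega\_P^\antishriek = \Omega\_C(E[1], R[2]) \to \_P$ is a quasi-isomorphism, i.e.\ that $\kappa$ is a \emph{Koszul} twisting morphism in the sense of \cref{Def_KoszulTwistingMorphisms}. \cref{Th_ModelStructureCoalgebra} then states that for a Koszul twisting morphism the transferred adjunction $\Omega_\kappa \dashv \mathbf{B}_\kappa$ is a Quillen equivalence, which is exactly the upper adjunction of the diagram.

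For the edge joining $\mathbf{Alg}_{\_P_\infty}$ and $\mathbf{Alg}_{\_P}$, I would use that $\_P_\infty = \Omega\_P^\antishriek$ by definition (\cref{Not_PinftyOperad}), so the very same map $g \colon \_P_\infty \to \_P$ — the operad morphism corresponding to $\kappa$ under the Rosetta Stone (\cref{Prop_RosettaStone}) — is a quasi-isomorphism of operads. Feeding $g$ into \cref{thm:q.i. operads} immediately gives that the change-of-operad adjunction $g_! \dashv g^*$ between $\mathbf{Alg}_{\_P_\infty}$ and $\mathbf{Alg}_{\_P}$ is a Quillen equivalence, the lower adjunction of the diagram. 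Composing the two equivalences identifies the homotopy categories of all three categories, which is what the triangle records. If one also wants a direct adjunction $\mathbf{coAlg}_{\_P^\antishriek} \rightleftarrows \mathbf{Alg}_{\_P_\infty}$ making the triangle strictly commute, one takes the bar-cobar adjunction $\Omega_\iota \dashv \mathbf{B}_\iota$ for the universal twisting morphism $\iota \colon \_P^\antishriek \dashrightarrow \_P_\infty$ (Koszul, since $\Omega$ of it is $\id_{\_P_\infty}$) and invokes the naturality of the algebraic bar-cobar adjunction (\cref{RQ_NaturalityAlgebraicBarCobar}) for the morphism of twisting morphisms $(\id_{\_P^\antishriek}, g) \colon \iota \Rightarrow \kappa$, yielding $g_! \circ \Omega_\iota = \Omega_\kappa$.

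I do not expect a real obstacle here: once the diagram is unpacked as above, the proof is a two-line citation of \cref{Th_ModelStructureCoalgebra} and \cref{thm:q.i. operads}, with the Koszul condition on $\_P$ supplying the one fact they both consume, namely the quasi-isomorphy of $\Omega\_P^\antishriek \to \_P$. The only place where a sentence of care is warranted is the optional strictly commuting triangle, for which one must know that the model structure transferred onto $\mathbf{coAlg}_{\_P^\antishriek}$ does not depend on the Koszul twisting morphism used to transfer it — this is the naturality-in-$\alpha$ clause of \cref{Th_ModelStructureCoalgebra}. Everything else, e.g.\ that $g$ is a quasi-isomorphism out of a cofibrant operad (\cref{Prop_BarCoBarCofibrantReplacement}), is routine.
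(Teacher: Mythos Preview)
Your proposal is correct and matches the paper's own proof, which is precisely the two-line citation of \cref{Th_ModelStructureCoalgebra} for the $\mathbf{coAlg}_{\_P^\antishriek}\!\leftrightarrow\!\mathbf{Alg}_{\_P}$ edge and \cref{thm:q.i. operads} for the $\mathbf{Alg}_{\_P_\infty}\!\leftrightarrow\!\mathbf{Alg}_{\_P}$ edge. Your additional remarks on the strictly commuting triangle via $\Omega_\iota\dashv\mathbf{B}_\iota$ and \cref{RQ_NaturalityAlgebraicBarCobar} are extra but correct and not in the paper.
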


\begin{proof}
	The first equivalence follows from Theorem \ref{Th_ModelStructureCoalgebra} and the second one from Theorem \ref{thm:q.i. operads}. 
\end{proof}

\subsubsection{A comment on operads as algebras} \label{sec:a-comment-on-operads-as-algebras}\

\medskip

The reader might wonder what is the relation between the bar-cobar adjunction for algebras and for operads. 

		In a roundabout way, non-symmetric operads can themselves be seen as algebras over a colored operad $\mathcal O^{ns}$ with set of colors $S= \Nn$ (that is, $\mathcal O^{ns}$ is a monoid in $\tx{Fun}(\Nn_{\Nn}, \chk)$, see Remark \ref{RQ_NS and Colored Operads}). The colored operad $\mathcal O^{ns}$ is generated by binary operations consisting of the partial compositions $\circ_i$, see \cite{Va04}. This observation can be used to show that operads form a model category by applying a colored version of Theorem \ref{thm:hinich model str}. There is also an operad $\mathcal O$ encoding symmetric operads which is colored in groupoids, to take properly into account the symmetric group actions, see \cite{Wa22}.
		
		Furthermore, the operad $\mathcal O$ is quadratic and Koszul self dual $\mathcal O^! = \mathcal O$. The (colored enhancement of the) bar-cobar construction from Definition \ref{Def_BarCobarAlgebra} with respect to the canonical $\mathcal O^\antishriek \dashrightarrow \mathcal O$ corresponds precisely to the operadic bar-cobar construction.

	\begin{Ex}
It is a curious observation that given a permutative algebra $A$, we can define an operad $\_P$ where $\_P(n)=A$ for all $n$ and setting all partial compositions to be the multiplication on $A$\footnote{In fact, this produces a non-unital operad, since $A$ is a priori non-unital. This is consistent with the detail hidden under the rug that $\mathcal O$ encodes in fact non-unital operads.}.

Using that $\mathbf{preLie}^! = \mathbf{Perm}$ we can use this observation to recover conceptually (albeit in an overcomplicated way) the pre-Lie structure on the totalization $\tx{Tot}(\_P)$ of an operad as follows:

There is an adjunction between operads and groupoid colored operads
\[ \begin{tikzcd}
						L: \Op \arrow[r, shift left] & \arrow[l, shift left] \mathbf{gcOp} : R 
\end{tikzcd}\]  

in which $L(\_Q)(c_1,\dots, c_n;c_0) = \_Q(n)$ for any choice of colors. The observation above manifests itself as a map of operads $\mathcal O \to L(\mathbf{Perm})$, which, by taking Koszul duals yields a map $L(\mathbf{preLie} ) \to \mathcal O^! = \mathcal O$. 

The data of an operad structure on $\_P$ is equivalent to a map $\mathcal O\to \mathbf{End}_{\_P}$. Composing these maps and applying the adjunction, we obtain a map

\[\mathbf{preLie}\to R(\mathbf{End}_{\_P}), \]
One can show that $ R(\mathbf{End}_{\_P})=\mathbf{End}_{\tx{Tot}(\_P)}$ so the map above recovers precisely the pre-Lie structure on $\tx{Tot}(\_P)$. 

For more details, see \cite[Section 3.3]{CCN20}.
\end{Ex}

\needspace{6\baselineskip}

	\section{Algebraic Deformation Theory}\label{Sec:2}
	
As mentioned in the introduction, it is an old heuristic that deformation problems are in correspondence with Lie algebras. We will not define  precisely for now what we mean by deformation problem, a formalisation of this correspondence will be treated in detail in Section \ref{Sec_Operadic Deformation Theory}. Nevertheless we point out that there is no general explicit procedure to produce a Lie algebra out of a deformation problem. 

In the other direction, though, this heuristic tells us that if, for some reason, we have access to the Lie algebra $\G_g$ controlling a deformation problem, then the set Maurer--Cartan elements of $\G_g$, $\tx{MC}(\G_g)$, corresponds to such deformations. Furthermore, a deformation problem should come with some notion of equivalence between two deformations which corresponds on the Lie algebra side, to an action of the ``gauge group'' or ``exponential group'' of $\G_g$ on $\tx{MC}(\G_g)$.\\ 

The goal of this section is to present how the operadic machinery developed in the previous section give us very efficient tools to produce Lie algebras associated to certain algebraic deformation problems.	{Throughout we will work with a cooperad $\_C$ such that $\_C(0) = 0$ and $\_C(1) = k$ so that $\Omega \_C$ is cofibrant. This condition implies in particular that $\_C$ is conilpotent. }\\
	 
	We will start, in Section \ref{Sec_AlgebraicStructuresUpToHomotopy}, by recalling the definitions of homotopy $\_P$-algebras, $\infty$-morphisms and the homotopy transfer theorem. It turns out that these homotopy algebras provide a good context in which we can deform algebraic structures. In Section \ref{Sec_DeformationofAlgebraicStructures}, we discuss the construction of spaces of deformations as the Maurer--Cartan space of the convolution Lie algebra. In that section, we will define two groupoids, one given by some Maurer--Cartan element together with gauge action between them, and the other given by deformations of algebraic structures together with equivalences of such deformations. The main result, Theorem \ref{Th_DeformationAndDeligneGroupoids}, says that those groupoids are in fact equivalent. Then, in Section \ref{Sec_CommutativeAndAssociativeStructures}, we will focus on the interplay between commutative and associative structures.
	
	\subsection{Algebraic Structures Up to Homotopy}\
	\label{Sec_AlgebraicStructuresUpToHomotopy}
	
%Recall from Remark \ref{RQ_NonQuasiInvertiblityPAlgebra} that all quasi-isomorphisms of $\_P$-algebras are not quasi-invertible. \ricardo{fix}

\medskip

As we mentioned in Section \ref{Sec_Model Categorical Aspects}, the non-invertibility of quasi-isomorphisms of $\_P$-algebras makes the study of the question of ``$A$ being quasi-isomorphic to $B$'' rather unpleasant since it, in principle, forces us to consider all possible zig-zags of quasi-isomorphisms of algebras between $A$ and $B$. 

On the other hand, the category of $\_P$-algebras forms a model category in which every object is fibrant, so this already reduces that question to the existence of a single quasi-isomorphism $Q(A)\stackrel{\sim}{\to} B$, where $Q(A)$ is any cofibrant replacement of $A$. This is not always a very efficient way to address this question or more generally to study the homotopy category of $\_P$-algebras.\\

{{In this section, we will always assume that $\_P$ is a Koszul operad}}. We will see that the class of morphisms of $\_P$-algebras can be enlarged to an explicit notion of \emph{$\infty$-morphism}, in which $\infty$-quasi-isomorphisms admit quasi-inverses, which gives us great control over the homotopy category. This is summarized by the following theorem whose proof we will sketch at the end of the section.

\begin{Th}\label{Th_MainPropertiesOfInftyMaps}
		
The faithful inclusion from $\_P$-algebras with ordinary morphisms to $\_P$-algebras with $\infty$-morphisms induces an equivalence at the level of the homotopy categories.

In particular, two $\_P$-algebras are weakly equivalent if and only if there exists a direct $\infty$-quasi-isomorphism between them. 
\end{Th}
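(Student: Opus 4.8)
The plan is to reduce the statement entirely to the bar--cobar adjunction for algebras attached to the canonical Koszul twisting morphism $\kappa\colon\_P^\antishriek\dashrightarrow\_P$, together with the model structure on coalgebras of Theorem~\ref{Th_ModelStructureCoalgebra}. The key starting point, which is exactly how $\infty$-morphisms are defined, is that an $\infty$-morphism between homotopy $\_P$-algebras is nothing but a strict morphism of conilpotent $\_P^\antishriek$-coalgebras between their bar constructions, and that, by a Rosetta-Stone-type correspondence (cf.~Proposition~\ref{Prop_RosettaStone}), quasi-cofree conilpotent $\_P^\antishriek$-coalgebra structures on $\_P^\antishriek\circ V$ are the same datum as $\_P_\infty$-algebra structures on $V$. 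Hence $\mathbf{B}_\kappa$ identifies the category of homotopy $\_P$-algebras with $\infty$-morphisms with the full subcategory of $\mathbf{coAlg}_{\_P^\antishriek}$ spanned by the quasi-cofree objects, and under this identification an $\infty$-quasi-isomorphism (an $\infty$-morphism whose linear component is a quasi-isomorphism) corresponds precisely to a weak equivalence of the $\kappa$-model structure between quasi-cofree coalgebras.

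The first thing I would record is that quasi-cofree $\_P^\antishriek$-coalgebras are both fibrant and cofibrant in the $\kappa$-model structure: every conilpotent $\_P^\antishriek$-coalgebra is cofibrant, its cobar construction being a filtered quasi-free (hence cofibrant) $\_P$-algebra, and quasi-cofree coalgebras are fibrant because they lie in the image of a bar functor, which is right Quillen. Next, for any conilpotent $\_P^\antishriek$-coalgebra $C$ the unit $C\to\mathbf{B}_\kappa\Omega_\kappa C$ is a weak equivalence (Proposition~\ref{RQ_UnitCouniQIAlgebraBarcoBar}) with quasi-cofree target, so $\mathbf{B}_\kappa\Omega_\kappa$ is a fibrant-replacement functor taking values in quasi-cofree coalgebras. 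Since the homotopy category of a model category is computed on its fibrant--cofibrant objects, the inclusion of quasi-cofree coalgebras into $\mathbf{coAlg}_{\_P^\antishriek}$ induces an equivalence on homotopy categories; composing with the Quillen equivalence $\Omega_\kappa\dashv\mathbf{B}_\kappa$ (Theorem~\ref{Th_ModelStructureCoalgebra} and Corollary~\ref{Prop:PooAlg==PAlg}, valid because $\kappa$ is Koszul) yields an equivalence between the localisation of the category of homotopy $\_P$-algebras and $\infty$-morphisms at $\infty$-quasi-isomorphisms and $\tx{h}(\mathbf{Alg}_{\_P})$. Unwinding the functors shows this equivalence to be quasi-inverse to the faithful inclusion sending a $\_P$-algebra to the associated homotopy $\_P$-algebra with vanishing higher operations and a strict morphism to the $\infty$-morphism concentrated in arity~$1$. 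Finally, the full subcategory of strict $\_P$-algebras includes into the category of all homotopy $\_P$-algebras (with $\infty$-morphisms) as an equivalence on homotopy categories, every homotopy $\_P$-algebra being $\infty$-quasi-isomorphic to a strict one by rectification; since the composite $\mathbf{Alg}_{\_P}\hookrightarrow\mathbf{Alg}^{\infty}_{\_P}\hookrightarrow(\text{homotopy }\_P\text{-algebras with }\infty\text{-morphisms})$ is the equivalence just identified, the first inclusion — the one in the statement — is an equivalence on homotopy categories too.

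For the last sentence of the theorem, recall that a weak equivalence between fibrant--cofibrant objects of a model category is a homotopy equivalence; transported through $\mathbf{B}_\kappa$, this is precisely the assertion that a \emph{direct} $\infty$-quasi-isomorphism admits an $\infty$-quasi-inverse. Therefore the relation ``there is a direct $\infty$-quasi-isomorphism $A\to B$'' is reflexive, symmetric and transitive, hence an equivalence relation, and by the equivalence of homotopy categories just obtained it has the same classes as ``$A$ and $B$ are isomorphic in $\tx{h}(\mathbf{Alg}_{\_P})$'', that is, as being connected by a zig-zag of ordinary quasi-isomorphisms.

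The main obstacle is the claim made at the end of the first paragraph, namely that the linear component of an $\infty$-morphism governs whether the associated map of bar constructions is a $\kappa$-weak equivalence. This is the point where Koszulness of $\_P$ is genuinely used, and it is proved by filtering the bar construction $(\_P^\antishriek\circ V,\,d)$ by the weight grading of $\_P^\antishriek$ and comparing the induced maps on associated graded complexes, or equivalently by invoking the minimal models furnished by the homotopy transfer theorem of Section~\ref{Sec_HTT}. Everything else is a formal manipulation of the bar--cobar adjunction and of localisations of model categories.
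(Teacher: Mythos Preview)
Your approach is correct, and in fact the paper explicitly invites it: just before its own sketch of proof it says that Theorem~\ref{Th_MainPropertiesOfInftyMaps} ``is a formal consequence of Theorem~\ref{Th_ModelStructureCoalgebra}'' and leaves this as an exercise. You have essentially carried out that exercise, deducing everything from the Quillen equivalence $\Omega_\kappa\dashv\mathbf{B}_\kappa$ and general model-categorical facts about fibrant--cofibrant objects.

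The paper's own sketch takes a deliberately more concrete route, avoiding the coalgebra model structure altogether. For the zig-zag direction it uses the bar--cobar adjunction only to produce the explicit short zig-zag $X\stackrel{\sim}{\leftarrow}\Omega_\kappa\mathbf{B}_\kappa X\stackrel{\sim}{\to}Y$ from an $\infty$-quasi-isomorphism $X\rightsquigarrow Y$. For the invertibility of $\infty$-quasi-isomorphisms it does not invoke the abstract fact that weak equivalences between bifibrant objects are homotopy equivalences; instead it uses the homotopy transfer theorem (Proposition~\ref{Prop_HTTHomology}) to reduce to the case of trivial differential, where $\infty$-isomorphisms are invertible by an explicit formal-power-series argument, and then builds the quasi-inverse as the composite $Y\rightsquigarrow H(Y)\rightsquigarrow H(X)\rightsquigarrow X$.

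What each buys: your argument is cleaner and makes the statement a corollary of the structural Theorem~\ref{Th_ModelStructureCoalgebra}, but it relies on that model structure and on the identification of $\infty$-quasi-isomorphisms with $\kappa$-weak equivalences (your ``main obstacle''), which ultimately uses Koszulness in the same place. The paper's argument is self-contained once one has the homotopy transfer theorem, yields explicit formulas for the quasi-inverse, and illustrates how HTT is the engine behind the good homotopical behaviour of $\infty$-morphisms. One small imprecision in your write-up: not every quasi-cofree $\_P^\antishriek$-coalgebra lies in the image of $\mathbf{B}_\kappa\colon\mathbf{Alg}_\_P\to\mathbf{coAlg}_{\_P^\antishriek}$, only those coming from strict $\_P$-algebras do; for the general fibrancy claim you should either invoke the universal twisting morphism $\_P^\antishriek\dashrightarrow\_P_\infty$ or appeal directly to the description of fibrant objects in the $\kappa$-model structure. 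This does not affect the logic, since for the theorem as stated you only need fibrancy of $\mathbf{B}_\kappa A$ for strict $A$.
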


	\subsubsection{Homotopy algebras and morphisms}\
	\label{Sec_HomotopyAlgebrasAndMorphism}

\medskip

	We will in fact start by enlarging the objects of the category of $\_P$-algebras rather than the morphisms, by going to the category of $\_P_\infty$-algebras, which recovers the classical notions of $A_\infty$- and $L_\infty$-algebras. The real advantages of doing so will only appear in Section 	\ref{Sec_HTT}, but for the moment notice that from the homotopical point of view, nothing is gained or lost thanks to Corollary \ref{Prop:PooAlg==PAlg}. 

	There are several different equivalent ways to define $\_P_\infty$-algebras. Given our presentation, the most natural definition is to define a $\_P_\infty$-structure on $A$ to be a morphism $\_P_\infty := \Omega \_P^\antishriek \rightarrow \mathbf{End}_A $. But using Proposition \ref{Prop_RosettaStone}, we have the equivalences: 
		\[ \Hom_{\Op} \left( \_P_\infty, \mathbf{End}_A \right) \simeq \tx{Tw}(\_P^\antishriek, \mathbf{End}_A) \simeq \Hom_{\coOp^{\tx{conil}}}\left( \_P^\antishriek, \mathbf{B}\mathbf{End}_A \right)\]

	\begin{Lem}\label{Lem_PInftyStructureCodifferential}
		
		A $\_P_\infty$-structure on $A$ is exactly given by a codifferential  on the cofree $\_P^\antishriek$-algebra $\_P^\antishriek (A)$. We denote by $\tx{codiff}(\_P^\antishriek (A))$ the set of such codifferentials. 
	\end{Lem}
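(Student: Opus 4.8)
The plan is to pass through the Rosetta Stone and then identify codifferentials with twisting morphisms. By definition a $\_P_\infty$-structure on $A$ is a morphism of operads $\_P_\infty = \Omega \_P^\antishriek \to \mathbf{End}_A$, and by \cref{Prop_RosettaStone} (this is exactly the chain of equivalences displayed above) such morphisms are in natural bijection with the set $\tx{Tw}(\_P^\antishriek,\mathbf{End}_A)$ of twisting morphisms, i.e. degree $1$ maps of symmetric sequences $\varphi\colon \overline{\_P^\antishriek} \dashrightarrow \mathbf{End}_A$ satisfying the Maurer--Cartan equation $\partial\varphi + \varphi\star\varphi = 0$, where $\partial$ is induced by the differential of $A$ (recall $\_P^\antishriek$ carries no differential, by \cref{Not_PinftyOperad}) and $\star$ is the pre-Lie product on $\tx{Tot}(\tx{Conv}(\_P^\antishriek,\mathbf{End}_A))$. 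Hence it suffices to produce a natural bijection
\[ \tx{codiff}\big(\_P^\antishriek(A)\big)\ \cong\ \tx{Tw}(\_P^\antishriek,\mathbf{End}_A), \]
where $\_P^\antishriek(A) = \_P^\antishriek\bar{\circ} A \cong \_P^\antishriek\circ A$ (using \cref{RQ_NormalisationIsomorphism}) is the cofree conilpotent $\_P^\antishriek$-coalgebra, and a codifferential is a square-zero degree $1$ coderivation extending the internal differential $d_A$.

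For the bijection I would use the universal property of the cofree conilpotent coalgebra: dually to the fact that a derivation of a free $\_P$-algebra is determined by its restriction to the generators, a coderivation $D$ of $\_P^\antishriek(A)$ is uniquely determined by its corestriction $\pi_A\circ D\colon \_P^\antishriek\bar{\circ}A \to A$ to the cogenerators. By \cref{Lem_closedmonoidalstructure}, degree $1$ maps $\_P^\antishriek\circ A \to A$ are the same data as degree $1$ morphisms of symmetric sequences $\_P^\antishriek \to \mathbf{End}_A$; splitting off the arity-one component, which is fixed to be $d_A$, the remaining data is precisely a degree $1$ map $\varphi\colon \overline{\_P^\antishriek}\to \mathbf{End}_A$, and we write $D = \overline{d_A} + D_\varphi$. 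It then remains to match the conditions. Since $D$ is odd, $D^2 = \tfrac12[D,D]$ is again a coderivation, hence determined by $\pi_A\circ D^2$; expanding via the co-Leibniz rule, the quadratic term $\pi_A\circ D_\varphi^2$ is computed by a single partial cocomposition of $\_P^\antishriek$ followed by a single partial composition of $\mathbf{End}_A$, which is, by the very definition of the convolution operad (\cref{Def_ConvolutionOperad}), exactly $\varphi\star\varphi$, while the cross term with $\overline{d_A}$ yields $\partial\varphi$. Thus $\pi_A\circ D^2 = \partial\varphi + \varphi\star\varphi$ up to sign, so $D$ is a codifferential iff $\varphi$ is a twisting morphism. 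For the direction producing $D$ from $\varphi$ one may alternatively just invoke \cref{Def_BarCobarAlgebra}: the twisting morphism $\varphi\colon \_P^\antishriek\dashrightarrow \mathbf{End}_A$ is exactly the input of a bar construction, and $\mathbf{B}_\varphi(A) = (\_P^\antishriek\bar{\circ}A,\ d_A + d_{\mathbf{B}})$ exhibits the sought codifferential.

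The only genuinely delicate point is this last matching of conditions — verifying that the quadratic part of $D^2$ is the convolution pre-Lie square and that the mixed part reproduces $\partial\varphi$, with all Koszul signs coming from the degree shifts $E[1]$, $R[2]$ in $\_P^\antishriek = \_C(E[1],R[2])$ accounted for. Conilpotency of $\_P^\antishriek$ (automatic since $\_P^\antishriek(0)=0$ and $\_P^\antishriek(1)=k$) ensures that all sums involved — in $D_\varphi$, in $D^2$, and in the cofree coalgebra itself — are locally finite, so everything is well defined. Alternatively, the whole statement is a special case of the coalgebra-level discussion in \cite[Section 10.1]{LV}.
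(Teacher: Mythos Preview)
Your proof is correct and follows essentially the same approach as the paper: pass through the Rosetta Stone to identify $\_P_\infty$-structures with twisting morphisms, use cofreeness of $\_P^\antishriek(A)$ together with \cref{Lem_closedmonoidalstructure} to identify coderivations with degree $1$ maps $\_P^\antishriek \to \mathbf{End}_A$, and then match the square-zero condition with the Maurer--Cartan equation. Your write-up is in fact more detailed than the paper's own proof, which defers the verification of this last matching to \cite[Proposition 10.1.11]{LV}.
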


By codifferential on $\_P^\antishriek (A)$, we mean a degree $1$ square zero coderivation $D$, such that the underlying map $A\to A$ is just the differential on $A$: 
\[D = d_A + D^{\geq 2}\]
	
\begin{proof}
			This is essentially the content of \cite[Proposition 10.1.11]{LV}. In general a coderivation on $\_P^\antishriek (A)$ is completly determined (because of cofreeness of $\_P^\antishriek (A)$) by a map in $\Hom_{\symseq} \left( \_P^\antishriek, \mathbf{End}_A \right)$ of degree $1$ (see \cite[Proposition 6.3.8]{LV} together with Lemma \ref{Lem_closedmonoidalstructure}). For this map to correspond to a $\_P_\infty$-structure on $A$, it needs to be a twisting morphism via the Rosetta Stone. It turns out that to satisfy the Maurer--Cartan equation in $\Hom_{\symseq} \left( \_P^\antishriek, \mathbf{End}_A \right)$ is equivalent to saying that the associated coderivation on $\_P^\antishriek(A)$ squares to zero.     
\end{proof}

\begin{RQ}
	Notice that for $\_P = \_P(E,R)$ Koszul, the Lie algebra $\tx{Tot}(\mathrm{Conv}(\_P^\antishriek, \mathbf{End}_A))$ is filtered complete, with filtration given by the vertex filtration on $\mathfrak T^c(E)$, which descends to $\_P^\antishriek$. 
	
	This filtration splits as a weight grading by the number of vertices:
		\[\tx{Tot}(\mathrm{Conv}(\_P^\antishriek, \mathbf{End}_A))= \prod_{n\geq 1}\Hom_\Sigma(\_P^{\antishriek(n)}, \mathbf{End}_A))\]
		 A twisting morphism then decomposes in $\phi = \phi_1 + \phi_2 + \cdots$ and the Maurer--Cartan equation becomes: 
\[ - \sum_{\underset{p<n, q<n}{p+q =n} } \phi_k \star \phi_l = \partial \phi_n  \] 

where $\partial$ and $\star$ give the pre-Lie structure of the convolution pre-Lie algebra.
\end{RQ}
	\begin{Ex} \ \label{Ex_InfityAlgebras}
		
		\begin{itemize}

			\item $\mathbf{As}_\infty$ and $A_\infty$-algebras: 
			
			For simplicity reasons let us consider $\mathbf{As}$ be the \emph{non-symmetric} version of the associative operad and let us describe homotopy algebras over it. 
			An $A_\infty$-algebra structure is given by a twisting morphism in $\tx{Tw}(\mathbf{As}^\antishriek, \mathbf{End}_A)$. Forgetting for now the Maurer--Cartan condition, this is given by a set of maps from $\mathbf{As}^\antishriek (n)$ to  $\mathbf{End}_A(n)[-1]$ for each $n\in \Nn$. We have that:  \[ \mathbf{As}^! (n) = (\mathbf{As}^\antishriek (n))^\vee \otimes \mathbf{End}_{k[-1]}(n) \simeq (\mathbf{As}^\antishriek (n))^\vee[n-1]\]
			
			 Therefore we get $\mathbf{As}^\antishriek(n) = (\mathbf{As}^!(n))^\vee[1-n] = \mathbf{As}(n)^\vee[1-n]$ (since $\mathbf{As}^! = \mathbf{As}$). Therefore we have an isomorphism $\mathbf{As}^\antishriek (n)  \cong k[1-n]$ for $n\geq 1$ and such a morphism is given for each $n$ by an element in $\mathbf{End}_A(n)$ of degree $2-n$, that is an element of degree $0$ in $\iHom_{k}\left( A^{\otimes n}, A \right)[n-2]$. This means that for each $n \geq 2$, we get  an $n$-ary operation of degree $2-n$:
			 \[ \mu_n : A^{\otimes n} \rightarrow A \] 
			 satisfying some conditions corresponding to the Maurer--Cartan equation satisfied by the twisting morphism:
			 \[ \partial \alpha + \alpha \star \alpha = 0, \]
			 where $\partial$ is the differential on $\tx{Tw}(\mathbf{As}^\antishriek, \mathbf{End}_A)$ given by $\partial \alpha = \alpha \circ d_{\mathbf{As}^\antishriek} + \partial_A \circ \alpha$. But we have  $\partial_{\mathbf{As}^\antishriek} = 0$ and we get: 
			 \[ \partial_A \circ \alpha + \alpha \star \alpha = 0 \]

			 From the codifferential point of view, $\_P^\antishriek (A)$ is simply the shifted cofree tensor coalgebra on the graded module $A[-1]$ given by the tensor algebra $(T(A[-1]))[1]$. 			 
			 Then the maps $\mu_n$ correspond to a map, $D$, on $\_P^\antishriek (A)$ with
			   $$D= d_A+ \mu_2 + \cdots \colon \_P^\antishriek (A) \rightarrow A$$
			 defining a derivation $D$ on $\_P^\antishriek (A)$. Then, if we write $\mu_1 \coloneqq d_A$, the condition $D^2 = 0$ recovers the classical equations encoding the associativity  up to homotopy of the $A_\infty$-algebras: 
			 \[ \sum_{p+q +r = n} (-1)^{p+qr} \mu_{p+r+1} \circ (\tx{id}^{\otimes p} \otimes \mu_q \otimes \tx{id}^{\otimes r}) = 0, \quad \forall n\geq 1\] 
			    
			\item $\mathbf{Lie}_\infty$ and $L_\infty$-algebras:
			
			Similarly to the associative situation, we can see that $\mathbf{Lie}^\antishriek (n) \cong k[1-n]$ with the sign action of $\Sigma_n$. An $L_\infty$-algebra structure on $A$ is given by a sequence of graded skew-symmetric maps for each $n \geq 2$:
			\[ \ell_n :  A^{\otimes n} \rightarrow A[n-2]\]
			
			The cofree $\mathbf{Lie}^\antishriek$-coalgebra generated by $A$ is given by $(\Sym A[-1])[1]$ together with the natural decomposition product. If $D : \mathbf{Lie}^\antishriek (A) \rightarrow A$ is the degree 1 codifferential associated to the sequence of $\ell_n$, and $d_A$ the differential on $A$, then $D^2 = 0$ is equivalent to the following conditions: %(see \cite[Proposition 10.1.7]{LV}): 
			\[ \sum_{\underset{p>1, q>1}{p+q =n} } \sum_{\sigma \in \tx{Unsh}(p,q)} \tx{sgn}(\sigma)(-1)^{(p-1)q} (\ell_p \circ \ell_q)^\sigma = \partial_A (\ell_n )\]
		\end{itemize}
	\end{Ex}
	
	Such homotopy algebraic structures, being algebras over an operad $\_P_\infty$ have a natural notion of morphism, which commutes with all the $\_P_\infty$ operations. Such morphisms are sometimes called \emph{strict} morphisms, in opposition with a weaker notion of morphisms called  \emph{$\infty$-morphism}. Even when the $\_P_\infty$ structure arises from a strict $\_P$-algebra structure, this is a novel notion. These new type of morphisms give us a better control of the homotopy theory of $\_P_\infty$-algebras, have very nice invertibility properties and naturally appear in the homotopy transfer theorem (Theorem \ref{Th_HHT}).  

	\begin{Def}\label{Def_InftyMorphisms}
		
		Let $A$ and $B$ be two $\_P_\infty$-algebras given by codifferentials $\left( \_P^\antishriek (A), \Delta_A \right)$ and $\left( \_P^\antishriek (B), \Delta_B \right)$ (see Lemma \ref{Lem_PInftyStructureCodifferential}). An \defi{$\infty$-morphism} from $A$ to $B$, denoted $f: A \rightsquigarrow B$, is defined to be a morphism of $\_P^\antishriek$-coalgebras preserving the codifferential: 
		\[ f: \left( \_P^\antishriek (A), \Delta_A \right) \rightarrow \left( \_P^\antishriek (B), \Delta_B \right) \]
	\end{Def}

Let us unravel a bit the definition above. 	By virtue of being a map into a cofree conilpotent coalgebra, an $\infty$-morphism $f: A \rightsquigarrow B$ is in fact completely determined by the projection $\_P^\antishriek (A) \rightarrow B$ or equivalently, thanks to Lemma \ref{Lem_closedmonoidalstructure}, by a map $\tilde f : \_P^\antishriek \rightarrow \mathbf{End}_B^A$. Decomposing $\tilde f $ by arity, we see that an $\infty$-morphism is determined by a series of maps $A^{\otimes n}\to B$, being enough to specify one such map for each generator of $\_P^\antishriek(n)$ as a $\Sigma_n$-module. 

	We call $f_1\coloneqq \tilde f(I) \colon A \to B$ the \emph{base map} of the $\infty$-morphism. Notice that this is only a morphism of cochain complexes and we interpret the other components of $f$ as the homotopies controlling the failure of $f_1$ to intertwine the $\_P$ or $\_P_\infty$ structures. If $\tilde f$ vanishes in arities $\geq 2$, then $f_1$ is a strict morphism of $\_P_\infty$-algebras.

	\begin{Ex}\cite[Section 10.2.6]{LV}\
		
		\begin{itemize}
			\item $A_\infty$-morphisms:
			
			Let $A$ and $B$ be $A_\infty$-algebras described by maps $\mu_i^A$ and $\mu_i^B$ respectively. An $\infty$-morphism $f : A \rightsquigarrow B$ is given by maps \[f_n : A[1]^{\otimes n} \rightarrow B[1]\] for all each $n \geq 1$. The condition that the induced map $\mathbf{Ass}^\antishriek (A) \rightarrow \mathbf{Ass}^\antishriek (B)$ preserves the differential is, for all $n \geq 1$: 
			\[ \sum_{0 \leq i+j \leq n} f_{i+j+1} \circ (1^{\otimes i} \otimes \mu_{n-i-j}^A \otimes 1^{\otimes j}) = \sum_{r \geq 1} \sum_{i_1 + \cdots + i_r = n} \mu_r^B \circ (f_{i_1} \otimes \cdots \otimes f_{i_r}) \]
			
			\item $L_\infty$-morphisms:
			
			Let $A$ and $B$ be $L_\infty$-algebras described by maps $\ell_i^A$ and $\ell_i^B$ respectively (with $\ell_1^A = d_A$ and $\ell_1^B = d_B$). An $\infty$-morphism $f : A \rightsquigarrow B$ is given by maps $f_n : \Sym^n (A[1]) \rightarrow B[1]$ for all each $n \geq 1$. The condition that the induced map $\mathbf{Lie}^\antishriek (A) \rightarrow \mathbf{Lie}^\antishriek (B)$ preserves the differential is a symmetrization of the $A_\infty$ situation and for all $n \geq 1$: 
		
			\[ \sum_{i+j = n} \sum_{\sigma \in \tx{Sh}(i,j)} \epsilon(\sigma) (f_{j+1} \circ_1 \ell_{i}^A)^\sigma = \sum_{r \geq 1} \sum_{i_1 + \cdots + i_r = n} \sum_{\sigma \in \tx{Sh}(i_1, \cdots, i_r)} \epsilon(\sigma) \ell_r^B \circ (f_{i_1} \otimes \cdots \otimes f_{i_r})^\sigma \]	
		
		\end{itemize}
	\end{Ex}

	\begin{Def}
		\label{Def_InftyIsotopy}
		
		We say that $f$ is an \defi{$\infty$-(quasi)-isomorphism} if the induced map $f_1 : A \rightarrow B$ is a (quasi)-isomorphism. An \defi{$\infty$-isotopy} between two $\_P_\infty$-structures on $A$ given by $\alpha, \beta : \_P_\infty \rightarrow \mathbf{End}_A$ is an $\infty$-morphism $f : \_P^\antishriek \rightarrow \mathbf{End}_A^A = \mathbf{End}_A$ between $\alpha$ and $\beta$ such that $f_1 := \tx{id}_A$. 
	\end{Def}
	
	\begin{RQ}\label{RQ_MorphismExtension}
		A morphism $f : A \rightarrow B$ of complexes between two $\_P_\infty$-algebras is said to extend to a $\infty$-morphism of $\_P_\infty$-algebra if there exists an $\infty$-morphism $f_\infty : A \rightsquigarrow B$ such that $f = (f_\infty)_1$.

	\end{RQ}

	\subsubsection{Homotopy transfer theorem}\label{Sec_HTT}\
	
	\medskip
	
As a general heuristic, there is a trade-off between how small a $\_P$-algebra is and how simple its structure is, while preserving the quasi-isomorphism type. For example, a quasi-free resolution of an algebra $A$ (such as $\Omega \mathbf B A$) is simple in the sense that it is there are no relations, but free objects tend to be quite big.

Trying to go in the other direction, the smallest possible $\_P$-algebra that has any chance of being quasi-isomorphic to $A$ is its own homology. In case $A$ is not formal, the homotopy transfer theorem tells us that in that case we can insist on working with the cochain complex $H(A)$ but the price to pay is to have to endow $H(A)$ with a \emph{different} $\_P_\infty$-algebra structure on it.

It might look odd to consider a $\_P_\infty$-structure on a complex with trivial differential, but this just means that while there exist higher homotopies, the $\_P$-algebra equations are satisfied on the nose. In that situation, the $\_P_\infty$ structure on $H(A)$ will recover the canonical $\_P$-algebra structure.\\

 Given a quasi-isomorphism $A \rightarrow B$ of cochain complexes, we would like to know when we can transfer a $\_P$-algebra structure (or even $\_P_\infty$-structure) from $A$ to $B$ (or vice versa). % We are  in particular interested in obtaining $\_P_\infty$ structures in $B=H(A)$.
  We start by describing the conditions on our quasi-isomorphism that permits such a transfer. This procedure gives a way to produce $\_P_\infty$-structures on cohomology, transferring from the deformation retract of cochain complexes on cohomology (see Proposition \ref{Prop_FormalityOfCHKAndDeformationRetractToCohomology}). 
	
	\begin{Def}\label{Def_HomotopyandDeformationRetracts}
		
		We say that $(B, d_B)$ is a \defi{homotopy retract} of $(A, d_A)$ if there are maps of cochain complexes: 
		\[\begin{tikzcd}
		\arrow[loop left, "h"] (A, d_A) \arrow[r, "p", shift left] & \arrow[l, shift left, "i"] (B, d_B)
		\end{tikzcd}\]
		
		such that $i$ is a quasi-isomorphism and $\tx{Id}_A - i p = d_A h + h d_A$. This pair is a \defi{deformation retract} if moreover $pi =\tx{id}$, and a \defi{strong deformation retract} if we have the side conditions $ph = hi = h^2 = 0$. 
	\end{Def} 

To go from a deformation retract to a strong one, it suffices to change the homotopy $h$ to $h'' = -h'd_Ah'$, where $h' = (d_Ah+hd_A)h(d_Ah+hd_A)$.

	\begin{Th}[Homotopy Transfer Theorem, {\cite[Theorem 10.3.1]{LV}}]\ 
		\label{Th_HHT}
		
		Let $\_P$ be a Koszul operad and let $(B, d_B)$ a homotopy retract of $(A, d_A)$. Then, any $\_P_\infty$-algebra structure on $A$ can be transferred to a $\_P_\infty$-algebra structure on $B$ such that $i$ extends to a $\infty$-quasi-isomorphism $i_\infty$. Moreover, the $\_P_\infty$ structure on $A$ and $i_\infty$ can be explicitly described (see Remarks \ref{RQ_PInftyStructure} and \ref{RQ_iInfty}).
	\end{Th}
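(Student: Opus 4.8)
The plan is to use the operadic reformulation of $\_P_\infty$-structures via twisting morphisms and to exhibit the transferred structure as an explicit element of the convolution Lie algebra. First I would recall from Lemma~\ref{Lem_PInftyStructureCodifferential} and the Rosetta Stone (Proposition~\ref{Prop_RosettaStone}) that a $\_P_\infty$-structure on $A$ is the same as a Maurer--Cartan element $\varphi_A$ in the convolution Lie algebra $\G_g_A := \tx{Tot}(\mathrm{Conv}(\_P^\antishriek, \mathbf{End}_A))$, which by the remark after that Lemma is filtered complete with the weight grading by number of vertices of $\_P^\antishriek$. The key point is that a homotopy retract datum $(h,i,p)$ between $(A,d_A)$ and $(B,d_B)$ allows one to build, for each generator of $\_P^\antishriek(n)$, a map $B^{\otimes n} \to B$ by a sum over trees: decorate the leaves by $i$, the root by $p$, the internal edges by $h$, and the internal vertices by the components of $\varphi_A$. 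This is the classical ``tree formula'' and gives a well-defined element $\varphi_B \in \G_g_B := \tx{Tot}(\mathrm{Conv}(\_P^\antishriek, \mathbf{End}_B))$ because the vertex filtration is complete, so the (a priori infinite) sum over trees with a fixed number of leaves converges; in fact for each $n$ there are only finitely many trees with $n$ leaves so no completeness subtlety even arises arity by arity.

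The main steps are then: (1) show $\varphi_B$ satisfies the Maurer--Cartan equation $\partial_B \varphi_B + \varphi_B \star \varphi_B = 0$ in $\G_g_B$, hence defines a $\_P_\infty$-structure on $B$; (2) construct the $\infty$-quasi-isomorphism $i_\infty\colon B \rightsquigarrow A$ extending $i$, given as a morphism of $\_P^\antishriek$-coalgebras, i.e.\ equivalently by a map $\widetilde{i_\infty}\colon \_P^\antishriek \to \mathbf{End}_A^B$, again specified by a tree sum (leaves decorated by $i$, internal edges by $h$, internal vertices by $\varphi_A$, and now the root \emph{not} capped by $p$); (3) check that $(\widetilde{i_\infty})_1 = i$ so that the base map is the given quasi-isomorphism, and since $i$ is a quasi-isomorphism this is indeed an $\infty$-quasi-isomorphism in the sense of Definition~\ref{Def_InftyIsotopy}. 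Step (1) is the combinatorial heart: one differentiates the tree sum defining $\varphi_B$, uses that $\varphi_A$ is Maurer--Cartan on $A$ and the homotopy retract identity $\tx{Id}_A - ip = d_A h + h d_A$, and observes that the terms organize into a telescoping cancellation indexed by internal edges of the trees, exactly as in \cite[Section~10.3]{LV}. Equivalently, and perhaps more cleanly, one can phrase the whole construction as a single statement in homotopical algebra: the homotopy retract exhibits $\mathbf{End}_B$ as a homotopy retract of $\mathbf{End}_A$ as symmetric sequences (via pre- and post-composition by $i,p,h$), this induces a homotopy retract on the convolution Lie algebras $\G_g_A \leftrightarrows \G_g_B$, and one transfers the Maurer--Cartan element along it using the explicit $L_\infty$-transfer / gauge-fixing formulas — but since $\G_g_B$ would then carry a transferred $L_\infty$-structure rather than a strict Lie one, I would prefer the direct tree-formula argument to stay within strict dg Lie algebras.

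I expect the main obstacle to be the bookkeeping of signs in step (1): making the telescoping cancellation in the Maurer--Cartan equation for $\varphi_B$ precise requires tracking the Koszul signs coming from the degree $+1$ shift implicit in $\_P^\antishriek = \_C(E[1],R[2])$, from the desuspensions in the tree grafting, and from moving the homotopy $h$ (of degree $-1$) past the other operations. The cleanest way to manage this is to work entirely on the codifferential side (Lemma~\ref{Lem_PInftyStructureCodifferential}): a $\_P_\infty$-structure on $B$ is a square-zero coderivation $D_B$ on the cofree coalgebra $\_P^\antishriek(B)$, an $\infty$-morphism is a dg coalgebra map, and the tree formulas become statements about coderivations and coalgebra morphisms where the signs are forced by cofreeness. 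Then $D_B^2 = 0$ and the compatibility of $i_\infty$ with the codifferentials both reduce to a single identity obtained by ``capping off'' the homotopy retract relation, which is verified once and for all; I would cite \cite[Theorem~10.3.1]{LV} and its proof for the sign conventions rather than recomputing them here, and merely indicate that the explicit formulas are recorded in Remarks~\ref{RQ_PInftyStructure} and~\ref{RQ_iInfty} below.
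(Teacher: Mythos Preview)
Your approach is correct and leads to the same explicit tree formulas, but the paper takes a more conceptual route that sidesteps the direct Maurer--Cartan verification you flag as the main obstacle. Rather than writing down $\varphi_B$ and checking $\partial_B\varphi_B + \varphi_B\star\varphi_B = 0$ by a telescoping argument over trees, the paper uses the Rosetta Stone in the form $\Hom_{\coOp}(\_P^\antishriek,\mathbf{B}\mathbf{End}_B)$: a $\_P_\infty$-structure on $B$ is a map of cooperads $\_P^\antishriek \to \mathbf{B}\mathbf{End}_B$. The given structure on $A$ is such a map $\_P^\antishriek \to \mathbf{B}\mathbf{End}_A$, so it suffices to produce a single map of dg cooperads $\Psi\colon \mathbf{B}\mathbf{End}_A \to \mathbf{B}\mathbf{End}_B$ from the retract data (this is Lemma~\ref{Lem_MapsBetweenBarConstructionofEndOperad}) and compose. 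The tree formula you describe is then precisely the description of $\Psi$ on a tree of $\mathbf{B}\mathbf{End}_A$, and the Maurer--Cartan equation for $\varphi_B$ is automatic because a composite of cooperad maps is a cooperad map.

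What this buys: the verification that $\Psi$ commutes with the bar differential is done once, independently of $\_P$, and is a statement purely about the retract data and $\mathbf{End}_A$, $\mathbf{End}_B$; your approach re-does (an equivalent of) this check inside $\G_g_B$. Your suggestion at the end to work on the codifferential side is a step in this direction, but the paper's factorisation through $\mathbf{B}\mathbf{End}_A$ is what makes the argument genuinely uniform in $\_P$. The construction of $i_\infty$ in the paper likewise goes through a variant $\tilde\Psi\colon \Tc(\mathbf{End}_A[-1])\to \mathbf{End}_A^B$ (same trees, root uncapped), matching your step~(2) exactly.
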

	
	The proof is based on the Rosetta Stone (Proposition \ref{Prop_RosettaStone}) so that finding a $\_P_\infty$-structure on $B$ boils down to finding a map $\_P^\antishriek \rightarrow \mathbf{B}\mathbf{End}_B$, which can be reduced to finding a map, $\Psi : \mathbf{B}\mathbf{End}_A \rightarrow \mathbf{B}\mathbf{End}_B$ that will make the following diagram commute:
	
	\[\begin{tikzcd}
	\_P^\antishriek \arrow[r] \arrow[dr] & \mathbf{B}\mathbf{End}_A \arrow[d, "\Psi"] \\
	 & \mathbf{B}\mathbf{End}_B
	\end{tikzcd}\] 
	
 This is the goal of the following Lemma. 
	
	\begin{Lem}[{\cite[Section 10.3.2]{LV}}]  \label{Lem_MapsBetweenBarConstructionofEndOperad}
		
		We can construct a map of cooperads $\Psi : \mathbf{B}\mathbf{End}_A \rightarrow \mathbf{B}\mathbf{End}_B$ that extends (by cofreeness) the map sending $m \in \mathbf{End}_A (n)$ to $p \circ m \circ i^{\otimes n} \in \mathbf{End}_B(n)$. 
	\end{Lem}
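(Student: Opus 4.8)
The plan is to exploit the cofreeness of $\mathbf{B}\mathbf{End}_B = \Tc\big(\overline{\mathbf{End}_B}[1]\big)$: by the universal property of the cofree conilpotent cooperad (Construction \ref{Cons_Cooperads}), a morphism of graded cooperads with target $\mathbf{B}\mathbf{End}_B$ is uniquely determined by its composite $\varphi := \pi\circ\Psi$ with the projection $\pi\colon \mathbf{B}\mathbf{End}_B \to \overline{\mathbf{End}_B}[1]$ onto the cogenerators. So the proof has two stages: first specify $\varphi\colon \mathbf{B}\mathbf{End}_A \to \overline{\mathbf{End}_B}[1]$ and let $\Psi$ be its unique cooperadic extension, and then check that $\Psi$ commutes with the two bar differentials.

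For the first stage, recall that $\mathbf{B}\mathbf{End}_A$ is, as a symmetric sequence, spanned by rooted trees $T$ with $n$ leaves whose internal vertices carry labels in $\overline{\mathbf{End}_A}$ (with a degree shift $[1]$ per vertex). I would define $\varphi(T)\in \mathbf{End}_B(n)$ by the ``substitution'' recipe underlying the homotopy transfer theorem: replace each internal vertex by its labelling operation, insert a copy of the homotopy $h$ along each internal edge, precompose with $i$ on each of the $n$ leaves, and postcompose with $p$ at the root, composing everything into a map $B^{\otimes n}\to B$. On a one-vertex tree labelled by $m\in\mathbf{End}_A(n)$ this gives exactly $p\circ m\circ i^{\otimes n}$, which is the prescription in the statement. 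Signs are forced by the Koszul rule together with the $[1]$-shifts and are routine. The extension $\Psi$ is then computed on a tree $T$ by summing over all ways of cutting $T$ into connected sub-trees (the iterated cocomposition of $\mathbf{B}\mathbf{End}_A$, i.e.\ successive decompositions as in Definition \ref{Def_BarCobarConstruction}), applying $\varphi$ to each sub-tree, and reassembling the outputs as a tree in $\mathbf{B}\mathbf{End}_B$.

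For the second stage, since $\Psi$ is a morphism of graded cooperads into a cofree cooperad and $d_{\mathbf{B}\mathbf{End}_B}$ is a coderivation, the identity $\Psi\circ d_{\mathbf{B}\mathbf{End}_A} = d_{\mathbf{B}\mathbf{End}_B}\circ\Psi$ is equivalent to the single projected identity $\pi\circ\Psi\circ d_{\mathbf{B}\mathbf{End}_A} = \pi\circ d_{\mathbf{B}\mathbf{End}_B}\circ\Psi$ of maps $\mathbf{B}\mathbf{End}_A \to \overline{\mathbf{End}_B}[1]$. Unwinding on a tree $T$: the left side differentiates the label of each vertex (contributing the $d_A$ part) and contracts each internal edge by the partial composition of $\mathbf{End}_A$; the right side applies $d_B$ to the composite operation $\varphi(T)$ and corrects by the terms coming from splitting a small blob near the root off $T$. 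Differentiating $\varphi(T)=p\circ(\cdots)\circ i^{\otimes n}$ produces, besides the $d_A$ on each vertex label and $d_B p = p\,d_A$, $d_A i = i\,d_B$ at the boundary, a term $d_A h + h\,d_A$ on each internal edge; invoking the homotopy retract relation $d_A h + h\,d_A = \mathrm{id}_A - i\,p$ from Definition \ref{Def_HomotopyandDeformationRetracts} replaces each such term by an edge-contraction term (the $\mathrm{id}_A$ piece, which glues the two adjacent vertices into a single composite in $\mathbf{End}_A$) minus an ``edge-cut plus $i\,p$'' term that is exactly what the root-splitting correction supplies. After these cancellations the two sides match, term by term, with signs.

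The genuinely delicate point is this telescoping/sign verification, which I expect to be the main obstacle: there is no new idea beyond the homotopy retract identities, but the combinatorics of matching the bar differential (contract-an-edge, differentiate-a-vertex) against the differential of the substitution formula requires care with the $[1]$-shift signs and the Koszul rule, and is best organized one edge at a time via the coderivation/derivation formulation, as in \cite[Section 10.3.2]{LV}. It is worth noting along the way that only the homotopy retract relations are used --- not the side conditions $ph = hi = h^2 = 0$ --- so the construction of $\Psi$ works for an arbitrary homotopy retract, the side conditions becoming relevant only afterwards for the explicit description of the transferred structure and of $i_\infty$.
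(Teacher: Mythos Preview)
Your proposal is correct and follows essentially the same approach as the paper: the paper does not give a standalone proof but refers to \cite[Section 10.3.2]{LV} and describes in Remark \ref{RQ_PInftyStructure} exactly the tree-substitution recipe (insert $i$ at leaves, $p$ at the root, $h$ on internal edges) that you use to define the projection $\varphi$, with $\Psi$ then determined by cofreeness. Your verification of compatibility with the bar differentials via the identity $d_Ah+hd_A=\mathrm{id}_A-ip$ is the standard argument from \cite{LV}; one small imprecision is that the correction terms from $\pi\circ d_{\mathbf{B}\mathbf{End}_B}\circ\Psi$ come from cutting $T$ at \emph{any} internal edge (not just near the root), yielding an $ip$ at that edge which cancels against the corresponding piece of $d_Ah+hd_A$.
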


	\begin{RQ} \label{RQ_PInftyStructure}We will denote by $sm$ the element of a cochain complex $V[-1]$ corresponding to $m \in V$ but in degree 1 higher. To understand the map obtained in Lemma \ref{Lem_MapsBetweenBarConstructionofEndOperad}, recall from Definition \ref{Def_BarCobarConstruction} that $\mathbf{B}\mathbf{End}_A $ is the cofree cooperad on $\mathbf{End}_A[-1]$ together with the bar differential. The idea is to define a map that will send a tree in $\mathbf{B}\mathbf{End}_A$ to an element in $\mathbf{End}_B$ by adding $i$ to each leaf of the tree, $p$ to the root, and $h$ to each internal edge: 
	\[
	\begin{tikzpicture}[scale=0.6, baseline=-1ex]
	\GraphInit[vstyle=Classic]
	
	\tikzset{VertexStyle/.style = {shape = circle,fill = black,minimum size = 1pt,inner sep=0pt}}
	
	\SetVertexNoLabel
	
	\Vertex[empty,x=-4,y=3]{A}
	\Vertex[empty,x=-2,y=3]{B}
	\Vertex[empty,x=-1,y=3]{C}
	\Vertex[empty,x=1,y=3]{D}
	\Vertex[empty,x=3,y=3]{E}
	\Vertex[empty,x=5,y=3]{F}

	\Vertex[empty, x=0,y=-3]{S}
	
	\SetVertexLabel

	\Vertex[LabelOut,Lpos=-110, L=$sm_1$, x=-3,y=1.5]{M1}
	\Vertex[LabelOut,Lpos=-110, L=$sm_2$, x=0,y=1.5]{M2}
	\Vertex[LabelOut,Lpos=-20, L=$sm_3$, x=4,y=1.5]{M3}
	\Vertex[LabelOut,Lpos=-20, L=$sm_4$, x=2,y=0]{M4}
	\Vertex[LabelOut,Lpos=-20, L=$sm_5$, x=0,y=-1.5]{M5}
	
	\Edge(A)(M1)
	\Edge(B)(M1)
	\Edge(C)(M2)
	\Edge(D)(M2)
	\Edge(B)(M1)
	\Edge(E)(M3)
	\Edge(F)(M3)
	\Edge(M1)(M5)
	\Edge(M2)(M4)
	\Edge(M3)(M4)
	\Edge(M4)(M5)
	\Edge(M5)(S)

	\end{tikzpicture} \mapsto s \left( \begin{tikzpicture}[scale=0.4, baseline=-1ex]
	\GraphInit[vstyle=Classic]
	
	\tikzset{VertexStyle/.style = {shape = circle,fill = black,minimum size = 1pt,inner sep=0pt}}
	
	\SetVertexNoLabel
	
	\Vertex[empty,x=-4,y=3]{A}
	\Vertex[empty,x=-2,y=3]{B}
	\Vertex[empty,x=-1,y=3]{C}
	\Vertex[empty,x=1,y=3]{D}
	\Vertex[empty,x=3,y=3]{E}
	\Vertex[empty,x=5,y=3]{F}
	
	\Vertex[empty,x=-4,y=4.5]{A'}
	\Vertex[empty,x=-2,y=4.5]{B'}
	\Vertex[empty,x=-1,y=4.5]{C'}
	\Vertex[empty,x=1,y=4.5]{D'}
	\Vertex[empty,x=3,y=4.5]{E'}
	\Vertex[empty,x=5,y=4.5]{F'}
	
	\Vertex[empty, x=0,y=-3]{S}
	\Vertex[empty, x=0,y=-4.5]{S'}
	
	\SetVertexLabel

	\Vertex[LabelOut,Lpos=-110, L=$m_1$, x=-3,y=1.5]{M1}
	\Vertex[LabelOut,Lpos=-110, L=$m_2$, x=0,y=1.5]{M2}
	\Vertex[LabelOut,Lpos=-20, L=$m_3$, x=4,y=1.5]{M3}
	\Vertex[LabelOut,Lpos=-20, L=$m_4$, x=2,y=0]{M4}
	\Vertex[LabelOut,Lpos=-20, L=$m_5$, x=0,y=-1.5]{M5}

	\Edge[label=$i$](A')(A)
	\Edge[label=$i$](B')(B)
	\Edge[label=$i$](C')(C)
	\Edge[label=$i$](D')(D)
	\Edge[label=$i$](E')(E)
	\Edge[label=$i$](F')(F)
	\Edge[label=$p$](S)(S')
	
	\Edge(A)(M1)
	\Edge(B)(M1)
	\Edge(C)(M2)
	\Edge(D)(M2)
	\Edge(B)(M1)
	\Edge(E)(M3)
	\Edge(F)(M3)
	\Edge[label=$h$](M1)(M5)
	\Edge[label=$h$](M2)(M4)
	\Edge[label=$h$](M3)(M4)
	\Edge[label=$h$](M4)(M5)
	\Edge(M5)(S)
		
	\end{tikzpicture}\right)
	\]  
	
	Composing all the maps in the tree on the right hand side gives an element in $\mathbf{End}_B$ of same degree as the left hand side, and the map $\mathbf{B}\mathbf{End}_A$ to $\mathbf{End}_B$ naturally extends to the map of Lemma \ref{Lem_MapsBetweenBarConstructionofEndOperad}. 
		
	\end{RQ}
	
	\begin{RQ} \label{RQ_iInfty} The $\infty$-morphism $i_\infty$ is constructed by defining a map $\_P^\antishriek \rightarrow \mathbf{End}_A^B$ as:	
		\[ i_\infty : \begin{tikzcd}
	\overline{\_P}^\antishriek \arrow[r, "\Delta"] & \Tc \overline{\_P}^\antishriek    \arrow[r, "\Tc s\mu_A"] & \Tc (\mathbf{End}_A[-1]) \arrow[r, "\tilde{\Psi}"] & \mathbf{End}_A^B
	\end{tikzcd} \]
	\[ i_\infty : I \mapsto i \in \Hom_{k}\left( B,A \right) \subset \mathbf{End}_A^B \]
	
		 The morphism $\tilde{\Psi} : \Tc (\mathbf{End}_A[-1]) \rightarrow \mathbf{End}_A^B$ is defined just as $\Psi$, except for the root which is not labeled by $p$: 
	 	\[ 
	 	\begin{tikzpicture}[scale=0.55, baseline=-1ex]
	 	\GraphInit[vstyle=Classic]
	 	
	 	\tikzset{VertexStyle/.style = {shape = circle,fill = black,minimum size = 1pt,inner sep=0pt}}
	 	
	 	\SetVertexNoLabel
	 	
	 	\Vertex[empty,x=-4,y=2]{A}
	 	\Vertex[empty,x=-2,y=2]{B}
	 	\Vertex[empty,x=-1,y=2]{C}
	 	\Vertex[empty,x=1,y=2]{D}
	 	\Vertex[empty,x=3,y=2]{E}
	 	\Vertex[empty,x=5,y=2]{F}

	 	\Vertex[empty, x=0,y=-2]{S}
	 	
	 	\SetVertexLabel

	 	\Vertex[LabelOut,Lpos=-110, L=$sm_1$, x=-3,y=1]{M1}
	 	\Vertex[LabelOut,Lpos=-110, L=$sm_2$, x=0,y=1]{M2}
	 	\Vertex[LabelOut,Lpos=-20, L=$sm_3$, x=4,y=1]{M3}
	 	\Vertex[LabelOut,Lpos=-20, L=$sm_4$, x=2,y=0]{M4}
	 	\Vertex[LabelOut,Lpos=-20, L=$sm_5$, x=0,y=-1]{M5}
	 	
	 	\Edge(A)(M1)
	 	\Edge(B)(M1)
	 	\Edge(C)(M2)
	 	\Edge(D)(M2)
	 	\Edge(B)(M1)
	 	\Edge(E)(M3)
	 	\Edge(F)(M3)
	 	\Edge(M1)(M5)
	 	\Edge(M2)(M4)
	 	\Edge(M3)(M4)
	 	\Edge(M4)(M5)
	 	\Edge(M5)(S)

	 	\end{tikzpicture} \mapsto s \left( \begin{tikzpicture}[scale=0.58, baseline=-1ex]
	 	\GraphInit[vstyle=Classic]
	 	
	 	\tikzset{VertexStyle/.style = {shape = circle,fill = black,minimum size = 1pt,inner sep=0pt}}
	 	
	 	\SetVertexNoLabel
	 	
	 	\Vertex[empty,x=-4,y=2]{A}
	 	\Vertex[empty,x=-2,y=2]{B}
	 	\Vertex[empty,x=-1,y=2]{C}
	 	\Vertex[empty,x=1,y=2]{D}
	 	\Vertex[empty,x=3,y=2]{E}
	 	\Vertex[empty,x=5,y=2]{F}
	 	
	 	\Vertex[empty,x=-4,y=3]{A'}
	 	\Vertex[empty,x=-2,y=3]{B'}
	 	\Vertex[empty,x=-1,y=3]{C'}
	 	\Vertex[empty,x=1,y=3]{D'}
	 	\Vertex[empty,x=3,y=3]{E'}
	 	\Vertex[empty,x=5,y=3]{F'}
	 	
	 	\Vertex[empty, x=0,y=-2]{S}
	 	\Vertex[empty, x=0,y=-3]{S'}
	 	
	 	\SetVertexLabel

	 	\Vertex[LabelOut,Lpos=-110, L=$m_1$, x=-3,y=1]{M1}
	 	\Vertex[LabelOut,Lpos=-110, L=$m_2$, x=0,y=1]{M2}
	 	\Vertex[LabelOut,Lpos=-20, L=$m_3$, x=4,y=1]{M3}
	 	\Vertex[LabelOut,Lpos=-20, L=$m_4$, x=2,y=0]{M4}
	 	\Vertex[LabelOut,Lpos=-20, L=$m_5$, x=0,y=-1]{M5}

	 	\Edge[label=$i$](A')(A)
	 	\Edge[label=$i$](B')(B)
	 	\Edge[label=$i$](C')(C)
	 	\Edge[label=$i$](D')(D)
	 	\Edge[label=$i$](E')(E)
	 	\Edge[label=$i$](F')(F)
	 	\Edge(S)(S')
	 	
	 	\Edge(A)(M1)
	 	\Edge(B)(M1)
	 	\Edge(C)(M2)
	 	\Edge(D)(M2)
	 	\Edge(B)(M1)
	 	\Edge(E)(M3)
	 	\Edge(F)(M3)
	 	\Edge[label=$h$](M1)(M5)
	 	\Edge[label=$h$](M2)(M4)
	 	\Edge[label=$h$](M3)(M4)
	 	\Edge[label=$h$](M4)(M5)
	 	\Edge(M5)(S)
	 	
	 	\end{tikzpicture}\right)
	 	\]  
	 	
	\end{RQ}
	
	Focusing on the case where $B$ is the chain complex given by the homology of $A$, Proposition \ref{Prop_FormalityOfCHKAndDeformationRetractToCohomology} guarantees the existence of a deformation retract.
	
	\begin{Prop}[{\cite[Proposition 10.3.9]{LV}}]\label{Prop_HTTHomology} 
		
	There exists a strong deformation retract from $A$ to its homology: 
			\[\begin{tikzcd}
		\arrow[loop left, "h"] (A, d_A) \arrow[r, "p", shift left] & \arrow[l, shift left, "i"] (H(A), 0)
		\end{tikzcd}\]
		
	 In this case, $p$ can also be extended to an $\infty$-quasi-isomorphism of $\_P_\infty$-algebras: 
		
		\[ p_\infty : A \rightsquigarrow H(A) \]
	%	\[ h_\infty : A \rightsquigarrow A\]

	\end{Prop}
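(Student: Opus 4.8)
The statement has two independent halves: the existence of a \emph{strong} deformation retract of $A$ onto $H(A)$, and the upgrade of the projection $p$ to an $\infty$-quasi-isomorphism of $\_P_\infty$-algebras. For the first half I would simply invoke Proposition \ref{Prop_FormalityOfCHKAndDeformationRetractToCohomology}: it yields a (non-canonical) splitting $A\cong B[1]\oplus B\oplus H(A)$ on which $d_A$ is the shift $B[1]\to B$, with $p$ the projection, $i$ the inclusion and $h$ the inverse shift $B\to B[1]$ extended by zero. One checks immediately that this data already satisfies the side conditions $ph=hi=h^2=0$ (the image of $h$ lies in $B[1]\subseteq\ker p$ and $h$ vanishes on $B[1]$ and on $H(A)$), so it is in fact a strong deformation retract; alternatively one replaces $h$ by $h''=-h'd_Ah'$ with $h'=(d_Ah+hd_A)h(d_Ah+hd_A)$ as recalled after Definition \ref{Def_HomotopyandDeformationRetracts}.

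For the second half, the Homotopy Transfer Theorem (Theorem \ref{Th_HHT}) already transfers the $\_P_\infty$-structure of $A$ to $H(A)$ and extends $i$ to an $\infty$-quasi-isomorphism $i_\infty\colon H(A)\rightsquigarrow A$ via the tree formula of Remark \ref{RQ_iInfty}. The plan is to produce $p_\infty$ by the mirror-image tree formula. Concretely, one defines the morphism of $\_P^\antishriek$-coalgebras $p_\infty\colon\_P^\antishriek(A)\to\_P^\antishriek(H(A))$ to be the one corepresented, by cofreeness of $\_P^\antishriek(H(A))$, by the degree-zero map $\_P^\antishriek\to\mathbf{End}_{H(A)}^A$ given on $I$ by $p\in\iHom_k(A,H(A))$ and on $\overline{\_P}^\antishriek$ by the composite
\[ \overline{\_P}^\antishriek \xrightarrow{\ \Delta\ } \Tc\overline{\_P}^\antishriek \xrightarrow{\ \Tc s\mu_A\ } \Tc(\mathbf{End}_A[-1]) \xrightarrow{\ \Phi\ } \mathbf{End}_{H(A)}^A, \]
where $\mu_A\colon\_P^\antishriek\dashrightarrow\mathbf{End}_A$ is the twisting morphism encoding the structure on $A$ and $\Phi$ is defined exactly like the map $\Psi$ of Remark \ref{RQ_PInftyStructure} except that the leaves are left undecorated (their inputs already lying in $A$): a tree gets each internal edge decorated by $h$, each internal vertex by the corresponding component of $s\mu_A$, and the root by $p$. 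Decomposing by arity this is precisely a system of maps $A^{\otimes n}\to H(A)$ with $(p_\infty)_1=p$.

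The real work, and the step I expect to be the main obstacle, is verifying that this candidate $p_\infty$ is a \emph{bona fide} $\infty$-morphism, i.e.\ that it intertwines the codifferentials on $\_P^\antishriek(A)$ and $\_P^\antishriek(H(A))$ (its compatibility with cocompositions being automatic from the cofreeness of the target and the coassociativity of $\Delta$). As in the proof of the $i_\infty$ part of Theorem \ref{Th_HHT}, this reduces to a tree-indexed identity in which all terms other than those matching the two sides telescope away, the cancellations being forced by the homotopy relation $\mathrm{id}_A-ip=d_Ah+hd_A$ together with the side conditions $ph=hi=h^2=0$; the only genuine difficulty is the sign and symmetric-group bookkeeping. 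Since $(p_\infty)_1=p$ is a quasi-isomorphism, $p_\infty$ is then an $\infty$-quasi-isomorphism. A cleaner, more conceptual route would be to apply the homological perturbation lemma to the strong deformation retract obtained by applying the cofree-coalgebra functor $\_P^\antishriek(-)$ to the retract of the first half and perturbing the linear differential by $\mu_A^{\geq 2}$: this simultaneously produces the transferred structure, $i_\infty$, $p_\infty$ and a contracting homotopy $h_\infty$, at the cost of a separate check that the perturbed maps remain morphisms of $\_P^\antishriek$-coalgebras.
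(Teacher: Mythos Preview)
The paper does not supply its own proof of this proposition: it is stated with a citation to \cite[Proposition 10.3.9]{LV} and no proof environment follows. The surrounding text only constructs $\Psi$ and $i_\infty$ explicitly (Remarks \ref{RQ_PInftyStructure} and \ref{RQ_iInfty}) and, after the proposition, points to \cite{Be14} for the perturbation-lemma viewpoint.

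Your proposal is correct and is precisely in the spirit of those remarks and of the Loday--Vallette reference. The mirror-image tree formula you write for $p_\infty$ (leaves undecorated, internal edges by $h$, root by $p$) is exactly the construction in \cite{LV}, and you correctly isolate the only nontrivial step: checking compatibility with the codifferentials, which relies on the side conditions of a strong retract. Your alternative route via the homological perturbation lemma applied to $\_P^\antishriek(-)$ of the retract is the approach of \cite{Be14} that the paper explicitly recommends. One minor remark: in your first half, the direct check that the $h$ from Proposition \ref{Prop_FormalityOfCHKAndDeformationRetractToCohomology} already satisfies $ph=hi=h^2=0$ is cleaner than invoking the general replacement $h\mapsto h''$, and you are right that it works for that particular splitting.
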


In particular, the homology of a $\_P$-algebra always carries a transferred $\_P_\infty$-structure. This structure depends on the choice of deformation retract, and therefore it is only unique up to $\infty$-isomorphism.

\begin{Ex}\label{Ex_MasseyProduct}
	For any topological space $X$, the cup product $\cup$ endows the singular cochain complex $C_{\tx{Sing}}^* (X)$ with an associative algebra structure. %According to \cite[Example 1.4]{Va12}, it induces an associative product on its cohomology, but the full homotopy transfer theorem actually gives an $A_\infty$-structure on the cohomology with trivial differential extending the associative product induced by $\cup$.
	Choosing a deformation retract to the cohomology, we transfer an $A_\infty$ structure to $H^*(X)$. For $n=3$ the product $\mu_3$ recovers essentially the Massey products of $X$ \cite{Ma58}, which are generalized to operadic algebras in \cite{muro}. Classically, the Massey product of classes $x,y,z\in H^*(X)$ is only defined if $xy=0$ or $yz=0$ and the result takes value in a quotient. The disappearance of this ambiguity in our case comes from the choice of deformation retract.
	More generally, the higher ($\mu_{n\geq 4}$) products  obtained from the homotopy transfer theorem are also related to the so-called higher Massey products, but the relation is a bit more subtle \cite{flynnconnolly2023higher}.
	
%	These higher ($n \geq 3$) product are known as the higher Massey product and for $n=3$, we recover the original Massey products defined in the context of the singular cohomology (\cite{Ma58}). 
\end{Ex}

The homotopy transfer theorem enables a proof of Theorem \ref{Th_MainPropertiesOfInftyMaps} without appealing to any model categorical arguments. More generally, the same arguments show that all inclusions we can write between the four categories given by strict/homotopy $\_P$-algebras with strict/$\infty$ morphisms induce equivalences at the level of homotopy categories, appealing to the property known as \emph{rectification}: Every $\_P_\infty$-algebra $A$ is equivalent to a strict $\_P$-algebra, namely $\Omega_\kappa \mathbf B_\kappa A$.

It can be shown (and we invite the reader to do so) that Theorem \ref{Th_MainPropertiesOfInftyMaps} is a formal consequence of Theorem \ref{Th_ModelStructureCoalgebra}.
\begin{proof}[Sketch of proof of Theorem \ref{Th_MainPropertiesOfInftyMaps}]
The proof of this result rests on two key points. Given an $\infty$-quasi-isomorphism of $\_P$-algebras $f\colon X\rightsquigarrow Y$:

\begin{enumerate}
	\item There is a zig-zag of quasi-isomorphisms connecting $X$ to $Y$,
	
	\item There is  $g\colon Y\rightsquigarrow X$ such that the two composites are the identity in homology.
\end{enumerate}

For the first point, notice that by the bar-cobar adjunction $f\colon \mathbf B X \to \mathbf B Y$ is equivalent to a map $\Omega \mathbf B X \to Y$. One can therefore consider the short zig-zag $X \stackrel{\sim}{\leftarrow} \Omega \mathbf B X \stackrel{\sim}\to Y$.

For the second point, one first consider the case where $d_X = 0 = d_Y$, i.e., we need to know that $\infty$-isomorphisms of $\_P_\infty$ are indeed invertible. This can be shown by an argument similar to the invertibility of formal power series (see \cite[Theorem 10.4.1]{LV}).

In case of non-trivial differentials, by the homotopy transfer theorem we can reduce the problem to the homology. More precisely, the composite of $\infty$-quasi-isomorphisms

\begin{align}\label{Eq:oomap}
	H(X) \stackrel[i_\infty]{\sim}{\rightsquigarrow} X \stackrel[f]{\sim}{\rightsquigarrow} Y \stackrel[p_\infty]{\sim}{\rightsquigarrow} H(Y) 
\end{align}
is an $\infty$-isomorphism, therefore we set $g$ to be

$$Y \stackrel[p_\infty]{\sim}{\rightsquigarrow} H(Y) \stackrel{\sim}{\rightsquigarrow} H(X) \stackrel[i_\infty]{\sim}{\rightsquigarrow} X,$$

where the middle map is the inverse of the composition given by \eqref{Eq:oomap}.
\end{proof}
	 For a general study of the operadic homotopy transfer theorem  in the spirit of the homological perturbation lemma see \cite{Be14}. This framework gives an extension of $i$, $p$ and $h$.

	\subsection{Deformation of Algebraic Structures}\
\label{Sec_DeformationofAlgebraicStructures}

\medskip

This section is devoted to the study of deformations of algebraic structures. Given $A \in \chk$, the set of $\_P_\infty$-structures on $A$ is by definition given by $ \tx{Tw}\left(\_P^\antishriek, \mathbf{End}_A \right)$, the set of Maurer--Cartan elements of the convolution Lie algebra (Definition \ref{Def_ConvolutionPreLieAlgebra} and \ref{Def_TwistingMorphisms}). In Section \ref{Sec_MaurerCartanSpace}, we define a  set, groupoid and $\infty$-groupoid of Maurer--Cartan elements of a Lie algebra, commonly called the Deligne ($\infty$-)groupoid. This is a priori unclear what these objects have to do with deformations, but in Section \ref{Sec_DeformationComplex}, we define a set, groupoid and $\infty$-groupoid of $\G_R$-deformations of a $\_P_\infty$-structure on $A$ with $\G_R$ a local (Artinian) commutative ring (for example $\G_R = k[\epsilon]$, the ring of dual numbers). Theorem \ref{Th_DeformationAndDeligneGroupoids} then tells us that the deformation $\infty$-groupoid is homotopy equivalent to a Deligne $\infty$-groupoid of a certain twisted Lie algebra. 

\subsubsection{Maurer--Cartan space and gauge group}\ \label{Sec_MaurerCartanSpace} 

\medskip

The set of Maurer--Cartan elements (Definition \ref{Def_MC}) is not only good to define twisting morphism but is known to describe the set of deformations controlled by a Lie algebra $\mathfrak g$. We are interested in such deformations up to equivalences.

	Such equivalences usually coincide with the general notion of \emph{gauge} equivalences: Just as a finite dimensional classical (i.e. $\G_g=\G_g^0$) Lie algebra integrates to a Lie group via an exponential map, a (dg) Lie algebra $\G_g$ formally integrates to the so-called \email{gauge group} $\exp(\G_g^0)$. The gauge group will act naturally on $\mathrm {MC}(\mathfrak g)$.

To talk about exponentials and deal with convergence issues, we will need to use complete filtered Lie algebras. Taking complete filtered Lie algebras will ensure that all the ensuing infinite sums will converge.

%In this Section, we will describe the so called Deligne groupoid whose objects will be the Maurer--Cartan elements of a Lie algebra with morphisms given by a gauge action of the gauge group defined by ``exponentiating'' our Lie algebra. We will see later, with Theorem \ref{Th_DeformationAndDeligneGroupoids}, that this is equivalent to a groupoid encoding deformations of algebraic structures. 

\begin{Def}\label{Def_Filtration}
	We say that $\G_g$ is a \defi{complete positively filtered Lie algebras} or just a \defi{complete Lie algebra} if it is equipped with a complete Hausdorff descending filtration: 
	\[\G_g := F^1 \G_g \supset F^2 \G_g \supset  \cdots   \]

Being complete Hausdorff here means that there is a equivalence $\G_g \rightarrow \lim_n \faktor{\G_g}{F^n \G_g}$. To any filtered Lie algebra, one can associate a complete Lie algebra, $\hat{\G_g} = \lim_n \faktor{\G_g}{F^n \G_g}$, called its \defi{completion}.
	
	We also assume that the filtration is compatible with the differential and the Lie bracket: 	
	\[ d(F^n \G_g) \subset F^n \G_g \qquad \left[F^p\G_g, F^q \G_g \right] \subset F^{p+q}\G_g  \]
	
	We will denote by $\G_g_0$ the set of degree $0$ elements. 
\end{Def}

For now, let us assume that the Lie structure on $\G_g$ is obtained from a unital associative product $[x,y] = x \star y - y \star x$. Then, assuming convergence, we can write down the usual formula for the exponential: 

\begin{center}
	\begin{tabular}{cccccl}
		$\exp$ : & $\G_g_0 $& $\longrightarrow$ &$ \G_g_0$    \\ 
		& $X$ & $\mapsto$ & $e^X = \sum\limits_k \frac{X^{\star k}}{k!}$  \\ 
	\end{tabular}
\end{center}

The image of this map forms a group with respect to $\star$ and unit $1$, which we would like to call the \emph{gauge group} $G$. Taking the formal logarithm, we can equivalently define the gauge group as a group structure on the underlying set of $\mathfrak g_0$:
	$$ G = (\mathfrak g_0, X\cdot_G Y  \coloneqq \log(e^X\star e^Y), 0).$$

However, this product does not make sense for a general Lie algebra. To address this issue, we can value this exponential in the \emph{complete} universal enveloping algebra of $\widehat{\G_g_0}$: 
\begin{center}
	\begin{tabular}{cccccl}
		$\exp$ : & $\G_g_0 $& $\longrightarrow$ &$ \widehat{\G_U} \G_g_0$    \\ 
		& $X$ & $\mapsto$ & $e^X = \sum\limits_k \frac{X^{\star k}}{k!}$  \\ 
	\end{tabular}
\end{center}

In fact, the tentative formula that we wrote before in the associative case  $\log(e^X\star e^Y)$ turns out to be expressible purely in terms of the Lie bracket of $\mathfrak g_0$ in a universal way.
This formula is called the BCH (Baker--Campbell--Hausdorff) formula, and its initial terms are:

$$\tx{BCH}(X,Y) = X+ Y +\frac{1}{2}[X,Y] +\frac{1}{12}\left([X,[X,Y]] +[Y,[Y,X]]\right)+ \dots .$$

\begin{Prop}%\ricardo{put a citation, if simple} 
	\label{Prop_BCH}
	There exist a universal BCH formula, such that:
	\[ e^X e^Y = e^{\tx{BCH}(X,Y)}\]
	
	In particular, the exponential map induces a bijection between $\G_g_0$ and its image:
	\[  \exp : \G_g_0 \rightarrow  G \subset \widehat{\G_U}\G_g_0
	\]
\end{Prop}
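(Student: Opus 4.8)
The plan is to reduce the whole statement to the universal case of the completed free Lie algebra on two generators, where the Hopf-theoretic characterisation of primitive elements --- a reformulation of the PBW theorem (Theorem~\ref{Th:PBW}) --- makes the assertion transparent, and then to transport it to an arbitrary complete $\G_g$ by functoriality. First I would set up convergence: since $\G_g_0 = F^1\G_g$ and the filtration is multiplicative (Definition~\ref{Def_Filtration}), one has $X^{\star k}\in F^k$ for $X\in\G_g_0$, so $e^X,e^Y$ converge in $\widehat{\G_U}\G_g_0$ and $e^Xe^Y\in 1+F^1\widehat{\G_U}\G_g_0$; hence $Z:=\log(e^Xe^Y)=\sum_{n\geq 1}\frac{(-1)^{n+1}}{n}(e^Xe^Y-1)^n$ converges and lies in $F^1$. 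Because $\exp$ and $\log$ are mutually inverse as formal power series, the identities $\log\circ\exp=\id$ on $tk[[t]]$ and $\exp\circ\log=\id$ on $1+tk[[t]]$ transfer verbatim to convergent elements of $F^1$; in particular $e^Z=e^Xe^Y$ and $\log(e^X)=X$.

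The heart of the argument is to show that $Z$ is a Lie element. I would do this universally, working in the completed free associative algebra $\widehat T(x,y)=k\langle\langle x,y\rangle\rangle$, which is the completed universal enveloping algebra of the completed free Lie algebra $\widehat{\mathfrak L}(x,y)$ and carries a complete cocommutative Hopf structure in which $x$ and $y$ are primitive. Then $e^x$ and $e^y$ are grouplike, hence so is the product $e^xe^y$, and therefore $\tx{BCH}(x,y):=\log(e^xe^y)$ is primitive. By Theorem~\ref{Th:PBW}, $\G_U\mathfrak L(x,y)\cong\Sym\mathfrak L(x,y)$ as coalgebras, so the space of primitives of $T(x,y)$ is exactly $\mathfrak L(x,y)$; passing to completions gives $\tx{BCH}(x,y)\in\widehat{\mathfrak L}(x,y)$. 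This is the universal formula, and expanding $\log(e^xe^y)$ to low order recovers the displayed initial terms.

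To specialise, given a complete $\G_g$ and $X,Y\in\G_g_0$, the assignment $x\mapsto X$, $y\mapsto Y$ extends to a unique \emph{continuous} morphism of Lie algebras $\widehat{\mathfrak L}(x,y)\to\G_g_0$ (well defined and continuous precisely because $X,Y\in F^1$ and $[F^p,F^q]\subseteq F^{p+q}$), and hence to a continuous algebra morphism $\phi\colon\widehat T(x,y)\to\widehat{\G_U}\G_g_0$ that commutes with $\exp$ and $\log$. Applying $\phi$ to the universal identity yields $\tx{BCH}(X,Y)\in\G_g_0$ with $e^Xe^Y=e^{\tx{BCH}(X,Y)}$. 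The map $\exp\colon\G_g_0\to G:=\exp(\G_g_0)$ is then surjective by definition of $G$ and injective with inverse $\log|_G$ by the identity $\log(e^X)=X$; moreover $G$ is a subgroup of $(\widehat{\G_U}\G_g_0)^\times$, since it contains $1=e^0$, is closed under multiplication by the BCH identity, and $(e^X)^{-1}=e^{-X}$ (from the elementary computation $e^Ue^V=e^{U+V}$ valid when $[U,V]=0$).

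The main obstacle is the second paragraph: recognising $\log(e^xe^y)$ as a Lie series. All other steps are routine manipulations with convergent series in a complete filtered algebra, but this one genuinely needs the identification of the primitives of the free associative algebra with the free Lie algebra, i.e.\ PBW, together with some care that the Hopf structure and the characterisation of primitives survive completion.
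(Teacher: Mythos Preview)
Your proof is correct and follows essentially the same route as the paper: reduce to the universal case of the completed free Lie algebra on two generators, use that $e^x$, $e^y$ are grouplike in $\widehat{k\langle x,y\rangle}$ so that $\log(e^xe^y)$ is primitive, and invoke (completed) PBW to identify primitives with Lie elements. Your write-up is more explicit about convergence and about transporting the universal identity to a general complete $\G_g$ by functoriality, but the core argument is identical.
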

	We call the group $G\coloneqq \exp (\G_g_0) \cong (\G_g_0, \tx{BCH},0)$, the \defi{gauge group} of $\mathfrak g$.

	\begin{proof}%[Sketch of proof]
			We follow an approach close to \cite[Section 4.1]{BF11}. 	
		To show a universal BCH formula, it is enough to show that such a formula exists on the free complete Lie algebra on two generators $\widehat{\mathbf{Lie}}(X,Y)$. Note that its universal enveloping algebra is the free complete associative algebra on $X,Y$, i.e. the algebra of non-commutative formal  power series $\widehat{\mathbf{Ass}}(X,Y )= \widehat{k\langle X,Y\rangle}$.
		
		It is easy to check that the exponential map is one to one into the set of formal power series with constant term equal to $1$ and indeed an inverse is given by the formal logarithm.
				\[ \begin{tikzcd}
					\exp:  \widehat{k\langle X,Y\rangle}  \arrow[r, shift left] & \arrow[l, shift left] 1+ \widehat{k\langle X,Y\rangle}_{\geq 1} : \log.
		\end{tikzcd}\]

	We claim that $v\in  \widehat{k\langle X,Y\rangle}$ is primitive if and only if $\exp v$ is grouplike  (see \ref{sec:Lie digression}). One implication is a direct verification and for the other one can observe that 
		
		\[
		\Delta(\exp v) = \exp v \otimes \exp v \Rightarrow \exp(\Delta v) = \exp(1\otimes v + v\otimes 1),
		\]
		 where the exponentials on the right hand side are being taken in the algebra $\widehat{k\langle X,Y\rangle}\hat\otimes \widehat{k\langle X,Y\rangle}$. The claim follows from the injectivity of the exponential.

		By the complete version of the Poincaré--Birkoff--Witt theorem, the set  of primitive elements of $\widehat{k\langle X,Y\rangle}$ is precisely the Lie algebra $\widehat{\mathbf{Lie}}(X,Y)$ and the group-like elements form a group with respect to the product and unit of the universal enveloping algebra.
		
		The BCH formula is therefore obtained by taking the exponentials of $X, Y \in \widehat{\mathbf{Lie}}(X,Y) $,  multiplying them into a group-like element $e^X\cdot e^Y$ and taking the inverse of the exponential.
		
	\end{proof}

\begin{Prop}\label{Prop_DeformationLieAlgebraMaurerCartan}
	Given a Lie algebra $\G_g$ and $\phi \in \tx{MC}(\G_g)$, we can twist $\G_g$ by $\phi$ and define:
	\[\G_g^\phi = \left( \G_g, [-,-]_\G_g, d_\phi = d + [\phi, -]_\G_g \right) \]
	
	This is also a Lie algebra and a Maurer--Cartan elements in $\G_g^\phi$ is exactly an element $\psi \in \G_g$ such that $\phi + \psi$ is a Maurer--Cartan element of $\G_g$. In other words, we can think of them as the ``deformations'' of the Maurer--Cartan element $\phi$. 
	
\end{Prop}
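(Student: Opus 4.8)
The plan is to check the two assertions in turn: first that $d_\phi = d + [\phi,-]$ makes $\G_g$ into a (dg) Lie algebra, and then that the Maurer--Cartan equation of $\G_g^\phi$ is equivalent to the Maurer--Cartan equation of $\G_g$ translated by $\phi$. Both reduce to elementary manipulations with the graded Jacobi identity, graded antisymmetry, and the hypothesis $d\phi + \frac{1}{2}[\phi,\phi]=0$.

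For the first assertion, I would note that the bracket is untouched, so only the new differential needs verification. Since $|\phi|=1$, the operator $[\phi,-]$ raises degree by one, and hence so does $d_\phi$. Both $d$ and $[\phi,-]$ are derivations of $[-,-]$ — the former by hypothesis, the latter by the Jacobi identity in Leibniz form $[\phi,[x,y]] = [[\phi,x],y] + (-1)^{|x|}[x,[\phi,y]]$ — so $d_\phi$ is too. It then remains to show $d_\phi^2 = 0$. Using that $d$ is a degree $1$ derivation of the bracket, so that $d[\phi,x] = [d\phi,x] - [\phi,dx]$, a short computation gives
\[
d_\phi^2 x = d[\phi,x] + [\phi,dx] + [\phi,[\phi,x]] = [d\phi,x] + [\phi,[\phi,x]].
\]
The graded Jacobi identity applied with the two equal odd arguments $\phi$ yields $[\phi,[\phi,x]] = \frac{1}{2}[[\phi,\phi],x]$, whence $d_\phi^2 x = \big[\,d\phi + \tfrac{1}{2}[\phi,\phi]\,,\,x\,\big]$, which is zero precisely because $\phi$ is a Maurer--Cartan element.

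For the second assertion, I would simply expand both Maurer--Cartan conditions and compare. By definition, $\psi \in \mathrm{MC}(\G_g^\phi)$ means $d_\phi\psi + \frac{1}{2}[\psi,\psi] = d\psi + [\phi,\psi] + \frac{1}{2}[\psi,\psi] = 0$. On the other hand, using graded antisymmetry $[\psi,\phi] = [\phi,\psi]$ for odd elements,
\[
d(\phi+\psi) + \tfrac{1}{2}[\phi+\psi,\phi+\psi] = \Big(d\phi + \tfrac{1}{2}[\phi,\phi]\Big) + \Big(d\psi + [\phi,\psi] + \tfrac{1}{2}[\psi,\psi]\Big),
\]
and the first summand vanishes since $\phi \in \mathrm{MC}(\G_g)$. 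Therefore $\phi + \psi \in \mathrm{MC}(\G_g)$ if and only if $d\psi + [\phi,\psi] + \frac{1}{2}[\psi,\psi] = 0$, i.e.\ if and only if $\psi \in \mathrm{MC}(\G_g^\phi)$, which gives the identification of $\mathrm{MC}(\G_g^\phi)$ with the deformations of $\phi$ inside $\mathrm{MC}(\G_g)$.

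I do not expect a genuine obstacle here; the one thing to be careful about is the bookkeeping of Koszul signs — in particular that $(-1)^{|\phi|} = -1$ makes $d$ act as a sign-changing derivation on brackets involving $\phi$, and that $[\phi,\psi]$ and $[\psi,\phi]$ add up rather than cancel because $\phi$ and $\psi$ are both odd. I would also remark that the completeness and filtration assumptions of Definition \ref{Def_Filtration} play no role in this purely algebraic twisting identity; they only become relevant when one later exponentiates to the gauge group.
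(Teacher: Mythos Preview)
Your proof is correct. The paper states this proposition without proof, as it is a standard and elementary fact; your direct verification via the graded Jacobi identity and expansion of the Maurer--Cartan equation is exactly the expected argument, and your handling of the Koszul signs is accurate.
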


We now wish to describe an action of the gauge group on the Maurer--Cartan set of a complete Lie algebra $\mathfrak g$. As a motivation, if we assume for a bit $\mathfrak g_1$ and $\mathfrak g_2$ to be finite dimensional, the solutions of the Maurer--Cartan equation $\tx{MC}(\mathfrak g)$, form an affine variety (an intersection of quadrics) and therefore its tangent space at a Maurer--Cartan element $x$ is $T_x\tx{MC}(\mathfrak g) = \{\alpha\in \mathfrak g_1 | d_x \alpha = 0 \}$. It follows that any element $\lambda \in \mathfrak g_0$ induces a tangent vector $d_x \lambda \in T_x\tx{MC}(\mathfrak g)$. Varying $x$, we get that $\lambda$ induces a vector field on $\tx{MC}(\mathfrak g)$. Two Maurer--Cartan elements connected by the flow of such $\lambda$ and said to be \emph{gauge equivalent}.

\begin{Def}\label{Def_DeligneGroupoid}
	Given a complete Lie algebra, there is an action of the gauge group, $\exp(\G_g) = (\G_g_0, \tx{BCH})$, on the set of Maurer--Cartan elements $\tx{MC}(\G_g)$, defined by: 	
	\[ e^\lambda\cdot x = x + \frac{e^{\tx{ad}_\lambda} - \tx{id}}{\tx{ad}_\lambda}  (d_x \lambda) := x - \sum_{k=0}^n \left(\frac{[\lambda, -]^n}{(n+1)!}\right)(d_x \lambda) \]
	
	where $d_x := [x,-] + d$ is the differential on $\G_g$ twisted by the Maurer--Cartan element $x$. The associated action groupoid on the set of Maurer--Cartan elements is called the \defi{Deligne groupoid}, denoted $\underline{\tx{Del}}(\G_g)$. 
\end{Def}

To relate the gauge action formula with the discussion above, we point out that one can consider the \emph{differential trick}, which gets rid of the differential of a dg Lie algebra by making it internal. We consider the graded Lie algebra $\mathfrak g^+ = \mathfrak g \oplus k\delta$, where $\delta$ is a degree $1$ nilpotent element such that for $x\in \mathfrak g$, $[\delta, x] = d_{\mathfrak g} x$. It it easy to see that the Maurer--Cartan elements of $\mathfrak g$ correspond to the set of degree $1$ square-zero elements in $g^+$ of the form $\delta + x$.

The gauge action formula from Definition \ref{Def_DeligneGroupoid} follows from applying the exponential of the operator $\mathrm{ad}_\lambda$ to $\delta +x$. Indeed, since $\mathrm{ad}_\lambda$, its exponential is a morphism of graded Lie algebras and therefore preserves square-zero elements, which justifies that $e^\lambda\cdot x$ is still a Maurer--Cartan element. For more details see \cite[Theorem 1.53]{DSV23}.\\

When dealing with homotopy theory, we often need an $\infty$-groupoid to obtain a full ``space'' with higher homotopies. It turns out that there is an $\infty$-groupoid version of the Deligne groupoid.

\begin{Def}[Deligne $\infty$-groupoid, {\cite[Section 2.2]{DVR}}]  \label{Def_HigherDeligneGroupoid}
	
Consider the simplicial commutative $k$-algebra $\Omega [\Delta^\bullet]$ (sometimes written $\Omega_\bullet$) defined by:
	\[ \Omega [\Delta^n] := \faktor{k[t_0, \cdots, t_n, dt_0, \cdots , dt_n]}{\left( \sum t_i = 1, \ \sum dt_i =0 \right)}\]

	with the differential sending $t_i$ to $dt_i$. 
	
	For any Lie algebra $\G_g$, $\G_g \otimes \Omega[\Delta^\bullet]$ is a simplicial Lie algebra where for all $n \in \Nn$, $\G_g \otimes \Omega[\Delta^n]$ is the Lie algebra with Lie bracket:  
	\[ [v \otimes \alpha, w \otimes \beta ] = [v,w]_\G_g \otimes \alpha . \beta \] 
	
	Then the \defi{Deligne $\infty$-groupoid} is the simplicial set defined by:
	\[ \mathbf{Del} (\G_g):= \tx{MC}(\G_g \hat{\otimes} \Omega[\Delta^\bullet])\]
	
	with  $\G_g \hat{\otimes} \Omega[\Delta^\bullet] := \lim_n \left( \faktor{\G_g}{F_n \G_g} \hat{\otimes} \Omega[\Delta^\bullet] \right)$. This definition from \cite{DVR} enables an extension of Theorem \ref{Th_GoldmannMilson} to Theorem \ref{Th_GoldmanMilsonExtended}. This simplicial set is an $\infty$-groupoid (i.e. a Kan complex) thanks to \cite[Proposition 4.1]{DVRC17}.
\end{Def}

\begin{Prop}[{\cite[Lemma B.2]{DVR}}] \label{Prop_Pi0DeligneGroupoid}% \ricardo{there has to be a better ref! check new paper from rogers}%page 9 of https://arxiv.org/pdf/1803.03144.pdf 

	There is a bijection between the connected components of the Deligne groupoid and those of the $\infty$-groupoid of Maurer--Cartan elements:
	\[ \faktor{\tx{MC}(\G_g)}{\sim} := \pi_0 (\underline{\tx{Del}}(\G_g)) \simeq \pi_0 \left( \mathbf{Del} (\G_g) \right) \]
	
	The equivalence relation $\sim$ is given by the gauge equivalences obtained from the action of the gauge group. 
\end{Prop}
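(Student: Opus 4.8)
The plan is to show that the map $\pi_0(\underline{\tx{Del}}(\G_g)) \to \pi_0(\mathbf{Del}(\G_g))$ induced by the inclusion of $0$-simplices is a bijection, by giving an explicit description of both sides. On the left, $\pi_0(\underline{\tx{Del}}(\G_g))$ is by definition $\tx{MC}(\G_g)$ modulo the equivalence relation generated by the gauge action of $\exp(\G_g_0)$, which is precisely the relation $\sim$ in the statement. On the right, $\pi_0(\mathbf{Del}(\G_g))$ is $\tx{MC}(\G_g \hat\otimes \Omega[\Delta^\bullet])$ modulo the relation of being connected by a $1$-simplex, i.e. $x_0 \sim x_1$ iff there exists $\xi \in \tx{MC}(\G_g \hat\otimes \Omega[\Delta^1])$ restricting to $x_0$ and $x_1$ under the two face maps $\Omega[\Delta^1] \to \Omega[\Delta^0] = k$ (evaluating $t_1 = 0$ and $t_1 = 1$ respectively). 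So the content is: two Maurer--Cartan elements of $\G_g$ are gauge equivalent if and only if they are connected by a Maurer--Cartan element over the algebra of polynomial differential forms on the interval.

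First I would treat the easier implication: a gauge equivalence gives rise to a path. Given $\lambda \in \G_g_0$ and $x \in \tx{MC}(\G_g)$, one forms the element $x(t) \in \G_g \hat\otimes \Omega[\Delta^1]$ obtained by acting with $\exp(t\lambda)$, more precisely the unique solution of the differential equation $\tfrac{d}{dt} x(t) = d_{x(t)}\lambda = d\lambda + [x(t),\lambda]$ with $x(0) = x$; completeness of $\G_g$ (Definition \ref{Def_Filtration}) guarantees convergence of the resulting power series in $t$. One checks that $x(t) + dt\otimes \lambda \in \G_g\hat\otimes\Omega[\Delta^1]$ is a Maurer--Cartan element, that its restriction at $t=0$ is $x$, and that its restriction at $t=1$ is $e^\lambda\cdot x$ as given by the closed formula of Definition \ref{Def_DeligneGroupoid}; this last identification is exactly the computation underlying that formula, which I would cite from \cite[Theorem 1.53]{DSV23}. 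Since the gauge equivalence relation is generated by such single gauge transformations and paths can be concatenated (using that $\mathbf{Del}(\G_g)$ is a Kan complex, so $\pi_0$ is genuinely the set of path components), this shows the left-to-right map is well-defined and surjectivity of $\pi_0(\underline{\tx{Del}}) \to \pi_0(\mathbf{Del})$ on the level of the relation — i.e. gauge-equivalent elements map to the same component.

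The substantive direction is the converse: if $x_0$ and $x_1$ are connected by a Maurer--Cartan $1$-simplex $\xi \in \tx{MC}(\G_g\hat\otimes\Omega[\Delta^1])$, then they are gauge equivalent in $\G_g$. Write $\xi = x(t) + \mu(t)\,dt$ with $x(t) \in \G_g_1\hat\otimes k[t]$ and $\mu(t)\in \G_g_0\hat\otimes k[t]$. Expanding the Maurer--Cartan equation for $\xi$ over $\Omega[\Delta^1] = k[t,dt]/(dt^2)$ and separating the $dt$-free and $dt$-components, the $dt$-component yields the flow equation $\tfrac{d}{dt}x(t) = d\mu(t) + [x(t),\mu(t)] = d_{x(t)}\mu(t)$, while the $dt$-free component says $x(t)\in \tx{MC}(\G_g)$ for each $t$. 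One then solves the non-autonomous ODE $\tfrac{d}{dt}g(t) = \mu(t)\cdot g(t)$ in the complete gauge group (or in the complete universal enveloping algebra, using the primitive/grouplike correspondence of Section \ref{sec:Lie digression} and Proposition \ref{Prop_BCH}) with $g(0) = 1$; the gauge element $g(1)$ then satisfies $g(1)\cdot x_0 = x_1$. The convergence of all these series is exactly where completeness is essential, and it is the only real technical point; I would either carry it out via a Picard-iteration/degreewise-nilpotence argument or simply invoke \cite[Lemma B.2]{DVR} and \cite[Proposition 4.1]{DVRC17}, since the cited reference \cite{DVR} proves precisely this lemma. Finally one verifies well-definedness: the two maps between $\pi_0(\underline{\tx{Del}}(\G_g))$ and $\pi_0(\mathbf{Del}(\G_g))$ constructed above are mutually inverse, because going gauge $\to$ path $\to$ gauge returns the same gauge-equivalence class (two gauge elements inducing the same flow on $x_0$ differ by an element of the stabilizer, hence give the same class) and going path $\to$ gauge $\to$ path lands in the same component by the Kan condition. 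The main obstacle is thus the careful bookkeeping of convergence in the completed tensor product $\G_g\hat\otimes\Omega[\Delta^\bullet]$ and matching the integrated flow with the BCH group law, but none of it is conceptually new given Propositions \ref{Prop_BCH} and \ref{Prop_DeformationLieAlgebraMaurerCartan} and the cited results.
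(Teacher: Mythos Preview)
The paper does not give its own proof of this proposition: it is stated with a citation to \cite[Lemma B.2]{DVR} and no argument is supplied. Your sketch is the standard proof (and is essentially the one in the cited reference): identify $1$-simplices of $\mathbf{Del}(\G_g)$ with flow equations $\tfrac{d}{dt}x(t)=d_{x(t)}\mu(t)$, integrate in each nilpotent quotient $\G_g/F^n\G_g$, and pass to the limit by completeness. The two directions you describe are exactly the right ones, and invoking \cite{DVR} for the convergence bookkeeping is appropriate. One small point: watch the sign in the flow equation (compare with the formula in Definition~\ref{Def_DeligneGroupoid}); depending on conventions the Maurer--Cartan equation over $\Omega[\Delta^1]$ yields $x'(t)=-d_{x(t)}\mu(t)$ rather than $+$, but this only affects whether the resulting gauge element is $e^{\lambda}$ or $e^{-\lambda}$ and does not touch the argument.
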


\begin{Th}[Goldman--Milson, \cite{DVR}]\label{Th_GoldmannMilson}
	
	Given $f: \G_g \rightarrow \G_h$, a filtered quasi-isomorphism (see \ref{RQ:filtered q.i.}) of filtered complete Lie algebras, we have a bijection:
	\[\faktor{\tx{MC}(\G_g)}{\sim}  \rightarrow \faktor{\tx{MC}(\G_g)}{\sim} \]
\end{Th}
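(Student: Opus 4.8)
The plan is to deduce the Goldman--Milson theorem from its $\infty$-categorical enhancement, which in the excerpt is essentially encoded in Definition \ref{Def_HigherDeligneGroupoid} and Proposition \ref{Prop_Pi0DeligneGroupoid}. Concretely, the statement that $\pi_0$ of the Deligne groupoid agrees with $\pi_0$ of the simplicial set $\mathbf{Del}(\G_g) = \tx{MC}(\G_g \hat\otimes \Omega[\Delta^\bullet])$ reduces the claim to showing that a filtered quasi-isomorphism $f\colon \G_g \to \G_h$ induces a weak equivalence (or at least a $\pi_0$-bijection) of Kan complexes $\mathbf{Del}(\G_g) \to \mathbf{Del}(\G_h)$. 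So the first step is to pass from the gauge-theoretic statement to the simplicial one via Proposition \ref{Prop_Pi0DeligneGroupoid}: it suffices to prove that $\mathbf{Del}(f)$ is a bijection on $\pi_0$.

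Next I would reduce to the nilpotent (or more precisely finite-stage) case by exploiting completeness. Since both $\G_g = \lim_n \G_g/F^n\G_g$ and $\G_h = \lim_n \G_h/F^n\G_h$, and since $\mathbf{Del}$ commutes with the relevant limits by construction (the definition already builds in $\G_g \hat\otimes \Omega[\Delta^\bullet] = \lim_n (\G_g/F^n \hat\otimes \Omega[\Delta^\bullet])$), one writes $\mathbf{Del}(\G_g)$ as a limit (in fact an inverse limit of a tower of fibrations of Kan complexes) of the $\mathbf{Del}(\G_g/F^n\G_g)$. The map $f$ respects the filtrations, so it induces compatible maps on the associated graded pieces; a filtered quasi-isomorphism is precisely a map inducing quasi-isomorphisms on $\mathrm{gr}$, hence on each finite quotient $\G_g/F^n\G_g \to \G_h/F^n\G_h$ we get a quasi-isomorphism of \emph{nilpotent} dg Lie algebras. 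For nilpotent dg Lie algebras, a quasi-isomorphism induces a weak equivalence of Maurer--Cartan simplicial sets --- this is the classical Hinich/Getzler statement, and it can be proved by the usual obstruction-theoretic argument lifting Maurer--Cartan elements and gauges along the central extensions $\G_g/F^{n+1} \to \G_g/F^n$ with kernel $\mathrm{gr}^n$ concentrated in the center. Then $\mathbf{Del}(f)$ is an inverse limit of weak equivalences of Kan complexes along towers of fibrations, hence a weak equivalence, hence in particular a $\pi_0$-bijection. Combined with the first step this yields the theorem.

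Alternatively --- and this is likely closer to the proof in \cite{DVR} that the statement is attributed to --- one can argue more directly with the gauge groupoids: a filtered quasi-isomorphism induces surjectivity and injectivity on $\pi_0$ separately. Surjectivity: given a Maurer--Cartan element $y \in \mathrm{MC}(\G_h)$, build a preimage in $\mathrm{MC}(\G_g)$ up to gauge by induction on the filtration stage, using that the obstruction to lifting from $\G_g/F^n$ to $\G_g/F^{n+1}$ lives in $H^2$ of $\mathrm{gr}^n$ and is matched with the (vanishing, since $y$ exists) obstruction on the $\G_h$ side; completeness assembles the partial lifts into an honest element. Injectivity: if $x, x' \in \mathrm{MC}(\G_g)$ become gauge equivalent in $\G_h$, produce a gauge between them in $\G_g$ by the same filtered induction, where at each stage the ambiguity in choosing the gauge is controlled by $H^1$ of $\mathrm{gr}^n$ and again transfers across $f$. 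I would use Proposition \ref{Prop_DeformationLieAlgebraMaurerCartan} to twist so as to reduce ``$x$ gauge equivalent to $x'$'' to ``$x - x'$ gauge equivalent to $0$ in the twisted algebra'', cleaning up the bookkeeping.

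The main obstacle is the inductive lifting argument along the tower of central extensions: one must check carefully that the Baker--Campbell--Hausdorff product of gauges behaves well modulo $F^{n+1}$ (so that a choice of gauge at stage $n$ genuinely extends), that the obstruction classes in $H^{\ast}(\mathrm{gr}^n\G_g)$ are natural in $f$, and that the quasi-isomorphism hypothesis (an isomorphism on \emph{all} cohomology of the associated graded, not just $H^0$ or $H^1$) is exactly what is needed to force both surjectivity and injectivity at every stage simultaneously. Handling the signs and the non-linearity of the gauge action (the operator $\frac{e^{\mathrm{ad}_\lambda}-\mathrm{id}}{\mathrm{ad}_\lambda}$ from Definition \ref{Def_DeligneGroupoid}) is where the real work lies; everything else is formal manipulation with limits of Kan complexes. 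I would, in the write-up, most likely just cite \cite{DVR} for the technical core and present the reduction steps, since a fully self-contained obstruction-theoretic proof would be disproportionately long for a survey.
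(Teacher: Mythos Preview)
The paper does not give its own proof of this theorem; it simply states it and attributes it to \cite{DVR}. However, immediately after, the paper remarks that ``thanks to Proposition \ref{Prop_Pi0DeligneGroupoid}, we can interpret the Goldman--Milson theorem as the $\pi_0$ case of an equivalence of the Deligne $\infty$-groupoid'' and then records that equivalence as Theorem \ref{Th_GoldmanMilsonExtended} (also cited from \cite{DVR}). Your first strategy --- reduce to a $\pi_0$-bijection via Proposition \ref{Prop_Pi0DeligneGroupoid}, then invoke the weak equivalence $\mathbf{Del}(\G_g) \to \mathbf{Del}(\G_h)$ --- is exactly this route, and your outline of why that weak equivalence holds (tower of fibrations, reduction to the nilpotent case, obstruction theory on $\mathrm{gr}^n$) is the standard argument of \cite{DVR}. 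Your alternative direct obstruction-theoretic approach is also correct and is essentially the classical Goldman--Millson/Schlessinger--Stasheff argument; it buys a more elementary proof at the cost of losing the higher homotopical information. Your closing remark that in a survey one would cite \cite{DVR} for the technical core is precisely what the paper does.
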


From the deformation theoretical perspective, this result tells us that two Lie algebras that are quasi-isomorphic encode equivalent deformation problems. In practice, one might be interested in showing that a Lie algebra is formal in order to have the ``simplest possible description'' of the deformation problem at hand.

\begin{RQ}\label{RQ:filtered q.i.}
The hypothesis that $f\colon\G_g \rightarrow \G_h$ is a \emph{filtered} quasi-isomorphism,  meaning that $f$ induces a quasi-isomorphism {on the associated graded}, is a stronger requirement than just a quasi-isomorphism.

As an example, let $\G_g$ be the two dimensional Lie algebra spanned by an element $x$ in degree $1$, and an element $[x,x]$ in degree $2$, such that $dx = -\frac{1}{2}[x,x]$. This Lie algebra is quasi-isomorphic to $0$, but not filtered quasi-isomorphic to $0$ for any possible complete filtration. Indeed, $\G_g$ has two Maurer--Cartan elements $0$ and $x$ which are not equivalent, since the gauge group is trivial.
\end{RQ}

One can suspect that the Goldman--Milson Theorem should generalise to the $L_\infty$ setting, since $\infty$-morphisms are equivalent to zig-zags of strict morphisms and $L_\infty$-algebras are rectifiable to strict Lie algebras. Indeed, given a complete $L_\infty$-algebra $\mathfrak g$, the corresponding Maurer--Cartan equation is:
\[
\mathrm{MC}(\mathfrak g) = \{x\in \G_g_1 \ | \  dx + \frac{1}{2!}\ell_2(x,x)+\frac{1}{3!}\ell_3(x,x,x) + \dots = 0\}.
\]

This is functorial with respect to any $\infty$-morphism $f\colon \mathfrak g \rightsquigarrow \mathfrak h$ with components $(f_n)_{n\geq 1}$. We get a map:
\begin{align*}
f\colon \mathrm{MC}(\mathfrak g)  &\to\mathrm{MC}(\mathfrak h)\\
x \hspace{.2cm} &\mapsto\sum_{n\geq 1} \frac{1}{n!}f_n(x,\dots,x).
\end{align*}

The Goldman--Milson Theorem indeed generalises to this setting and furthermore, thanks to Proposition \ref{Prop_Pi0DeligneGroupoid}, we can interpret the Goldman--Milson theorem as the $\pi_0$ case of an equivalence of the Deligne $\infty$-groupoid:

\begin{Th}[{\cite[Theorem 1.1]{DVR}}] \label{Th_GoldmanMilsonExtended}
	Let $f\colon L \rightarrow L'$ be an $\infty$-morphism between complete positively filtered $L_\infty$-algebras compatible with the filtration. If the linear term $f_1 : L \rightarrow L'$ is a filtered quasi-isomorphism, then it induces a weak homotopy equivalence: 
	\[\mathbf{Del}(L) \rightarrow \mathbf{Del}(L') \]
\end{Th}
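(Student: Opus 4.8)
The plan is to reduce the $L_\infty$ statement to the strict case, i.e.\ to Theorem~\ref{Th_GoldmannMilson} together with its $\infty$-groupoid refinement coming from Proposition~\ref{Prop_Pi0DeligneGroupoid}. The key mechanism is \emph{rectification}: for a complete $L_\infty$-algebra $L = \Omega_\kappa \_C$-type object, the bar-cobar adjunction for Lie algebras produces a strict Lie algebra $\widehat{\Omega_\kappa \mathbf B_\kappa L}$ (the completed cobar of the Chevalley--Eilenberg coalgebra) together with a \emph{filtered} $\infty$-quasi-isomorphism $\widehat{\Omega_\kappa \mathbf B_\kappa L}\rightsquigarrow L$ whose linear component is a filtered quasi-isomorphism; since $\mathbf{Del}(-)$ only depends on the Maurer--Cartan functor applied arity-wise to $\Omega[\Delta^\bullet]$, and this functor is compatible with completion, it suffices to treat strict Lie algebras provided one knows that $\mathbf{Del}$ sends such a rectification map to a weak equivalence.

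First I would record the naturality of $\mathbf{Del}$: an $\infty$-morphism $f\colon L\rightsquigarrow L'$ induces, by the explicit formula $x\mapsto \sum_{n\ge 1}\tfrac1{n!}f_n(x,\dots,x)$, a morphism of simplicial sets $\mathbf{Del}(L)\to\mathbf{Del}(L')$, because tensoring with each $\Omega[\Delta^n]$ is a symmetric monoidal, filtration-compatible operation and $f$ extends arity-wise. Next I would invoke the Kan property (\cite[Proposition~4.1]{DVRC17}) so that a map of Kan complexes is a weak equivalence iff it is a bijection on $\pi_0$ and on all $\pi_n$ at every basepoint. The $\pi_0$ statement is exactly Proposition~\ref{Prop_Pi0DeligneGroupoid} combined with the $L_\infty$ Goldman--Milson bijection $\mathrm{MC}(L)/\!\!\sim\ \to\ \mathrm{MC}(L')/\!\!\sim$, which one either quotes from \cite{DVR} or re-derives by the rectification argument above. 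For the higher homotopy groups, I would use the standard trick of twisting: $\pi_n(\mathbf{Del}(L),x)$ for a Maurer--Cartan basepoint $x$ is computed by the cohomology of the twisted complex $(L^x,d_x)$ in degree $1-n$ (this is the content of the ``homotopy groups of the Deligne $\infty$-groupoid are the cohomology of the tangent complex'' folklore, cf.\ \cite[Section~2.2]{DVR}), and a filtered $\infty$-quasi-isomorphism $f$ induces a filtered quasi-isomorphism on twisted complexes $L^x\to (L')^{f(x)}$ because the linear term $f_1$ is a filtered quasi-isomorphism and the filtration is complete (so a spectral-sequence comparison on the associated graded applies). Hence $f$ is a bijection on all homotopy groups at all basepoints, so it is a weak equivalence.

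The main obstacle, and the step deserving the most care, is the comparison on the associated graded under the completeness hypothesis: one must check that ``filtered quasi-isomorphism of the linear term'' genuinely upgrades to ``quasi-isomorphism of each twisted complex'' and then to ``weak equivalence after $-\hat\otimes\Omega[\Delta^\bullet]$''. Concretely, $\Omega[\Delta^n]$ is an acyclic (contractible) commutative dg algebra, so $L\hat\otimes\Omega[\Delta^n]\to L'\hat\otimes\Omega[\Delta^n]$ is again a filtered quasi-isomorphism, and one needs the convergence of the relevant spectral sequences for the decreasing complete Hausdorff filtration — this is where Remark~\ref{RQ:filtered q.i.} is essential, since without the \emph{filtered} hypothesis the statement fails outright. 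A clean way to organise this is: (i) reduce to strict Lie algebras by rectification; (ii) for strict $f$, show $f\hat\otimes\mathrm{id}_{\Omega[\Delta^n]}$ is a filtered quasi-isomorphism for each $n$; (iii) conclude that $\mathbf{Del}(L)\to\mathbf{Del}(L')$ is a levelwise weak equivalence of simplicial sets between Kan complexes that is moreover a homotopy equivalence because $\pi_*$ is detected levelwise via the twisted-complex computation; (iv) transport back along the rectification $\infty$-quasi-isomorphisms, which are themselves sent to weak equivalences by the strict case applied to the associated zig-zag $\widehat{\Omega_\kappa\mathbf B_\kappa L}\xrightarrow{\sim}L$ (interpreted via $\Omega\mathbf B$ as in the proof sketch of Theorem~\ref{Th_MainPropertiesOfInftyMaps}). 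I would present steps (i)--(iv) in that order, expecting (ii) to carry essentially all the analytic content.
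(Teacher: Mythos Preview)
The paper does not prove this theorem: it simply cites \cite[Theorem 1.1]{DVR} and moves on. So there is no ``paper's own proof'' to compare against, and your proposal must be judged on its own merits.

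Your outline conflates two separate strategies and the rectification half has a circularity you do not resolve. In step (iv) you want to transport along the rectification $\infty$-quasi-isomorphism $\widehat{\Omega_\kappa\mathbf B_\kappa L}\rightsquigarrow L$, claiming this is ``sent to a weak equivalence by the strict case applied to the associated zig-zag''. But the strict case you have available (Theorem~\ref{Th_GoldmannMilson}) is only a $\pi_0$ statement, not an equivalence of $\infty$-groupoids; and the zig-zag produced in the sketch of Theorem~\ref{Th_MainPropertiesOfInftyMaps} goes through $\Omega\mathbf B$ of a strict Lie algebra, yet its second leg still lands in the $L_\infty$-algebra $L$, so it is not a diagram of strict Lie algebra maps to which a strict Goldman--Milson theorem applies. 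You would need exactly the $L_\infty$ statement you are trying to prove to know that this leg induces an equivalence on $\mathbf{Del}$. Separately, step (iii) speaks of a ``levelwise weak equivalence of simplicial sets'', which is not a meaningful notion: the $n$-simplices of $\mathbf{Del}(L)$ form a bare set, and a filtered quasi-isomorphism certainly does not induce a bijection on $\mathrm{MC}$ level by level.

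The second strategy you sketch --- check $\pi_0$ via the $L_\infty$ Goldman--Milson bijection and check $\pi_{n\ge 1}$ via the identification $\pi_n(\mathbf{Del}(L),x)\cong H^{1-n}(L^x)$ together with a spectral-sequence argument showing that a filtered $\infty$-quasi-isomorphism induces a quasi-isomorphism on each twist --- is sound and does not require rectification at all. This is much closer to how the result is actually established in \cite{DVR}, which proceeds by a tower argument over the quotients $L/F^nL$, reducing to the nilpotent case and exploiting completeness to pass to the limit. I would drop the rectification entirely and write up only this direct line: it is shorter, avoids the circularity, and the spectral-sequence comparison you flag as ``carrying all the analytic content'' is indeed the heart of the matter.
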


\subsubsection{A digression on variations of the Deligne groupoid}\label{sec:a-digression-on-variations-of-the-deligne-groupoid}\

\medskip

	Given a complete Lie algebra $\mathfrak g$, it follows from $\Omega[\Delta^0] = k$ that the $0$-simplices of the Deligne groupoid $\mathbf{Del}(\mathfrak g)$ are precisely the Maurer--Cartan elements of $\mathfrak g$.
	In light of Proposition \ref{Prop_Pi0DeligneGroupoid}, it would be natural to expect the $1$-simplices of $\mathbf{Del}(\mathfrak g)$ to correspond to the gauge equivalences. This is however not true: the set of $1$-simplices is too big (even though it encodes the same gauge equivalence relation).
	In \cite{Ge09}, Getzler solved this problem by considering the normalised cellular complex of the $n$-simplex $C_n\coloneqq C^*(\Delta^n)$, which has the advantage of being much smaller than $\Omega[\Delta^n]$ and the disadvantage of not being a commutative algebra.

	\begin{Prop}[Dupont contraction]\label{prop:Dupont}
There exists a simplicial deformation retract:
	\[\begin{tikzcd}
	\arrow[loop left, "h_\bullet"] \Omega[\Delta^\bullet] \arrow[r, "p_\bullet", shift left] & \arrow[l, shift left, "i_\bullet"] C_\bullet.
\end{tikzcd}\] 
	\end{Prop}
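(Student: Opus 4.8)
The statement to prove is the existence of a simplicial deformation retract (the Dupont contraction)
\[\begin{tikzcd}
\arrow[loop left, "h_\bullet"] \Omega[\Delta^\bullet] \arrow[r, "p_\bullet", shift left] & \arrow[l, shift left, "i_\bullet"] C_\bullet.
\end{tikzcd}\]
where $\Omega[\Delta^n]$ is the polynomial de Rham (Sullivan) algebra of the $n$-simplex and $C_n = C^*(\Delta^n)$ is the normalized simplicial cochain complex.

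The plan is to construct explicit maps level by level and then check simpliciality. First I would set up the integration map $p_n\colon \Omega[\Delta^n]\to C_n$: given a simplicial cochain, i.e. a linear functional on the nondegenerate simplices of $\Delta^n$, the value of $p_n(\omega)$ on a face $[e_{i_0},\dots,e_{i_k}]$ is the integral $\int_{[e_{i_0},\dots,e_{i_k}]}\omega$ of the polynomial differential form over that affine simplex; this is the classical Stokes/de Rham pairing and it is a chain map by Stokes' theorem. In the other direction, $i_n\colon C_n\to\Omega[\Delta^n]$ sends the basic cochain dual to $[e_{i_0},\dots,e_{i_k}]$ to the Whitney elementary form
\[
\omega_{i_0\cdots i_k} = k!\sum_{j=0}^k (-1)^j t_{i_j}\, dt_{i_0}\wedge\cdots\wedge \widehat{dt_{i_j}}\wedge\cdots\wedge dt_{i_k},
\]
and one verifies directly that $p_n i_n = \mathrm{id}_{C_n}$ (the Whitney form integrates to $1$ over its own face and to $0$ over the others, by a standard computation) and that $i_n$ is a chain map. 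The homotopy $h_n$ is the Dupont homotopy, built from the ``cone'' (or Poincaré) operators: writing $a_j$ for the operator that contracts a form toward the vertex $e_j$ and rescales, Dupont's formula gives $h_n$ as an alternating sum of composites of these cone operators over increasing sequences of vertices; the key identity to check is the chain-homotopy relation $\mathrm{id}_{\Omega[\Delta^n]} - i_n p_n = d h_n + h_n d$, which is Dupont's theorem and follows from the Stokes-type identities satisfied by the cone operators.

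Once the three families are in place, I would verify that they assemble into morphisms of simplicial cochain complexes, i.e. that $p_\bullet$, $i_\bullet$, $h_\bullet$ commute with the coface and codegeneracy maps induced by the simplicial structure of $\Delta^\bullet$ on both sides. For $p_\bullet$ and $i_\bullet$ this is essentially naturality of integration of forms and of Whitney forms under the affine maps $\Delta^m\to\Delta^n$; for $h_\bullet$ one checks compatibility of the cone operators with pullback along face and degeneracy maps, which is where a little care with signs and with the choice of base vertices is needed. I would also record the side conditions ($h_\bullet^2=0$, $p_\bullet h_\bullet = 0$, $h_\bullet i_\bullet = 0$) so that this is actually a strong deformation retract in the sense of Definition \ref{Def_HomotopyandDeformationRetracts}, since the application to the Deligne groupoid (transferring an $L_\infty$ structure via the homotopy transfer theorem) will want those.

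The main obstacle is not the pointwise construction — the integration map, Whitney forms, and Dupont homotopy are classical — but rather organizing the simplicial compatibility cleanly: one must check that $h_\bullet$ is genuinely a simplicial map of collections, i.e. compatible with \emph{all} coface and codegeneracy operators simultaneously, and the Dupont homotopy is defined by a sum over chains of vertices whose behavior under a general simplicial operator requires a careful case analysis (injective versus non-injective vertex maps). For a survey, rather than reproving everything I would invoke Dupont's original construction \cite{Ge09} (or the account in \cite{DSV23}) for the existence of the contraction with the side conditions, and limit the written proof to recalling the explicit formulas for $p_\bullet$, $i_\bullet$ and sketching why $p_\bullet i_\bullet = \mathrm{id}$ and $\mathrm{id} - i_\bullet p_\bullet = d h_\bullet + h_\bullet d$, pointing to the literature for the verification of simpliciality.
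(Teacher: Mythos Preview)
The paper does not actually prove this proposition: it is stated as a classical result (the Dupont contraction, originating in Dupont's work on simplicial de Rham theory and used crucially by Getzler), and the only comment the paper adds is the sentence explaining that ``simplicial'' means $i_\bullet, p_\bullet, h_\bullet$ commute with the simplicial structure maps. So there is nothing to compare against on the paper's side.

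Your sketch is the standard construction and is correct: integration over faces for $p_\bullet$, Whitney elementary forms for $i_\bullet$, and Dupont's cone-operator formula for $h_\bullet$, with simpliciality checked via naturality under affine simplicial maps. One small point: the proposition as stated only asks for a \emph{deformation retract} in the sense of Definition~\ref{Def_HomotopyandDeformationRetracts} (i.e.\ $p_\bullet i_\bullet = \mathrm{id}$ and the homotopy relation), not a \emph{strong} one, so the side conditions $h_\bullet^2=0$, $p_\bullet h_\bullet=0$, $h_\bullet i_\bullet=0$ are not required here --- though you are right that they hold and are useful downstream for homotopy transfer.
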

The word ``simplicial'' in the statement means that $i_\bullet, p_\bullet$ and $h_\bullet$ commute with the respective simplicial structures.
	
	\begin{Th}[{\cite{Ge09}}]
		Given a complete Lie algebra $\mathfrak g$, the simplicial set 
		\[
		\gamma_\bullet (\mathfrak g)\coloneqq \mathrm{MC}(\G_g \hat{\otimes} \Omega[\Delta^\bullet]) \cap \ker h_\bullet
		\]
is isomorphic to the nerve of the Deligne groupoid of $\mathfrak g$. In particular it
is a $\infty$-groupoid and the inclusion $\gamma_\bullet(\G_g) \to \mathbf{Del}(\G_g)$ is a homotopy equivalence. We call it the \emph{Getzler $\infty$-groupoid}.
	\end{Th}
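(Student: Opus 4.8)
The plan is to build on the Dupont contraction of Proposition~\ref{prop:Dupont} by transferring, for each fixed $\G_g$, the $L_\infty$-structure from the complete Lie algebra $\G_g \hat\otimes \Omega[\Delta^\bullet]$ to the complex $\G_g \hat\otimes C_\bullet$ via the homotopy transfer theorem (Theorem~\ref{Th_HHT}), applied levelwise in the simplicial direction. Since the contraction is simplicial, the transferred $L_\infty$-structures and the transferred $\infty$-quasi-isomorphisms are compatible with the simplicial maps, so $n\mapsto \mathrm{MC}(\G_g\hat\otimes C_n)$ is a simplicial set and $i_\bullet$ induces a levelwise $\infty$-quasi-isomorphism of simplicial $L_\infty$-algebras. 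The key point is then that the transferred structure on $\G_g\hat\otimes C_n$ involves only the bracket of $\G_g$ together with $h_\bullet$, and one checks (as in Getzler's original computation) that imposing the side condition $h_\bullet = 0$ exactly cuts out those Maurer--Cartan elements coming from $\ker h_\bullet$, giving $\gamma_\bullet(\G_g) = \mathrm{MC}(\G_g\hat\otimes\Omega[\Delta^\bullet])\cap\ker h_\bullet$.

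First I would record that $\gamma_\bullet(\G_g)$ is indeed a simplicial subset of $\mathbf{Del}(\G_g)$: the faces and degeneracies of $\Omega[\Delta^\bullet]$ commute with $h_\bullet$ (simpliciality of the Dupont contraction), hence preserve $\ker h_\bullet$, and they automatically preserve Maurer--Cartan elements. Next I would identify $\gamma_\bullet(\G_g)$ with the nerve of the Deligne groupoid $\underline{\tx{Del}}(\G_g)$. For this one shows that a Maurer--Cartan element of $\G_g\hat\otimes\Omega[\Delta^n]$ lying in $\ker h_\bullet$ is uniquely determined by its restriction along the vertices and edges of $\Delta^n$: the element $\alpha(t,dt)$ satisfies the ``gauge flow'' ODE in each edge variable, and the side condition $h_n\alpha = 0$ is precisely the normalization that kills the remaining freedom, so that $\gamma_n(\G_g)$ is in bijection with composable $n$-tuples of gauge equivalences, i.e.\ with $N(\underline{\tx{Del}}(\G_g))_n$. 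In low degrees this recovers $\gamma_0(\G_g) = \mathrm{MC}(\G_g)$ (since $\Omega[\Delta^0]=k$) and $\gamma_1(\G_g) = \{(\lambda,x) : \lambda\in\G_g_0,\ x\in\mathrm{MC}(\G_g)\}$ recording $x\mapsto e^\lambda\cdot x$, matching Definition~\ref{Def_DeligneGroupoid}.

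Then, being isomorphic to the nerve of a groupoid, $\gamma_\bullet(\G_g)$ is automatically a Kan complex, hence an $\infty$-groupoid. For the homotopy equivalence $\gamma_\bullet(\G_g)\hookrightarrow\mathbf{Del}(\G_g)$, I would argue that the inclusion is the map induced on Maurer--Cartan simplicial sets by the levelwise $\infty$-quasi-isomorphism $i_\bullet\colon \G_g\hat\otimes C_\bullet \rightsquigarrow \G_g\hat\otimes\Omega[\Delta^\bullet]$ coming from transfer; since $i_n$ has filtered quasi-isomorphism as its linear part (it is the inclusion of a deformation retract), Theorem~\ref{Th_GoldmanMilsonExtended} gives that $\mathbf{Del}(\G_g\hat\otimes C_n)\to\mathbf{Del}(\G_g\hat\otimes\Omega[\Delta^n])$ is a weak equivalence; passing to the diagonal/realization in the simplicial direction, and using that $\gamma_\bullet(\G_g)$ is the subcomplex of $\mathrm{MC}(\G_g\hat\otimes\Omega[\Delta^\bullet])$ picked out by the transferred structure, yields the claimed homotopy equivalence. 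Alternatively and more directly, one exhibits an explicit deformation retraction $\mathbf{Del}(\G_g)\to\gamma_\bullet(\G_g)$ using $h_\bullet$ in the same spirit as the homological perturbation argument.

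The main obstacle I expect is the bookkeeping in the second step: showing that the side condition $h_\bullet = 0$ really does cut $\mathrm{MC}(\G_g\hat\otimes\Omega[\Delta^\bullet])$ down to exactly the nerve of $\underline{\tx{Del}}(\G_g)$ — neither more (no extra simplices) nor less (every composable tuple of gauges is realized). This requires a careful analysis of the explicit form of $h_n$ on $\Omega[\Delta^n]$ and of the transferred brackets, essentially reproving Getzler's Proposition~\ref{prop:Dupont}-based computation; the rest (Kan property from being a nerve, homotopy equivalence from Goldman--Milson) is comparatively formal.
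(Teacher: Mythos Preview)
The paper does not give its own proof of this theorem; it is stated with attribution to \cite{Ge09}, and the very next sentence notes that ``the condition $h_\bullet(x)=0$ hides the difficult combinatorics of the Dupont contraction'' before turning to the alternative approach of Robert-Nicoud--Vallette \cite{RV20}.

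Your outline via homotopy transfer along the Dupont contraction is the right starting point and matches Getzler's setup. However, there is a genuine gap in your steps 2--3. The identification $\gamma_\bullet(\mathfrak g)\cong N(\underline{\mathrm{Del}}(\mathfrak g))$ with the nerve of a $1$-groupoid is \emph{not} true for a general complete Lie algebra: the paper itself records, in the discussion of \cite{RV20} just below the theorem, that the fillers of a horn $\Lambda^n_k\to\gamma_\bullet(\mathfrak g)$ are in bijection with $\mathfrak g_{-n}$, so whenever $\mathfrak g_{-n}\neq 0$ for some $n\geq 2$ those fillers are not unique and $\gamma_\bullet(\mathfrak g)$ cannot be the nerve of any $1$-category. (The theorem's phrasing in the survey is thus somewhat loose on this point; in \cite{Ge09} the nerve comparison is only drawn under degree restrictions on $\mathfrak g$.) Consequently you cannot deduce the Kan property from ``being a nerve''. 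Getzler's actual argument is a direct and rather intricate construction of horn fillers --- precisely the ``difficult combinatorics'' the paper alludes to --- and the \cite{RV20} approach the paper sketches bypasses it via the adjunction with $\mathfrak L$.

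Your step 4 also does not go through as written. Applying Theorem~\ref{Th_GoldmanMilsonExtended} to each $i_n$ yields weak equivalences $\mathbf{Del}(\mathfrak g\hat\otimes C_n)\to\mathbf{Del}(\mathfrak g\hat\otimes\Omega[\Delta^n])$, but these are maps of simplicial sets in a \emph{second} simplicial variable, and ``passing to the diagonal'' does not recover the inclusion $\gamma_\bullet(\mathfrak g)\hookrightarrow\mathbf{Del}(\mathfrak g)$ that you want. Getzler instead computes the homotopy groups of both sides directly in terms of the twisted cohomology of $\mathfrak g$ and compares them.
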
	
% The main point of the proof of this result is the verification that $\gamma_\bullet (\mathfrak g)$ is a Kan complex. 
Despite its apparent simplicity, the condition $h_\bullet(x)=0$ hides the difficult combinatorics of the Dupont contraction, which make some categorical properties of 	$\gamma_\bullet$ more difficult to show.

Let us briefly describe the operadic approach that led Robert-Nicoud and Vallette \cite{RV20} to give a more conceptual presentation of Getzler's $\infty$-groupoid.

The usual construction of the Deligne $\infty$-groupoid involves taking a simplicial framing of a Lie algebra $\mathfrak g$ by associating to it the simplicial Lie algebra $\mathfrak g \otimes \Omega [\Delta^\bullet]$. Notice that its Maurer--Cartan elements can be identified with a mapping space. Let $k\epsilon$ be the $1$-dimensional $\mathbf{Lie}^\antishriek$ coalgebra, with $\deg \epsilon =1$ and $\Delta \epsilon = \frac{1}{2}\epsilon \otimes \epsilon$. Then if $\kappa \colon \mathbf{Lie}^\antishriek\to \mathbf{Lie}$ is the canonical Koszul morphism, we have
\[
\mathbf{Del}(\mathfrak g) = \Hom_{\mathbf{Alg}_{\mathbf{Lie}}}(\Omega_\kappa(k\epsilon),\mathfrak g \otimes \Omega [\Delta^\bullet]).
\]

Another way to cook up a simplicial set out of a Lie algebra $\mathfrak g$ is to consider all maps from a cosimplicial Lie algebra into $\mathfrak g$. The cosimplicial Lie algebra we will consider is denoted $\mathfrak{mc}^\bullet$ is in fact an $L_\infty$-algebra that is Koszul dual to the $C_\infty$-algebra structure on $C_\bullet$ obtained by applying the homotopy transfer theorem (on every simplicial degree) to the Dupont contraction (see \cite[Section 2.2]{RV20}).

%The construction of $\mathfrak{mc}^\bullet$ starts by applying the homotopy transfer theorem (on every simplicial degree) to the Dupont contraction \ref{prop:Dupont} to obtain a $\Omega \mathbf B \mathbf{Com}$-algebra structure on $C_\bullet$. Thanks to its finite dimensionality we can dualize it into a $ \mathbf B \Omega \mathbf{Com}^\vee$-coalgebra and we proceed to take the completed cobar construction relative to the universal twisting morphism $\pi \colon \mathbf B \Omega \mathbf{Com}^\vee\to \Omega \mathbf{Com}^\vee$.\footnote{We could have equally transferred a $C_\infty$ structure, but it is more convenient to }. 
\begin{Th}[{\cite[Theorem 2.17]{RV20}}]
	
There is an isomorphism
\[
\gamma_\bullet(\mathfrak g) \cong \Hom_{L_\infty}(\mathfrak{mc}^\bullet,\mathfrak g).
\]
\end{Th}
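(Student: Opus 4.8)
The plan is to identify both sides as functors on complete Lie algebras and produce the isomorphism naturally. Recall that $\gamma_\bullet(\mathfrak g) = \mathrm{MC}(\mathfrak g\,\hat\otimes\,\Omega[\Delta^\bullet])\cap \ker h_\bullet$, so the first task is to unravel what $\mathrm{Hom}_{L_\infty}(\mathfrak{mc}^\bullet,\mathfrak g)$ means simplexwise: an $L_\infty$-morphism $\mathfrak{mc}^n \rightsquigarrow \mathfrak g$ is, by the Rosetta Stone (Proposition \ref{Prop_RosettaStone}) applied to $\mathbf{Lie}$, the same as a twisting morphism, hence a morphism of conilpotent $\mathbf{Lie}^\antishriek$-coalgebras $\mathbf{B}_\kappa \mathfrak{mc}^n \to \mathbf{B}_\kappa\mathfrak g$, or equivalently (by the bar–cobar adjunction for algebras, Definition \ref{Def_BarCobarAlgebra}) a map of $\mathbf{Lie}$-algebras $\Omega_\kappa \mathbf B_\kappa \mathfrak{mc}^n \to \mathfrak g$. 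The key structural input is that $\mathfrak{mc}^\bullet$ is defined precisely as the $L_\infty$-algebra Koszul dual to the $C_\infty$-algebra $C_\bullet = C^*(\Delta^\bullet)$ coming from transferring the commutative structure of $\Omega[\Delta^\bullet]$ along the Dupont contraction (Proposition \ref{prop:Dupont}); so $\mathfrak{mc}^n$ should be thought of, via the operadic Koszul duality dictionary of Section \ref{sec:algebraic-bar-cobar-adjunction}, as $\mathfrak D(C_n)$ in a suitable completed sense, and the pairing $\mathrm{Hom}_{L_\infty}(\mathfrak{mc}^n,\mathfrak g)$ reduces to $\mathrm{MC}$ of a convolution Lie algebra built from $C_n$ and $\mathfrak g$.

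The heart of the argument is then a comparison of two convolution Lie algebras: the ``big'' one $\mathfrak g\,\hat\otimes\,\Omega[\Delta^n]$ controlling $\mathbf{Del}$, and a ``small'' one built from $C_n$ controlling $\gamma_\bullet$ via the $\ker h_\bullet$ condition. First I would recall Getzler's theorem (already quoted in the excerpt) that $\gamma_\bullet(\mathfrak g)$ is isomorphic to the nerve of the Deligne groupoid, so that $\gamma_\bullet(\mathfrak g)$ is honestly the intersection with $\ker h_\bullet$, and then show that this kernel condition is exactly what extracts, from the transferred structure, the data of an $L_\infty$-morphism out of $\mathfrak{mc}^n$. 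Concretely: the homotopy transfer theorem (Theorem \ref{Th_HHT}) applied to the Dupont contraction gives a $C_\infty$-structure on $C_\bullet$ together with an $\infty$-quasi-isomorphism $i_\infty$; dualizing (in the $\mathfrak D$-sense, using that $\mathbf{Com}^! = \mathbf{Lie}$, Theorem \ref{Th_ExampleKoszulDuality}) turns this into the $L_\infty$-algebra $\mathfrak{mc}^\bullet$ together with a canonical $\infty$-morphism relating it to the Koszul dual of $\Omega[\Delta^\bullet]$; and an $L_\infty$-morphism $\mathfrak{mc}^n \rightsquigarrow \mathfrak g$ then corresponds, after transport along these $\infty$-quasi-isomorphisms and using the functoriality of $\mathrm{MC}$ under $\infty$-morphisms described before Theorem \ref{Th_GoldmanMilsonExtended}, to a Maurer--Cartan element of $\mathfrak g\,\hat\otimes\,\Omega[\Delta^n]$ whose ``internal'' components are pinned down — this pinning-down is precisely the gauge-fixing condition $h_\bullet = 0$. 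One must check that this correspondence is simplicial, i.e. commutes with faces and degeneracies; this follows because the Dupont contraction is simplicial (the statement of Proposition \ref{prop:Dupont}) and the homotopy transfer formulas are natural in the contraction data.

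I would organise the proof in the following steps. Step 1: spell out $\mathrm{Hom}_{L_\infty}(\mathfrak{mc}^n,\mathfrak g)$ as $\mathrm{MC}$ of the convolution $L_\infty$-algebra $\mathrm{Tw}(\,(\mathfrak{mc}^n)^{\text{-bar}}, \mathfrak g\,)$, using the Rosetta Stone and bar–cobar adjunction. Step 2: using that $\mathfrak{mc}^\bullet = \mathfrak D(C_\bullet)$ (completed) with its transferred $C_\infty$-structure, rewrite this as $\mathrm{MC}$ of something built from $C_n\otimes\mathfrak g$, identifying the underlying complex with a sub/quotient of $\Omega[\Delta^n]\otimes \mathfrak g$ cut out by $\ker h_\bullet$. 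Step 3: invoke the functoriality of the Maurer--Cartan functor under the $\infty$-quasi-isomorphism $i_\bullet, p_\bullet$ of the Dupont contraction, together with Getzler's identification, to match this with $\gamma_\bullet(\mathfrak g)$. Step 4: verify simpliciality. The main obstacle I expect is Step 2: making precise the completed/filtered version of the operadic Koszul duality $\mathfrak D$ so that it interacts correctly with the simplicial structure and with the fact that $C_\bullet$ is not a commutative algebra on the nose (only $C_\infty$), and carefully tracking signs and degree shifts through the $\{-1\}$ twist in Definition \ref{Def_KoszulDualOperad} so that the Maurer--Cartan equations literally coincide rather than merely being equivalent up to reindexing. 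I would lean on \cite[Section 2.2]{RV20} for the explicit description of $\mathfrak{mc}^\bullet$ to keep this bookkeeping under control, and present the isomorphism as the adjunction/Koszul-duality identity $\mathrm{Hom}_{L_\infty}(\mathfrak D(C_\bullet), \mathfrak g) \cong \mathrm{MC}(C_\bullet \hat\otimes \mathfrak g)^{h_\bullet = 0}$ specialised to $C_\bullet = C^*(\Delta^\bullet)$.
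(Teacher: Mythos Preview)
The paper does not prove this theorem: it is quoted from \cite[Theorem 2.17]{RV20} in the survey digression of Section~\ref{sec:a-digression-on-variations-of-the-deligne-groupoid} and immediately used for its consequences (functoriality of $\gamma_\bullet$, existence of the left adjoint $\mathfrak L$, the horn-filling interpretation of BCH). There is therefore no in-paper argument to compare your proposal against.

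That said, your outline is in the right spirit relative to the actual proof in \cite{RV20}: the identification does go through rewriting $\Hom_{L_\infty}(\mathfrak{mc}^n,\mathfrak g)$ as the Maurer--Cartan set of a convolution $L_\infty$-algebra built from $C_n$ and $\mathfrak g$, and the $\ker h_\bullet$ condition does emerge from the homotopy transfer along the Dupont contraction. One caution: in Step~3 you invoke functoriality of $\mathrm{MC}$ under the $\infty$-quasi-isomorphisms $i_\bullet,p_\bullet$ to ``match'' the two sides, but the theorem asserts an \emph{isomorphism} of simplicial sets, not merely a homotopy equivalence, so an appeal to Goldman--Millson-type invariance is not enough; the point in \cite{RV20} is that the transferred $L_\infty$-structure on $C_n\,\hat\otimes\,\mathfrak g$ makes the Maurer--Cartan sets literally coincide with $\gamma_n(\mathfrak g)$ on the nose. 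Your Step~2 correctly flags this as the delicate part.
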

This description automatically gives some functoriality properties of $\gamma_\bullet$ relative to some refined version of $\infty$-morphisms of Lie algebras and immediately produces the left adjoint $\mathfrak L$ of $\gamma_\bullet\colon \bf{Alg}_{\_L_\infty} \to \tx{sSets}$. 

As a beautiful application, let us point out that one of the key steps in \cite{Ge09} is showing that $\gamma_\bullet(\mathfrak g)$ is an $\infty$-groupoid. Using the explicit formula for $\mathfrak L$ one can easily show that given any horn $\Lambda^n_k \to \gamma_\bullet(\mathfrak g)$, the set of corresponding horn fillers is in bijection with the degree $-n$ elements $\mathfrak g_{-n}$.
It follows that there always exist horn fillers since $\mathfrak g_{-n}$ is a vector space and is therefore non-empty! Furthermore, a canonical horn filler exists: $0\in\mathfrak g_{-n}$. Such objects are called \defi{algebraic Kan complexes}.

\begin{Prop}[{\cite[Section 5.2]{RV20}}]\label{Prop_BCHHornFilling}
Let $x,y\in \exp g$ be elements of the gauge group of $g$. They define a horn $\Lambda_1^2 \to \gamma_\bullet(\mathfrak g)$. The horn filler correponding to $0$ produces the BCH formula:

\begin{center}
	\includegraphics[scale=0.7]{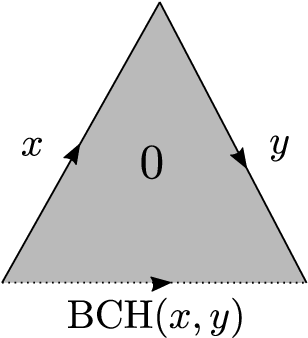}
\end{center}
\end{Prop}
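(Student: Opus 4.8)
The plan is to unpack all the identifications and see that the "horn filler corresponding to $0$" is literally the BCH formula computed in Proposition \ref{Prop_BCH}. First I would recall the setup: by the Theorem of \cite{RV20} cited just above, $\gamma_\bullet(\mathfrak g) \cong \Hom_{L_\infty}(\mathfrak{mc}^\bullet, \mathfrak g)$, so that the $1$-simplices correspond to morphisms out of $\mathfrak{mc}^1$, and these $1$-simplices are exactly the gauge equivalences in the Deligne groupoid of $\mathfrak g$ (by the isomorphism $\gamma_\bullet(\mathfrak g)\cong N(\underline{\tx{Del}}(\mathfrak g))$ of Getzler's theorem). Concretely, an element $x$ of the gauge group $\exp\mathfrak g = (\mathfrak g_0,\tx{BCH},0)$ determines a $1$-simplex of $\gamma_\bullet(\mathfrak g)$ whose two faces are the source and target of the gauge action, i.e.\ the edge "$x$" connects a Maurer--Cartan element $a$ to $x\cdot a$. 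Thus a pair $x,y\in\exp\mathfrak g$ of composable gauge equivalences assembles into a horn $\Lambda_1^2\to\gamma_\bullet(\mathfrak g)$: the $(0,1)$-edge is labelled by $y$ and the $(1,2)$-edge by $x$, with the middle vertex in common.

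Next I would invoke the mechanism for horn-filling in $\gamma_\bullet(\mathfrak g)$ described in the paragraph preceding the statement: because $\gamma_\bullet(\mathfrak g)\cong\Hom_{L_\infty}(\mathfrak{mc}^\bullet,\mathfrak g)$ and $\mathfrak L$ is its left adjoint, the set of fillers of a given horn $\Lambda^n_k\to\gamma_\bullet(\mathfrak g)$ is a torsor (in fact an affine space) over $\mathfrak g_{-n}$, hence $\gamma_\bullet(\mathfrak g)$ is an algebraic Kan complex with distinguished filler indexed by $0\in\mathfrak g_{-n}$. For $n=2$, $k=1$ this produces a canonical $2$-simplex of $\gamma_\bullet(\mathfrak g)$ whose three edges are $y$, $x$ (the two given) and a third edge $z$ — the "long" edge, the $(0,2)$-face — which records the gauge equivalence obtained by composing. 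Since composition of $1$-simplices in the nerve of the Deligne groupoid is precisely multiplication in the gauge group $(\mathfrak g_0,\tx{BCH},0)$, the third edge must be labelled $z = \tx{BCH}(x,y)$ (with the convention on the order of composition dictated by the $\Lambda^2_1$-horn). This is exactly the content of the picture: the two given legs of the horn are $x$ and $y$ and the filler's remaining face is $\tx{BCH}(x,y)$.

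The only genuine point to check is that the canonical ($0$-indexed) filler is the one whose long edge is the group product, as opposed to some other filler differing by a nonzero element of $\mathfrak g_{-2}$. Here I would argue as follows: any horn $\Lambda^2_1\to\gamma_\bullet(\mathfrak g)$ admits \emph{some} filler (algebraic Kan), and the underlying $0$-skeleton of that filler forces the long edge to be a gauge equivalence between the correct endpoints; but in the Deligne \emph{groupoid} the composite $1$-morphism between two fixed objects is unique, so \emph{every} filler of this horn has the same long edge, namely $\tx{BCH}(x,y)$ — in particular the $0$-indexed one does. (The freedom parametrised by $\mathfrak g_{-2}$ only affects the "interior" of the $2$-simplex, not its boundary, when $\mathfrak g$ is concentrated in nonnegative degrees, or more precisely the ambiguity is invisible on the edge.) I expect this last bookkeeping — matching the simplicial-degree conventions of \cite{RV20,Ge09} so that the $(0,2)$-face of the filler of a $\Lambda^2_1$-horn is $\tx{BCH}(x,y)$ rather than $\tx{BCH}(y,x)$ — to be the only mildly delicate part; everything else is a direct translation through the adjunction $\mathfrak L\dashv\gamma_\bullet$ and the identification of $\gamma_\bullet(\mathfrak g)$ with the nerve of $\underline{\tx{Del}}(\mathfrak g)$. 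For the details we refer to \cite[Section 5.2]{RV20}.
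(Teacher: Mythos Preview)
The paper does not give its own proof of this proposition; it is cited from \cite[Section 5.2]{RV20}, where the argument is a direct computation. One uses the explicit model for the left adjoint $\mathfrak L$ (built from the cosimplicial $L_\infty$-algebra $\mathfrak{mc}^\bullet$, itself obtained by homotopy transfer along the Dupont contraction) to write the $0$-indexed horn filler as the solution of an explicit fixed-point equation, and the BCH series emerges term by term from that computation.

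Your plan is different and, as written, circular. You want to deduce that the long edge is $\tx{BCH}(x,y)$ by invoking Getzler's identification of $\gamma_\bullet(\mathfrak g)$ with the nerve of $\underline{\tx{Del}}(\mathfrak g)$ and then reading off the composition law in that groupoid. But the composition in $\underline{\tx{Del}}(\mathfrak g)$ is $\tx{BCH}$ \emph{by definition} (Definition~\ref{Def_DeligneGroupoid} and Proposition~\ref{Prop_BCH}), so this only confirms that the answer agrees with something already built into the target. The reason the paper calls the proposition a ``beautiful application'' is precisely that the Robert-Nicoud--Vallette horn-filling procedure produces $\tx{BCH}$ \emph{independently}, from the adjunction $\mathfrak L \dashv \gamma_\bullet$ alone, without presupposing the gauge-group product.

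There is also a gap in your uniqueness step. You argue that all fillers of the $\Lambda^2_1$-horn share the same $(0,2)$-edge because nerves of $1$-groupoids have unique inner-horn fillers. But by the sentence immediately preceding the proposition, fillers are in bijection with $\mathfrak g_{-2}$; when $\mathfrak g_{-2}\neq 0$ there are many, $\gamma_\bullet(\mathfrak g)$ is not the nerve of a $1$-groupoid, and your argument does not apply. Your parenthetical hedge (``the ambiguity is invisible on the edge'') is exactly the nontrivial statement, and establishing it---by identifying which filler corresponds to $0\in\mathfrak g_{-2}$ and computing its missing face---is precisely what the explicit computation in \cite{RV20} does.
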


\subsubsection{Deformation Complex}\ 
\label{Sec_DeformationComplex}

\medskip

In this section we define a groupoid of deformations of algebraic structures obtained by considering algebraic structures on the tensor product with a local commutative algebra such that the quotient by the maximal ideal recovers the initial structure we want to deform. For example, if $\_O_X$ is some algebra of functions, a classical question is to understand its deformations as an associative algebra along the algebra of formal power series $k[\![t]\!]$. This is given by an associative product $\star$ on $\_O_X\otimes_k k[\![t]\!] = \_O_X[\![t]\!]$ such that for all $a, b \in \_O_X$, $a.b = a \star b \ \tx{mod} \ (t)$.\\

First let us describe the space of $\_P$-algebra structures on a given cochain complex $A$. Such a structure naturally defines a map $\mu : \_P_\infty \rightarrow \mathbf{End}_A$. We know from Proposition \ref{Prop_RosettaStone} that this is equivalent to the datum of a twisting morphism in the convolution pre-Lie algebra.

\begin{Def}\label{Def_ConvolutionPreLieAlgebra}
	Given a Koszul operad $\_P$, we define the \defi{convolution Lie algebra} of the cochain complex $A$ (Definition \ref{Def_TotatlizationOfAnOperad}) as:
	\[ \G_g_{\_P_\infty,A} := \tx{Tot}(\tx{Conv}(\_P^\antishriek, \mathbf{End}_A)) := \prod_{n\geq0} \iHom_{\Sigma_n} \left( \_P^\antishriek(n), \tx{End}_A(n) \right) \]
	
	together with the Lie structure induced by the natural pre-Lie product of Definition \ref{Def_TotatlizationOfAnOperad}. As such, we have an equivalence between $\_P_\infty$-structures and Maurer--Cartan elements in $\G_g_{\_P_\infty, A}$: 
	\[ \lbrace \_P_\infty-\tx{structures \ on }\ A \rbrace \simeq \tx{Tw}(\_P^\antishriek, \mathbf{End}_A) := \tx{MC}(\G_g_{\_P_\infty,A})\]
	Notice that the convolution Lie algebra is naturally complete filtered with filtration given by the arity.

\end{Def}

\begin{Def} \label{Def_DeformationComplex} 
		Let $A$ be a $\_P_\infty$-algebra, represented by $\phi \in \tx{MC}(\G_g_{\_P_\infty,A})$.
		We define the \defi{deformation complex of $A$} (or deformation complex at $\phi$) to be the twisted Lie algebra $\G_g_{\_P_\infty, A}^\phi$  (using the notation of Proposition \ref{Prop_DeformationLieAlgebraMaurerCartan}).
			
\end{Def}

{\begin{RQ}		
		The construction of the twist of a Lie algebra $\G_g^\phi$ is functorial in the pair $(\G_g, \phi)$. More precisely, for a morphism: 
		\[ f : (\G_g, \phi) \to (\G_h, \phi') \]
		given by a morphism of Lie algebras $f:  \G_g \to \G_h$ such that $f(\phi) = \phi'$, then we get a morphism of Lie algebras: 
		\[ f : \G_g^\phi \to \G_h^{\phi '}\] 
		
		To see this, we only need to show that $f$ exchanges $d_\phi$ and $d_{\phi '}$: 
		\[\begin{split}
			f \circ d_\phi  & = f\circ (d + [\phi, -]_\G_g  )\\
			& = f\circ d + f \left([\phi, -]_\G_g \right) \\
			& = d \circ f + [f(\phi), f(-)]_\G_h \\
			& = d_{\phi '} \circ f.
		\end{split}\] 
	
	It therefore follows from Remark 
	\ref{RQ_NaturalityAlgebraicBarCobar}, that deformation complexes are functorial with respect to morphisms of operads.
\end{RQ}}

We will now explain how this deformation complex does indeed encode deformations of the $\_P$-algebra structure determined by $\phi$. To do so we fix a classical (non-dg) local  commutative algebra $\G_R$ with maximal ideal $\G_M$ that will parameterize the type of deformation we are interested in. For example we can consider $\G_R$ a local Artinian algebra (Definition \ref{Def_SmallComAlgebra}), the algebra of dual numbers $\G_R = k[\epsilon]$ (we take the geometers' convention that $\epsilon^2 = 0$) for ``first-order'' deformations (more generally any $\G_R$ with $\G_M^2 =0$ will encode ``first-order'' deformations) or the pro-Artinian algebra (a cofiltered limit of Artinian algebras) $\G_R = k[\![t]\!]$ to encode formal deformations.

\begin{Def}\label{Def_DeformationGroupoidAtAMaurerCartanElement}
	
	We define the set of deformations of a $\_P$-algebraic structure on $A$, $\phi : \_P \rightarrow \mathbf{End}_A$, as the set of $\G_R$-linear $\_P_\infty\otimes \G_R$-algebra structure on $A \otimes \G_R$ that coincide with $\phi$ modulo $\G_M$. This can be described as: 
	\[ \tx{Def}_\phi ( \G_R)  := \left\lbrace \Phi \in \tx{MC}\left( \G_g_{\_P_\infty, A}^\phi \otimes \G_R \right), \ \Phi \cong \phi \ \tx{mod} \ \G_M \right\rbrace, \]
	
see \cite[Lemma 12.2.5]{LV} for details. The functor sending $\G_R$ to $\tx{Def}_\phi ( \G_R)$ is what we would like to think as a prototypical example of a \emph{deformation functor}. A more thorough study of deformation functors is done in Section \ref{Sec_Formal Moduli Problem}. We also refer to \cite{Ni06} for a more classical approach to deformation functors.
	
	Following \cite{DSV16}, an equivalence between two such deformations is given by an $\G_R$-linear $\infty$-isomorphism $f: (A \otimes \G_R, \Phi) \rightarrow (A \otimes \G_R, \Psi)$ such that $f_1 \cong \tx{id} \ \tx{mod} \ \G_M$. \\
	
	Together with this notion of equivalence, $\tx{Def}_\phi (\G_R)$ forms a groupoid. We will denote this groupoid by $\underline{\tx{Def}}_\phi(\G_R)$ and its quotient by: 
	\[\textit{Def}_\phi(\G_R) := \faktor{\tx{Def}_\phi (\G_R)}{\sim} = \pi_0 \left( \underline{\tx{Def}}_\phi (\G_R) \right)\]

\end{Def}

\begin{Th}[{\cite[Theorem 12.2.10]{LV}}]\label{Th_DeformationAndDeligneGroupoids}
	For $\G_R$ an Artinian local algebra (Definition \ref{Def_SmallComAlgebra}), there is a equivalence between the deformation groupoid and the Deligne groupoid (Definition \ref{Def_DeligneGroupoid}): 
	\[ \underline{\tx{Def}}_\phi (\G_R) \simeq \underline{\tx{Del}}\left( \G_g_{\_P_\infty, A}^\phi \otimes \G_M \right)\]
\end{Th}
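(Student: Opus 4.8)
The plan is to unwind both sides of the claimed equivalence into statements purely about the complete Lie algebra $\G_g_{\_P_\infty,A}^\phi$ and to exhibit an explicit functor between the two groupoids which is bijective on objects and on morphisms. First I would address the objects. By Definition \ref{Def_DeformationGroupoidAtAMaurerCartanElement}, an object of $\underline{\tx{Def}}_\phi(\G_R)$ is a Maurer--Cartan element $\Phi\in\tx{MC}(\G_g_{\_P_\infty,A}^\phi\otimes \G_R)$ reducing to $\phi$ modulo $\G_M$; equivalently, writing $\Phi$ relative to the base point $\phi$ via Proposition \ref{Prop_DeformationLieAlgebraMaurerCartan}, it is an element $\psi$ with $\psi\equiv 0 \pmod{\G_M}$, i.e. $\psi\in \G_g_{\_P_\infty,A}^\phi\otimes\G_M$, satisfying $d_\phi\psi+\tfrac12[\psi,\psi]=0$. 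But that is exactly the Maurer--Cartan equation in the Lie algebra $\G_g_{\_P_\infty, A}^\phi\otimes \G_M$, so objects on both sides coincide once one checks that the bracket and differential on $\G_g_{\_P_\infty,A}^\phi\otimes\G_M$ are the restrictions of those on $\G_g_{\_P_\infty,A}^\phi\otimes \G_R$ (immediate, since $\G_M$ is an ideal). Here one uses that $\G_R$ is Artinian local so that $\G_M$ is nilpotent, which guarantees convergence of all the sums below without needing the filtration on $A$; the reference \cite[Lemma 12.2.5]{LV} handles the compatibility of the $\_P_\infty\otimes \G_R$-algebra structure with the twisted convolution Lie algebra.

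Next I would match morphisms. A morphism in $\underline{\tx{Def}}_\phi(\G_R)$ from $\Phi$ to $\Psi$ is an $\G_R$-linear $\infty$-isomorphism $f\colon (A\otimes \G_R,\Phi)\to(A\otimes\G_R,\Psi)$ with $f_1\equiv \tx{id}\pmod{\G_M}$. Following \cite{DSV16}, such an $\infty$-isomorphism is encoded, via Lemma \ref{Lem_closedmonoidalstructure} and the Rosetta Stone philosophy, by a degree $0$ element of the convolution algebra; the condition $f_1\equiv\tx{id}\pmod{\G_M}$ forces this element to lie in $\G_g_{\_P_\infty,A}^\phi\otimes\G_M$ up to the identity component, i.e. an element $\lambda\in(\G_g_{\_P_\infty,A}^\phi\otimes\G_M)_0$. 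The key computation — which is the substance of \cite[Theorem 12.2.10]{LV} and where I expect the main work to lie — is that the action of such an $\infty$-isomorphism on the Maurer--Cartan element $\psi$ corresponding to $\Phi$ is precisely the gauge action $e^\lambda\cdot\psi$ of Definition \ref{Def_DeligneGroupoid}. Concretely, one writes out the conjugation of the codifferential $D_\Phi$ on $\_P^\antishriek(A\otimes \G_R)$ by the coalgebra automorphism determined by $f$, expands using the differential trick ($\mathfrak g^+=\mathfrak g\oplus k\delta$), and identifies the result with the exponential of $\mathrm{ad}_\lambda$ applied to $\delta+\psi$; this is exactly the manipulation recalled after Definition \ref{Def_DeligneGroupoid}. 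Composition of $\infty$-isomorphisms then corresponds to the BCH product on the gauge group, so the assignment $f\mapsto\lambda$ is functorial and gives an isomorphism of $\mathrm{Hom}$-sets.

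Putting these together: the correspondence $\Phi\leftrightarrow\psi$ on objects and $f\leftrightarrow e^\lambda$ on morphisms defines a functor $\underline{\tx{Def}}_\phi(\G_R)\to\underline{\tx{Del}}(\G_g_{\_P_\infty,A}^\phi\otimes\G_M)$ which is an isomorphism of groupoids, a fortiori an equivalence. I would close by remarking on the one subtle point one must verify rather than assume: that every $\infty$-isomorphism $f$ with $f_1\equiv\tx{id}\pmod{\G_M}$ is automatically invertible (so that ``$\infty$-isomorphism'' and ``$\infty$-quasi-isomorphism with invertible $f_1$'' agree here) — this follows from the nilpotence of $\G_M$ by the same power-series inversion argument used in Theorem \ref{Th_MainPropertiesOfInftyMaps}, and from the fact that $f_1$ is an isomorphism modulo a nilpotent ideal. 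The principal obstacle is thus not conceptual but computational: carefully tracking the signs and the combinatorial bookkeeping in the identification of the $\infty$-isomorphism action with the gauge action, for which one may safely invoke \cite[Theorem 12.2.10]{LV} and \cite{DSV16}.
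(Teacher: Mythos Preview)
The paper does not give its own proof of this theorem; it is stated with a citation to \cite[Theorem 12.2.10]{LV} and no argument is supplied. So there is no ``paper's proof'' to compare against directly. That said, the paper does sketch the closely related Theorem \ref{Th_GaugeActionAndIsotopies} (the untwisted, $\G_R=k$ version), and the tools it invokes there differ somewhat from yours: the paper emphasises the \emph{pre-Lie exponential} machinery of \cite{DSV16}, using that the convolution Lie bracket is the antisymmetrisation of a pre-Lie product $\star$, so that $e^\lambda=\sum_k \frac{\lambda^{\star k}}{k!}$ makes sense and lands exactly in the set of $\infty$-isotopies, with composition $\circledcirc$ matching BCH via \cite[Lemma~2, Proposition~5]{DSV16}. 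Your route via conjugation of the codifferential and the differential trick $\mathfrak g^+=\mathfrak g\oplus k\delta$ is also valid and is closer in spirit to the original argument in \cite{LV}; the pre-Lie approach buys more explicit formulas, while yours is conceptually cleaner.

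One point in your plan is underspecified and deserves more care. You write that an $\infty$-isomorphism $f$ with $f_1\equiv\tx{id}\pmod{\G_M}$ ``is encoded by a degree $0$ element $\lambda\in(\G_g_{\_P_\infty,A}^\phi\otimes\G_M)_0$''. This is the crux of the argument, not a bookkeeping detail: you must show that the exponential map $\lambda\mapsto e^\lambda$ is a \emph{bijection} from $(\G_g_{\_P_\infty,A}^\phi\otimes\G_M)_0$ onto the set of such $\infty$-isomorphisms, and that it intertwines BCH with composition. In the pre-Lie framework this is exactly the content of \cite[Theorem~2 and Lemma~2]{DSV16}; in your framework you need the analogous statement that every coalgebra automorphism of $\_P^\antishriek(A\otimes\G_R)$ reducing to the identity mod~$\G_M$ is the exponential of a unique coderivation. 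Nilpotence of $\G_M$ makes the formal logarithm converge, so this is not hard, but it should be stated rather than absorbed into ``combinatorial bookkeeping''. With that step made explicit, your plan is correct.
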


	\begin{RQ}
		For the pro-Artinian algebra $\G_R = k[\![t]\!] = \lim  \left(\faktor{k[t]}{(t^{n})}\right)$, we could \emph{define} the value of  $ \underline{\tx{Def}}$ and $\underline{\tx{Del}}$ as the limits: 
		\[  \underline{\tx{Def}}_\phi (k[\![t]\!]) := \lim \ \underline{\tx{Def}}_\phi \left(\faktor{k[t]}{(t^{n})}\right)  \qquad \underline{\tx{Del}} (k[\![t]\!]) := \lim \ \underline{\tx{Del}} \left(\faktor{k[t]}{(t^{n})}\right) \]
		so that the equivalence also extends to pro-Artinian algebras.
\end{RQ}

\begin{RQ}\label{RQ_DeformationAndCohomology}
	For $\G_R = k[\epsilon],$ the algebra of dual numbers, we have $\G_M = k$ and we find that $\tx{Def}_\phi (k[\epsilon]) = Z^1 (\G_g_{\_P_\infty, A }^\phi)$ and $\textit{Def}_\phi (k[\epsilon]) = H^1 (\G_g_{\_P_\infty, A }^\phi)$. In that sense, the complex underlying the Lie algebra $\G_g_{\_P_\infty, A }^\phi$ encodes the first order infinitesimal deformations of $\phi$.
	
	If one has a first order infinitesimal deformation of $\phi$, of the form $\phi + t \alpha_1$ living over $\G_R = k[t]/(t^2)$ and wishes to extend it to a second order deformation of the form $\phi + t \alpha_1 + t^2 \alpha_2$ living over $k[t]/(t^3)$, simplifying the Maurer--Cartan equation one sees that the condition is that $2[\phi,\alpha_2] + [\alpha_1,\alpha_1] =0$. In other words, the condition is that the class of $[\alpha_1,\alpha_1]$ vanishes in $H^2(\G_g_{\_P_\infty, A }^\phi)$.
	
	This line of reasoning concludes that the obstruction to extending an infinitesimal deformation to a formal deformation is controlled by $H^2(\G_g_{\_P_\infty, A }^\phi)$ (\cite[Theorem 12.2.14]{LV}). 
\end{RQ}
	
	As we will see in Proposition \ref{Prop_TangentDGLAAndDeformationComplex}, the deformation complex is closely related to the operadic tangent complex which classifies higher deformations including higher homology as explained in Remarks \ref{RQ_HigherCotangentComplex} and \ref{RQ_HigherDeformationFormalSpectrumCommutative}. 
	
%	Moreover, we will see in Proposition \ref{Prop_TangentDGLAAndDeformationComplex} \albin{This is not exactly true it will need to be though about.}that this deformation complex is equivalent to the operadic tangent complex (Definition \ref{Def_Operadic(Co)TangentComplex}) which will classify higher deformations, including higher homology as explained in Remarks \ref{RQ_HigherCotangentComplex} and \ref{RQ_HigherDeformationFormalSpectrumCommutative}. 

\begin{Ex}\
	
	\begin{itemize}
		\item Let us compute the deformation complex of an associative\footnote{Recall from Example \ref{Ex_InfityAlgebras} that $\mathbf{Ass}^\antishriek (n)$ is isomorphic to $k[\Sigma_n]$ in arity $n$ and degree $1-n$.} algebra $A$.
		
		\[  \begin{split}
			\G_g_{A_\infty, A}  & = \prod_{n\geq 0} \iHom_{\Sigma_n} \left( \mathbf{Ass}^\antishriek (n), \mathbf{End}_A(n) \right) \\
			& = \prod_{n\geq 1} \iHom_{\Sigma_n} \left( k[\Sigma_{n}], \mathbf{End}_A(n) \right)[n-1] \\
			& = \prod_{n\geq 1} \iHom_{k}(A^{\otimes n}, A)[n-1]
		\end{split} \]
		
		When twisting $\G_g_{A_\infty, A}$ with a Maurer--Cartan element (i.e. an associative multiplication), the differential on $\G_g_{A_\infty, A}^m$ is given by $[m, -]$. 
		If we explicitly write out the differential of an element $f$ living in the $n=2$ component, we obtain $$df(a_1,a_2,a_3) = a_1f(a_2,a_3) \pm f(a_1a_2,a_3) \pm f(a_1,a_2a_3) \pm f(a_1,a_2)a_3.$$
	
		This coincides with the Hochschild cochain complex (shifted by $1$) of the algebra $(A,m)$ with the Gerstenhaber bracket, which was mentioned in Example \ref{Ex:intro}.
		
		 From Remark \ref{RQ_DeformationAndCohomology}, if we take $m$ an associative multiplication, the infinitesimal deformations are classified by the first Hochschild cohomology group, and the obstruction to extending to higher formal deformations in controlled by the second Hochschild cohomology group.   
		
		\item The deformation complex for a Lie\footnote{Recall from Example \ref{Ex_InfityAlgebras} that $\mathbf{Lie}^\antishriek (n)$ is $1$-dimensional concentrated in degree $1-n$.} algebra $A$ is computed as follows:
		\[ \begin{split}
			\G_g_{L_\infty, A}  & = \prod_{n\geq 0} \iHom_{\Sigma_n} \left( \mathbf{Lie}^\antishriek(n), \mathbf{End}_{A}(n) \right) \\
			& = \prod_{n\geq 1} \iHom_{\Sigma_n} \left( k, \mathbf{End}_{A}(n) \right)[n-1] \\
			& = \prod_{n\geq 1} \iHom_{k}(\left(\Sym^n A[-1]\right)[n], A)[n-1] \\
			& = \prod_{n\geq 1} \iHom_{k}(\Sym^n (A[-1]), A[-1])
		\end{split} \]
		
		This coincides with the Chevalley--Eilenberg algebra valued in $A$ with trivial Lie algebra structure on $A$. Given a Lie structure on $A$ (corresponding to a Maurer--Cartan element $\phi$ in $ \G_g_{L_\infty, A}$), we obtain that $ \G_g_{L_\infty, A}^\phi$ is the Chevalley--Eilenberg algebra associated to $A$ (with the structure associated to $\phi$) valued in $A$.

	\end{itemize}
\end{Ex}

\begin{Ex}[Deformation quantization of Poisson manifolds]\

Given a Poisson manifold $(M,\{-,-\})$, a (formal deformation) \emph{quantization} of $M$ is a so-called \emph{star-product}, i.e., an $\mathbb R[\![\hbar]\!]$-linear associative product $- \star -$ on formal power series of functions $C^\infty(M)[\![\hbar]\!]$, such that: 

\begin{enumerate}[label=(\roman*)]
	\item The star-product is associative.
	\item The star-product is a deformation of the original product: $f\star g = f \cdot g + O(\hbar)$, for $f,g\in C^\infty(M)$.
	\item The star-product quantizes the Poisson bracket: $f\star g- g \star f = \{f,g\}\hbar  + O(\hbar^2)$. 
\end{enumerate}

Given a star-product on $M$, one can check that 

\[
\{f,g\}_\star = \lim_{\hbar \to 0}\frac{f\star g- g \star f}{\hbar}
\] 
defines a Poisson bracket on $M$. If our original Poisson bracket is obtained from a star-product in this form, we say that  $\star$ \emph{quantizes} $(M,\{-,-\})$. The natural questions being raised are:

%\begin{Que} \phantom{I want to skip this line}
\begin{enumerate}
	\item 	Can every Poisson manifold be quantized?
	
	\item 	If so, can one provide explicit formulas?
	
	\item 	Are quantizations unique?
\end{enumerate}

%\end{Que}

In his seminal paper \cite{kontsevichdefquant}, Kontsevich used the machinery described above to give a complete solution to the previous questions: (1) yes, (2) yes up to computing some integrals and (3) essentially yes.

The first observation is that the data of a star-product is encoded by the (Hochschild) deformation complex of $A=\_C^\infty(M)$ seen as an associative algebra. Ignoring condition (iii), a star-product is a Maurer--Cartan element of $\G_g_{A_\infty, \_C^\infty(M)} \hat\otimes \mathbb R[\![\hbar]\!]$. More precisely, requiring the components of the star-product to be multidifferential operators leads us to consider the Lie subalgebra $D_\mathrm{poly}$, where
$$D_\mathrm{poly}^{n-1}(M)  =  \left\lbrace D\colon \_C^\infty(M)^{\otimes n} \to \_C^\infty(M) \Big|  D \stackrel{\tx{loc.}}{=} \sum f\frac{\partial}{\partial x_{I_1}}\otimes \dots\otimes \frac{\partial}{\partial x_{I_n}}\right\rbrace.$$

On the other hand, the datum of a Poisson bracket is equivalently given by a bivector field $\pi \in \Gamma(M,T_X \wedge T_X)$, such that $[\pi,\pi]=0$ with respect to the Schouten--Nijenhuis bracket. One can therefore interpret the Poisson structure as a Maurer--Cartan element in the Lie algebra (with $d=0$) of polyvector fields $T_\mathrm{poly}(M)$, where $T_\mathrm{poly}^{-1} (M)= \_C^\infty(M)$ and $T_\mathrm{poly}^{n-1}(M) = \Gamma(M,T_X^{\wedge n})$.
\begin{Th}[Kontsevich Formality, \cite{kontsevichdefquant}]
	For every manifold $M$, there exists an $\infty$-quasi-isomorphism
	\[
	\mathcal U \colon T_\mathrm{poly}(M) \rightsquigarrow D_\mathrm{poly}(M).
	\]
	The first component of the $\infty$-morphism is the Hochschild--Kostant--Rosenberg (HKR) map, which is obtained by de-symmetrization. 
	The higher components are parametrized by certain kinds of graphs, with coefficients given by integrals over configuration spaces of points on the upper half-plane.
\end{Th}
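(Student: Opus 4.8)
The plan is to prove the theorem first in the flat case $M=\mathbb R^d$, where $\mathcal U$ admits an explicit universal formula, and then to globalize by formal geometry.

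\emph{Step 1: the local formula.} For $M=\mathbb R^d$ one would define, for each $n\geq 1$, a map $\mathcal U_n\colon \Sym^n\big(T_\mathrm{poly}(\mathbb R^d)[1]\big)\to D_\mathrm{poly}(\mathbb R^d)[1]$ as a finite sum $\mathcal U_n=\sum_\Gamma w_\Gamma\, B_\Gamma$ over a class of \emph{admissible graphs} $\Gamma$: directed graphs with $n$ ``aerial'' vertices, each receiving a polyvector field, and some number $m$ of ``terrestrial'' vertices, the arguments of the resulting multidifferential operator, with no loops or multiple edges. The operator $B_\Gamma$ is built by placing a partial derivative on each edge, summing over coordinate indices and contracting; the coefficient $w_\Gamma$ is the integral over the configuration space $C^+_{n,m}$ of $n$ distinct points in the open upper half-plane $\mathcal H$ and $m$ ordered points on $\mathbb R$ (of dimension $2n+m-2$ after quotienting by the group $z\mapsto az+b$, $a>0$) of a product of closed angle $1$-forms $d\phi_e$, one per edge $e$. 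For $n=1$ this recovers the HKR antisymmetrization map.

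\emph{Step 2: the $\infty$-morphism equations.} The core of the argument is to verify that the $\mathcal U_n$ satisfy the $L_\infty$-morphism equations of Section~\ref{Sec_HomotopyAlgebrasAndMorphism}, the source $L_\infty$-structure on $T_\mathrm{poly}$ being the Schouten--Nijenhuis bracket with $d=0$ and the target the Hochschild $L_\infty$-structure on $D_\mathrm{poly}$. The plan is to rewrite these equations, after expanding in graphs, as $\sum_\Gamma (dw_\Gamma)\,B_\Gamma=0$, and to evaluate $dw_\Gamma$ via Stokes' theorem on the Fulton--MacPherson/Axelrod--Singer compactification $\overline C_{n,m}$. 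Its codimension-one boundary faces are of two types: (i) a subset of points collapses onto the real axis, and (ii) a subset $S$ of $\geq 2$ aerial points collapses to a single interior point. One then shows that the type (i) faces recombine exactly into the two sides of the morphism equation (the differential of $D_\mathrm{poly}$ composed with $\mathcal U$ versus $\mathcal U$ applied to the bracket), while all type (ii) faces vanish: for $|S|\geq 3$ by Kontsevich's vanishing lemma for the configuration space of $\geq 3$ colliding bulk points, and for $|S|=2$ harmlessly, since such a face only reproduces the vanishing Schouten square $[\pi,\pi]$. I expect this bookkeeping of boundary faces — and especially the vanishing of the ``hidden'' faces — to be the main obstacle.

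\emph{Step 3: quasi-isomorphism and globalization.} Since $\mathcal U_1$ is the HKR map, a quasi-isomorphism of complexes by the classical Hochschild--Kostant--Rosenberg theorem, the morphism of Step~2 is an $\infty$-quasi-isomorphism, and by Theorem~\ref{Th_GoldmanMilsonExtended} it induces a weak equivalence of the associated Deligne $\infty$-groupoids, which yields the deformation-quantization consequences. To drop the hypothesis $M=\mathbb R^d$, the plan is to use formal (Gelfand--Fuks) geometry: work over the bundle of formal coordinate systems on $M$, pick a torsion-free connection, and twist the flat universal formula by the connection $1$-form; because the local $\mathcal U$ is $\mathrm{GL}_d$-equivariant and compatible with the Gelfand--Fuks differential, these twisted data glue into a Fedosov-type resolution and produce the global $\infty$-quasi-isomorphism $\mathcal U\colon T_\mathrm{poly}(M)\rightsquigarrow D_\mathrm{poly}(M)$.
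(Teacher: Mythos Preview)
The paper does not give its own proof of this theorem: it is stated as a cited result from \cite{kontsevichdefquant}, and the surrounding text only uses it as a black box to deduce the existence of star-products via the Goldman--Millson theorem. Your outline is an accurate sketch of Kontsevich's original argument (graph expansion, weights as integrals over compactified configuration spaces, Stokes' theorem on the boundary strata, and globalization via formal geometry), so there is nothing in the paper to compare it against.

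One small correction: in Step~2 you say that the $|S|=2$ interior collision faces ``harmlessly reproduce the vanishing Schouten square $[\pi,\pi]$''. This is not quite right. At the level of the $L_\infty$-morphism equations there is no $\pi$ yet; the $|S|=2$ faces produce exactly the Schouten--Nijenhuis bracket term $\mathcal U\circ\ell_2^{T_\mathrm{poly}}$ on the left-hand side of the morphism equation, and they do \emph{not} vanish. It is only the $|S|\geq 3$ interior faces that vanish (by Kontsevich's lemma), which is what forces the source $L_\infty$-structure to be strict Lie rather than genuinely $L_\infty$.
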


It now follows from the Goldman--Milson Theorem \ref{Th_GoldmannMilson} that for every Poisson bivector $\pi$, its image under $\mathcal U$ is a star-product quantizing it. Furthermore, any such quantization is unique up to gauge equivalence.

\end{Ex}

\paragraph{\emph{Functoriality of the deformation complex:}}\

\medskip

The construction of the deformation complex is functorial in the following sense:
Suppose $f\colon \_Q \to \_P$ is a map of Koszul operads, induced by a map from the generators of $\_Q$ to the generators of $\_P$. Then, there is an associated map of cooperads $f^\antishriek\colon \_Q^\antishriek \to \_P^\antishriek$, as well as the Koszul dual map $f^!\colon \_P^!\to \_Q^!$. Then, $(f^\antishriek,f)$ form a map between the canonical twisting morphisms as in Remark \ref{RQ_NaturalityAlgebraicBarCobar}. The map $f^\antishriek$ induces a Lie algebra morphism at the level of the respective deformation complexes:
\[
\mathfrak g_{\_P_\infty,A} \to \mathfrak g_{\_Q_\infty,A}
\]

\begin{RQ}\label{Ex:ComAssLie def complex}

	The morphisms of operads: 
	\[ \mathbf{Lie} \rightarrow \mathbf{Ass} \rightarrow \mathbf{Com}\] 
	which are the same as: 
	\[ \mathbf{Com}^! \rightarrow \mathbf{Ass}^! \rightarrow \mathbf{Lie}^!\] 
	
	are dual (up to some shifts) to:
	\[ \mathbf{Lie}^\antishriek \rightarrow \mathbf{Ass}^\antishriek \rightarrow \mathbf{Com}^\antishriek\]
	
	This induces morphisms between the deformation complexes:
	\[ \G_g_{\mathbf{Com}_\infty, A} \rightarrow \G_g_{\mathbf{Ass}_\infty, A} \rightarrow \G_g_{\mathbf{Lie}_\infty, A}\] 
	
	If $A$ is a commutative algebra, applying the Maurer--Cartan functor to the first map recovers the usual inclusion from the Harrison complex into the Hochschild complex. Similarly, if $A$ is an associative algebra, applying the Maurer--Cartan functor to the second map yields the classical projection of the Hochschild complex into the Chevalley--Eilenberg complex of the Lie algebra structure on $A$.

\end{RQ}

%\subsection{Relationship between $C_\infty$ and $A_\infty$ Structures }	
\subsection{A Deformation Approach to $\_P_\infty$-Algebras}\
\label{Sec_CommutativeAndAssociativeStructures}

\medskip

In this section we will show how the operadic and deformation tools introduced previously apply to address a problem that is not (or at least does not seem) deformation theoretical in nature.

A bit more concretely, assume $\_P$ is a Koszul operad and let $A$ and $B$ be  two $\_P$-algebras. We consider the following questions:

	\begin{Que}\label{Que:w.e.}\ 
			\begin{itemize}
		\item Weak equivalence: Do we have a zig-zag of quasi-isomorphisms of $\_P$-algebras from $A$ to $B$? We will denote this equivalence relation by $A \sim_{\_P} B$.

		\item Formality: Is $A \sim_\_P H(A)$?
		
	\end{itemize}
	\end{Que}
	
	Given a map of operads $f\colon \_Q \to \_P$ and a $\_P$ algebra $A$, to avoid excessive notation we will refer to $f^*(A)$ as ``$A$ seen as a $\_Q$-algebra'' (see Proposition \ref{Prop_NaturalityAlgebraAdjunction}). We will be particularly interested in seeing a commutative algebra as just an associative algebra. This abuse of notation justifies the $\_P$ in  $\sim_\_P$, which would otherwise be redundant.

\begin{RQ}
Let us consider the unit operad $I=I(1)=k$. Every $I$-algebra is given by some $A \in \chk$ and is formal since $k$ a field. In particular, when forgetting the $\_P$-structures (via the unit $I \rightarrow \_P$), every $\_P$-algebra is equivalent to its homology as a cochain complex, but not as a $\_P$-algebra a priori.
\end{RQ}

Recall that Theorem \ref{Th_MainPropertiesOfInftyMaps} implies in particular that the weak-equivalence problem can be fully reformulated in terms of $\infty$-morphisms.

\begin{Prop}\label{prop:w.e. equiv to infty map}
	
	$A \sim_{\_P} B$ if and only if there is a $\_P_\infty$-quasi-isomorphism $A \rightsquigarrow B$. 
\end{Prop}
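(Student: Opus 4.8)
The plan is to deduce this directly from Theorem \ref{Th_MainPropertiesOfInftyMaps}, which already packages the hard work. First I would recall what that theorem gives us: the faithful inclusion of the category of $\_P$-algebras with strict morphisms into the category of $\_P$-algebras with $\infty$-morphisms induces an equivalence on homotopy categories, and in particular two $\_P$-algebras are weakly equivalent (connected by a zig-zag of quasi-isomorphisms of $\_P$-algebras) if and only if there is a \emph{direct} $\infty$-quasi-isomorphism between them. Since a $\_P$-algebra $A$ can be viewed as a $\_P_\infty$-algebra via the canonical map $\_P_\infty = \Omega\_P^\antishriek \to \_P \to \mathbf{End}_A$, an $\infty$-quasi-isomorphism of $\_P$-algebras in the sense of Theorem \ref{Th_MainPropertiesOfInftyMaps} is precisely a $\_P_\infty$-quasi-isomorphism $A \rightsquigarrow B$ in the sense of Definition \ref{Def_InftyMorphisms} and Definition \ref{Def_InftyIsotopy}. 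This gives both implications at once.

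Concretely, for the forward direction I would argue: if $A \sim_\_P B$, then by definition there is a zig-zag of quasi-isomorphisms of $\_P$-algebras connecting them, and hence by Theorem \ref{Th_MainPropertiesOfInftyMaps} there is a direct $\infty$-quasi-isomorphism $A \rightsquigarrow B$. For the converse, if there is a $\_P_\infty$-quasi-isomorphism $f\colon A \rightsquigarrow B$, then (using that $\Omega \mathbf B X \to X$ is a quasi-isomorphism of $\_P$-algebras and the bar-cobar adjunction, as in the sketch of proof of Theorem \ref{Th_MainPropertiesOfInftyMaps}) one obtains a short zig-zag $A \xleftarrow{\sim} \Omega\mathbf B A \xrightarrow{\sim} B$ of genuine quasi-isomorphisms of $\_P$-algebras, i.e. $A \sim_\_P B$. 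So the statement is really a restatement of the equivalence of homotopy categories, specialized to the invertibility of $\infty$-quasi-isomorphisms.

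I do not expect any serious obstacle here — all the substance lives in Theorem \ref{Th_MainPropertiesOfInftyMaps} (whose proof was sketched via the homotopy transfer theorem and rectification) and in Theorem \ref{Th_ModelStructureCoalgebra}. The only point requiring a line of care is the bookkeeping identifying ``$\infty$-quasi-isomorphism of $\_P$-algebras'' with ``$\_P_\infty$-quasi-isomorphism'': one must note that a strict $\_P$-algebra structure on $A$ determines a $\_P_\infty$-structure (the composite $\_P_\infty \to \_P \to \mathbf{End}_A$), that Definition \ref{Def_InftyMorphisms} of $\infty$-morphism between the resulting $\_P_\infty$-algebras is the relevant notion, and that the base map $f_1$ being a quasi-isomorphism is exactly the condition in Definition \ref{Def_InftyIsotopy}. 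Once this identification is in place the proposition is immediate; I would write it as a one-paragraph proof citing Theorem \ref{Th_MainPropertiesOfInftyMaps}.

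\begin{proof}
	Recall that a (strict) $\_P$-algebra structure on a cochain complex determines a $\_P_\infty$-algebra structure via the canonical operad map $\_P_\infty = \Omega\_P^\antishriek \to \_P$, and that under this identification an $\infty$-morphism of the associated $\_P_\infty$-algebras (Definition \ref{Def_InftyMorphisms}) whose base map $f_1$ is a quasi-isomorphism is precisely what Theorem \ref{Th_MainPropertiesOfInftyMaps} calls a direct $\infty$-quasi-isomorphism of $\_P$-algebras. If $A \sim_\_P B$, then by Theorem \ref{Th_MainPropertiesOfInftyMaps} there exists such a direct $\infty$-quasi-isomorphism, i.e.\ a $\_P_\infty$-quasi-isomorphism $A \rightsquigarrow B$. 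Conversely, given a $\_P_\infty$-quasi-isomorphism $f\colon A \rightsquigarrow B$, viewing $f$ as a map $\mathbf B A \to \mathbf B B$ of $\_P^\antishriek$-coalgebras and using the bar-cobar adjunction yields a map $\Omega\mathbf B A \to B$ of $\_P$-algebras, which together with the counit $\Omega\mathbf B A \to A$ (both quasi-isomorphisms by Proposition \ref{RQ_UnitCouniQIAlgebraBarcoBar}) provides a zig-zag of quasi-isomorphisms of $\_P$-algebras $A \xleftarrow{\sim} \Omega\mathbf B A \xrightarrow{\sim} B$. Hence $A \sim_\_P B$.
\end{proof}
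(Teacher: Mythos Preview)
Your proposal is correct and matches the paper's approach: the paper states this proposition without proof, presenting it explicitly as a direct consequence of Theorem \ref{Th_MainPropertiesOfInftyMaps} (see the sentence immediately preceding the proposition). Your write-up simply unpacks that deduction, including the zig-zag $A \xleftarrow{\sim} \Omega\mathbf B A \xrightarrow{\sim} B$ already appearing in the sketch of proof of Theorem \ref{Th_MainPropertiesOfInftyMaps}, so there is nothing to add.
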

%\begin{proof}
%	If we take a $\_P_\infty$-quasi-isomorphism $A \rightsquigarrow B$, we can consider the following zig-zag of quasi-isomorphisms: 
%	\[ \begin{tikzcd}
%		A & & B\\
%		& \Omega \mathbf{B}A \arrow[ul] \arrow[ur]&
%	\end{tikzcd}\]
%
%If $A \sim_\_P B$ then $H(A) $ is isomorphic to $H(B)$. As cochain complexes,  $H(A)$ is a strong deformation retract of $A$ and using the Homotopy Transfer Theorem \ref{Th_HHT}, $H(A)$ can be given the structure of a $\_P_\infty$-algebra and the natural projection $A \rightarrow H(A)$ extend to an $\infty$-quasi-isomorphism. Therefore, $A$ (resp. $B$) is $\infty$-quasi-isomorphic to $H(A)$ (resp. $H(B)$)and using the fact fact $\infty$-quasi-isomorphisms are $\infty$-quasi-invertible, we get an $\infty$-quasi-isomorphism from $A$ to $B$: 
%\[ \begin{tikzcd}
%A \arrow[r, squiggly] &  H(A) \arrow[r, "\sim"]& H(B) \arrow[r, squiggly]& B
%\end{tikzcd}\] 
%\end{proof}

This allows us to simplify the question of whether $A$ and $B$ are weakly equivalent $\_P$-algebras:
First, supposing $H(A) \cong H(B)$ (otherwise the result is trivial), the homotopy transfer theorem allows us to intepret the $\_P$-algebra structure of both $A$ and $B$ as a $\_P_\infty$ structure in the same graded vector space $H\coloneqq H(A) \cong H(B)$, which we denote $\alpha$ and $\beta$ respectively.
Secondly, Proposition \ref{prop:w.e. equiv to infty map} implies that the existence of such a weak equivalence is equivalent to an $\infty$-quasi-isomorphism:
	\[ f: (H, \alpha) \rightsquigarrow (H, \beta) \]
 notice that since $H$ has null differential, an $\infty$-quasi-isomorphism $H\rightsquigarrow H$ is just an $\infty$-isomorphism. We can in fact suppose that $f_1\colon H \to H$ is $\id_H$ and therefore get an $\infty$-isotopy (Definition \ref{Def_InftyIsotopy}).
	
We can therefore reformulate Questions \ref{Que:w.e.} as follows:

\begin{Refo}\
			\begin{itemize}
	\item Weak equivalence: Given two $\_P_\infty$-algebra structures $\alpha$ and $\beta$ on $H$, are they $\infty$-isotopic?
	
	\item Formality: Is the transferred $\_P_\infty$ structure on $H(A)$ $\infty$-isotopic to a strict one?	
\end{itemize}
\end{Refo}

%We know already that $\_P_\infty$-algebra structures on a cochain complex $A$ are given by the Maurer--Cartan elements of the Lie algebra $\mathfrak g_{\_P_infty,A}$. 

 The following result gives as a deformation theoretical interpretation of the reformulation above.

\begin{Th}[{\cite[Theorem 3]{DSV16}}] \label{Th_GaugeActionAndIsotopies}
	
	The action of the gauge group of $\G_g_{\_P_\infty, A}$ on $\tx{MC}(\G_g_{\_P_\infty, A})$ corresponds to the action of $\infty$-isotopies $f : A \rightsquigarrow A$ on the space of $\_P_\infty$ structures on $A$. 
In other words we have:
\[
\underline{\mathrm{Del}}(\G_g_{\_P_\infty, A}) \cong (\_P_\infty\text{- }\mathrm{algebra\ structures\ on\ }A, \infty\text{- }\mathrm{isotopies})
\]

\end{Th}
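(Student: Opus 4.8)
The plan is to build an isomorphism of groupoids that is the identity on objects and identifies, on morphisms, the gauge group $\exp\big((\G_g_{\_P_\infty, A})_0\big)$ with the group of $\infty$-isotopies of $A$, in a way that intertwines the two actions. On objects there is nothing to prove: by the Rosetta Stone (Proposition \ref{Prop_RosettaStone}) and Lemma \ref{Lem_PInftyStructureCodifferential}, the Maurer--Cartan set $\tx{MC}(\G_g_{\_P_\infty, A})$ is by construction the set of $\_P_\infty$-structures on $A$ (Definition \ref{Def_ConvolutionPreLieAlgebra}). So two things remain: identify the morphism sets together with their composition, and check that under this identification the gauge action of Definition \ref{Def_DeligneGroupoid} becomes the transport of a $\_P_\infty$-structure along an $\infty$-isotopy. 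Throughout I would follow the pre-Lie deformation theory of \cite{DSV16}: the key point is that $\G_g_{\_P_\infty, A}=\tx{Tot}(\tx{Conv}(\_P^\antishriek,\mathbf{End}_A))$ is not merely a complete filtered Lie algebra but a complete filtered pre-Lie algebra (Definition \ref{Def_TotatlizationOfAnOperad}), complete for the filtration by number of vertices, and that the pre-Lie product $\star$ is what makes every formula below explicit.

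First I would identify the morphisms at the level of underlying data. By Definition \ref{Def_InftyIsotopy}, Definition \ref{Def_InftyMorphisms} and the discussion following it, an $\infty$-isotopy $f\colon (A,\alpha)\rightsquigarrow(A,\beta)$ is a degree-$0$ map of symmetric sequences $\tilde f\colon \_P^\antishriek \to \mathbf{End}_A$ with $f_1 = \id_A$, subject to the equation expressing that the induced coalgebra automorphism of $\_P^\antishriek(A)$ intertwines the codifferentials $D_\alpha$ and $D_\beta$. Write $\mathcal G$ for the set of such $\tilde f$ with the intertwining condition dropped; restricting to the weight-$0$ part $\_P^\antishriek(1)=k$ forces $\tilde f = \id_A + \lambda$ with $\lambda\in(\G_g_{\_P_\infty, A})_0$. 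Composition of $\infty$-morphisms endows $\mathcal G$ with a product $\bullet$, computed by cofreeness of $\_P^\antishriek(A)$ from the decomposition map of $\_P^\antishriek$: apply $\tilde f$ to the upper corollas of a decomposed tree, $\tilde g$ to the lower vertex, and compose in $\mathbf{End}_A$. The crux of \cite{DSV16} is that, after the differential trick replacing $\G_g_{\_P_\infty, A}$ by $\G_g_{\_P_\infty, A}^+ = \G_g_{\_P_\infty, A}\oplus k\delta$, an explicit pre-Lie exponential (a series in $\lambda$ built from $\star$, summed over rooted trees) is a bijection of $(\G_g_{\_P_\infty, A})_0$ onto $\mathcal G$ carrying the $\tx{BCH}$ product of the gauge group (Proposition \ref{Prop_BCH}) to $\bullet$. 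In particular $\mathcal G$ is a group, reproving that $\infty$-isotopies are invertible, and this gives the desired bijection on morphism sets.

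The remaining, and main, step is compatibility of the actions. The group $\mathcal G$ acts on $\tx{MC}(\G_g_{\_P_\infty, A})$ by transport: $\tilde g\in\mathcal G$ sends a $\_P_\infty$-structure $\phi$ with codifferential $D_\phi$ to the conjugate codifferential $g\circ D_\phi\circ g^{-1}$ (Lemma \ref{Lem_PInftyStructureCodifferential}), and by construction $\tilde g$ is then an $\infty$-isotopy from $(A,\phi)$ to the transported structure — so the action groupoid of $\mathcal G$ on $\tx{MC}$ is exactly $(\_P_\infty\text{-structures on }A,\ \infty\text{-isotopies})$. Thus the theorem reduces to the identity, for $\lambda\in(\G_g_{\_P_\infty, A})_0$ corresponding to $f^\lambda\in\mathcal G$ and $\phi\in\tx{MC}$, that $f^\lambda\circ D_\phi\circ (f^\lambda)^{-1}$ represents the gauge-transformed element $e^\lambda\cdot\phi$ of Definition \ref{Def_DeligneGroupoid}. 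Using the differential trick, $\phi$ becomes the square-zero degree-$1$ element $\delta+\phi\in\G_g_{\_P_\infty, A}^+$, the gauge action is $e^\lambda\cdot(\delta+\phi)=e^{\tx{ad}_\lambda}(\delta+\phi)$, and — this is the computation to carry out — under the dictionary of the previous paragraph the conjugation $f^\lambda\circ(-)\circ(f^\lambda)^{-1}$ on codifferentials translates into exactly $e^{\tx{ad}_\lambda}$ acting on $\delta+\phi$. This is a direct but combinatorially delicate identity in the complete pre-Lie algebra $\G_g_{\_P_\infty, A}^+$ — tracking how $\star$, the decomposition of $\_P^\antishriek$, and the degree shifts and signs interact — and it is the main obstacle; the reason for working pre-Lie rather than Lie is precisely that it turns the ``change of $\_P_\infty$-structure along an $\infty$-isotopy'' formula into the gauge formula verbatim, with no detour through zig-zags of quasi-isomorphisms. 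Having matched objects, morphisms with composition, and the two actions, one concludes $\underline{\tx{Del}}(\G_g_{\_P_\infty, A})\cong(\_P_\infty\text{-structures on }A,\ \infty\text{-isotopies})$; in particular $g\cdot\alpha=\beta$ holds iff $f^g$ intertwines $D_\alpha$ and $D_\beta$, i.e. iff $f^g$ is an $\infty$-isotopy from $(A,\alpha)$ to $(A,\beta)$.
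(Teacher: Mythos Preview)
Your outline is correct and follows the same pre-Lie approach as the paper's sketch (both ultimately pointing to \cite{DSV16}): objects match by the Rosetta Stone, the pre-Lie exponential identifies the gauge group with the group of $\infty$-isotopies, and the BCH product matches the composition $\circledcirc$.

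The one place you diverge is the verification that the gauge \emph{action} matches the isotopy action. You propose to show directly that conjugation of codifferentials $f^\lambda\circ D_\phi\circ(f^\lambda)^{-1}$ translates into $e^{\mathrm{ad}_\lambda}(\delta+\phi)$, calling it ``a direct but combinatorially delicate identity''. The paper (following \cite[Proposition 5]{DSV16}) avoids this combinatorics: it writes the isotopy action as $\lambda\cdot\alpha=(e^\lambda\star\alpha)\circledcirc e^{-\lambda}$ and then invokes \emph{uniqueness of solutions of formal differential equations} --- both the gauge flow $t\mapsto e^{t\lambda}\cdot\alpha$ and the isotopy flow satisfy the same first-order ODE in $t$ with the same initial condition, hence coincide. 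This bypasses the tree-by-tree comparison you anticipate as the main obstacle. Your route would work in principle, but the ODE argument is what actually closes the proof cleanly; you may want to swap it in. Also, your description of the pre-Lie exponential as ``summed over rooted trees'' is vaguer than what is used: in \cite{DSV16} it is the specific left-normed series $\sum_k \tfrac{1}{k!}\,((\cdots(\lambda\star\lambda)\star\cdots)\star\lambda)$.
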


To finish the section let us  mention briefly what kind of tools go into the proof of this result. 
The proof of this theorem makes use of the fact that the Lie bracket on the deformation complex $\G_g_{\_P_\infty, A}$ arises as the anti-symmetrisation of a pre-Lie product $\star$, as defined in \ref{Def_TotatlizationOfAnOperad}. 
	
In \cite{DSV16}, Dotsenko--Shadrin--Vallette develop a method of pre-Lie exponentials, making use of the fact that the expression $\sum\limits_k \frac{X^{\star k}}{k!} \coloneqq \sum\limits_k \frac{(((X\star X)\dots )\star X}{k!}$ makes sense for complete pre-Lie algebras, in order to produce explicit formulas for the gauge group product and action. 
The explicit formula for the exponential shows that the image consists indeed of the $\infty$-isotopies. Together with the explicit  computation of \cite[Lemma 2]{DSV16}, one shows that the gauge group structure (the BCH formula) agrees with the composition of $\infty$-isotopies, noted $\circledcirc$. Using an argument of uniqueness of solutions of formal differential equations, one can show that the gauge group acts indeed by $\infty$-isotopies, via the formula $\lambda \cdot \alpha = (e^\lambda\star \alpha)\circledcirc e^{-\lambda}$, see \cite[Proposition 5]{DSV16}.

%
%More generally, the more appropriate notion of exponentials to consider for other algebraic structures seems to be \emph{graph exponentials} \cite{CV}.
%	\begin{proof}
%		
%		We already saw that $\_P_\infty$ structures on $A$ are given by Maurer--Cartan elements:
%		$$\mathrm{MC}(\G_g_{\_P_\infty, A} )\subset \prod_{n\geq 0} \iHom_{\Sigma_n} \left( \_P^\antishriek(n), \mathbf{End}_A(n)\right).$$
%		An $\infty$-isotopy (see Definition \ref{Def_InftyIsotopy}) between these structures is a degree $0$ \ricardo{$\infty$-morphism}map $f : \_P^\antishriek \rightarrow \mathbf{End}_A^A = \mathbf{End}_A$ that intertwines the $\_P_\infty$-structures on $A$ (see Proposition  \ref{Prop_InftyMorphismEquivalentDescription})\ricardo{Should be a map that restricts to the identity on $f_1$. Saying it's degree 0 is unnecessary. Am I missing something here?}.
%Therefore, as sets, the gauge group and the $\infty$-isotopies agree. The explicit computation of \cite[Lemma 2]{DSV16} shows that the gauge group structure (the BCH formula) agrees with the composition of $\infty$-isotopies, noted $\circledcirc$. Using an argument of uniqueness of solutions of formal differential equations, one can show that the gauge group acts indeed by $\infty$-isotopies, via the formula $\lambda \cdot \alpha = (e^\lambda\star \alpha)\circledcirc e^{-\lambda}$, see \cite[Proposition 5]{DSV16}.
%	\end{proof}

\subsubsection{Commutative and associative algebras}\

\medskip

Let us start this section with an example arising from rational homotopy theory \cite{Qu69,Su77}, which is at first unrelated to any $\infty$-structures.
\begin{Ex}
	In rational homotopy theory à la Sullivan, we consider the functor $\Omega: \mathbf{Top} \rightarrow \cdga^{\Qq/}$ of rational differential forms. One might naively wonder whether we could consider instead the simpler functor of rational singular cochains $C^*(-,\Qq)$ landing in $ \dga^{\Qq/}$. Despite being quasi-isomorphic, it is unclear whether this functor remembers the homotopy type of the space.  This is because Corollary 10.10 in \cite{FHT01} only gives a zig-zag of natural quasi-isomorphisms between the singular rational cochains $C^*(-,\Qq)$ functor and $F \circ \Omega$ where $F$ is the forgetful functor $ \cdga^{\Qq/} \rightarrow  \dga^{\Qq/}$. It raises the following question: given $X, Y \in \mathbf{Top}$ such that $C^*(X, \Qq) \sim_{ \dga^{\Qq/}} C^*(Y,\Qq)$, are they rationally equivalent, i.e. do we have $\Omega(X) \sim_{ \dga^{\Qq/}} \Omega (Y)$? Thanks to Corollary 10.10 in \cite{FHT01}, this reduces to proving the following implication:   
	\[ \Omega (X) \sim_{ \dga^{\Qq/}} \Omega (Y) \Rightarrow \Omega (X) \sim_{ \cdga^{\Qq/}} \Omega (Y) \]

\end{Ex}

This problem is actually algebraic. Motivated by this example, we consider the following question: 

\begin{Que}\label{Que:CPRW}
	If $A$, $B$ are  commutative algebras over a field of characteristic zero, do we have:
\[ A \sim_{\mathbf{Ass}} B \Rightarrow A \sim_{\mathbf{Com}}B \]
\end{Que}

In other words, if there is a zig-zag of quasi-isomorphisms passing by associative algebras, must there be one passing by commutative algebras?
A first attempt to show this directly could be to abelianize the associative zig-zag. Unfortunately, abelianization does not commute with taking homology, as we saw in Warning \ref{war:abelianization}.
 Nevertheless, the answer to the question is yes.

\begin{Th}[{\cite[Theorem (A) and (B)]{CPRW19}}] \label{Th_CommutativeAndAssociativeWeakEquivalences}
	Two (non-)unital commutative algebras $A$ and $B$ are quasi-isomorphic if and only if they are quasi-isomorphic as (non-)unital associative algebras.
\end{Th}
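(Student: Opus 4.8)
The plan is to reduce the statement to a question about $\infty$-isotopies of $C_\infty$-structures, following the strategy outlined in Section \ref{Sec_CommutativeAndAssociativeStructures}. First, one direction is immediate: if $A \sim_{\mathbf{Com}} B$ then, applying the forgetful functor $f^*$ along $f\colon \mathbf{Ass}\to\mathbf{Com}$ to every algebra in the zig-zag, we get $A\sim_{\mathbf{Ass}} B$ (the right adjoint preserves underlying complexes, hence quasi-isomorphisms). So the content is the converse. By Theorem \ref{Th_MainPropertiesOfInftyMaps} and Proposition \ref{prop:w.e. equiv to infty map}, $A\sim_{\mathbf{Ass}} B$ is equivalent to the existence of an $A_\infty$-quasi-isomorphism $A\rightsquigarrow B$. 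We may assume $H(A)\cong H(B) =: H$ (otherwise there is nothing to prove). Using the homotopy transfer theorem (Theorem \ref{Th_HHT}, Proposition \ref{Prop_HTTHomology}) applied to the \emph{commutative} structures on $A$ and $B$, we obtain transferred $C_\infty$-structures $\alpha, \beta$ on $H$, each $\infty$-quasi-isomorphic to the original commutative algebra. The functoriality of transfer along $\mathbf{Ass}\to\mathbf{Com}$ (or rather the compatibility of $f^*$ with $\infty$-morphisms) shows that the underlying $A_\infty$-structures $f^*\alpha, f^*\beta$ on $H$ are $\infty$-quasi-isomorphic, hence $\infty$-isotopic (since $H$ has zero differential, we may normalize the linear term to $\mathrm{id}_H$).

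The crux is therefore the following purely operadic statement: \emph{if two $C_\infty$-structures on $H$ become $\infty$-isotopic after pushing forward to $A_\infty$-structures, then they were already $\infty$-isotopic as $C_\infty$-structures.} By Theorem \ref{Th_GaugeActionAndIsotopies}, $\infty$-isotopies of $\_P_\infty$-structures on $H$ are exactly gauge equivalences in the convolution Lie algebra $\G_g_{\_P_\infty, H}$, so we must show: the map of deformation complexes $\G_g_{\mathbf{Com}_\infty, H}\to \G_g_{\mathbf{Ass}_\infty, H}$ (induced by $f^{\antishriek}\colon \mathbf{Com}^{\antishriek}\to\mathbf{Ass}^{\antishriek}$, dual to $\mathbf{Com}^!\to\mathbf{Ass}^!$, i.e. $\mathbf{Lie}\to\mathbf{Ass}$ appropriately; see Remark \ref{Ex:ComAssLie def complex}) induces an injection on gauge-equivalence classes of Maurer--Cartan elements, at least on the relevant twisted components. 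The natural way to get this is to exhibit a splitting or a retraction: one wants a map $\G_g_{\mathbf{Ass}_\infty, H}\to \G_g_{\mathbf{Com}_\infty, H}$ of (filtered, complete) Lie algebras that is a one-sided inverse, or more realistically a \emph{filtered quasi-isomorphism} onto the image, so that the Goldman--Milson Theorem \ref{Th_GoldmannMilson} (or its $\infty$-version, Theorem \ref{Th_GoldmanMilsonExtended}, together with Proposition \ref{Prop_Pi0DeligneGroupoid}) transports gauge classes. Concretely, this rests on the classical fact (Hoffbeck, or the operadic Poincaré--Birkhoff--Witt phenomenon) that in characteristic zero $\mathbf{Com}$ is a \emph{direct summand} of $\mathbf{Ass}$ as a symmetric sequence via the (dual) Eulerian idempotent / Dynkin idempotent decomposition $\mathbf{Ass}\cong \bigoplus_n \mathbf{Lie}\text{-like pieces}$, which dually gives that $\mathbf{Com}^{\antishriek}$ is a retract of $\mathbf{Ass}^{\antishriek}$; applying $\mathrm{Conv}(-,\mathbf{End}_H)$ and totalizing yields the desired retraction $r\colon \G_g_{\mathbf{Ass}_\infty,H}\to \G_g_{\mathbf{Com}_\infty,H}$ with $r\circ f^{\antishriek}_* = \mathrm{id}$.

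With the retraction $r$ in hand, the argument closes as follows. Let $\alpha,\beta\in \mathrm{MC}(\G_g_{\mathbf{Com}_\infty,H})$ be our two transferred $C_\infty$-structures, and suppose $f^{\antishriek}_*\alpha$ and $f^{\antishriek}_*\beta$ are gauge equivalent in $\G_g_{\mathbf{Ass}_\infty,H}$, say $e^{\lambda}\cdot f^{\antishriek}_*\alpha = f^{\antishriek}_*\beta$ for some $\lambda\in (\G_g_{\mathbf{Ass}_\infty,H})_0$. Since $r$ is a morphism of Lie algebras sending $f^{\antishriek}_*\alpha\mapsto\alpha$, $f^{\antishriek}_*\beta\mapsto\beta$, and gauge action is natural with respect to Lie algebra maps, applying $r$ gives $e^{r(\lambda)}\cdot\alpha = \beta$ with $r(\lambda)\in(\G_g_{\mathbf{Com}_\infty,H})_0$; hence $\alpha,\beta$ are gauge equivalent, i.e. $\infty$-isotopic as $C_\infty$-structures, giving $A\sim_{\mathbf{Com}} B$. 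The unital case follows by a minor variant, observing that $\mathbf{uCom}$ differs from $\mathbf{Com}$ only in arity $0$ and that the same retraction argument applies arity by arity, or by passing to augmentation ideals. I expect the main obstacle to be the careful verification that the retraction $\G_g_{\mathbf{Ass}_\infty,H}\to \G_g_{\mathbf{Com}_\infty,H}$ is not merely a retraction of complexes but respects the complete filtration in the way required for Goldman--Milson to apply cleanly (a filtered quasi-isomorphism onto a sub-Lie-algebra, or an honest Lie retraction); establishing that the Eulerian-idempotent splitting is compatible with both the operadic composition structure and the arity filtration is the technical heart. An alternative route, avoiding an explicit retraction, is to verify directly that the fibers of $\underline{\mathrm{Del}}(\G_g_{\mathbf{Com}_\infty,H})\to\underline{\mathrm{Del}}(\G_g_{\mathbf{Ass}_\infty,H})$ are trivial using an obstruction-theoretic induction over the filtration, where at each stage the relevant obstruction lives in a Harrison-type cohomology group that vanishes because, in characteristic zero, Harrison cohomology is a direct summand of Hochschild cohomology.
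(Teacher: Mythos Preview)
Your reduction is exactly the paper's: transfer to $H=H(A)\cong H(B)$, translate $\infty$-isotopies into gauge equivalences via Theorem~\ref{Th_GaugeActionAndIsotopies}, and ask for injectivity of
\[
\faktor{\mathrm{MC}(\G_g_{C_\infty,H})}{\sim}\longrightarrow \faktor{\mathrm{MC}(\G_g_{A_\infty,H})}{\sim}.
\]
The gap is in the final step. You assert that the Eulerian-idempotent retraction $r\colon \G_g_{A_\infty,H}\to \G_g_{C_\infty,H}$ is a morphism of Lie algebras, and then conclude $e^{r(\lambda)}\cdot\alpha=\beta$ by naturality of the gauge action. But $r$ is \emph{not} a Lie algebra map: that would require a retract of $\mathbf{Lie}\to\mathbf{Ass}$ as \emph{operads}, which does not exist. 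The PBW/Eulerian splitting $\mathbf{Ass}\cong\Sym(\mathbf{Lie})$ is only a splitting of symmetric sequences, and at the level of convolution algebras this buys you nothing more than injectivity of $i$ as a linear map. Your hedge at the end (``an honest Lie retraction'') names exactly the thing that is unavailable.

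What the paper does instead is subtler on both ends. First, it proves a refinement of Goldman--Milson (Theorem~\ref{Th:CPRW gauge}): if $i\colon\G_h\hookrightarrow\G_g$ admits a retraction $r$ that is merely an \emph{$\G_h$-module} map, i.e.\ $r[i(h),x]=[h,r(x)]$, then the induced map on gauge classes is still injective. Second, it shows that such an $\G_h$-module retraction of $\G_g_{C_\infty,H}\hookrightarrow\G_g_{A_\infty,H}$ exists, and that this amounts to retracting $\mathbf{Lie}\to\mathbf{Ass}$ as \emph{infinitesimal $\mathbf{Lie}$-bimodules}---a structure strictly between symmetric sequences and operads. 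The PBW decomposition $\mathbf{Ass}\cong\Sym(\mathbf{Lie})$ does respect this intermediate structure, and projecting onto $\Sym^1(\mathbf{Lie})=\mathbf{Lie}$ gives the required retract. Crucially, the commutative gauge $\tilde\lambda$ produced by Theorem~\ref{Th:CPRW gauge} is \emph{not} simply $r(\lambda)$; it is built by an iterative, non-explicit procedure along the arity filtration. So your closing formula fails, and your alternative obstruction-theoretic sketch is closer in spirit to what is actually needed, though the precise mechanism is the $\G_h$-module retraction combined with Theorem~\ref{Th:CPRW gauge} rather than a vanishing argument.
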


 At first this question might look too simple to have a difficult answer. The answer being yes, one might wonder whether this is a formal consequence of some simple property of the (fully faithful) forgetful functor $F$ from commutative to associative algebras, which is also induced from the map of operads $\mathbf{Ass} \to \mathbf{Com}$.
This does not seem to be the case: While it is indeed true that $F$ induces a faithful functor at the level of the homotopy categories (this not trivial, see \cite{CPRW22}), $h(F)$ is not full as can be seen in the following example.

\begin{Ex}
	Consider the algebra $A= k[x,y]$ and a commutative algebra $S$. Then, in the homotopy category, we have: 
	\[ \Hom_{h\mathbf{Alg}_{\mathbf{Com}}}(k[x,y], S) \sim H^0(S) \oplus H^0(S)\] 
	\[ \Hom_{h\mathbf{Alg}_{\mathbf{Ass}}}(k[x,y], S) \sim H^0(S) \oplus H^0(S) \oplus H^{-1}(S)\]
	
	This comes from the fact that $k[x,y]$ is not cofibrant as an associative algebra. It needs to be replaced by the quasi-free associative algebra generated by $x, y$ and $\epsilon$ of degree $-1$ such that $d\epsilon = xy - yx$. 
\end{Ex}

Making use of Reformulation \ref{Th_GaugeActionAndIsotopies} and denoting $H\coloneqq H(A) \cong H(B)$, we can also reformulate this question as follows.

\begin{Refo}
 If $\alpha$ and $\beta$ are two $C_\infty$-structure on a graded vector space $H$ that are $A_\infty$ isotopic, are they $C_\infty$-isotopic?
\end{Refo}

To answer that question, using Theorem \ref{Th_GaugeActionAndIsotopies}, we need to ask whether an associative gauge transformation $\lambda$ between $\alpha$ and $\beta$ in $\underline{\tx{Del}}\left( \G_g_{A_\infty, H}\right)$ (corresponding to the two $A_\infty$ structures on $H$) can be lifted to a commutative gauge $\tilde{\lambda}$ in $\underline{\tx{Del}}\left(\G_g_{C_\infty, H} \right)$ such that the image of $\tilde{\lambda}$ through the map between the Deligne groupoids induced by the map $\G_g_{C_\infty, H} \rightarrow \G_g_{A_\infty, H}$ from Example \ref{Ex:ComAssLie def complex} is $\lambda$. If $\G_g_{C_\infty, H} \rightarrow \G_g_{A_\infty, H}$ were a quasi-isomorphism, then our conclusion would follow from the Goldmann--Milson Theorem \ref{Th_GoldmannMilson}. In our case, the map is injective and it even remains injective in homology upon twisting by a $C_\infty$ structure, but this does not generally suffice to obtain an injective map at the level of gauge equivalence classes of Maurer--Cartan elements. 
The following theorem is the appropriate ``injectivity'' refinement of the Goldmann--Milson Theorem.

	\begin{Th}[Theorem 1.7 in \cite{CPRW19}]\label{Th:CPRW gauge}
		Let $i: \G_h \hookrightarrow \G_g $ a map of filtered complete differential graded Lie algebra. Suppose that $i$ has a retract $r$ as $\G_h$-modules (i.e. $r [i(h), x] = [h, r(x)]$).  Then the following map is injective: 
		\[ \faktor{\tx{MC}(\G_h)}{\sim} \hookrightarrow  \faktor{\tx{MC}(\G_g)}{\sim}\] 
	\end{Th}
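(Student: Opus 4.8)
The statement says: if $i\colon \G_h\hookrightarrow \G_g$ is a map of filtered complete dg Lie algebras admitting a retract $r$ which is a map of $\G_h$-modules, then the induced map on gauge equivalence classes of Maurer--Cartan elements is injective. The strategy is to show directly that if $\phi_0,\phi_1\in\mathrm{MC}(\G_h)$ become gauge equivalent in $\G_g$, then they were already gauge equivalent in $\G_h$. First I would reduce, using completeness of the filtrations, to an inductive argument modulo $F^{n}$: by Definition~\ref{Def_Filtration}, everything is a limit of the finite stages $\G_g/F^n\G_g$, so it suffices to produce a gauge $\lambda_n\in\G_h_0/F^n$ taking $\phi_0$ to $\phi_1$ compatibly as $n$ grows, and then pass to the limit. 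Base case $n=1$ is trivial. For the inductive step, assume we have $\lambda_{n}\in \G_h_0$ with $e^{\lambda_{n}}\cdot\phi_0 \equiv \phi_1 \pmod{F^{n}}$; replacing $\phi_0$ by $e^{\lambda_n}\cdot\phi_0$ (still in $\mathrm{MC}(\G_h)$, by Proposition~\ref{Prop_DeformationLieAlgebraMaurerCartan} twisting stays inside $\G_h$) we may assume $\phi_0\equiv\phi_1\pmod{F^{n}}$, so $\psi:=\phi_1-\phi_0\in F^{n}\G_h_1$, and we must correct by a gauge living in $F^{n}$.

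\textbf{Key computation.} Working modulo $F^{n+1}$ and using $[\G_h_0\cap F^n,\G_h]\subset F^{n+1}$ (hence $\pmod{F^{n+1}}$ the gauge action of $\mu\in F^n\G_h_0$ on any Maurer--Cartan element $\phi$ is simply $\phi\mapsto \phi - d_{\phi_0}\mu$ with $d_{\phi_0}=d+[\phi_0,-]$, all higher terms in Definition~\ref{Def_DeligneGroupoid} being in $F^{n+1}$), the hypothesis that $\phi_0$ and $\phi_1$ are gauge equivalent \emph{in} $\G_g$ gives some $\Lambda\in\G_g_0$ with $e^{\Lambda}\cdot\phi_0=\phi_1$. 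Peeling off the part of $\Lambda$ in $F^{<n}$ as above — which by induction we may take to lie in $\G_h$, since gauge equivalence in $\G_g$ of two elements already equal mod $F^n$ can be tracked, and we only need the $F^n$-piece — one reduces to: there is $\Lambda_n\in F^n\G_g_0$ with $d_{\phi_0}\Lambda_n\equiv -\psi\pmod{F^{n+1}}$, i.e. $\psi$ is a $d_{\phi_0}$-coboundary in $\G_g$ at filtration level $n$. Now apply $r$: since $r$ is a morphism of $\G_h$-modules, $r$ commutes with $d$ and with $[\phi_0,-]$ (as $\phi_0\in\G_h$), hence $r\circ d_{\phi_0}=d_{\phi_0}\circ r$ on $\G_g$; also $r$ preserves the filtration and restricts to the identity on $\G_h$. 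Therefore $r(\psi)=\psi$ and $r(d_{\phi_0}\Lambda_n)=d_{\phi_0}(r\Lambda_n)$, giving $d_{\phi_0}(r\Lambda_n)\equiv -\psi\pmod{F^{n+1}}$ with $r\Lambda_n\in F^n\G_h_0$. Then $e^{r\Lambda_n}\cdot\phi_0\equiv\phi_1\pmod{F^{n+1}}$, which completes the inductive step after composing with the previously built gauge via the BCH formula (Proposition~\ref{Prop_BCH}), noting that elements of $F^n$ contribute trivially to the lower-filtration part of $\mathrm{BCH}$.

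\textbf{Conclusion.} Taking the limit over $n$ of the gauges $\lambda_n\in\G_h_0$ — which converges since successive corrections lie in deeper and deeper filtration steps and $\G_h$ is complete — produces $\lambda_\infty\in\G_h_0$ with $e^{\lambda_\infty}\cdot\phi_0=\phi_1$ in $\G_h$, proving injectivity of $\faktor{\mathrm{MC}(\G_h)}{\sim}\to\faktor{\mathrm{MC}(\G_g)}{\sim}$.

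\textbf{Main obstacle.} The delicate point is the bookkeeping in the inductive step: one must carefully separate the gauge $\Lambda\in\G_g_0$ into its filtration layers and argue that at each stage the correction needed can be taken in $\G_h$, using the retract $r$ only layer by layer — the retract is not assumed to be a Lie algebra map, only a $\G_h$-module map, so it does \emph{not} directly transport gauge equivalences; it only transports the \emph{linearized} obstruction $d_{\phi_0}\Lambda_n = -\psi$ at each filtration level, which is exactly why the filtration-by-filtration induction (rather than a one-shot argument) is essential. Making the interaction between the nonlinear BCH/gauge formulas and the linear retract rigorous at every step is where the real work lies; the paper presumably carries this out via the pre-Lie exponential formulas of \cite{DSV16} or a direct filtered induction as sketched.
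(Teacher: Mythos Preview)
Your strategy---a filtered induction in which, at each stage, the gauge action is linearised modulo the next filtration step and the $\G_h$-module retract $r$ is applied to the resulting coboundary equation---is the right one and matches the ``iterative procedure'' the paper alludes to (the survey does not prove this theorem; it only records, after the statement, that the gauge $\tilde\lambda$ is built iteratively and is \emph{not} simply $r(\lambda)$).

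There is, however, a real gap in your key computation. You assert that after ``peeling off the part of $\Lambda$ in $F^{<n}$'' one is left with some $\Lambda_n\in F^n\G_g_0$ satisfying $d_{\phi_0}\Lambda_n\equiv -\psi\pmod{F^{n+1}}$. This step is neither justified nor, as stated, correct: the residual $\G_g$-gauge $\Lambda^{(n-1)}:=\tx{BCH}(\Lambda,-\lambda_{n-1})$ obtained after applying the $\G_h$-corrections typically does \emph{not} lie in $F^n$---it remains only in $F^1$, and there is no reason the class of $\psi$ should be $d_{\phi_0}$-exact by an element of $F^n\G_g_0$. What one must establish and carry along the induction are two finer facts: (i) $d_{\phi_0^{(n-1)}}\Lambda^{(n-1)}\in F^n$, which follows from a short bootstrap (if $d_{\phi_0^{(n-1)}}\Lambda^{(n-1)}\in F^k$ then every higher term in the gauge-action expansion lies in $F^{k+1}$, and $\psi\in F^n$ forces $k$ to climb to $n$); and (ii) $r\Lambda^{(n-1)}\in F^n\G_h_0$, which follows from $\Lambda^{(n-1)}\equiv(\tx{id}-i\circ r)\Lambda^{(n-2)}\pmod{F^n}$ together with $r\circ i\circ r=r$. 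With these in hand one sets $\mu_n:=r\Lambda^{(n-1)}\in F^n\G_h_0$ directly, and your remaining steps (including convergence of the $\lambda_n$) go through. You correctly flag this bookkeeping as the main obstacle, but your sketch does not close it; the passage through an intermediate $\Lambda_n\in F^n\G_g_0$ is a red herring.
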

		
	\begin{proof}[Idea of proof of Theorem \ref{Th_CommutativeAndAssociativeWeakEquivalences}]
	Admitting Theorem \ref{Th:CPRW gauge}, we need to construct such a retract of $i\colon \G_g_{C_\infty, H} \to \G_g_{A_\infty, H}$. If there existed a retract of $f\colon \mathbf{Lie}=\mathbf{Com}^! \rightarrow \mathbf{Ass}=\mathbf{Ass}^!$ as operads, then it would induce a retract of $i$ as Lie algebras. The much weaker existence of a retract of $f$ as symmetric sequences, essentially only tells us about the injectivity of $i$.
	
	It turns out that to obtain a retract as $\G_g_{C_\infty, H}$-modules, it suffices to construct retract of $\mathbf{Lie} \rightarrow \mathbf{Ass}$ as infinitesimal $\mathbf{Lie}$-bimodules, which is a type of structure whose strength sits between the two. 
	
	To obtain such a retract, we can use a suitable variant of the Poincaré--Birkhoff--Witt theorem \ref{Th:PBW} to express $\mathbf{Ass} \cong \Sym(\mathbf{Lie})$ and then projecting onto the summand $\Sym^1(\mathbf{Lie})=\mathbf{Lie}$. The reader can find the details in \cite[Section 2]{CPRW19}.
	\end{proof}
	
We point out that in Theorem \ref{Th:CPRW gauge}, the gauge equivalence $\tilde \lambda\in \exp(\mathfrak h)$ one constructs is not just $r(\lambda)$. It is rather obtained by an iterative procedure which is not fully explicit. Therefore, while the answer to Question \ref{Que:CPRW} is yes, even given explicit deformation retracts of $A$ and $B$ into $H$, there is no known constructive way of exhibiting such a zig-zag.

\subsubsection{Lie algebras and their enveloping algebras}\

	\medskip
	
Let us consider a seemingly unrelated question to the previous section:
\begin{Que}
	Can we recover a Lie algebra from its universal enveloping algebra?
\end{Que}

Recalling the digression in Section \ref{sec:Lie digression}, the immediate answer should be yes: $\mathfrak g$ is isomorphic (as a Lie algebra) to the primitive elements of $\mathfrak U \mathfrak g$.
However, this makes use of the Hopf algebra structure. 
Interpreting $\mathfrak U$ as a functor into associative algebras, we can rephrase the previous question more similarly to Question \ref{Que:CPRW}

\begin{Que}\label{Que:CPRW2}
If $\mathfrak g$ and $\mathfrak h$ are Lie algebras, do we have:

$$\mathfrak U \mathfrak g \sim_\mathbf{Ass} \mathfrak U \mathfrak h \Rightarrow \mathfrak g \sim_\mathbf{Lie} \mathfrak h.$$
\end{Que}

Let us start by pointing out two differences between Questions \ref{Que:CPRW} and \ref{Que:CPRW2}. The first one is that the converse of Question \ref{Que:CPRW} is obviously true: the same zig-zag exhibiting the equivalence between two commutative algebras exhibits their equivalence as associative algebras. 
The same argument holds for Question \ref{Que:CPRW2}, as soon as we know the much less trivial fact that universal enveloping algebras preserve quasi-isomorphisms. Indeed, by the PBW Theorem \ref{Th:PBW}, as a cochain complex $\mathfrak U \mathfrak g\cong \Sym \mathfrak g$ and the symmetric algebra functor preserves quasi-isomorphisms. Notice that both of these facts require $\mathrm{char}\ k = 0$.

The second difference is that in the classical (i.e. non-dg) setting, Question \ref{Que:CPRW2} is non-trivial. In fact, in characteristic $0$ the following is still an open question:

\begin{Que}[The isomorphism problem for enveloping algebras]\label{Que:iso problem}
	Let $\mathfrak g$ and $\mathfrak h$ be Lie algebras in vector spaces, such that the associative algebras $\mathfrak U \mathfrak g$ and $\mathfrak U \mathfrak h$ are isomorphic. Are $\mathfrak g$ and $\mathfrak h$ necessarily isomorphic?
\end{Que}

Question \ref{Que:CPRW} concerns the restriction functor associated to the operad morphism $f\colon \mathbf{Ass} \to \mathbf{Com}$, while Question \ref{Que:CPRW2} concerns the induction functor associated to the Koszul dual morphism $f^!\colon \mathbf{Lie} \to \mathbf{Ass}$. In a sense that we will make precise, Koszul duality intertwines these two functors in a way that makes the two questions almost equivalent. The word almost is present because Koszul duality, as presented in Section \ref{Sec_KoszulDuality} is only an equivalence between $\_P$-algebras and $\_P^\antishriek$-coalgebras and some nilpotence assumptions are required in order to have an equivalence between $\_P$-algebras and $\_P^!$-algebras (see Warning \ref{war:dual is not dual}).

\begin{Th}\label{Th:CPRW2}
	Let $\mathfrak g$ and $\mathfrak h$ be Lie algebras in non-negative degrees that are either nilpotent or concentrated in strictly positive degrees.
	Then we have:
	\[\mathfrak U \mathfrak g \sim_\mathbf{Ass} \mathfrak U \mathfrak h \Rightarrow \mathfrak g \sim_\mathbf{Lie} \mathfrak h\]
\end{Th}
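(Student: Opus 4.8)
The plan is to deduce Theorem \ref{Th:CPRW2} from the already-established Theorem \ref{Th_CommutativeAndAssociativeWeakEquivalences} (and its gauge-theoretic refinement Theorem \ref{Th:CPRW gauge}) by exploiting Koszul duality to intertwine the induction functor $\mathfrak U = f^!_!$ along $f^!\colon \mathbf{Lie}\to\mathbf{Ass}$ with the restriction functor $f^*$ along $f\colon \mathbf{Ass}\to\mathbf{Com}$. Concretely, first I would observe that under the nilpotence/positivity hypotheses on $\mathfrak g$, the bar-cobar machinery of Section \ref{sec:algebraic-bar-cobar-adjunction} gives a genuine equivalence between $\mathbf{Lie}$-algebras (satisfying these hypotheses) and conilpotent $\mathbf{Lie}^\antishriek$-coalgebras, and dually between such coalgebras and (a suitable subcategory of) $\mathbf{Com}$-algebras; similarly for $\mathbf{Ass}$ and $\mathbf{Ass}^\antishriek$. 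The key computation is that the Koszul dual commutative algebra $\mathfrak D(\mathfrak U \mathfrak g)$ is weakly equivalent (as a commutative algebra) to $\mathfrak D(\mathfrak g)$ — this is essentially the statement that the Chevalley--Eilenberg cochain complex $\mathrm{CE}^*(\mathfrak g)$, computed via the Koszul resolution, agrees with the cobar-type resolution coming from viewing $\mathfrak U\mathfrak g$ as an associative algebra and taking its Hochschild-style Koszul dual, which follows from the naturality of the bar-cobar adjunction (Remark \ref{RQ_NaturalityAlgebraicBarCobar}) applied to the square relating the canonical twisting morphisms for $\mathbf{Lie}$ and $\mathbf{Ass}$ via $f^!$ and $f^\antishriek$.

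Granting that, the argument would run as follows. Suppose $\mathfrak U\mathfrak g \sim_\mathbf{Ass} \mathfrak U\mathfrak h$. Since $\mathfrak D$ is built from functors that are homotopy invariant on the relevant subcategories (the bar functor preserves quasi-isomorphisms, Proposition \ref{Prop_BPreservesQI}, and the cobar functor does too under the hypotheses at hand), we get $\mathfrak D(\mathfrak U\mathfrak g) \sim_\mathbf{Ass}^! \mathfrak D(\mathfrak U\mathfrak h)$, i.e. a zig-zag of quasi-isomorphisms of associative algebras (as $\mathbf{Ass}^! = \mathbf{Ass}$). By the computation above these are $\mathfrak D(\mathfrak g)$ and $\mathfrak D(\mathfrak h)$ up to weak equivalence, which are \emph{commutative} algebras. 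Now Theorem \ref{Th_CommutativeAndAssociativeWeakEquivalences} applies: two commutative algebras quasi-isomorphic as associative algebras are quasi-isomorphic as commutative algebras, so $\mathfrak D(\mathfrak g) \sim_\mathbf{Com} \mathfrak D(\mathfrak h)$. Applying the inverse Koszul duality equivalence $\mathbf{Alg}_\mathbf{Com}^{\mathrm{nilp}} \simeq \mathbf{coAlg}_{\mathbf{Com}^\antishriek} \simeq \mathbf{Alg}_\mathbf{Lie}^{\mathrm{nilp}}$ (valid on the subcategories cut out by our hypotheses, which is precisely why nilpotence or strict positivity is needed), we recover $\mathfrak g \sim_\mathbf{Lie} \mathfrak h$. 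One must check at each step that the hypotheses (nilpotent, or concentrated in strictly positive degrees, and non-negatively graded) are preserved so that all the equivalences really are equivalences and not just adjunctions.

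The main obstacle I expect is controlling the subcategories on which Koszul duality is an \emph{equivalence} rather than merely a Quillen adjunction — this is the content of Warning \ref{war:dual is not dual}, which cautions that $\mathfrak D^2$ need not be the identity in general. One needs to verify that the classes ``nilpotent Lie algebra in non-negative degrees'' and ``Lie algebra concentrated in strictly positive degrees'' map under $\mathfrak D$ (and its inverse) to subcategories where the unit and counit of the bar-cobar adjunction are quasi-isomorphisms (cf. Proposition \ref{RQ_UnitCouniQIAlgebraBarcoBar} and the remark following it, and the conilpotency discussion around Definition \ref{Def_ConilpotentCooperad}); the positive-degree case should follow from a weight/degree argument guaranteeing conilpotency, while the nilpotent case requires a genuine finiteness/filtration argument on $\mathfrak U\mathfrak g$. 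A secondary subtlety is the PBW-type identification needed to see that $\mathfrak U$ preserves quasi-isomorphisms (used implicitly for the converse direction and to transport weak equivalences), which relies on Theorem \ref{Th:PBW} and $\mathrm{char}\,k = 0$, as flagged in the discussion preceding the statement. Once these compatibility checks are in place, the deduction from Theorem \ref{Th_CommutativeAndAssociativeWeakEquivalences} is formal.
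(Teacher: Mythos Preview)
Your overall strategy coincides with the paper's --- use Koszul duality to trade the induction $\mathfrak U$ along $\mathbf{Lie}\to\mathbf{Ass}$ for the restriction along $\mathbf{Ass}\to\mathbf{Com}$, invoke the $\mathbf{Com}/\mathbf{Ass}$ result, and transport back --- but the implementation has a genuine gap. You pass through the functor $\mathfrak D=(-)^\vee\circ\mathbf B$, landing in \emph{algebras}; the paper stays on the \emph{coalgebra} side via $\mathbf B$ alone. This is not cosmetic. Your final step, ``apply the inverse Koszul duality equivalence to recover $\mathfrak g\sim_\mathbf{Lie}\mathfrak h$ from $\mathfrak D(\mathfrak g)\sim_\mathbf{Com}\mathfrak D(\mathfrak h)$'', amounts to asking that $\mathfrak D^2(\mathfrak g)\simeq\mathfrak g$, and Warning~\ref{war:dual is not dual} is precisely the statement that this requires degreewise finite-dimensionality, which the theorem does \emph{not} assume. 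The hypotheses you have (nilpotent, or concentrated in strictly positive degrees) do not rescue this: an infinite-dimensional abelian $\mathfrak g$ concentrated in degree~$1$ satisfies them, yet $\mathfrak D(\mathfrak g)$ is the linear dual of an infinite symmetric algebra and a second $\mathfrak D$ does not return $\mathfrak g$. Your chain $\mathbf{Alg}_\mathbf{Com}\simeq\mathbf{coAlg}_{\mathbf{Com}^\antishriek}\simeq\mathbf{Alg}_\mathbf{Lie}$ hides a linear dualization in the second arrow (a $\mathbf{Com}^\antishriek$-coalgebra is a shifted Lie \emph{co}algebra), so the problem is unavoidable along this route.

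The paper sidesteps dualization entirely. From $\mathfrak U\mathfrak g\sim_\mathbf{Ass}\mathfrak U\mathfrak h$ it obtains $\mathbf B_\alpha\mathfrak g\sim_{\mathbf{Ass}^\antishriek}\mathbf B_\alpha\mathfrak h$ (using $\mathfrak U\circ\Omega_\alpha=\Omega_\beta\circ F$ from Remark~\ref{RQ_NaturalityAlgebraicBarCobar} and $\id\to\mathbf B_\beta\Omega_\beta$), then invokes a \emph{coalgebra} analogue of Theorem~\ref{Th_CommutativeAndAssociativeWeakEquivalences} to upgrade this to $\mathbf B_\alpha\mathfrak g\sim_{\mathbf{Lie}^\antishriek}\mathbf B_\alpha\mathfrak h$, and finally applies the \emph{complete} cobar $\Omega_\alpha^\wedge$ --- which, unlike $\Omega_\alpha$, preserves quasi-isomorphisms --- to return to Lie algebras. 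The nilpotence/positivity hypotheses enter only at this last step, guaranteeing that $\mathfrak g$ coincides with its homotopy completion; they are not there to make $\mathfrak D^2\simeq\id$, which is what your argument would need.
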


\begin{proof}
	We consider the bar-cobar adjunction associated to the twisting morphism $\alpha : \_P^\antishriek \dashrightarrow \_P$ (Definition \ref{Def_BarCobarAlgebra}) for $\_P = \mathbf{Lie}$ and $\_P=\mathbf{Ass}$:
	\[ \begin{tikzcd}
		\Omega_\alpha:  \mathbf{coAlg}_{\_P^\antishriek}^{\tx{conil}}  \arrow[r, shift left] & \arrow[l, shift left] \mathbf{Alg}_{\_P}  : \mathbf{B}_{\alpha}
	\end{tikzcd}\]
	
	Recall that we have a morphism of twisting morphisms (Definition \ref{Def_TwistingMorphisms})
	\[\begin{tikzcd}
		\mathbf{Com}^\vee\lbrace -1 \rbrace \simeq \mathbf{Lie}^\antishriek \arrow[r, dashed, "\alpha"] \arrow[d] & \mathbf{Lie} \arrow[d]\\
		\mathbf{Ass}^\vee\lbrace -1 \rbrace \simeq \mathbf{Ass}^\antishriek \arrow[r, dashed, "\beta"]& \mathbf{Ass}
	\end{tikzcd}\]

the induces the  following diagram of adjunctions thanks to the naturality of the bar-cobar construction (see Remark \ref{RQ_NaturalityAlgebraicBarCobar}):
\[ \begin{tikzcd}
	\mathbf{coAlg}_{\mathbf{Lie}^\antishriek} \arrow[r, "\Omega_\alpha", shift left] \arrow[d, shift left, "F"] & \mathbf{Alg}_{\mathbf{Lie}} \arrow[l, shift left, "\mathbf{B}_\alpha"] \arrow[d, shift left,  " \G_U"] \\\mathbf{coAlg}_{\mathbf{Ass}^\antishriek} \arrow[u, shift left] \arrow[r, shift left, "\Omega_\beta"] & \mathbf{Alg}_{\mathbf{Ass}} \arrow[l, shift left, "\mathbf{B}_\beta"] \arrow[u, shift left] 
\end{tikzcd}\]

The diagram of left (respectively right) adjoint commute and therefore $\G_U \circ \Omega_\alpha = \Omega_\beta \circ  F$, where $F$ the is the forgetful functor. 

Let us now suppose $\mathfrak U\mathfrak g \sim_\mathbf{Ass} \mathfrak U\mathfrak h$. The rest of the proof decomposes into the following cochain of implications explained below:

\begin{enumerate}	
	\item \label{1} $\mathfrak U\Omega_\alpha \mathbf{B}_\alpha \mathfrak g \sim_{\mathbf{Ass}} \mathfrak U\Omega_\alpha \mathbf{B}_\alpha \mathfrak h$.
	
	\item $\Omega_\beta \mathbf{B}_\alpha \mathfrak g\sim_{\mathbf{Ass}} \Omega_\beta \mathbf{B}_\alpha \mathfrak h.$
	
	\item\label{3}  $\mathbf{B}_\alpha \mathfrak g\sim_{\mathbf{Ass}^\antishriek} \mathbf{B}_\alpha  \mathfrak h$.
	
	\item\label{4} $\mathbf{B}_\alpha  \mathfrak g\sim_{\mathbf{Lie}^\antishriek} \mathbf{B}_\alpha\mathfrak h$.

		\item \label{5} $\mathfrak g\sim_\mathbf{Lie}\mathfrak h$.

%		\item  ${\mathcal L}^{\color{blue}\wedge} \mathcal C \mathfrak g\simeq_\mathbf{Lie} {\mathcal L}^{\color{blue}\wedge} \mathcal C \mathfrak h$  
%		
%		\item $\mathfrak g^{\color{blue}h\wedge} \simeq_\mathbf{Lie} \mathfrak h^{\color{blue}h\wedge}$.
\end{enumerate}

The first point follows from the PBW theorem \ref{Th:PBW},  which implies that $$\mathfrak U \Omega_\alpha \mathbf{B}_\alpha \mathfrak g  \stackrel{\sim}{\to} \mathfrak U\mathfrak g$$ is a resolution. The second point follows from the equality $\G_U \circ \Omega_\alpha = \Omega_\beta \circ  F$ established before. 

The third point follows from applying $\mathbf{B}_\beta$ and the fact that $\id \to \mathbf{B}_\beta \Omega_\beta $ is a natural quasi-isomorphism since $\mathbf{Lie}$ is Koszul and using Proposition \ref{RQ_UnitCouniQIAlgebraBarcoBar}. Notice that we are also using that bar constructions preserve quasi-isomorphisms (see Proposition \ref{RQ_UnitCouniQIAlgebraBarcoBar}), since we are applying $\mathbf{B}_\beta$ to the whole zig-zag.

We find ourselves in a situation where we have two (shifted) cocommutative algebras that are quasi-isomorphic only as (shifted) coassociative algebras. The same methods used to prove Theorem \ref{Th_CommutativeAndAssociativeWeakEquivalences} can be applied in the coalgebra setting to show that such coalgebras must also be quasi-isomorphic as cocommutative algebras \cite[Theorem 4.27]{CPRW19}, showing point \ref{4}.

Finally, to deduce point \ref{5} we would like to proceed as in point \ref{3} and apply $\Omega_\alpha$, but unfortunately cobar constructions do not preserve quasi-isomorphisms in general.

What one can instead do is to apply the \emph{complete} cobar construction $\Omega_\alpha^\wedge$, which does preserve quasi-isomorphisms. The resulting Lie algebras are no longer quasi-isomorphic to $\mathfrak g$ and $\mathfrak h$, but rather to their \emph{homotopy completions}. The hypotheses of the theorem are such that the homotopy completions of $\mathfrak g$ and $\mathfrak h$ are equivalent to $\mathfrak g$ and $\mathfrak h$, thus concluding the proof.
\end{proof}

\needspace{6\baselineskip}

	\section{Formal Moduli Problems and Koszul Duality}\label{Sec_Operadic Deformation Theory}

This section aims at explaining recent results on ``operadic deformation theory'' and extend the classical philosophy that the space of deformations is classified by a Lie algebra.  \\

We start with the Section \ref{Sec_Classical Deformation Theory} where we define deformations from a geometric point of view and recall the notion of infinitesimal deformations, and their relationship with the tangent complex (Section \ref{Sec_CotangentComplexHigherDeformation}). In Section \ref{Sec_Deformation from the Algebraic Point of View} we will look at deformations of algebras and compare with what was studied in Section \ref{Sec_DeformationofAlgebraicStructures}.\\

Then Section \ref{Sec_Formal Moduli Problem} sets up a general context to describe deformation theories (following \cite{Lu11}). We will see a generalization of the notion of infinitesimal objects using the formalism of small objects and morphisms. This leads to the definition of \emph{formal moduli problem} (Definition \ref{Def_FMP}) which gives an axiomatic framework for what a deformation functor\footnote{Essentially, a deformation functor is a functor that sends a ``small'' deformation parameter (e.g. $k[\epsilon]$) to the ``space of all deformations'' along this parameter space.} is in a given deformation context. Formal moduli problems, assemble into an $\infty$-category noted $\mathbf{FMP}$. 

With the right definitions in place, this allows us to formalize the classical slogan that deformation problems correspond to Lie algebras into a rigorous equivalence of $\infty$-categories. 

%Later on we will need to use the language of $\infty$-categories and we will not distinguish the notations between model categories and their $\infty$-localisations.

\begin{restatable*}[\cite{Lu11,Pr10}]{Th}{LuriePridhamEquivalence}
	\label{Th_LuriePridham}
	
	There is an equivalence of $\infty$-categories: 
	\[ \begin{tikzcd}[row sep = tiny, ampersand replacement=\&]
 \mathbf{MC} \colon	\mathbf{Alg}_{\mathbf{Lie}} \arrow[r,shift left, "\sim"] \& \arrow[l,shift left] \mathbf{FMP} \colon \Tt[-1]  
	\end{tikzcd}\]
	
%	There is also a version for associative formal moduli problem desribed in \cite{Lu11}: 
%	\[ \begin{tikzcd}[row sep = tiny, ampersand replacement=\&]
% \mathrm{MC} \colon \mathbf{Alg}_{\mathbf{Ass}} \arrow[r,shift left, "\sim"] \& \arrow[l,shift left] \mathbf{FMP}_{\mathbf{Ass}} \colon \Tt[-1] 
%\end{tikzcd}\]

\end{restatable*}

This equivalence  is roughly given by the Deligne $\infty$-groupoid in one direction and by the shifted tangent space in the other direction.

	As we will see, this is in fact an instance of the Koszul duality between the operads $\mathbf{Com}$ (deformations are parametrized by commutative algebras) and $\mathbf{Lie}$.  In fact, we can consider deformations parameterized by more general algebraic structures. \\

Indeed, in Section \ref{Sec_Operadic Formal Moduli Problems and Koszul Duality} we describe the categories of formal moduli problems over a class of deformation contexts for $\_P$-algebras. We finish by providing the tools that go into understanding and showing the following generalization of Theorem \ref{Th_LuriePridham}:
	
	\begin{restatable*}[\cite{CCN20}]{Th}{KoszulOperadicFMPEquivalence}\label{Th_KoszulOperadicFMPEquivalence}
		For $\_P$ a Koszul binary quadratic operad concentrated in non-positive degrees, there is an equivalence of $\infty$-categories: 	
		\[  \mathbf{FMP}_\_P \simeq \mathbf{Alg}_{\_P^!}. \]	
	\end{restatable*}

%
%In Section \ref{Sec_tangent-complex-of-a-formal-moduli-problem}, we define the tangents of a functor that describe this equivalence. 
%
%Then in Sections \ref{Sec_koszul-duality-context} and \ref{Sec_KoszulContextDualityAndKoszulDuality} we set up the general framework of Koszul duality contexts (Definition \ref{Def_KoszulDualityContext}) that will provide an axiomatic framework to obtain equivalent descriptions of the category of formal moduli problems (Theorem \ref{Th_FMPEquivalenceInKoszulDualityContext}). Then Section \ref{Sec_main-result} is an application of this general framework to the case of operadic Koszul duality following the results  of \cite{CCN20}. Through all this section we will work with augmented operads (see Remark \ref{RQ_TrivalPAlgebra}).   

\subsection{Classical Deformation Theory}
\label{Sec_Geometric Deformation Theory}
\subsubsection{Deformation theory of varieties}\
\label{Sec_Classical Deformation Theory}

\medskip

	Let us start by reviewing classical deformation theory. We refer the readers to \cite{Ha09}, as well as the classical books \cite{Il71, Il72}
	 First of all, we recall that classically, the space along which we deform  is often chosen to be the spectrum of a local Artinian algebra. Such algebras are infinitesimal extensions of a point and therefore will encode infinitesimal deformations.
	\begin{Def}\label{Def_SmallComAlgebra}
		
		The category $\art$ of \defi{classical local Artinian $k$-algebras} with residue field $k$ is the category of local non-graded commutative $k$-algebras $A$, with residue field $k$, such that the maximal ideal satisfies $ \G_M_A^n = 0$ for $n \gg 0$.

	\end{Def}

The adjective ``classical'' in the definition above is non-standard. The reason for this is to distinguish it from the differential graded version of Artinian algebras which will be central starting from Section \ref{Sec_Formal Moduli Problem}.
	
	\begin{RQ}
		Geometrically, Artinian algebras contain only one closed point (given by the augmentation $A\to k$), and they correspond to infinitesimal  neighborhoods of this point. For example, the algebras $k[x]/(x^n)$ are classical local Artinian algebras. 
\end{RQ}

The following definition is given for $k$-schemes but the reader can keep in mind the example of proper smooth $\Cc$-varieties.

\begin{Def}[Deformation of Algebraic Schemes] \label{Def_DeformationOfCVariety}

	Let $X$ be an algebraic scheme (locally finitely generated $k$-scheme). Let $(S, s)$ be a  pointed scheme. Then, an $(S,s)$ deformation is given by a scheme $\widetilde{X}$ together with a flat surjective morphism $\pi: \widetilde{X} \rightarrow S$ such that the following square is a pullback: 
	\[ \begin{tikzcd}
		X \arrow[r] \arrow[d] & \widetilde{X} \arrow[d, "\pi"] \\
		\star \arrow[r, "s"] & S
	\end{tikzcd}\] 
Deformations along $\tx{Spec}\left( k[\epsilon] \right)$, where $\epsilon^2=0$, are called \defi{first-order} deformations.
More generally, we say that a deformation is \defi{infinitesimal} if $S = \tx{Spec}(A)$ with $A$ a local Artinian $k$-algebra with residue field $k$. 
	
	Two deformations $\widetilde{X}_1$ and $\widetilde{X}_2$ are said to be isomorphic if there is an isomorphism of schemes $\phi : \widetilde{X}_1 \rightarrow \widetilde{X}_2$ making the following diagram commute: 
	\[ \begin{tikzcd}
		& X \arrow[dl] \arrow[dr] & \\
		\widetilde{X}_1 \arrow[dr]  \arrow[rr, "\phi"]& & \arrow[dl] \widetilde{X}_2 \\
		& S & 
	\end{tikzcd}\]
	
	Observe that this implies that $\phi$ induces the identity on the pullback, $X$. 
\end{Def}

\begin{RQ}\label{RQ_DeformationAlgebraicGeometry}\
	
	\begin{itemize}
%		\item The pullback condition implies that we have an isomorphism: \albi{up to making the proper sheaves restrictions/pushforward} 
%		\[ \_O_X \rightarrow \_O_{\widetilde{X}} \otimes_{\_O_S} k \]
%		
%		An equivalence of deformations induces a commutative diagram: 
%		\[ \begin{tikzcd}
%			& \_O_S \arrow[dl] \arrow[dr] & \\
%			\_O_{\widetilde{X}_2}   \arrow[dr]  \arrow[rr, "\phi^\sharp"]& & \arrow[dl] \_O_{\widetilde{X}_1}  \\
%			& \_O_X & 
%		\end{tikzcd}\]
		
		\item If $X = \tx{Spec}(B)$ and $S = \tx{Spec}(A)$ are affine and $A$ is a classical local Artinian algebra, then it is shown in \cite[Section 1.2.2]{Se07} that any deformation $\widetilde{X}$ is also affine. We can rephrase the notion of deformation of an affine space by saying that a deformation of $B$ along $A$ is the data of a commutative algebra $B'$ together with a flat map $A \rightarrow B'$ and an isomorphism $ B' \otimes_A k \overset{\sim}{\to} B$.  
		
		As before, an equivalence of two such deformations is given by an isomorphism $\phi : B_1' \rightarrow B_2'$ of $A$-algebras making the following diagram commute: 
		\[ \begin{tikzcd}
			& A\arrow[dl] \arrow[dr] & \\
			B_2' \otimes_{A} k  \arrow[dr, "\sim"']  \arrow[rr, "\phi"]& & \arrow[dl,"\sim"] B_1' \otimes_{A} k \\
			& B & 
		\end{tikzcd}\]
		
		\item Smooth algebras are rigid. If $B$ is a smooth (non-graded) algebra, then Corollary 4.8 in \cite{Ha09} %or Theorem 1.2.4 in \cite{Se07}
		shows that if $A$ is a classical local Artinian algebra, then any deformation of $B$ along $A$ is trivial, that is, any deformation $B'$ is equivalent to $B\otimes_k A$.   
	\end{itemize}
\end{RQ}

Let us now mention two classical results on deformations of complex manifolds that will motivate the relationship between first-order deformations, the tangent space and a set of Maurer--Cartan elements. These relations will then be explored in detail and generalized later on.
\begin{Th}[Kodaira--Spencer, \cite{KS58}] \ 
%	\ricardo{weird citation to kodaira only, maybe we can ref KS before the statement. Is there a more precise citation from the book of Kodaira?Check the notes of manetti?}
	First order deformations (for $A = \Cc[\epsilon]/\epsilon^2$) of a complex analytic manifold are controlled by {the cohomology of $X$ with values on the tangent bundle $H^*(X,T_X)$, namely: }
	\[ H^1(X, T_X)  \overset{\tx{1:1}}{\longleftrightarrow} \tx{deformations \ along } \ \tx{Spec} \left(\Cc[\epsilon]\right) \] 
	\[ H^2(X, T_X)  \overset{\tx{1:1}}{\longleftrightarrow} \tx{Obstructions \ to \ extending \ the \ deformation \ to \ higher \ order.} \] 

%	Moreover the obstruction to lifting deformations to higher order is controlled by $H^2(X,T_X)$: 	\
%	
%	\adjustbox{scale=0.88,center}{$ \tx{Obstruction \ in} \ H^2(X, T_X) \leftrightarrow \tx{pullbacks}  \begin{tikzcd}[row sep= small, column sep= small]
%			\widetilde{X}_{n-1}
%			\arrow[r] \arrow[d] & \widetilde{X}_n \arrow[d]\\
%			\tx{Spec} \left(\faktor{\Cc[x]}{\left(x^{n}\right)}\right) \arrow[r] & \tx{Spec} \left(\faktor{\Cc[x]}{\left(x^{n+1}\right)}\right)
%		\end{tikzcd}$}

\end{Th}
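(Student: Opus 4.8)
The plan is to follow Kodaira and Spencer's original argument, reconstructing a deformation from local patching data. Fix a cover $X = \bigcup_i U_i$ by coordinate charts, so that $X$ is obtained by gluing the $U_i$ along biholomorphisms $g_{ij}$ on the overlaps satisfying the cocycle condition $g_{ik} = g_{ij}\circ g_{jk}$. A deformation over $\tx{Spec}(\Cc[\epsilon])$, with $\epsilon^2 = 0$, is (using that affine pieces deform to affine pieces, see \cref{RQ_DeformationAlgebraicGeometry}, and that flatness over $\Cc[\epsilon]$ makes the structure sheaf locally a trivial square-zero extension) the data of $\Cc[\epsilon]$-linear gluings of $U_i \times \tx{Spec}(\Cc[\epsilon])$ reducing mod $\epsilon$ to the $g_{ij}$. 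First I would write such a gluing as $g_{ij}^\epsilon = g_{ij} + \epsilon\,\theta_{ij}$, where $\theta_{ij}$ is identified with a holomorphic vector field on $U_i\cap U_j$, namely the first-order variation of the transition map transported along $g_{ij}$. Since the reduced cocycle condition is automatic, expanding $g_{ik}^\epsilon = g_{ij}^\epsilon \circ g_{jk}^\epsilon$ to first order in $\epsilon$ yields exactly the \v{C}ech cocycle identity $\theta_{ik} = \theta_{ij} + \theta_{jk}$ on triple overlaps. Hence a first-order deformation determines a class in $\check{H}^1(X, T_X) = H^1(X, T_X)$.

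Next I would check that this class is a complete invariant of the isomorphism type of the deformation. An isomorphism of deformations inducing the identity on $X$ is, chart by chart, an automorphism $\tx{id} + \epsilon\,\eta_i$ of $U_i\times\tx{Spec}(\Cc[\epsilon])$ with $\eta_i$ a holomorphic vector field on $U_i$; compatibility with the gluings translates into the replacement $\theta_{ij}\mapsto \theta_{ij} + \eta_i - \eta_j$, i.e.\ modification of the cocycle by a \v{C}ech coboundary. Conversely, any cocycle $(\theta_{ij})$ defines a deformation by the formula above, and cohomologous cocycles give isomorphic deformations, with the trivial deformation $X\times\tx{Spec}(\Cc[\epsilon])$ corresponding to $0$. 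This establishes the bijection between first-order deformations up to equivalence and $H^1(X,T_X)$.

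For the obstruction statement I would run the same bookkeeping one order at a time. Given a deformation over $\Cc[t]/(t^{n})$ presented by gluings $g_{ij}^{(n)} = g_{ij} + t\,\theta_{ij}^{(1)} + \dots + t^{n-1}\theta_{ij}^{(n-1)}$ satisfying the cocycle condition mod $t^n$, one attempts an extension $g_{ij}^{(n+1)} = g_{ij}^{(n)} + t^n\,\theta_{ij}^{(n)}$. Imposing the cocycle condition mod $t^{n+1}$ produces, on triple overlaps, an equation $\theta_{ij}^{(n)} + \theta_{jk}^{(n)} - \theta_{ik}^{(n)} = o_{ijk}$, where $o_{ijk}$ is a holomorphic vector field on $U_i\cap U_j\cap U_k$ built, via the Lie bracket of vector fields, from the already-chosen lower-order terms. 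A \v{C}ech computation shows $(o_{ijk})$ is a $2$-cocycle, the extension exists precisely when its class $[o]\in H^2(X, T_X)$ vanishes, and when it does the set of extensions is a torsor under $H^1(X,T_X)$, recovering the previous paragraph as the case $n=1$. Iterating over all $n$ shows that every obstruction to prolonging a deformation lives in $H^2(X,T_X)$.

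The main obstacle is not a genuine difficulty but the care needed in the patching calculus: making precise the identification of the first-order part of a biholomorphism with a vector field, keeping track of the direction of transport along the $g_{ij}$, and verifying the signs and Jacobi-type identities that make $(o_{ijk})$ an honest $2$-cocycle. This is cleanest once one recognizes the whole computation as the \v{C}ech model of the Maurer--Cartan equation in the Kodaira--Spencer differential graded Lie algebra $\bigl(A^{0,\bullet}(X, T_X),\bar\partial,[-,-]\bigr)$, whose cohomology is the Dolbeault cohomology $H^\bullet(X,T_X)$; granting that this Lie algebra controls deformations of the complex structure, the statement becomes the $\pi_0$ shadow of \cref{Th_DeformationAndDeligneGroupoids} applied to it, and \cref{RQ_DeformationAndCohomology} yields the distinction between the roles of $H^1$ and $H^2$ directly.
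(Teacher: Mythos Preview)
The paper does not prove this theorem: it is stated with attribution to \cite{KS58} and immediately followed by the remark that it is ``a geometric version of \cite[Theorem 12.2.14]{LV} and Remark \ref{RQ_DeformationAndCohomology}'', with no argument given. So there is no proof in the paper to compare your proposal against.

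That said, your sketch is the standard Kodaira--Spencer argument and is correct in outline. It is worth noting that the paper \emph{does} prove the closely analogous statement for smooth algebraic varieties (the unlabelled Proposition just after Remark \ref{RQ_HigherCotangentComplex}), and that proof follows exactly your strategy: use local triviality of deformations on affine/smooth pieces, identify the gluing liftings with derivations (hence sections of $T_X$) on overlaps, and read off the \v{C}ech $1$-cocycle condition on triple overlaps. Your proposal is essentially the analytic transcription of that argument, with the additional obstruction-theoretic paragraph that the paper only alludes to via Remark \ref{RQ_DeformationAndCohomology} and the reference to Manetti in Theorem \ref{prop:from manetti}. The one point you flag as ``the main obstacle'' --- verifying that $(o_{ijk})$ is a genuine $2$-cocycle --- is indeed where the real work lies, and neither you nor the paper carries it out; but since the paper treats the whole statement as a citation, your level of detail already exceeds what is present there.
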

	This statement is a geometric version of \cite[Theorem 12.2.14]{LV} and Remark \ref{RQ_DeformationAndCohomology}. We will see the relationship between algebraic and geometric deformations in Section \ref{Sec_Deformation from the Algebraic Point of View}.
%\ricardoline{It's a bit weird to treat these two results as separate. In a way, the second one is a refinement of the first. I propose something along the following lines.}\albi{this is the point of the remark. Would you like a phrasing using the work refinement instead of geometric. Although this is in the context of unital algebras which I kindof avoided in the operadic framework.}
In this setting, the Lie algebra controlling infinitesimal deformations of the complex manifold $X$ is precisely the Dolbeault complex $\Omega^{0, *}(T_X)$, with degree and differential coming from the differential forms, and inheriting a bracket from the Lie bracket of vector fields.

	\begin{Th}[{\cite[Theorem V.55]{Ma05}}]\label{prop:from manetti}
	Given a classical local Artinian algebra $A$ with maximal ideal $\G_M_A$ we have an isomorphism of groupoids:
		\begin{equation*}%\label{eq:manetti}
			\begin{tikzcd}[column sep = small]
				\arrow[loop left, "\tx{Aut}"] \left\lbrace \tx{Deformations \ of} \ X \  \tx{ along }\ A \right\rbrace \arrow[r, leftrightarrow] & \tx{MC}\left( \Omega^{0, *} (T_X) \otimes \G_M_A  \right) \arrow[loop right, "\tx{Gauge}"]
			\end{tikzcd} 
		\end{equation*}
\end{Th}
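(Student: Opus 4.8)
The plan is to realize both sides of the claimed bijection as specializations of the general deformation-theoretic machinery developed earlier, applied to the Dolbeault resolution. First I would recall that for a complex manifold $X$, the sheaf of holomorphic vector fields $T_X$ admits the Dolbeault resolution $0\to T_X\to \_A^{0,0}(T_X)\xrightarrow{\bar\partial}\_A^{0,1}(T_X)\to\cdots$, so that $\Omega^{0,*}(T_X)$ computes $H^*(X,T_X)$. The wedge of forms together with the Lie bracket of vector fields equips $\Omega^{0,*}(T_X)$ with the structure of a differential graded Lie algebra (this is the Kodaira--Spencer algebra), with $\bar\partial$ as differential; the compatibility $\bar\partial[\alpha,\beta]=[\bar\partial\alpha,\beta]\pm[\alpha,\bar\partial\beta]$ is a standard local computation. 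Since $A$ is classical local Artinian with maximal ideal $\G_M_A$, which is nilpotent, the dg Lie algebra $\Omega^{0,*}(T_X)\otimes \G_M_A$ is nilpotent (hence complete for its lower central series filtration), so the Deligne groupoid $\underline{\tx{Del}}(\Omega^{0,*}(T_X)\otimes \G_M_A)$ from Definition \ref{Def_DeligneGroupoid} is well defined and its objects are exactly $\tx{MC}(\Omega^{0,*}(T_X)\otimes \G_M_A)$.

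Next I would set up the comparison functor between geometric deformations and Maurer--Cartan elements. Given a first-order-and-higher deformation $\widetilde X\to \tx{Spec}(A)$, flatness together with the pullback condition lets one cover $X$ by affines and trivialize $\widetilde X$ locally as $U_i\times \tx{Spec}(A)$; the transition data between these trivializations, being the identity modulo $\G_M_A$, is measured by a $\check{\text{C}}$ech-type cocycle valued in $\G_M_A\otimes\Theta$ where $\Theta$ denotes (infinitesimal) automorphisms, i.e. vector fields. Passing from the $\check{\text{C}}$ech model on a good cover to the Dolbeault model via the standard $\check{\text{C}}$ech--Dolbeault comparison quasi-isomorphism, this cocycle becomes precisely an element $\omega\in\Omega^{0,1}(T_X)\otimes\G_M_A$ of degree $1$; the cocycle condition on overlaps (which is the obstruction to gluing the local trivializations consistently, i.e. integrability of the almost-complex deformation) translates into the Maurer--Cartan equation $\bar\partial\omega+\frac12[\omega,\omega]=0$. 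This is essentially the content of the Newlander--Nirenberg / Kodaira--Spencer integrability analysis; for the algebraic-scheme formulation one argues instead via the exact sequence of Remark \ref{RQ_DeformationAlgebraicGeometry} and affineness of infinitesimal deformations, reducing to gluing data for $A$-algebra structures. Conversely, a Maurer--Cartan element gives a new integrable $\bar\partial$-operator $\bar\partial+\tx{ad}_\omega$ on $\_A^{0,*}(-)\otimes A$, whose kernel defines the structure sheaf of a deformation $\widetilde X$; flatness over $A$ follows since $A$ is free as a $k$-module in the Artinian case (or more carefully, since $\G_M_A$ is nilpotent the deformation is built by successive square-zero extensions, each of which is automatically flat).

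Then I would check that this assignment is compatible with the two notions of equivalence on each side. An isomorphism $\phi\colon\widetilde X_1\to\widetilde X_2$ of deformations over $\tx{Spec}(A)$ restricting to $\id_X$ is, locally, an $A$-linear automorphism of $U_i\times\tx{Spec}(A)$ congruent to the identity mod $\G_M_A$; such an automorphism is the exponential of a vector field in $\Omega^{0,0}(T_X)\otimes\G_M_A$ (using nilpotency of $\G_M_A$ to make $\exp$ a finite sum), and the effect of $\phi$ on the gluing cocycle $\omega$ is exactly the gauge action $e^\lambda\cdot\omega$ of Definition \ref{Def_DeligneGroupoid}. Conversely every gauge transformation is realized by such a $\phi$. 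This gives a functor between the two groupoids which is bijective on objects and fully faithful on morphisms, hence an equivalence — and taking $\pi_0$ recovers the Kodaira--Spencer statement $H^1(X,T_X)\leftrightarrow\{$first-order deformations$\}$ together with the obstruction in $H^2$.

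The main obstacle I expect is the careful bookkeeping in the $\check{\text{C}}$ech-to-Dolbeault dictionary: one must verify that the $L_\infty$ (or here honestly dg Lie, after choosing a strict model) structures on the two models agree up to the relevant $\infty$-quasi-isomorphism, so that the Goldman--Milson Theorem \ref{Th_GoldmannMilson} (resp. its $\infty$-enhancement Theorem \ref{Th_GoldmanMilsonExtended}) guarantees that the choice of cover and of resolution does not affect the groupoid up to equivalence. In the holomorphic category this rests on the Dolbeault lemma (local $\bar\partial$-exactness); in the algebraic-scheme category one instead invokes the local triviality of deformations of smooth affines (Remark \ref{RQ_DeformationAlgebraicGeometry}) and the identification of the controlling complex with $R\tx{Hom}$ from the cotangent complex, but the bracket-compatibility check is the delicate point either way. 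Everything else is the routine translation between ``gluing data up to coherent isomorphism'' and ``Maurer--Cartan elements up to gauge'' that the formalism of Section \ref{Sec_DeformationofAlgebraicStructures} was built to handle.
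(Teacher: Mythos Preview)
The paper does not prove this theorem: it is stated with a citation to \cite[Theorem V.55]{Ma05} and no proof is given in the text, so there is no ``paper's own proof'' to compare against. Your outline is a reasonable sketch of the classical argument, and is broadly in the spirit of Manetti's treatment.

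That said, your route via \v{C}ech gluing data followed by a \v{C}ech--Dolbeault comparison is more circuitous than necessary. The standard direct argument (and the one in the cited reference) identifies a deformation of the complex structure over $A$ with a new $\bar\partial$-operator $\bar\partial_\omega=\bar\partial+\omega$ for some $\omega\in\Omega^{0,1}(T_X)\otimes\G_M_A$; the integrability condition $\bar\partial_\omega^2=0$ is \emph{exactly} the Maurer--Cartan equation, and isomorphisms of deformations reducing to $\id_X$ are exactly exponentials of elements of $\Omega^{0,0}(T_X)\otimes\G_M_A$ acting by the gauge formula. This avoids entirely the ``main obstacle'' you flag (the bracket-compatibility in the \v{C}ech--Dolbeault dictionary), since no comparison of resolutions is needed. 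Your invocation of Goldman--Millson to show independence of the cover is likewise unnecessary in the direct approach: there is no cover to choose.
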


\subsubsection{Cotangent complex and higher deformations}\
\label{Sec_CotangentComplexHigherDeformation}

\medskip

As we have seen in Proposition \ref{Prop_Pi0DeligneGroupoid}, the right hand side of Theorem \ref{prop:from manetti} is part of the full Deligne $\infty$-groupoid. The construction of $\mathbf{Del}$ involves tensoring the Lie algebra $\Omega^{0, *} (T_X) \otimes \G_M_A$, living in non-negative degrees with differential forms, also living in non-negative degrees. Since the Maurer--Cartan equation lives in degree $2$, even for the Deligne $\infty$-groupoid we only see the degrees $0$, $1$ and $2$ of $\Omega^{0, *} (T_X)$ playing a part. It is natural to wonder what is the role of the higher degrees of $\Omega^{0, *} (T_X)$, or more generally, of any Lie algebra controlling a deformation problem. In short, these higher degrees control \emph{derived deformations}.\\

This section will explore the relationship between infinitesimal deformation, square zero extensions, (higher) infinitesimal deformations, higher $\mathbf{Ext}$ groups and the cotangent complex. 

\begin{Def}\label{Def_SquareZeroExtension}
	A \defi{square zero extension} of $B \in \mathbf{Alg}_{\mathbf{Com}}$ is a commutative algebra $B' \in \mathbf{Alg}_{\mathbf{Com}}$ together with a surjection $\pi : B' \rightarrow B$ that fits in an exact sequence in $\mathbf{Alg}_{\mathbf{Com}}$:
	\[ \begin{tikzcd}
		0 \arrow[r] & I \arrow[r] & B' \arrow[r, "\pi"] & B \arrow[r] & 0 
	\end{tikzcd}\]
	
	such that $I^2 = 0$. This is an algebraic version of the extension of a scheme described in \cite[Section 1.1.3]{Se07}. Moreover, if $B \in  \mathbf{Alg}_{\mathbf{Com}}^{A/}$, i.e. $B$ comes with a map $A\to B$ for some fixed $A \in \mathbf{Alg}_{\mathbf{Com}}$, then the same definition makes sense with all objects under $A$.

	Given any $B$-module $M$, we can construct the \defi{trivial square zero extension} ${B \oplus M}$ given by the direct sum as a $B$-module, and with the product defined by $(b, m)\times (b', m') = (bb', bm' + b'm)$. This square zero extension corresponds to the split exact sequence:  
	\[ \begin{tikzcd}
		0 \arrow[r] & M \arrow[r] & B\oplus M \arrow[r, "\pi"] & B \arrow[r] & 0 
	\end{tikzcd}\]
	
	A \defi{morphism between two square zero extensions} $B_1'$ and $B_2'$  is a morphism $\phi : B_1' \rightarrow B_2'$ that commutes with the projections on $B$. 
\end{Def} 

\begin{Ex}\label{Ex:sq zero}
Any iterated square zero extension of $k$ by a finite dimensional vector space is classical local Artinian.

Conversely, any classical local Artinian algebra can be obtained as an iterated square zero extension of the trivial algebra $k$. Indeed, if $A$ is such that  $\G_M_A^n = 0$ and $\G_M_A^{n-1}\ne 0$, then: 

\[
0\to \G_M_A^{n-1} \to A \to \faktor{A}{\G_M_A^{n-1}}\to 0
\]

exhibits $A$ as a square zero extension of an algebra of lower nilpotence.

As a particular example, the quotient $\faktor{k[x]}{(x^n)} \to \faktor{k[x]}{(x^{n-1})}$ is a square zero extension.

\end{Ex}

\begin{RQ}\label{RQ_TrivialSquareZeroExtension}
	Let us explain how trivial square zero extensions are functorially related to derivations.
	
	\begin{itemize}
		\item Given $B \in \mathbf{Alg}_{\mathbf{Com}}^{A/}$, the trivial square zero extension construction defines a functor: 
		\[ B \oplus - : \begin{tikzcd}[row sep = tiny, column sep = small]
			\Mod_B \arrow[r] & \mathbf{Alg}_{\mathbf{Com}}^{A//B} \\
			M  \arrow[r, mapsto] & B \oplus M
		\end{tikzcd} \]
		
		\item $A$-linear sections of the projection $B \oplus M \rightarrow B$ are precisely $A$-linear $M$-valued derivations\footnote{$\underline{\tx{Der}}_A(B,M)$ denotes the $B$-module of $A$-linear $M$-valued derivations concentrated in degree $0$ and $\Rr \tx{Der}_A(B,M)$ its derived functor.} on $B$:
		\[ \Hom_{\mathbf{Alg}_{\mathbf{Com}}^{A//B}} \left( B, B\oplus M \right) \cong \tx{Der}_A (B,M)\]
		
		\item  The square zero extension functor $B \oplus -$ has a left adjoint: 
		\[ L : \mathbf{Alg}_{\mathbf{Com}}^{A//B} \rightarrow \Mod_B\]
		
		Moreover this adjunction is Quillen for the standard model structures on $\mathbf{Alg}_{\mathbf{Com}}^{A//B}$ and $\Mod_B$. 
	\end{itemize}
\end{RQ}

\begin{Def}\label{Def_CotangentComplex}
	Given $B \in \mathbf{Alg}_{\mathbf{Com}}^{A/}$, the \defi{module of Kähler differentials} is defined as the $B$-modules $\Omega_{B/A}^1$ representing $A$-linear derivations: 
	\[ \Hom_{B} \left(\Omega_{B/A}^1, M \right) \cong \tx{Der}_A (B, M) \]
	
	Remark \ref{RQ_TrivialSquareZeroExtension} ensures that: 
	\[\Omega_{B/A}^1 = L (\tx{id} : B \rightarrow B)\]

{A direct construction of $\Omega_{B/A}^1$ is obtained by constructing the free $B$-module on formal generators $d_\mathrm{dR}b$ for all $b\in B$, subject to the relations $d_\mathrm{dR} a=0$ for $a\in A$, $d_{\Omega^1}d_\mathrm{dR}x = d_\mathrm{dR}d_B x$ and $d_\mathrm{dR}(xy) =( d_\mathrm{dR}x) y +x d_\mathrm{dR}y$.}

	The derived version is called the \defi{cotangent complex} $\Ll_{B/A}$ representing the derived module of derivations: 
	\[ \iHom_{B} \left(\Ll_{B/A}, M \right) \simeq \mathbb R\mathrm{Der}_A (B, M) := \Rr  \iHom_{\mathbf{Alg}_{\mathbf{Com}}^{A//B}} \left( B, B\oplus M \right). \]
	
	The $B$-linear dual of $\Ll_{B/A}$ is called the \defi{tangent complex} and we have:
	\[ \Tt_{B/A} := \iHom_{B} \left( \Ll_{B/A}, B \right) \simeq \Rr \tx{Der}_A(B,B)\] 
\end{Def}

\begin{RQ}\label{RQ_HigherCotangentComplex} Here are a few basic properties of the cotangent complex:
	
	\begin{itemize}
		\item 	Using the derived functor of the adjunction in Remark \ref{RQ_TrivialSquareZeroExtension}, we obtain: 
		\[ \Ll_{B/A} :=  \Ll (\tx{Id}: B \rightarrow B) := \Omega_{Q(B)/A}^1 \otimes_{Q(B)} B,  \]
		
		where $Q$ is a cofibrant replacement functor in $\cdga^{A//B}$ and $\Ll$ denotes the left derived functor of $L$. 
		
		\item Given $f: A \rightarrow B$ and $g: B \rightarrow C$, we have a homotopy fiber sequence: 
		\[ \begin{tikzcd}
			\Ll_{B/A} \otimes_B C \arrow[r] & \Ll_{C/A} \arrow[r] &  \Ll_{C/B}
		\end{tikzcd}\]
		
		In particular for $A = k$ we write $ \Ll_{B/k} =  \Ll_{B} $ and we get: 
		\[ \begin{tikzcd}
			\Ll_{B} \otimes_B C \arrow[r] & \Ll_{C} \arrow[r] &  \Ll_{C/B}
		\end{tikzcd}\]

		\item The isomorphism classes of square zero extensions {of $B$ along $I$} are in one to one correspondence with $\mathbf{Ext}_A^1 \left( \Ll_B, I \right)$.	In particular, for $I =  B$ and $A =B$, we get $\mathbf{Ext}_B^1 \left( \Ll_B, B \right) = \mathbf{Ext}_B^1 (B, \Tt_B)$. Moreover, the trivial square zero extension of $B$ by $M$ described in Remark \ref{RQ_TrivialSquareZeroExtension} represents the class of $0 \in \mathbf{Ext}^1(\Ll_B, M)$. 
		
		\item  {Following \cite{Ve10}, t}he cotangent complex enables us to consider deformations in higher derived directions. Such higher deformations are encoded by higher $\mathbf{Ext}$ groups, and those are controlled by derived mappings from higher infinitesimal disks into the $k$-scheme $X$ we want to deform.
		
		More precisely, for any $R$, a commutative $k$-algebra, a $R$-point in $X$, $x: \tx{Spec}(R) \rightarrow X$, and any $M \in \Mod_R$. Then we define the derived $i$-th order infinitesimal disk over $R$ by:
		\[ \mathbf{D}_{R,i}(M) := \tx{Spec} \left( R \oplus M[i] \right) \]
		More generally, for $\_M$ a quasi-coherent sheaf on $X$, we define:
		\[ \mathbf{D}_{X,i}(\_M) := \tx{Spec} _X\left( \_O_X \oplus \_M[i] \right) \]
		
		Then Proposition 2.1 in \cite{Ve10} tells us that this is related to the $\mathbf{Ext}$\footnote{$\mathbf{Ext}_A(M,N)$ is the derived functor of $\underline{\Hom}_{A}(-,-)$ evaluated at $M$, $N$.} functor via the equivalences: 
		\[ \mathbf{Ext}_{R}^i \left( \Ll_{X,x}, M \right) \simeq \Rr \iHom_\star ( \mathbf{D}_{R,i}(M), (X,x))\]
		\[ \mathbf{Ext}_{\_O_X}^i \left( \Ll_X, \_M \right) \simeq \Rr \iHom_X ( \mathbf{D}_{X,i}(\_M), X)\]
		
		Above, the first Hom is required to be pointed and the second Hom is required to restrict to the identity on $X$. 
		Extensions of the Kodaira--Spencer morphism and obstruction are also discussed in \cite[Section 3]{Ve10}.
	\end{itemize}
\end{RQ}

We can now show how \emph{first-order} derivations of smooth schemes are related to the tangent cohomology.

\begin{Prop}
	Let $X$ be a smooth algebraic variety, then first-order deformations up to equivalence are in bijection with $H^1(X,T_X)$. 
\end{Prop}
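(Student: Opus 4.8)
The plan is to reduce the statement to the local/affine computation combined with a sheaf-theoretic gluing argument, and then invoke the already-established relationship between square-zero extensions and the cotangent complex. First I would recall from Remark \ref{RQ_HigherCotangentComplex} that isomorphism classes of square-zero extensions of a commutative algebra $B$ by a module $M$ are classified by $\mathbf{Ext}^1(\Ll_B, M)$, and that by Definition \ref{Def_DeformationOfCVariety} (together with the affine reformulation in Remark \ref{RQ_DeformationAlgebraicGeometry}) a first-order deformation of $X$ along $\tx{Spec}(k[\epsilon])$ is precisely a flat deformation, which on affines amounts to a square-zero extension of $\_O_X$ by $\_O_X$ itself (the ideal $I=\epsilon \_O_X$ satisfies $I^2=0$ and $I\cong \_O_X$ as a $B$-module since $\epsilon^2=0$). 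Thus, at least locally, first-order deformations up to equivalence are classified by $\mathbf{Ext}^1(\Ll_{\_O_X}, \_O_X)$, which globalizes to $\mathbf{Ext}^1_{\_O_X}(\Ll_X, \_O_X)$.

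The second step is to use smoothness of $X$ to identify the cotangent complex with the sheaf of Kähler differentials placed in degree $0$: when $X$ is smooth, $\Ll_X \simeq \Omega^1_X$, and $\Omega^1_X$ is locally free. Consequently $\mathbf{Ext}^1_{\_O_X}(\Ll_X, \_O_X) \simeq \mathbf{Ext}^1_{\_O_X}(\Omega^1_X, \_O_X)$, and because $\Omega^1_X$ is locally free we may dualize: this $\mathbf{Ext}$ group is computed as the hypercohomology $\mathbb H^1(X, \_Hom(\Omega^1_X,\_O_X)) = H^1(X, T_X)$, using that $T_X = \_Hom(\Omega^1_X, \_O_X)$ is the tangent sheaf and that for a locally free sheaf the local $\_Ext$ sheaves in positive degree vanish, so the local-to-global spectral sequence degenerates to ordinary sheaf cohomology. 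This yields the desired bijection with $H^1(X, T_X)$.

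The third step, which is where the real content lies, is the passage from the local affine statement to the global one — i.e. checking that the gluing of local first-order deformations and the gluing of local equivalences is governed exactly by the Čech/hypercohomology $H^1$. Concretely, one covers $X$ by affines $U_i = \tx{Spec}(B_i)$, observes that a deformation restricts on each $U_i$ to a square-zero extension classified by an element of $\tx{Der}(B_i, B_i)$ worth of choices (all locally trivial by rigidity of smooth affine algebras, Remark \ref{RQ_DeformationAlgebraicGeometry}), and that the failure to glue the local trivializations is a Čech $1$-cocycle with values in $T_X$, well-defined up to coboundaries coming from equivalences. I expect this Čech bookkeeping — showing the correspondence is well-defined, independent of the cover, additive, and bijective — to be the main obstacle, though it is entirely standard; the cotangent-complex input from the excerpt does most of the conceptual work. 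The cleanest writeup would actually bypass Čech cocycles by directly invoking the equivalence $\mathbf{Ext}^1_{\_O_X}(\Ll_X, \_O_X) \simeq \Rr\iHom_X(\mathbf D_{X,1}(\_O_X), X)$ from Remark \ref{RQ_HigherCotangentComplex}, interpreting the right-hand side as the space of first-order deformations (pointed/restricting to the identity on $X$), and then reading off $\pi_0$.
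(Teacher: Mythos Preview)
Your proposal is correct and contains the paper's argument as its ``third step,'' but the framing and emphasis differ. The paper does \emph{only} the direct \v{C}ech argument: it covers $\widetilde{X}$ by smooth affine opens, uses rigidity of smooth affines (Remark~\ref{RQ_DeformationAlgebraicGeometry}) to trivialize each $B_i' \cong B_i[\epsilon]$, observes that a lift of the gluing isomorphism $\phi_{ij}$ is precisely a section of $B_{j,i}[\epsilon]\to B_{j,i}$, hence a derivation, and reads off the \v{C}ech $1$-cocycle condition. No cotangent complex, no $\mathbf{Ext}$ groups, no local-to-global spectral sequence appear. Your route through $\mathbf{Ext}^1_{\_O_X}(\Ll_X,\_O_X)$ is more conceptual and generalizes better (to the non-smooth case, to higher orders), but as you yourself note, the globalization from affines to schemes is where the content lives, and that step is essentially the same \v{C}ech bookkeeping the paper does explicitly. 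Your final suggestion to bypass \v{C}ech via $\Rr\iHom_X(\mathbf{D}_{X,1}(\_O_X),X)$ is slick but somewhat circular here: identifying that derived mapping space with first-order deformations up to equivalence is itself the statement to be proved, not something Remark~\ref{RQ_HigherCotangentComplex} gives for free at the scheme level.
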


\begin{proof}
	
	First, notice that a first-order deformation of a smooth algebra $\tx{Spec}(B)$ is trivial as explained in Remark \ref{RQ_DeformationAlgebraicGeometry}, and given by $B' := B \otimes {k[\epsilon]}$, with $\epsilon^2 = 0$. It is not difficult to see that $B'$ coincides with the trivial square zero extension of $B$ by $B$ itself. In other words $B' = B \oplus B$. To simplify the notations, we will write $B' = B[\epsilon]$ and the product is given by $(b + b'\epsilon)(a + a' \epsilon) = ba + (ba'+ b'a)\epsilon$ as expected when having $\epsilon^2 = 0$. \\
	
	Now a first order deformation $\widetilde{X}$ can be covered by smooth affine opens. Take $\lbrace\tx{Spec}(B_i')\rbrace$ such a smooth open chart. Then pulling back this covering along $X \rightarrow \widetilde{X}$ gives a covering of $X$ . Then, we have the following pullback:  
	\[ \begin{tikzcd}
		\tx{Spec}(B_i) \arrow[r] \arrow[d] & \tx{Spec}(B_i') \arrow[d] \\
		X \arrow[r] & \widetilde{X}
	\end{tikzcd}\] 
	
	Since $\tx{Spec}(B_i)$ is an open of $X$ which is smooth, then $\tx{Spec}(B)$ is also smooth, and by rigidity (see Remark \ref{RQ_DeformationAlgebraicGeometry}) we have $B_i' = B_i[\epsilon]$. $\widetilde{X}$ is therefore only characterized by the gluing data of the $B_i[\epsilon]$, lifting the gluing data of $X$. In other other, if $B_i$ and $B_j$ intersect, we need a lifting of the isomorphism $\phi_{ij} : B_{i, j} \rightarrow B_{j, i}$\footnote{$B_{ij}$ denotes the restriction of $B_i$ to the intersection of $B_i$ with $B_j$. In other words, $B_{i,j}$ is a localisation of $B_i$ by some ideal realising this intersection.} characterizing the gluing on $X$: 
	\[ \begin{tikzcd}
		B_{i, j} \arrow[r, "\phi_{ij}"] \arrow[d] \arrow[dd, bend right = 40, "\tx{id}"'] & B_{j, i} \arrow[d]\arrow[dd, bend left = 40, "\tx{id}"]\\
		B_{i, j} [\epsilon] \arrow[r, dashed, "\phi_{ij}^\epsilon"] \arrow[d] & B_{j, i}[\epsilon] \arrow[d] \\
		B_{i, j} \arrow[r, "\phi_{ij}"]  & B_{j, i} 
	\end{tikzcd}\]     	
	
	Since $\phi$ is an isomorphism, giving a filling is equivalent to taking a section of $B_{j, i}[\epsilon] \rightarrow B_{j,i}$. We can also show that such a lift is necessarily an isomorphism.\\
	
	Since such sections are equivalent to derivations of $ B_{j,i}$, a deformation is given by a family of derivations on pairwise intersections of $X$. \\
	
	Moreover the maps, $\phi_{ij}$, satisfy a cocycle condition on triple intersections that lift to a cocycle condition on the family of derivations. Such a family is then exactly an element in the first cohomology class of $T_X$.
\end{proof}

\subsubsection{Deformation theory of algebras}\label{Sec_Deformation from the Algebraic Point of View} \
\medskip

We want to describe deformations from the algebraic point of view. To do so, we start by describing deformations of commutative algebras similarly to the affine geometric situation of Remark \ref{RQ_DeformationAlgebraicGeometry}. This definition extends naturally from commutative algebras to general $\_P$-algebras and we will compare this notions of deformation that mimics geometric deformations, with deformations of $\_P$-algebraic structures as defined in Section \ref{Sec_MaurerCartanSpace} and \ref{Sec_DeformationComplex}.
 From now on we will assume that $\_P$ is Koszul, without differential. \\

 We consider the canonical model structure on groupoids in which weak equivalences are equivalences of categories and fibrations are isofibrations\footnote{An isofibration is a functor $p:\_C \to \_D$ such that for any object $e$ of $\_C$ and any isomorphism $f:p(e)\to b$, there exists an isomorphism $\tilde{f}: e \to e'$ such that $p(\tilde{f})=f$.}. This model structure is the natural model structure restricting the model structure on the category of small categories, or equivalently the transferred model structure from $\mathbf{sSet}$ via the nerve adjunction (see \cite[Theorem 2.1 and Corollary 2.3]{Ho08}).

\begin{Def} \label{Def_GroupoidDeformationPAlgebra}
	The \defi{groupoid of geometric deformations} of a $\_P$-algebra $B$ along a local Artinian  algebra $A$ (Proposition \ref{Prop_dgArtinian}) is defined as the homotopy pullback of groupoids\footnote{The superscript ${}^\mathrm{iso}$ denotes the subcategory given by the maximal subgroupoid.}: 
	\[\begin{tikzcd}
		\underline{\t_{\tx{Def}}}_B^\_P(A) \arrow[r] \arrow[d] & \star \arrow[d, "B"]\\
		\mathbf{Alg}_{\_P\otimes_k A}^{\tx{iso}}(\Mod_A) \arrow[r] & \mathbf{Alg}_{\_P}^{\tx{iso}}
	\end{tikzcd}\]
	
	where the lower functor sends $B'$ to $B' \otimes_A k$. For groupoids all objects are fibrant so we only need to have one fibration. To do that, we need to replace the map $B$ by an equivalent fibration given by the following factorization:
	\[ \begin{tikzcd}
		\star \arrow[r, "\tx{triv. cof}"]& \left(\mathbf{Alg}_{\_P}^{\tx{iso}} \right)^{/B} \arrow[r, "\tx{fib.}"] & \mathbf{Alg}_{\_P}^{\tx{iso}}
	\end{tikzcd} \]
	Computing the strict pullback with this replacement, we obtain that $\underline{\t_{\tx{Def}}}_B^\_P(A)$ is equivalent to the groupoid whose objects are elements $B' \in \mathbf{Alg}_{\_P}(\Mod_A)$ together with an isomorphism $B' \otimes_A k \overset{\sim}{\to} B$. This corresponds to the affine version of the set of geometric $A$-deformations as described in Remark \ref{RQ_DeformationAlgebraicGeometry}. 
	
	The morphisms in $\underline{\t_{\tx{Def}}}_B^\_P(A)$ correspond to the notion of equivalence described in Remark \ref{RQ_DeformationAlgebraicGeometry}. In other words, two deformations $B_1'$ and $B_2'$ are \emph{equivalent} if there exists an isomorphism $\phi : B_1' \rightarrow B_2'$ in $	\mathbf{Alg}_{\_P\otimes_k A}(\Mod_A) $ such that the following diagram commutes: 
	\[ \begin{tikzcd}
		B_1' \otimes_A k  \arrow[dr]  \arrow[rr, "\phi\otimes_A k"]& & \arrow[dl] B_2' \otimes_A k \\
		& B & 
	\end{tikzcd}\]
\end{Def}

%Notice that all algebraic definitions above make sense for algebras over operads as long as we have a suitable notion of $A$ being an Artinian $\_P$-algebra. Supposing both $A$ and $\_P$ concentrated in degree zero, for the moment the reader can take this as a meaning that $A$ is finite dimensional and nilpotent (thanks to Lemma \ref{Lem_ArtinianConditionPAlgebra}). The more general derived notion of Artinian will be given in Definition \ref{Def_SmallPAlgebra}.

 In that situation, the $\_P$-algebra structure on $B$ is given by a map $\_P \to \mathbf{End}_B$. Recall that the set of deformations of a $\_P$-algebra structure $\phi : \_P \to \mathbf{End}_B$ along $A$, given by Definition \ref{Def_DeformationGroupoidAtAMaurerCartanElement}, is the set: 
\[\tx{Def}_\phi(A) := \left\lbrace \Phi \in \tx{MC}\left( \G_g_{\_P_\infty, B}^\phi \otimes A \right), \ \Phi \cong \phi \ \tx{mod} \ \G_M \right\rbrace \]

It turns out that these \emph{algebraic} deformations coincide with the more \emph{geometric} deformations (of affine):

\begin{Prop} \label{Prop_EquivalenceAlgebraicAndGeometricDeformationOperadic} 
	
	Given $B$ a $\_P$-algebra with $\phi : \_P \rightarrow \mathbf{End}_B$ and $A$ a classical local Artinian $k$-algebra, there is a map of groupoids: 	
	\[\underline{\tx{Def}}_\phi(A) \hookrightarrow \underline{\t_{\tx{Def}}}_B^{\_P_\infty}(A) \] 
	
\end{Prop}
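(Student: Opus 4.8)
The plan is to unwind both sides explicitly and exhibit the map, then check it is well-defined and functorial. First I would use Definition \ref{Def_GroupoidDeformationPAlgebra} to identify $\underline{\t_{\tx{Def}}}_B^{\_P_\infty}(A)$, after strictifying the homotopy pullback, with the groupoid whose objects are $A$-linear $\_P_\infty\otimes_k A$-algebra structures $\Psi$ on some $A$-module $M$ together with an isomorphism $M\otimes_A k \xrightarrow{\sim} B$ of $\_P_\infty$-algebras (where $B$ carries the strict $\_P$-algebra structure $\phi$, viewed as a $\_P_\infty$-structure via $\_P_\infty = \Omega \_P^\antishriek \to \_P$), and morphisms given by $A$-linear $\infty$-isomorphisms of $\_P_\infty\otimes_k A$-algebras commuting with the identifications down to $B$. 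On the other side, by Definition \ref{Def_DeformationGroupoidAtAMaurerCartanElement}, an object of $\underline{\tx{Def}}_\phi(A)$ is a Maurer--Cartan element $\Phi\in\tx{MC}(\G_g_{\_P_\infty,B}^\phi\otimes A)$ with $\Phi\equiv\phi\pmod{\G_M}$, and a morphism is an $A$-linear $\infty$-isomorphism $f\colon(B\otimes A,\Phi)\to(B\otimes A,\Psi)$ with $f_1\equiv\id\pmod{\G_M}$.

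The map is then essentially the identity on the underlying data: given such a $\Phi$, by Definition \ref{Def_ConvolutionPreLieAlgebra} (and the twisting-morphism/codifferential correspondence of Lemma \ref{Lem_PInftyStructureCodifferential}) it \emph{is} an $A$-linear $\_P_\infty\otimes_k A$-algebra structure on the free $A$-module $B\otimes_k A$; the condition $\Phi\equiv\phi\pmod{\G_M}$ says exactly that the induced structure on $(B\otimes_k A)\otimes_A k = B$ is $\phi$, so we may take the identity of $B$ as the required identification $B\otimes_A k \xrightarrow{\sim} B$. This sends an object of $\underline{\tx{Def}}_\phi(A)$ to an object of $\underline{\t_{\tx{Def}}}_B^{\_P_\infty}(A)$ in which the underlying $A$-module happens to be free and the identification is strict. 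On morphisms the $\infty$-isomorphism $f$ with $f_1\equiv\id\pmod{\G_M}$ is already precisely a morphism in $\underline{\t_{\tx{Def}}}_B^{\_P_\infty}(A)$; functoriality (composition and identities) is then immediate since composition of $\infty$-isomorphisms is used on both sides. This constructs the functor and shows it is faithful; injectivity on objects up to isomorphism — hence that the functor is an inclusion of groupoids in the stated sense — follows because an $\infty$-isomorphism between two such free-module deformations compatible with the identity on $B$ is exactly a morphism in $\underline{\tx{Def}}_\phi(A)$.

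I would cite \cite[Lemma 12.2.5]{LV} (already invoked in Definition \ref{Def_DeformationGroupoidAtAMaurerCartanElement}) for the identification of $A$-linear $\_P_\infty\otimes_k A$-structures on $A\otimes_k V$ with Maurer--Cartan elements in $\G_g_{\_P_\infty,V}\otimes A$, which is the technical heart making the ``identity'' assignment make sense; and \cite{DSV16} for the matching of $\infty$-isotopies with gauge data on the morphism level, already referenced in Theorem \ref{Th_GaugeActionAndIsotopies}. The main obstacle, and the only place where there is genuine content rather than bookkeeping, is checking that the essential image of the functor is really a full sub-\emph{groupoid} and that no objects of $\underline{\t_{\tx{Def}}}_B^{\_P_\infty}(A)$ not of free-module type get identified: concretely one must verify that the strictification of the homotopy pullback in Definition \ref{Def_GroupoidDeformationPAlgebra} does land in free $A$-modules up to isomorphism (this uses that $A$ is classical local Artinian, so flat deformations of a $k$-module are free, paralleling the affine discussion of Remark \ref{RQ_DeformationAlgebraicGeometry}), and that the comparison respects the two different-looking notions of ``equivalence of deformations''. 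Once that compatibility of equivalences is pinned down, the remaining verifications — that $d_\phi$-twists match, that the Maurer--Cartan condition corresponds to $D^2=0$, and that the $\infty$-morphism conditions coincide — are the routine unwindings already contained in Lemma \ref{Lem_PInftyStructureCodifferential}, Definition \ref{Def_InftyMorphisms} and the functoriality remark following Definition \ref{Def_DeformationComplex}.
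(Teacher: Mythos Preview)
Your approach is essentially the same as the paper's: identify an object $\Phi\in\underline{\tx{Def}}_\phi(A)$ with the $A$-linear $\_P_\infty$-structure it defines on $B\otimes_k A$, observe that the canonical identification $(B\otimes_k A)\otimes_A k\cong B$ is an isomorphism of $\_P_\infty$-algebras precisely because $\Phi\equiv\phi\pmod{\G_M}$, and send morphisms to themselves. The paper's proof is in fact much terser than yours --- it simply records the inclusion on objects and notes that the morphism datum in Definition~\ref{Def_DeformationGroupoidAtAMaurerCartanElement} is already a morphism in $\underline{\t_{\tx{Def}}}_B^{\_P_\infty}(A)$, yielding a faithful functor.

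One small point: you may be over-reading the arrow $\hookrightarrow$. The paper only establishes faithfulness together with an injection on the \emph{set} of objects; it does not claim fullness on the essential image, nor does it argue that every object of $\underline{\t_{\tx{Def}}}_B^{\_P_\infty}(A)$ has free underlying $A$-module. Your discussion of flatness implying freeness over a classical local Artinian ring, and of matching the two equivalence relations, is correct and interesting, but it goes beyond what the proposition (as proved in the paper) asserts. If you want to keep your argument streamlined, you can drop that paragraph and simply conclude with faithfulness.
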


\begin{proof}
An element in $\underline{\tx{Def}}_\phi(A)$ is a $\_P_\infty$-structure on $B \otimes_k A$ such that modulo $\G_M_A$, it recovers the $\_P$-structure on $B$. In other words, the natural map: 
\[ B \otimes_k A \otimes_A k \to B\]
is an isomorphism on $\_P$-algebras. Therefore there is an inclusion of the set of objects. 

Furthermore, the definition of morphisms in $\underline{\tx{Def}}_\phi(A)$ (Definition \ref{Def_DeformationGroupoidAtAMaurerCartanElement}) induces a morphism in $\underline{\t_{\tx{Def}}}_B^{\_P_\infty}(A)$ defining the faithfull functor we seek.
%	Definition \ref{Def_GroupoidDeformationPAlgebra} gives us a strict model for the homotopy pullback $\underline{\t_{\tx{Def}}}_B^{\_P_\infty}(A)$ whose set of objects is given by the set of $B' \in \mathbf{Alg}_{\_P_\infty \otimes_k A} \left( \Mod_A \right) $ together with an isomorphism $B' \otimes_A k \to B$ of $\_P$-algebras. This is exactly given by a $\_P_\infty \otimes_k A$ structure on $B'$ that coincide modulo $\G_M_A$ with the $\_P$-algebra structure on $B'\otimes_A k \cong_\_P B$. This is exactly the set $\tx{Def}_\phi(A)$ and therefore we have the set of object of the strict model for $\tx{Def}_\phi(A)$ described in Definition \ref{Def_GroupoidDeformationPAlgebra} is exactly $\tx{Def}_\phi(A)$
%	
%	This identification extends to an identification of groupoids since the equivalence described in Definition \ref{Def_GroupoidDeformationPAlgebra} is exactly the equivalence relation defined in \ref{Def_DeformationGroupoidAtAMaurerCartanElement}. This implies that:
%	\[\t_{\underline{\tx{Def}}}_B^\_P(A) \simeq \underline{\tx{Def}}_\phi(A)\] 
\end{proof}

We will now extend the previous construction to the full $\infty$-groupoids of deformations. This $\infty$-groupoid will both extend the previous ones to higher homotopies of deformations and will take into account the homotopy theory of $\_P$-algebras.  

\begin{Def}[{\cite[Definition 2.1]{Hi04}}] \label{Def_HigerDeformationGroupoidGeometric}
	
	Let $A$ be an Artinian commutative algebra concentrated in non-positive degree (Proposition \ref{Prop_dgArtinian}) and $\_P$ a Koszul operad. We define $\_W^c (\_P, A)$ to be the simplicial category whose objects are cofibrant $\_P \otimes_k A$-algebras in $\Mod_A$ and whose $n$-morphisms from $B$ to $C$ are given by quasi-isomorphisms $B \rightarrow \Omega (\Delta^n) \otimes C$. Then the \defi{higher deformation groupoid} $\mathbf{Def}_A (B)$ of $B \in \mathbf{Alg}_\_P$ is defined as the homotopy pullback: 
	
	\[ \begin{tikzcd}
		\mathbf{Def}_A (B) \arrow[r] \arrow[d] & \star \arrow[d, "\tilde{B}"] \\
		N(\_W^c (\_P,A)) \arrow[r] & N(\_W^c (\_P,k))
	\end{tikzcd}\]
	
	Where $\tilde{B}\rightarrow B$ is a cofibrant resolution of $B$ and $N$ denotes the simplicial nerve functor. 
\end{Def}

\begin{RQ}
	In \cite{Hi04}, it is shown thanks to Theorem 2.1.2, that for $\_P$ and $B$ non-positively graded, the deformation $\infty$-groupoid $\mathbf{Def}_A (B)$ is naturally equivalent to the higher Deligne groupoid $\mathbf{Del}\left(\Tt_B^\_P \otimes_k A\right)$ of the $\_P$-operadic tangent Lie algebra (see Definition \ref{Def_Operadic(Co)TangentComplex}). 
\end{RQ}

	The operadic tangent and cotangent complexes are defined in a very similar way to what was explained in Section \ref{Sec_CotangentComplexHigherDeformation} for $\_P = \mathbf{Com}$. %Square zero extensions in the context of augmented operads are discussed in \cite[Section 12.3.2]{LV}. This leads to the following definition.
	
\begin{Def}[Operadic Tangent Complex] \label{Def_Operadic(Co)TangentComplex}\
Let $A$ be a $\_P$-algebra. A \defi{derivation }of $A$ valued in $A$  is a map $D\colon A \to A$ such that for all $p\in \_P(n)$, 
\[
	D(p(b_1,\dots,b_n))= \sum_{i=1}^n p(b_1,\dots,Db_i,\dots,b_n).
\]
The $k$-module of all derivations $\mathrm{Der}(A,A)$ is a Lie algebra with bracket given by the commutator of derivations.

The derived derivations of $A$, which we denote $\Tt_A^\_P$, is called the \defi{tangent complex} of $A$. 	
\end{Def}

As the name indicates, the derived derivations can be obtained as a derived functor. Indeed, there is an operadic notion of module of Kähler differentials, $\Omega_A^\_P$ and the \defi{operadic cotangent complex} $\Ll_A^\_P$, see \cite[Sections 12.3.8--12.3.10]{LV}. One could equivalently define $\Tt_A^\_P= \iHom_{A}\left(\Ll_A^\_P, A \right)$.

To complete this story, we would like to have a relationship between $\Tt_A^{\_P}$ and the deformation complex, to have  a relationship between the higher deformation groupoid of Definition \ref{Def_HigerDeformationGroupoidGeometric} and the higher Deligne groupoid (Definition \ref{Def_HigherDeligneGroupoid}).

%	Suppose the Koszul operad $\_P$ is generated in arities $\geq 2$. 
To obtain a model of $\Tt_A^{\_P}$ (which is only defined up to quasi-isomorphism) we choose the cofibrant resolution of $A$ given by the bar-cobar resolution. We have then

\[
%\hspace{-.3cm}
\Tt_A^{\_P}
 %= \Rr \tx{Der}_k(A,A) 
\simeq \tx{Der}_k(\Omega_\kappa \mathbf B_\kappa A,\Omega_\kappa \mathbf B_\kappa A) = \Hom_k(\mathbf B_\kappa A,\Omega_\kappa \mathbf B_\kappa A) \stackrel{p}\to \Hom_{k} \left( \mathbf B_\kappa A, A \right).\]
The map $p$ is induced by the bar-cobar resolution and is therefore a quasi-isomorphism of complexes. In fact, $\Hom_{k} \left( \mathbf B_\kappa A, A \right)$ can be identified with coderivations on $\mathbf B_\kappa A$ and therefore has a Lie algebra structure making $p$ a quasi-isomorphism of Lie algebras. 
Notice that ignoring differentials $\mathbf B_\kappa A = \_P^\antishriek \circ A$ and therefore as graded vector spaces we have

\[\iHom_{k} \left( \mathbf B_\kappa  A, A \right) \cong \prod_{n\geq 1} \iHom_{k} \left( \_P^\antishriek(n), \End_A(n) \right) \]

We recognize here the underlying graded vector space of the deformation complex, up to a caveat: In the deformation complex one should remove the counit by taking $\overline{\_P^\antishriek}$ instead, see Remark \ref{rem:A remark on augmentations}.

 Keeping track of the differential and the bracket, we see that they correspond precisely to the differential and bracket of the deformation complex. Taking this model for $ \Tt_A^{\_P}$, the projection $\_P^\antishriek \to \overline{\_P^\antishriek}$ yields a well defined inclusion of Lie algebras:

$$\G_g_{\_P_\infty, A}^\phi\to \Tt_A^\_P$$

		This map is the Lie algebra version of the map of groupoids of Proposition \ref{Prop_EquivalenceAlgebraicAndGeometricDeformationOperadic}.
		In short, we have shown the following Proposition:
		
		\begin{Prop}\label{Prop_TangentDGLAAndDeformationComplex} 
	Let $\phi : \_P \rightarrow \mathbf{End}_A$ be a $\_P$-algebra structure on $A\in \chk$. There is a homotopy fiber sequence of Lie algebras:
			
			\[ \G_g_{\_P_\infty, A}^\phi\to \Tt_A^\_P \to \Hom(A,A)\]
		\end{Prop}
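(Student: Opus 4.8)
The plan is to unravel the models already set up in the discussion immediately preceding the statement and check that the three Lie algebras fit into the asserted fibre sequence. First I would fix the concrete model $\Tt_A^\_P \simeq \iHom_k(\mathbf B_\kappa A, \Omega_\kappa \mathbf B_\kappa A) \xrightarrow{p} \iHom_k(\mathbf B_\kappa A, A)$ from the paragraph above the proposition, recalling that the right-hand side is identified with coderivations on $\mathbf B_\kappa A$, which carries a Lie bracket making $p$ a quasi-isomorphism of Lie algebras. Then I would observe that, ignoring differentials, $\mathbf B_\kappa A = \_P^\antishriek \circ A$, so as graded $k$-modules
\[
\iHom_k(\mathbf B_\kappa A, A) \cong \prod_{n\geq 1} \iHom_k(\_P^\antishriek(n), \End_A(n)),
\]
and, keeping track of the differential and bracket, this agrees with $\Tt_A^\_P$ as a dg Lie algebra, whereas the deformation complex $\G_g_{\_P_\infty, A}^\phi$ is the same product but over $\overline{\_P^\antishriek} = \_P^\antishriek/I$ (Definition \ref{Def_ConvolutionPreLieAlgebra}, together with Remark \ref{rem:A remark on augmentations}), twisted by the Maurer--Cartan element $\phi$.

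Next I would exhibit the three maps. The arity-$1$ part $\_P^\antishriek(1) = I(1) = k$ splits off the direct product, giving a short exact sequence of graded $k$-modules
\[
0 \to \prod_{n\geq 2}\iHom_k(\_P^\antishriek(n),\End_A(n)) \to \prod_{n\geq 1}\iHom_k(\_P^\antishriek(n),\End_A(n)) \to \iHom_k(k,\End_A(1)) \to 0,
\]
and the last term is $\iHom_k(A,A) = \Hom(A,A)$ as a cochain complex. The projection $\_P^\antishriek \to \overline{\_P^\antishriek}$ identifies the first term with $\G_g_{\_P_\infty,A}^\phi$ and induces the inclusion of Lie algebras $\G_g_{\_P_\infty,A}^\phi \to \Tt_A^\_P$ described just before the statement; the quotient map $\Tt_A^\_P \to \Hom(A,A)$ is the ``value on the counit / restriction to the underlying complex'' map. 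I would then check that these are morphisms of dg Lie algebras: compatibility with the brackets is immediate because the bracket on $\Tt_A^\_P$ is the commutator of coderivations and the projection $\_P^\antishriek\to\overline{\_P^\antishriek}$ is a coalgebra map, while compatibility with the differentials uses that twisting by $\phi$ matches the codifferential on $\mathbf B_\kappa A$ coming from the $\_P$-structure (Lemma \ref{Lem_PInftyStructureCodifferential} and the discussion of $\mathbf B_\alpha$ in Definition \ref{Def_BarCobarAlgebra}).

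Finally I would argue that the sequence is a homotopy fibre sequence. Since it is degreewise (indeed termwise in each arity) split short exact as a sequence of cochain complexes, it is a distinguished triangle in the derived category, hence $\G_g_{\_P_\infty,A}^\phi$ is the homotopy fibre of $\Tt_A^\_P \to \Hom(A,A)$; as all three maps are maps of dg Lie algebras this is a homotopy fibre sequence of Lie algebras in the sense used elsewhere in the text. The main obstacle I expect is bookkeeping rather than conceptual: one must pin down signs and degree shifts so that the identification of $\prod_{n\geq 1}\iHom_k(\_P^\antishriek(n),\End_A(n))$ with the chosen model of $\Tt_A^\_P$ is genuinely an isomorphism of dg Lie algebras (not merely of graded Lie algebras), and check that the twist by $\phi$ on the deformation-complex side is exactly the restriction of the bar-cobar differential on the tangent side. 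I would handle this by citing \cite[Sections 12.3.8--12.3.10]{LV} for the cotangent complex together with the explicit coderivation description already invoked in the paragraph preceding the proposition, so that no new computation is needed beyond matching conventions.
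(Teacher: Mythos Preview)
Your proposal is correct and follows essentially the same approach as the paper. The paper does not give a separate proof: the proposition is stated as a summary of the discussion immediately preceding it (``In short, we have shown the following Proposition''), and your argument unpacks exactly that discussion---choosing the bar--cobar model for $\Tt_A^\_P$, identifying it with $\prod_{n\geq 1}\iHom_k(\_P^\antishriek(n),\End_A(n))$, splitting off the arity-$1$ part as $\Hom(A,A)$, and recognizing the kernel as $\G_g_{\_P_\infty,A}^\phi$---while adding the explicit verification (which the paper leaves implicit) that the resulting short exact sequence is a homotopy fibre sequence of Lie algebras.
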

	
\begin{RQ}\label{RQ:heuristic associative FMP}
Heuristically,  Proposition \ref{Prop_TangentDGLAAndDeformationComplex} expresses that the ``difference'' between the two deformation problems is the deformation of the underlying $k$-module, which is encoded by $\Hom(A,A)$.
		 	 
Notice that the Lie algebra structure on $\Hom(A,A)$ in fact arises from an associative algebra structure. From the deformation perspective, this means that the corresponding deformation problem is also richer. This will correspond to an \emph{associative} (as opposed to commutative, not Lie) formal moduli problem and its formalisation will be the topic of the next sections.
	\end{RQ}

\begin{RQ}[Global Deformations]
	
	So far, we only talked about deformations of ``affine objects'', that is $\_P$-algebras. However, we could be interested in deforming not only $\_P$-algebras, but a sheaf of $\_P$-algebras in the spirit of the first part of Remark \ref{RQ_DeformationAlgebraicGeometry}. 
	
	In \cite{Hi05}, it is shown that the deformations of a sheaf of $\_P$-algebras $\_A$ are controlled, under some conditions on $\_P$ and $\_A$ given in \cite[Assumption 3.5.1]{Hi05}, by the higher Deligne groupoid of the derived global sections of the presheaf of local tangent Lie algebras, which by definition gives the tangent cohomology. 
\end{RQ}

\subsection{Formal Moduli Problems}\
\label{Sec_Formal Moduli Problem}

\medskip

This section is about axiomatizing the notion of deformation theory. After seeing how deformations from an algebraic and from a geometric point of view are related, we expect them to have very similar properties which we will encode in the notion of formal moduli problems.

The idea is that deformations can be understood as a functor sending an ``Artinian'' object (which we think think of as a parameter space) to a space of deformations along this object. For example such a deformation functor assigns for each choice of Artinian local algebra $A$ a set, groupoid or $\infty$-groupoid of deformations of $X$ along $A$. As the category of local Artinian algebras and the notion of ``infinitesimal'' object will play a more important role from here, we will start, following \cite{Lu11}, by defining a general framework in which we can speak of deformations and Artinian objects.

In this section, $\_P$ will be an augmented Koszul operad. 

\subsubsection{Artinian and small algebras in a deformation context}\
\label{Sec_Artin and Small Algebras in a Deformation Context}

\medskip

The general idea behind Artinian algebras is to have a class of algebras considered ``small'' so that deforming along those algebras amounts to consider \emph{infinitesimal deformations}, looking at the \emph{formal neighborhood} of what we want to deform. We will always assume that our ambient category $\_A$, in which we will define the notion of Artinian object, has a terminal object. This leads to the general framework of deformation context: 

\begin{Def}[{\cite[Definition 1.1.3]{Lu11}}] \label{Def_DeformationContext}
	A \defi{deformation context} is a pair $\left( \_A, \lbrace E_\alpha \rbrace_\alpha \right)$  where $\_A$ is a presentable\footnote{{$\_A$ is presentable if it is generated by a small set of ``small objects'' under homotopy colimits.}} $\infty$-category and $\lbrace E_\alpha \rbrace_\alpha$ is a set of objects in $\mathbf{Stab}(\_A)$, the stabilization\footnote{Recall that objects in $\mathbf{Stab}(\_A)$ correspond to spectrum objects in $\_A_*$, i.e., a sequence  $E=(\cdots, a_{2}, a_{1},a_0, a_{-1}, \cdots)$  of pointed objects of $\_A$, such that $a_i$ is equivalent to the loop space $\Omega a_{i+1}$. For $n\geq 0$, we denote $\Omega^{\infty-n}E = a_{n}$.} of $\_A$.  
	
		%already in nomenclature Stab
\end{Def}

	  Intuitively, $\lbrace E_\alpha \rbrace_\alpha$ corresponds to the first-order objects we want to deform along. 	
To compare with the classical picture, we take $\_A$ to be the category of commutative $k$-algebras. Since previously we took $k[\epsilon]$ as the algebra representing first-order deformations, we now take it as the underlying object of the spectrum object $E= (\cdots,k[\epsilon_2],k[\epsilon_1],k[\epsilon_0],\cdots)$, where $k[\epsilon_i]$ is the $2$-dimensional augmented commutative algebra with $\deg \epsilon_i=-i$ and $\epsilon_i^2 = 0$. {We will see in Section \ref{Sec_tangent-complex-of-a-formal-moduli-problem} that deformations along $k[\epsilon_i]$ are both related to the higher tangent complex and control obstructions to lifting deformations.} 

In all cases we consider, the family $\lbrace E_\alpha \rbrace_\alpha$ contains only a single object. Here are some other examples of deformation contexts. 

\begin{RQ}\label{RQ: Stabilisation for P algebras}\
	
	\begin{itemize}
		\item Given a commutative algebra $A$, the stabilization of $A$-augmented commutative $A$-algebra, $\mathbf{Stab}\left(\mathbf{Alg}_{\mathbf{Com}}^{A//A}\right)$ can be identified with $\Mod_A$ using the square zero extension functor, see \cite[Theorem 3.7]{Ba04}
		\[ \begin{tikzcd}[row sep = tiny]
			\Mod_A \arrow[r, "\sim"] & \mathbf{Stab}\left(\mathbf{Alg}_{\mathbf{Com}}^{A//A}\right)\\
			M \arrow[r, mapsto] & \left( A \oplus M[n] \right)_{n \in \Zz}
		\end{tikzcd}\]  
		\item To understand $\mathbf{Stab}\left(\mathbf{Alg}_{\mathbf{Com}}\right)$ first observe that there is an equivalence: 
		\begin{equation}\label{eq: equivalence unital non-unital commutative algebras}
		\begin{tikzcd}
		\mathbf{Alg}_{\mathbf{Com}} \arrow[r, shift left] & \arrow[l, shift left] \mathbf{Alg}_{\mathbf{Com}}^{k//k}  
		\end{tikzcd}
		\end{equation}
		
		sending a non-unital commutative algebra $B$ to its unitalization ${k \oplus B}$ and an augmented $k$-algebra $A$ to the fiber of the augmentation $A \to k$. This equivalence induces an equivalence: 
		
		\[ \begin{tikzcd}
		\mathbf{Stab}\left(\mathbf{Alg}_{\mathbf{Com}} \right) \arrow[r, shift left] & \arrow[l, shift left] \mathbf{Stab}\left( \mathbf{Alg}_{\mathbf{Com}}^{k//k}  \right) 
		\end{tikzcd} \]
		
		As such $\mathbf{Stab}\left(\mathbf{Alg}_{\mathbf{Com}} \right)$ is also equivalent to $\Mod_k$ and under this equivalence the spectrum object corresponding to $M \in \Mod_k$ is $(M[n])_{n \in \Zz}$ with the trivial commutative product on each $M[n]$.
		\item Similarly $\mathbf{Stab}\left(\mathbf{Alg}_{\_P} \right)$ will be given by the category of $\_P$-modules. A $\_P$-module $M$ corresponds the spectrum object $(M[n])_{n \in \Zz}$ where $M[n]$ is viewed with the trivial $\_P$-algebra structure\footnote{Since we assume that $\_P$ is augmented, Remark \ref{RQ_TrivalPAlgebra} gives us a functor $\Mod_k \to \mathbf{Alg}_\_P$ giving the trivial $\_P$-algebra structure to a module.}.
		
	\end{itemize}

\end{RQ}

\begin{Ex}\label{Ex_DeformationContexts}\
	
	\begin{itemize}
		
		\item For $\_A = \Mod_A$ (with $A \in \mathbf{Alg}_{\mathbf{Com}}$), we can define the deformation context $\left(  \Mod_A, \lbrace E \rbrace \right)$ with only one spectrum object $E = \left( A[n] \right)_{n \in \Zz}$ (\cite[Example 2.2]{CG18}). 
		
		\item  Given $A$ a commutative algebra, we consider $\_A = \mathbf{Alg}_{\mathbf{Com}}^{A//A}$  and we can define the following deformation context: 
		\[\left( \mathbf{Alg}_{\mathbf{Com}}^{A//A}, \lbrace E = \left( A \oplus A[n] \right)_{n \in \Zz} \rbrace \right)\]
		
		where $A \oplus A[n]$ is the square zero extension of $A$ by $A[n]$ (\cite[Remark 2.5]{CG18}). When $A=k$, we will call this deformation context $\dccomaug$ for short.

		\item For $\_A = \mathbf{Alg}_{\mathbf{Com}}$ we consider the following deformation context: 
		\[ \left(\mathbf{Alg}_{\mathbf{Com}}, \lbrace \left( k[n] \right)_{n \in \Zz} \rbrace \right) \]
		
		where $k[n]$ is the trivial commutative algebra on $k[n]$. This deformation context can also be seen as obtained from a transfer of deformation contexts along the adjunction of Equation \eqref{eq: equivalence unital non-unital commutative algebras} (see \cite[Lemma 2.6]{CG18}). 
		
		\item For $\_A = \mathbf{Alg}_\_P$ we consider the deformation context:
		\[\left( \mathbf{Alg}_\_P, (k[n])_{n\in \Zz} \right)\]
		We will denote it by $\dcP$ for short.
	
	\end{itemize}
\end{Ex}

From a deformation context, we can define the notion of ``small'' objects and morphisms using $\Omega^{\infty-n}E_\alpha$ as building blocks.

\begin{Def}[{\cite[Definition 1.1.14]{Lu11}}] \label{Def_SmallObjectsAndMorphisms}
	Given a deformation context $\left( \_A, \lbrace E_\alpha \rbrace_{\alpha \in T} \right)$ we say that:
	\begin{itemize}
		\item A morphism $f : A \rightarrow A'$ is \defi{elementary} if it is given by a pullback of the form: 
		\[ \begin{tikzcd}
			A \arrow[r] \arrow[d, "f"] & \star \arrow[d] \\
			A' \arrow[r] & \Omega^{\infty -n} ( E_\alpha)
		\end{tikzcd}\]
		
		for some $\alpha \in T$ and $n \geq 1$. 
		\item A morphism $f: A \rightarrow A'$ is \defi{small} if it is a finite composition of elementary morphisms.
		\item An object $A \in \_A$ is \defi{Artinian} if the morphism $\_A \rightarrow \star$ is small\footnote{Artinian objects can be though as ``small'' objects. In fact they are called ``small'' in \cite{Lu11}.}. We denote by $\mathbf{Art}_\_A$ the full sub-category of $\_A$ given by small objects. 
		
	\end{itemize}
	
\end{Def}

\begin{Ex}\label{ex:what are artinian algebras}
	Going back to the example of (augmented) commutative algebras, we  have $\Omega^{\infty -n} E = k[\epsilon_n]$ so that in order to compute the homotopy pullback of $\star \to \Omega^{\infty -n} E$, one can replace the point $\star = k$ by the algebra $k\oplus k\epsilon_n \oplus k\epsilon_{n-1}$ (notice that $n\geq 1$), with the only non-zero product being the unit and with differential sending $\epsilon_{n}$ to $\epsilon_{n-1}$. 
	
	The strict pullback exhibits therefore $A$ as a square zero extension of $A'$ along $k\epsilon_{n-1}$. In particular, an Artinian commutative algebra concentrated in degree zero is a classical Artinian local algebra in the sense of Definition \ref{Def_SmallComAlgebra}, thanks to Example \ref{Ex:sq zero}.
	
Conversely, suppose $A$ is Artinian. Then for any square zero extension by the algebra $k[\epsilon_n]$:
	\[ \begin{tikzcd}
		k[\epsilon_{n}]  \arrow[r] & B \arrow[r] & A ,
	\end{tikzcd}\]

	$B$ is also Artinian.
	Indeed up to {pulling back to} a quasi-free resolution of $A$, the surjection $B \rightarrow A$ splits as graded algebras and $B \simeq A \oplus k\epsilon_{n}$ with differential $d(a,v) = (da,\chi(a))$. We can then check that $B$ can be realised as the homotopy pullback square:
	\[ \begin{tikzcd}
		B \arrow[r] \arrow[d] & k \arrow[d] \\
		A \arrow[r, "\chi"] & k[\epsilon_{n+1}] 
	\end{tikzcd}\] 
	
	where $\chi$ is obtained as a (shift of) the part of the differential on $A \oplus k[n]$ going from $A$ to $k[n]$. In particular, we have $B \simeq  A \times_{k[n+1]}^h 0$ and therefore $B$ is Artinian. 

\end{Ex}

We will now specialize these definitions to the deformation contexts we are interested in, namely $\dccomaug$ and $\dcP$. 

\begin{Prop}[Artinian Commutative Augmented $k$-Algebras]

	The category $\small$ is the smallest full sub-category of $\mathbf{Alg}_{\mathbf{Com}}^{k//k}$  such that:  
	\begin{itemize}
		\item $ k \oplus k[n] \in \small$ for all $n\geq 0$
		\item For any $A \in \small$ and any map $A \rightarrow k \oplus k[n]$ with $n\geq 1$, the homotopy pullback $A \times_{k \oplus k[n]} k$ is also Artinian:
		
		\[\begin{tikzcd}
			A \times_{k\oplus k[n]} k \arrow[r] \arrow[d] & k \arrow[d] \\
			A \arrow[r] &  k \oplus k[n]
		\end{tikzcd}\] 
	\end{itemize}   

This is a variation of Proposition \ref{Prop_SmallCategoryDescription}. 	
\end{Prop}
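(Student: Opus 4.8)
The plan is to show this proposition follows from specializing the general notion of Artinian object (Definition \ref{Def_SmallObjectsAndMorphisms}) to the deformation context $\dccomaug = \left( \mathbf{Alg}_{\mathbf{Com}}^{k//k}, \lbrace E = (k\oplus k[n])_{n\in\Zz}\rbrace\right)$, and then to unpack the definitions of elementary and small morphisms using the explicit computation of homotopy pullbacks of commutative algebras that appeared in Example \ref{ex:what are artinian algebras}. First I would observe that for this deformation context, $\Omega^{\infty-n}E = k\oplus k[n]$, so an elementary morphism $f\colon A\to A'$ is precisely a homotopy pullback of the form
\[
\begin{tikzcd}
A \arrow[r] \arrow[d, "f"] & k \arrow[d]\\
A' \arrow[r] & k\oplus k[n]
\end{tikzcd}
\]
for some $n\geq 1$. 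An object $A$ is Artinian when the terminal map $A\to k$ is small, i.e.\ a finite composite of such elementary morphisms starting from $k$. Since $k = k\oplus k[n]$ is \emph{not} in the list (the shift must be $\geq 0$ and we also need to reach it by a composite), one checks that the very first elementary morphism in any such composite must be $k\oplus k[m]\to k$ for some $m\geq 0$: that is, the homotopy pullback $k\times_{k\oplus k[m+1]} k \simeq k\oplus k[m]$, which recovers the first bullet point.

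The core of the argument is then a two-way inclusion. For one direction, I would argue that the class of Artinian algebras is closed under the operation described in the second bullet: given $A\in\small$ and a map $A\to k\oplus k[n]$ with $n\geq 1$, the homotopy pullback $A\times_{k\oplus k[n]}k$ is obtained from $A$ by one more elementary morphism, hence is again a finite composite of elementary morphisms from $k$, hence Artinian. Together with the base case $k\oplus k[n]\in\small$ for $n\geq 0$, this shows that the smallest full subcategory closed under these two operations is contained in $\small$. For the reverse inclusion, I would proceed by induction on the length of a small morphism $k\to A$: if $A$ is built from $A'$ by an elementary morphism along some $k\oplus k[n]$ with $n\geq 1$, then by induction $A'$ lies in the smallest such subcategory, and $A\simeq A'\times_{k\oplus k[n]}k$ is obtained by exactly the operation in the second bullet, so $A$ lies there too; the base of the induction is $k$ itself, which equals $k\oplus k[n]$ only vacuously, but any one-step Artinian algebra is $k\oplus k[m]$ with $m\geq 0$, covered by the first bullet.

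The one subtlety — and the step I expect to require the most care — is matching up the \emph{direction} of the pullback squares: Definition \ref{Def_SmallObjectsAndMorphisms} writes elementary morphisms as pullbacks against $\Omega^{\infty-n}E_\alpha = k\oplus k[n]$ with $n\geq 1$, whereas the second bullet of the proposition also pulls back against $k\oplus k[n]$ with $n\geq 1$; one must verify that these genuinely describe the same class of squares and, in particular, that the off-by-one shift appearing in Example \ref{ex:what are artinian algebras} (where a square-zero extension of $A'$ by $k[n-1]$ is realized as a pullback against $k[n]$) is correctly bookkept. This is exactly the content flagged in Example \ref{ex:what are artinian algebras}, so I would invoke that computation: the strict pullback against $k\oplus k\epsilon_n\oplus k\epsilon_{n-1}\to k$ exhibits $A$ as a square-zero extension of $A'$ by $k\epsilon_{n-1}$, and conversely any square-zero extension arises this way. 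Thus the proposition is indeed, as the statement says, ``a variation of Proposition \ref{Prop_SmallCategoryDescription},'' and the proof amounts to transporting that general statement along the identifications of Remark \ref{RQ: Stabilisation for P algebras} and Example \ref{Ex_DeformationContexts} for the commutative case. I would keep the write-up short, citing Example \ref{ex:what are artinian algebras} for the pullback computations rather than repeating them.
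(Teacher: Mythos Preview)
Your approach is correct and essentially the same as the paper's: the paper gives no separate proof for this proposition at all, simply remarking that it is a variation of Proposition \ref{Prop_SmallCategoryDescription}, whose proof is precisely the two-way inclusion you describe (first bullet via $k\times_{k\oplus k[n+1]}k\simeq k\oplus k[n]$, second bullet by composing one more elementary morphism, and minimality by induction on the length of the small morphism). Your parenthetical worry about the terminal object $k$ not being literally of the form $k\oplus k[n]$ is a harmless sloppiness already present in the paper's statement; you can safely drop that digression and keep the write-up as short as you suggest.
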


We can also describe Artinian algebras in more classical terms:

\begin{Prop}[{\cite[Proposition 1.1.11]{Lu11}}]\label{Prop_dgArtinian}
	$A \in \mathbf{Alg}_{\mathbf{Com}}^{k//k}$ is Artinian if and only if:
	\begin{itemize}
		\item $H^i(A) = 0$ for all $i >0$.
		\item $H^*(A)$ is of total finite dimension.
		\item Artinian condition: 		
		\begin{center}
			$\ker \left( H^0(A) \rightarrow k \right)$ is nilpotent. 
		\end{center} 
	\end{itemize}
	
In particular $H^0(A) \in \art$ and there is an embedding of strict categories $\art \rightarrow \small$. Indeed, as we have seen in Example \ref{Ex:sq zero}, classical local Artinian algebras are created by successive square-zero extensions. 
\end{Prop}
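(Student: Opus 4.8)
The plan is to prove Proposition \ref{Prop_dgArtinian} by unravelling the inductive definition of Artinian objects (Definition \ref{Def_SmallObjectsAndMorphisms}) in the deformation context $\dccomaug = \left(\mathbf{Alg}_{\mathbf{Com}}^{k//k}, \{(k\oplus k[n])_{n\in\Zz}\}\right)$, and checking that the three stated conditions are precisely what is preserved under elementary extensions. Recall that $\Omega^{\infty-n}E = k\oplus k[n]$ here, so an elementary morphism $A\to A'$ is exactly a homotopy pullback $A \simeq A' \times^h_{k\oplus k[n]} k$ with $n\geq 1$, and an Artinian algebra is a finite iteration of such pullbacks starting from $k$ itself.

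First I would prove the ``only if'' direction by induction on the length of the chain of elementary morphisms exhibiting $A$ as Artinian. The base case $A = k$ is trivial. For the inductive step, suppose $B \simeq A\times^h_{k\oplus k[n]}k$ with $A$ Artinian and $n\geq 1$, and compute the homotopy fiber sequence. As explained in Example \ref{ex:what are artinian algebras}, one replaces the point $k$ by the quasi-free resolution $k\oplus k\epsilon_n\oplus k\epsilon_{n-1}$, so that the strict pullback exhibits $B$ as a square-zero extension of $A$ by $k[n-1]$, giving a distinguished triangle $k[n-1]\to B\to A$ in $\Mod_k$. From the associated long exact sequence in cohomology, together with the inductive hypotheses that $H^{>0}(A)=0$, $\dim_k H^*(A)<\infty$ and $\ker(H^0(A)\to k)$ is nilpotent, one reads off: $H^{>0}(B)=0$ (using $n-1\geq 0$ and the fact that the map $H^0(A)\to H^0(k[n-1]) = 0$ for $n\geq 2$, resp. a direct splitting argument for $n=1$); that $\dim_k H^*(B)\leq \dim_k H^*(A) + 1 <\infty$; and that $\ker(H^0(B)\to k)$ sits in an extension of $\ker(H^0(A)\to k)$ by an ideal of square zero (the image of $H^0(k[n-1])$), hence is still nilpotent. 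The only delicate case is $n=1$, where $H^0(k[0]) = k$ contributes in degree $0$: here one uses that the pullback is along a map $A\to k\oplus k$ whose composite with the augmentation is the augmentation of $A$, so $B$ is genuinely a square-zero extension of $A$ by the ideal $k\epsilon_0$ with $\epsilon_0^2=0$, and nilpotence of the augmentation ideal is preserved.

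For the ``if'' direction, suppose $A\in\mathbf{Alg}_{\mathbf{Com}}^{k//k}$ satisfies the three conditions. The strategy is to produce $A$ as a finite iterated square-zero extension of $k$, and then invoke the second bullet of Example \ref{ex:what are artinian algebras}, which shows that a square-zero extension of an Artinian algebra by $k[n]$ is again Artinian. Write $\G_M = \ker(H^0(A)\to k)$; by nilpotence there is an $N$ with $\G_M^N = 0$, and the finite filtration $H^0(A) \supset \G_M \supset \G_M^2 \supset\cdots\supset \G_M^N = 0$ (using total finite-dimensionality so each quotient is finite-dimensional) exhibits $H^0(A)$ as an iterated square-zero extension of $k$ by finite-dimensional vector spaces, exactly as in Example \ref{Ex:sq zero}. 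To handle the (dg) algebra $A$ rather than just its $H^0$, one first passes to a minimal model / uses Postnikov-type truncations: since $H^{>0}(A)=0$ and $H^*(A)$ is finite-dimensional, $A$ is quasi-isomorphic to a quasi-free algebra on finitely many generators in non-positive degrees, and one builds it up one generator at a time; attaching a generator in degree $-i$ against a cycle in degree $-i+1$ is, up to quasi-isomorphism, precisely a homotopy pullback along a map to $k\oplus k[i]$ as in Example \ref{ex:what are artinian algebras}. Assembling these finitely many elementary steps shows $A$ is Artinian.

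The main obstacle I anticipate is the bookkeeping in the ``if'' direction: carefully producing a \emph{finite} tower of elementary morphisms from the three cohomological conditions, i.e.\ reconciling the ``classical'' characterization (nilpotent finite-dimensional) with the homotopy-pullback definition, which requires knowing that a minimal/quasi-free model of $A$ has finitely many generators and that each generator-attachment is an elementary morphism in the sense of Definition \ref{Def_SmallObjectsAndMorphisms}. The degenerate case $n=1$ in the forward direction (where the pullback diagram's fiber lands in degree $0$) also needs a separate careful check to see that it does not break the vanishing $H^{>0}(B)=0$. The final sentence, the embedding $\art\hookrightarrow\small$, is then immediate: an object of $\art$ is non-graded, so $H^{>0}=0$ and $H^0 = A$ is automatic, finite-dimensionality is part of the Artinian hypothesis, and nilpotence of the maximal ideal is the defining condition of Definition \ref{Def_SmallComAlgebra}; functoriality of the inclusion is clear since a morphism of classical Artinian algebras is in particular a morphism in $\mathbf{Alg}_{\mathbf{Com}}^{k//k}$.
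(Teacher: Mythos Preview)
The paper does not give its own proof of this proposition: it is stated with a citation to \cite[Proposition 1.1.11]{Lu11} and no argument is supplied. So there is nothing in the paper to compare your proposal against beyond Example~\ref{ex:what are artinian algebras}, which you are already using.

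Your outline is essentially Lurie's argument and is correct in structure. Two small comments. First, your worry about the case $n=1$ in the forward direction is overstated: from the fiber sequence $k[n-1]\to B\to A$ one has $H^1(k[n-1])=0$ for every $n\geq 1$, so $H^1(B)\hookrightarrow H^1(A)=0$ directly via the long exact sequence; no separate splitting argument is needed. Second, there is a minor index slip in the backward direction: a square-zero extension by $k[n-1]$ is realized as a pullback along a map to $k\oplus k[n]$, so attaching a generator in degree $-i$ corresponds to a pullback along $k\oplus k[i+1]$, not $k\oplus k[i]$. This does not affect the argument since $i\geq 0$ still gives $i+1\geq 1$, but it is worth getting right. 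The genuine work in the ``if'' direction, as you note, is producing a \emph{finite} chain; the cleanest way (and Lurie's) is to induct on $\dim_k H^*(\mathfrak m_A)$ rather than to invoke a minimal model, finding at each step a nonzero map $A\to k\oplus k[n]$ whose pullback drops the dimension by one.
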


 With Definition \ref{Def_SmallObjectsAndMorphisms} and the deformation context on $\mathbf{Alg}_\_P$ of Example \ref{Ex_DeformationContexts} we have now a suitable notion of Artinian algebra over an operad.

\begin{Prop}  \label{Prop_SmallCategoryDescription}
	The $\infty$-category $\smallp$ of Artinian $\_P$-algebras is the smallest full sub-category of the $\infty$-category $\mathbf{Alg}_\_P$ such that:  
	\begin{itemize}
		\item $k[n] \in \smallp$ for all $n\geq 0$
		\item For any $A \in \smallp$ and any map in $\mathbf{Alg}_\_P$, $A \rightarrow k[n]$ with $n\geq 1$, the following homotopy pullback $A \times_{k[n]} 0$ is also Artinian:
		
		\[\begin{tikzcd}
			A \times_{k[n]} 0 \arrow[r] \arrow[d] & 0 \arrow[d] \\
			A \arrow[r] &  k[n]
		\end{tikzcd}\] 
	\end{itemize}  	 
	This is corresponds to \cite[Definition 2.2]{CCN20}.   
\end{Prop}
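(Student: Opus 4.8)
The plan is to characterize $\smallp$ as an inductively generated subcategory by unwinding the general definition of Artinian object (Definition \ref{Def_SmallObjectsAndMorphisms}) specialized to the deformation context $\dcP = \left( \mathbf{Alg}_\_P, (k[n])_{n\in\Zz}\right)$ from Example \ref{Ex_DeformationContexts}. The single spectrum object here is $E = (k[n])_{n\in\Zz}$, with $\Omega^{\infty-n}E = k[n]$, so an elementary morphism $f\colon A\to A'$ is precisely a homotopy pullback of the form
\[
\begin{tikzcd}
A \arrow[r]\arrow[d,"f"] & 0 \arrow[d]\\
A' \arrow[r] & k[n]
\end{tikzcd}
\]
for some $n\geq 1$, i.e. $A \simeq A'\times^h_{k[n]} 0$. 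An object $A$ is Artinian exactly when $A\to \star = 0$ (the terminal, which for the augmented/non-unital convention is the zero algebra) is a finite composition of such elementary morphisms.

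\textbf{Key steps.} First I would check that the terminal object $0$ is Artinian (empty composition) and that $k[n]$ is Artinian for every $n\geq 0$: indeed $k[n]\to 0$ is elementary, being the pullback of $0\to k[n+1]$ along itself, using that the loop/shift relation $\Omega(k[n+1]) = k[n]$ holds in the stabilization identified with $\Mod_k$ (Remark \ref{RQ: Stabilisation for P algebras}). This shows the subcategory described in the statement is contained in $\smallp$ for the generator clause, and the second clause is literally the statement that $\smallp$ is closed under the elementary-morphism pullbacks, since if $A$ is Artinian then $A\to 0$ is small, and prepending one elementary morphism $A\times^h_{k[n]}0 \to A$ keeps the composite to $0$ small. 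Conversely, I would argue minimality: any Artinian $\_P$-algebra $A$ is, by definition, obtained from $0$ by finitely many elementary steps, each of which is exactly a pullback $(-)\times^h_{k[n]} 0$ with $n\geq 1$; running this tower in reverse starting from $0$ (which lies in the candidate subcategory, as does $k[0]=k$, hence so do the $k[n]$ via the first bullet), and invoking closure under the second bullet at each stage, places $A$ in the candidate subcategory. Finally I would note the parenthetical compatibility with \cite[Definition 2.2]{CCN20} is a direct comparison of the two inductive generations, and that the analogous statement for $\small$ (the commutative augmented case, Proposition stated just before) follows identically, replacing $k[n]$ by $k\oplus k[n]$ and $0$ by $k$, as already sketched in Example \ref{ex:what are artinian algebras}.

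\textbf{Main obstacle.} The only genuinely delicate point is the bookkeeping of the terminal object and the base of the pullbacks: in the non-unital/augmented presentation $\mathbf{Alg}_\_P$ the terminal object is the zero algebra $0$, whereas a reader might expect $k$; one must be careful that the generators $k[n]$ for $n\geq 0$ (in particular $k[0]=k$ itself) genuinely arise as iterated pullbacks of the $k[m]$ with $m\geq 1$ over $0$, which is where the identification $\Omega^\infty(\text{shift}) = $ loop space and the equivalence $\mathbf{Stab}(\mathbf{Alg}_\_P)\simeq \Mod_k$ (Remark \ref{RQ: Stabilisation for P algebras}) are used; once $k$ is shown Artinian the rest is a routine induction on the length of the small morphism. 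Everything else is formal manipulation of homotopy pullbacks and the definition of ``smallest full subcategory closed under the two operations''.
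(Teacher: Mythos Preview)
Your proposal is correct and follows essentially the same route as the paper's proof: show that each $k[n]$ is Artinian via the homotopy pullback square with $k[n+1]$, observe that the elementary pullback $A\times^h_{k[n]}0 \to A$ preserves Artinianness by composition, and conclude minimality since any Artinian object is by definition built from $0$ by a finite chain of such elementary morphisms. The ``main obstacle'' you flag about terminal-object bookkeeping is not really an issue---the paper works throughout with non-unital $\_P$-algebras so the terminal object is $0$, and your pullback argument for $k[n]$ (including $n=0$) handles this without further ado.
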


\begin{proof}
	By definition, we have that $k[n]$ is Artinian since the map $k[n] \rightarrow 0$ is elementary, given by the homotopy pullback:
	\[ \begin{tikzcd}
		k[n] \arrow[r] \arrow[d] & 0 \arrow[d] \\
		0 \arrow[r] & k[n+1]
	\end{tikzcd}\] 
	
	Moreover, the map $A \times_{k[n]} 0 \rightarrow A$  is elementary, therefore if $A$ is Artinian, then the composition $A \times_{k[n]} 0 \rightarrow A \rightarrow 0$ is Artinian as well. Therefore $\smallp$ satisfies the properties given in the proposition. It is the smallest such category because any Artinian object can be obtained from some $k[n]$ in finitely many pullbacks along some $0 \rightarrow k[n]$ for $n\geq 1$. 
\end{proof}

Similarly we can rephrase the Artinian $\_P$-algebra property as a simpler condition.
\begin{Lem}[{\cite[Lemma 2.8]{CCN20}}]\label{Lem_ArtinianConditionPAlgebra}
	Suppose the operad $\_P$ is non-positively graded. A $\_P$-algebra $A \in \mathbf{Alg}_\_P$ is Artinian if: 
	\begin{itemize}
		\item $H^i(A) = 0$ for $i>0$. 
		\item $H^*(A)$ has total finite dimension. 
		\item $H^i(A)$ is nilpotent with respect to the $H^0(\_P)$ algebra $H^0(A)$ i.e. for all $a_1, \cdots, a_n \in H^0(A)$, $P \in H^0(\_P)(n+1)$ the map: 
		\[ \_P(a_1, \cdots, a_n, -) : H^i (A) \rightarrow H^i(A) \]
		
		is such that the $k$-th iteration of this map is $0$ for some $k \in \Nn$.  
	\end{itemize} 
\end{Lem}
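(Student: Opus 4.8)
\textbf{Proof plan for Lemma \ref{Lem_ArtinianConditionPAlgebra}.}

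The statement to prove is that a $\_P$-algebra $A$ (with $\_P$ non-positively graded) is Artinian provided three conditions hold: cohomological concentration in non-positive degrees, total finite-dimensionality of $H^*(A)$, and the nilpotency of the $H^0(\_P)$-action of $H^0(A)$ on each $H^i(A)$. The plan is to build $A$ out of the generating objects $k[n]$ ($n\geq 0$) by a finite tower of elementary morphisms, i.e. homotopy pullbacks along maps $A' \to k[n]$ with $n\geq 1$, using the characterization of $\smallp$ given in Proposition \ref{Prop_SmallCategoryDescription}. So the heart of the argument is an induction that peels off one ``layer'' of $A$ at a time, each layer being a shifted copy of $k$ in a single cohomological degree, and at each stage exhibits the quotient as an elementary extension of the next stage down.

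First I would reduce to a cofibrant (quasi-free) model of $A$, so that surjections split as graded $\_P$-algebras and the square-zero extension picture of Example \ref{ex:what are artinian algebras} applies verbatim in the operadic setting: an extension of a $\_P$-algebra $B$ by a trivial module $k[m]$ is exhibited as a homotopy pullback $B\times^h_{k[m+1]} 0$, where the map $B\to k[m+1]$ is a shift of the component of the differential landing in the new generator. Next I would set up the induction by the total dimension $\dim_k H^*(A)$. The base case $H^*(A)=0$ means $A\simeq 0$, which is Artinian. For the inductive step, I would pick a cohomological degree: among the non-positive degrees where $H^*(A)$ is non-zero, choose the most negative one, say $-m$; then $H^{-m}(A)$ is (by the nilpotency hypothesis together with finite-dimensionality) a finite-dimensional module over $H^0(A)$ that must contain a non-zero element $\bar a$ killed by the augmentation ideal of $H^0(A)$ acting through $\_P$ — this is exactly where the nilpotency condition is used, to guarantee that a sub-$\_P$-module generated by a single bottom-degree class is a trivial one-dimensional module $k[m]$. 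I would realize $k[m]\hookrightarrow A$ as a $\_P$-ideal (using that it is concentrated in the most negative degree, so there are no non-trivial $\_P$-operations out of it), form the quotient $\_P$-algebra $A' = A/k[m]$, which still satisfies the three hypotheses and has strictly smaller total cohomology dimension, and check that $A \to A'$ is a square-zero extension by $k[m]$, hence (by the homotopy pullback description) an elementary morphism in the deformation context $\dcP$. By induction $A'$ is Artinian, and composing the elementary morphism $A\to A'$ with the tower building $A'$ shows $A$ is Artinian.

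The step I expect to be the main obstacle is extracting, from the nilpotency hypothesis on cohomology, an actual $\_P$-ideal $k[m]\subseteq A$ in the chain-level (cofibrant) model rather than merely in cohomology — one must be careful that a bottom-degree cocycle generating a trivial cohomology submodule really spans a trivial $\_P$-subalgebra on the nose, and that the quotient remains quasi-free and still satisfies the Artinian-type hypotheses so the induction can proceed. Handling the case where $H^0(A)$ itself is non-trivial (so the ``layers'' at degree $0$ are governed by the classical Artinian condition on $H^0(A)$, via Proposition \ref{Prop_dgArtinian} applied to $H^0$) requires combining the classical square-zero filtration of a local Artinian algebra with the derived square-zero filtration in negative degrees; I would treat the degree-$0$ part first, reducing $H^0(A)$ to $k$ by a finite sequence of classical square-zero extensions, and only then peel off the negative-degree layers as above. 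Finiteness of the whole process is guaranteed by finite total dimension of $H^*(A)$, so once the single-layer step is established the Lemma follows.
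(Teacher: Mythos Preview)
The paper does not give its own proof of this lemma; it is simply cited from \cite{CCN20}. Your plan is the standard one and is correct in outline: induct on $\dim_k H^*(A)$, at each step find a one-dimensional $\_P$-ideal $k[m]\subset A$ in the lowest nonzero cohomological degree, take the quotient, and recognise the resulting short exact sequence as a square-zero extension, hence an elementary morphism via the homotopy pullback description of Remark~\ref{RQ_RemarkOnSmallP}.

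Two comments. First, your worry about lifting the cohomology-level ideal to the chain level is largely avoidable: being Artinian is a condition in the $\infty$-category $\mathbf{Alg}_\_P$, so you may replace $A$ by any quasi-isomorphic model; taking a minimal $\_P_\infty$-model on $H^*(A)$ (homotopy transfer, Theorem~\ref{Th_HHT}) makes the bottom-degree one-dimensional ideal exist strictly, since all $\_P_\infty$-operations out of it land in degrees $<-m$ (using that $\_P$, hence $\_P_\infty$, is non-positively graded) and hence vanish. Second, your separate treatment of the degree-$0$ part is unnecessary: the same bottom-degree argument works uniformly, including when $m=0$, because the nilpotency hypothesis on $H^0(A)$ acting on itself guarantees a nonzero element annihilated by every $H^0(\_P)$-operation with $H^0(A)$-inputs (this is where finite-dimensionality plus nilpotency produces a nontrivial socle), and the corresponding extension by $k[0]$ is the pullback along a map to $k[1]$, which is allowed ($n\geq 1$).
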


\begin{RQ}
	\label{RQ_RemarkOnSmallP}
	If $A \in \smallp$, then for any square zero extension by the trivial $\_P$-algebra $k[n]$:
		\[
			k[n] \to B \to 
			A 
	\]
		
		$B$ is also in $\smallp$. The argument is precisely the same as in Remark \ref{ex:what are artinian algebras}, see also \cite[Example 2.3]{CCN20}.

\end{RQ}

\subsubsection{Formal moduli problems}\label{Sec_FMP}\

\medskip

In Sections \ref{Sec_Artin and Small Algebras in a Deformation Context} and \ref{Sec_DeformationofAlgebraicStructures}, we defined, given a local Artinian algebra $A$, different notions of sets, groupoids or even $\infty$-groupoids of deformations of a object along $A$.  \\

These assignments that send $A$ to such spaces of deformations define functors which will be the prototypical examples of what we will call ``formal moduli problems''. In other words, given an object $X$ (for example a $\_P$-algebra, a complex manifold, etc...), we will be interested in studying the functor that sends an Artinian object $A$ to the \emph{space} of all deformations of $X$ along $A$.
To be precise about what we mean by ``space'', we take $\_S$ to be the $\infty$-category of $\infty$-groupoids.

\begin{Def}[Formal Moduli Problem, {\cite[Definition 1.1.14]{Lu11}}] \label{Def_FMP}\
	
	Given a deformation context $\left( \_A, \lbrace E_\alpha \rbrace_{\alpha \in T} \right)$, a functor of $\infty$-categories $F : \mathbf{Art}_{\_A} \rightarrow \_S$  is called a \defi{formal moduli problem} if it satisfies the following conditions: 
	
	\begin{itemize}
		\item  Deformations along the point (terminal object) are trivial:
		\[ F(\star) \simeq \star \]
		
		\item Any homotopy pullback in $\mathbf{Art}_{\_A}$ along a small morphism $\phi : A \rightarrow B$ is sent to a homotopy pullback in $\_S$: 
		
		\[ F \left( \begin{tikzcd}
			A' \arrow[r] \arrow[d] & A \arrow[d, "\phi"] \\
			B' \arrow[r] & B
		\end{tikzcd} \right) = \begin{tikzcd}
			F(A') \arrow[r] \arrow[d] & F(A) \arrow[d, "F(\phi)"] \\
			F(B') \arrow[r] & F(B)
		\end{tikzcd} \]
	\end{itemize}
	
	The $\infty$-category of such functors will be denoted $\mathbf{FMP}\left( \_A, \lbrace E_\alpha \rbrace_{\alpha \in T} \right)$ or $\mathbf{FMP}_\_A$ for short if the choice of the collection of spectrum objects $E_\alpha$ is clear. 
\end{Def}
\begin{Ex}\label{Ex_CategoryOperadicFMP}
	A formal moduli problem on the deformation context $\dcP$ is a functor $F : \smallp \rightarrow \_S$ such that $F(\star) \simeq \star$ and sends the following homotopy pullbacks of Artinian objects to homotopy pullbacks (for all $n \geq 0$): 
	
	\[ F \left( \begin{tikzcd}
		A \arrow[r] \arrow[d] & 0 \arrow[d] \\
		B \arrow[r] & k[n+1]
	\end{tikzcd} \right) = \begin{tikzcd}
		F(A) \arrow[r] \arrow[d] & \star \arrow[d] \\
		F(B) \arrow[r] & F(k[n+1])
	\end{tikzcd} \] 
	This is the class of examples we will be interested in. We will denote it by $\mathbf{FMP}_\_P$. The category $\mathbf{FMP}_\mathbf{Com}$ will be denoted $\mathbf{FMP}$.
\end{Ex}

\begin{RQ}\label{RQ_ObstructionFMP}
	Given a square zero extension of an Artinian object along $k[n]$, we have seen in Remark \ref{RQ_RemarkOnSmallP} that it is equivalent to a homotopy pullback of the small morphism $0 \rightarrow k[n]$. In particular, we have a homotopy pullback diagram: 
	\[ \begin{tikzcd}
		F(B) \arrow[r] \arrow[d] & \star \arrow[d,"F(0)"] \\
		F(A) \arrow[r] & F(k[n+1]),
	\end{tikzcd}\]
	where (up to taking a cofibrant resolution) the graded vector space underlying $B$ can be taken to be $A \oplus k[n]$.

	Therefore an $A$-deformation, given by a point $\star \to F(A)$, lifts to a $B$-deformation $\star \to F(B)$ if and only if the composition $\star \to F(A) \to F(k[n+1])$ is homotopic to $0$. In other words, the image of the point in $F(k[n+1])$ is the obstruction to a lift and if the obstruction vanishes, the choice of a lift corresponds to the choice of a $1$-simplex of $F(k[n+1])$. 
	
	We will see in Section \ref{Sec_tangent-complex-of-a-formal-moduli-problem} that the collection of $F(k[n+1])$ defines the notion of tangent complex which controls the obstructions to lifting deformations and indeed a deformation lifts if it induces an exact element in the tangent complex.
\end{RQ}

\begin{Ex}\label{Ex_FMP_AffineMappingSpace}
	For any $B \in \mathbf{Alg}_{\mathbf{Com}}^{k//k}$ the following functor is a formal moduli problem:
	\[  \begin{tikzcd}[row sep = tiny, column sep = tiny]
		\tx{Spf} (B) : & \small \arrow[r] & \_S \\
		& A \arrow[r, mapsto] & \Map \left( B,A \right) 
	\end{tikzcd} \]
	
	Moreover we get a functor $\mathbf{Alg}_{\mathbf{Com}} \rightarrow \mathbf{FMP}$ sending $B$ to $\tx{Spf} (B)$.
	
	 $\tx{Spf} (B)$ is called the \defi{formal spectrum} of $B$. More generally, we can define the formal spectrum in a similar way for $B \in \mathbf{Alg}_\_P$ as:  
	\[  \begin{tikzcd}[row sep = tiny, column sep = tiny]
		\tx{Spf} (B) : & \smallp \arrow[r] & \_S \\
		& A \arrow[r, mapsto] & \Map \left( B,A \right) 
	\end{tikzcd} \]
	
\end{Ex}

\begin{Prop}	
	Following \cite[Lemma 1.1.20]{Lu11}, a morphism $f: A \rightarrow B$ between Artinian commutative algebras, $A,B \in \small$ is small if and only if it induces a surjection of commutative rings $H^0(A) \rightarrow H^0(B)$. Therefore the second condition of Definition \ref{Def_FMP} can be rephrased as follows: 
	
	Any homotopy pullback in $\small$ such that $H^0 (A) \rightarrow H^0(B)$ or $H^0 (B') \rightarrow H^0(B)$ is surjective is sent to a pullback in $\_S$: 
	
	\[ F \left( \begin{tikzcd}
		A' \arrow[r] \arrow[d] & A \arrow[d, "\phi"] \\
		B' \arrow[r] & B
	\end{tikzcd} \right) = \begin{tikzcd}
		F(A') \arrow[r] \arrow[d] & F(A) \arrow[d, "F(\phi)"] \\
		F(B') \arrow[r] & F(B)
	\end{tikzcd} \]
	It implies in particular the classical Schlessinger condition (see \cite{Sch68}).
\end{Prop}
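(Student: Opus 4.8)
The statement to be proven is the reformulation of the pullback condition in Definition \ref{Def_FMP} for the deformation context $\dccomaug$, together with the cited criterion from \cite[Lemma 1.1.20]{Lu11} that a morphism $f\colon A\to B$ in $\small$ is small exactly when $H^0(A)\to H^0(B)$ is surjective. The plan is to split the argument into two halves: first, reduce the general pullback condition in Definition \ref{Def_FMP} to the equivalent statement phrased via surjectivity on $H^0$; second, explain why this recovers the classical Schlessinger condition.

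\textbf{First step: the smallness criterion.} I would begin by recalling that, by definition, the second axiom of a formal moduli problem concerns homotopy pullback squares in $\small$ in which the right-hand vertical map $\phi\colon A\to B$ (or symmetrically the bottom map $B'\to B$) is small. The content to quote is \cite[Lemma 1.1.20]{Lu11}: in the deformation context $\dccomaug$, a morphism $f\colon A\to B$ of Artinian commutative augmented algebras is small if and only if the induced ring map $H^0(A)\to H^0(B)$ is surjective. Granting this, the axiom can be restated verbatim with ``small morphism $\phi$'' replaced by ``$H^0(A)\to H^0(B)$ surjective (or $H^0(B')\to H^0(B)$ surjective)''. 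I would note that the two clauses are genuinely symmetric: in a homotopy pullback square, if either $\phi$ or the parallel map $B'\to B$ is small then so is the other parallel map of the square, since smallness here is detected on $\pi_0=H^0$ and homotopy pullbacks of spaces/algebras are computed degreewise on homotopy groups, so surjectivity on $H^0$ is inherited by the pulled-back map. Hence checking the pullback condition only for squares where one designated leg is small is the same as checking it for all homotopy pullback squares where at least one of the two relevant legs induces a surjection on $H^0$.

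\textbf{Second step: recovering Schlessinger's condition.} Once the pullback condition is phrased in terms of surjectivity on $H^0$, I would specialize to classical local Artinian algebras concentrated in degree zero, using the embedding $\art\hookrightarrow\small$ from Proposition \ref{Prop_dgArtinian}. For such algebras, ``small morphism'' reduces to ``surjection'' in the ordinary sense, and the homotopy pullback squares one is allowed to feed to $F$ include in particular the fibre products $A'=A\times_B B'$ where $A\to B$ is a surjection of local Artinian rings and $B'\to B$ is arbitrary. Applying $F$ and taking $\pi_0$ (i.e.\ passing to the underlying set-valued deformation functor), the homotopy pullback in $\_S$ yields at least a surjection
\[
\pi_0 F(A') \longrightarrow \pi_0 F(A)\times_{\pi_0 F(B)}\pi_0 F(B'),
\]
which is precisely Schlessinger's half-exactness condition (his axioms H$_1$, H$_2$) on the deformation functor $\pi_0\circ F$; one also gets the bijectivity statement of his H$_4$ when $B=k$ and $B'=k[\epsilon]$, because then the square becomes a genuine product decomposition detecting the vector-space structure on the tangent set. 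I would present this as a short remark rather than a full verification, since matching up the precise numbering of Schlessinger's conditions is bookkeeping; the essential point is that the homotopy pullback requirement of Definition \ref{Def_FMP} is strictly stronger (it remembers higher homotopies and derived structure) and degenerates to Schlessinger's conditions after truncation.

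\textbf{Main obstacle.} The only non-formal input is \cite[Lemma 1.1.20]{Lu11} itself, i.e.\ the equivalence between ``small'' and ``surjective on $H^0$''; its proof requires the characterization of Artinian objects in $\small$ as iterated square-zero extensions (Example \ref{Ex:sq zero} and Proposition \ref{Prop_dgArtinian}) together with a factorization argument writing an arbitrary $H^0$-surjection as a finite composite of elementary maps of the form $A\times_{k\oplus k[n]}^h k\to A$. Since that lemma is quoted from \cite{Lu11}, I would simply invoke it; the remainder of the proof is the routine translation described above, so there is no serious technical obstacle within this survey's scope.
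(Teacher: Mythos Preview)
The paper gives no proof of this proposition: it is stated as a direct consequence of \cite[Lemma 1.1.20]{Lu11}, with no argument supplied for the rephrasing or for the Schlessinger remark. Your proposal is therefore more detailed than what the paper does, and in substance it is correct: the only real input is the cited lemma, and everything else is a translation.

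One small point: your symmetry argument in the first step is overcomplicated and slightly misworded. You call $B'\to B$ ``parallel'' to $\phi\colon A\to B$ (they are not parallel; they share a codomain) and then invoke the fact that surjectivity on $H^0$ is inherited by pulled-back maps. That fact is true but unnecessary. The cleaner justification is simply that a homotopy pullback square is symmetric under reflection across its diagonal, swapping the roles of $A\to B$ and $B'\to B$; hence the axiom ``$\phi$ is small'' is automatically equivalent to ``at least one of the two maps into $B$ is small'', which via Lurie's lemma becomes the $H^0$-surjectivity condition on either leg. Your detour through inherited surjectivity on the opposite leg $A'\to B'$ is not needed for this equivalence.
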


As we have mentioned multiple times, it is an old philosophy that deformation problems are classified by Lie algebras. This classical idea is formalized by the following theorem:  

%\begin{Th}[Lurie-Pridham from \cite{Lu11} and \cite{Pr10}]\
%	\label{Th_LuriePridham}
%	
%	There is an equivalence of $\infty$-categories: 
%	\[ \begin{tikzcd}[row sep = tiny]
%		\mathbf{Alg}_{\mathbf{Lie}} \arrow[r, "\sim"] & \mathbf{FMP}  
%	\end{tikzcd}\]
%	
%	There is also a version for associative formal moduli problem desribed in \cite{Lu11}: 
%	\[ \mathbf{Alg}_{\mathbf{Ass}} \rightarrow \mathbf{FMP}_{\mathbf{Ass}} \]
%\end{Th}

\LuriePridhamEquivalence

\begin{War}\label{War_FMP equivalence is NOT always given by Maurer-Cartan} 
	For $\G_g$ a Lie algebra, the one should think heuristically of $\mathbf{MC}$ as the functor assigning to an Artinian algebra $A$ the space $\mathbf{Del}(\G_g\otimes A)$, that is, the space of Maurer--Cartan elements. Unfortunately, $\mathbf{Del}(\G_g\otimes -)$ does not generally define a formal moduli problem, essentially for the same reasons the Goldmann--Milson theorem \ref{Th_GoldmannMilson} does not preserve all weak-equivalences, but only the filtered ones. On finite dimensional $A$, the formula $\mathbf{Del}(\G_g\otimes A)$ gives the correct result on objects. The functor $\mathbf{MC}$ is in some sense more general in order to circumvent this problem. 
\end{War}
 
In the next sections we will explore the techniques used to prove this result, as well as explain its generalisation to operadic algebras.

\subsection{Operadic Formal Moduli Problems and Koszul Duality }\
\label{Sec_Operadic Formal Moduli Problems and Koszul Duality}

\medskip

The goal of this section is to make sense of a generalization of Theorem \ref{Th_LuriePridham}. The main insight is that the equivalence between (commutative) formal moduli problems and Lie algebras arises from the Koszul duality between the operads $\mathbf{Com}$ and $\mathbf{Lie}$. With this in mind, this Theorem generalizes to Theorem \ref{Th_KoszulOperadicFMPEquivalence}, stating the existence of an equivalence of $\infty$-categories: 
	\[ \mathbf{Alg}_{\_P} \simeq \mathbf{FMP}_{\_P^!} \]
	for $\_P$ a Koszul binary quadratic operad concentrated in non-positive degrees.
	
	The key observation is that if the generators $E=\_P(2)$ are finite dimensional, there is a map of operads $\mathbf{Lie}\to \_P \otimes \_P^!$, defined by sending the generator $\ell_2$ of $\mathbf{Lie}$ to $\sum_{e\in E} e\otimes e^\vee$, where the sum runs over a basis of $E$, and $e^\vee$ represents a dual basis. 
	In this case, if $\mathfrak g$ is now  a $\_P$-algebra and $A$ is an Artinian $\_P^!$-algebra, $\mathfrak g\otimes A$ is a Lie algebra and, with the same caveats of Warning \ref{War_FMP equivalence is NOT always given by Maurer-Cartan}, the formula $\mathbf{Del}(\G_g\otimes A)$ is the correct one.\\

We will start in Section \ref{Sec_tangent-complex-of-a-formal-moduli-problem} by explaining how the inverse of $\mathbf{MC}$ is related to the tangent complex. Then Sections \ref{Sec_koszul-duality-context} and \ref{Sec_KoszulContextDualityAndKoszulDuality} are devoted to the study of Koszul duality in the context of deformation theory. Then the main result and its functoriality are discussed in Sections \ref{Sec_main-result} and  \ref{sec:naturality-of-the-main-result}.

\subsubsection{Tangent complex of a formal moduli problem}\ 
\label{Sec_tangent-complex-of-a-formal-moduli-problem}

\medskip

The goal of this section is to explain that the algebra controlling a given formal moduli problem is the ``tangent'' of this formal moduli problem. In particular this explains why first-order deformations and obstructions to lifting those deformations are controlled by the tangent complex.

To motivate the following definition, we will start by explaining the example of the formal spectrum (Example \ref{Ex_FMP_AffineMappingSpace}) of an augmented commutative algebra $B$. We have that $\tx{Spf} (B)(A) = \Map \left( B,A \right)$. In the case of first order deformations, i.e. $A =  k[\epsilon]$, with $\epsilon^2 = 0$, this mapping space corresponds to the tangent complex of $\tx{Spec} (B)$ at the point\footnote{{Given a map $B \to A$ with $A$ local Artinian, the composition $B \to A \to k$ defines a unique point which is deformed within $\tx{Spec}(B)$. The tangent complex at a point $x$ is defined as the $k$-linear derivations of $B$ at $x$, with $\tx{Der}_k^x(B,M) := \Hom_{\mathbf{Alg}_{\mathbf{Com}}^{k//k}}\left(B, k \oplus M\right)$ with $B$  viewed over $k$ via $x$} (see \cite[Definition 1.4.1.2]{TV08}).} corresponding to the augmentation of $B$,
 $x: \star \rightarrow \tx{Spec} (B)$. 
 This is a consequence of the following equivalences: 
 \[ \begin{split}
 \Map_{\mathbf{Alg}_\mathbf{Com}^{k//k}}(B, k[\epsilon]) \simeq & \Map_{\mathbf{Alg}_\mathbf{Com}^{k//B}}(B, B \oplus x_*k[n]) \\
  \simeq & \Map_{\Mod_B}\left( \Ll_B, x_* k[n] \right) \\
   \simeq & \Map_{\Mod_k}\left( x^*\Ll_B, k[n] \right) \\
    \simeq  & \left|  \Tt_{B,x}[n] \right|
 \end{split}  \]

The goal will be to generalize this by applying the functor not only to $k[\epsilon]$ but to all ``first-order elements'' of a given deformation context. Such ``first-order'' objects correspond to the elements in $\lbrace \Omega^{\infty -n} (E_\alpha) \rbrace_{\alpha \in T, n \in \Nn}$ (for example the square zero extensions $k\oplus k[n]$ for $n \in \Nn$ in the case of augmented commutative algebras). Therefore, we will define the tangent functor of a general formal moduli problem by evaluating it at first-order elements.

\begin{Def}[Tangent Functor, {\cite[Section 1.2]{Lu11}}] \ \label{Def_TangentFunctor}
	Given a deformation context $\left( \_A, \lbrace E_\alpha \rbrace_{\alpha \in T} \right)$, (see Definition \ref{Def_DeformationContext}), we define the \defi{tangent space} of a formal moduli problem $F : \mathbf{Art}_{\_A} \rightarrow \_S$ on this deformation context at $\alpha \in T$ to be $T_\alpha F := F(\Omega^\infty E_\alpha) \in \_S$.
	
	To remember higher deformations, it is not enough to evaluate $F$ at $\Omega^\infty E_\alpha$ but we should also evaluate it at all $\Omega^{\infty -n}E_\alpha$ for $n \geq 0$. We define \defi{the tangent complex} of $F$ at $\alpha$ to be the spectrum object $F(E_\alpha) \in \mathbf{Sp}\coloneqq \mathbf{Stab}(\mathbf{sSet})$, which we denote\footnote{This notation is by analogy to the tangent complex but is a priori only a (collection of) spectrum objects. However, we will see that $\Tt_F$ can be ``represented'' by objects with more structure (see the discussion following Proposition \ref{Prop_TangentSpaceAndTangentComplex} and Proposition \ref{Prop_FMPEquivalenceAndTangent}).} $\Tt_F$ in case there is only a single $\alpha\in T$.

\end{Def}   

 This spectrum object verifies $\Omega^{\infty -n}F(E_\alpha) \simeq F(\Omega^{\infty -n}E_\alpha)$ for all $n\geq 0$ (see \cite[Remark 1.2.7]{Lu11}). Notice that the tangent complex $\Tt_F$ is not actually a chain complex but rather only a spectrum object a priori. These two categories are related by the composition:

\begin{equation}\label{Eq:DoldKan}
	 \Mod_k \stackrel{\tx{Forget}}{\longrightarrow} \Mod_{\mathbb Z} \stackrel{DK}{\simeq} \mathbf{Stab}(\mathbf{sAb}) \stackrel{\tx{Forget}}{\longrightarrow}  \mathbf{Stab}(\mathbf{sSet})=\mathbf{Sp}
\end{equation}
In the setting of operadic formal moduli problems, the following proposition justifies the terminology.

\begin{Prop}[{\cite[Lemma 2.15]{CCN20}}]\label{Prop_TangentSpaceAndTangentComplex}
	
A formal moduli problem $F\in \mathbf{FMP}_{\_P}$ has a unique pre-image in $\Mod_k$ under the map \eqref{Eq:DoldKan}, which we also denote by $\Tt_F$.	%There is a functor $\Mod_k \rightarrow \mathbf{Sp}$ such that the tangent $TF \in \mathbf{Sp}$\ricardo{notation $T$. $F$ is an FMP right} as a unique preimage in $\Mod_k$ denoted $\Tt_F$.
 Moreover for any $n \geq 0$, we have an equivalence: 
	\[ \Map_{\Mod_k} \left( k[-n], \Tt_F \right) \simeq F(k[n])\]
		
\end{Prop}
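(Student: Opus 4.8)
The plan is to exhibit the claimed pre-image in $\Mod_k$ directly and then establish the mapping-space equivalence as a formal consequence. First I would recall that by definition $\Tt_F$ is the spectrum object $F(E) \in \mathbf{Sp}$ with $\Omega^{\infty-n}F(E) \simeq F(k[n])$ for all $n\geq 0$ (Definition \ref{Def_TangentFunctor} and \cite[Remark 1.2.7]{Lu11}). The key structural input is that each $k[n]$ is the trivial $\_P$-algebra on the $k$-module $k[n]$, and that the formal moduli problem axioms (Example \ref{Ex_CategoryOperadicFMP}) force $F$ to send the pullback squares exhibiting $k[n] = k[n+1]\times_{k[n+2]}^h 0$ — wait, more precisely the square with corners $0,0,k[n+1]$ and apex $k[n]$ — to pullback squares in $\_S$. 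This yields equivalences $F(k[n]) \simeq \Omega F(k[n+1])$, which is exactly the spectrum-object structure, and moreover shows these deloopings are \emph{infinite} loop spaces compatibly: the sequence $(F(k[n]))_{n\geq 0}$ underlies a connective spectrum. The point is then to promote this to a genuine object of $\Mod_k$.

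Next I would trace through where the $k$-linear structure comes from. Since $\_P$ is an augmented operad over a field $k$ of characteristic zero, the trivial-algebra functor $\mathrm{Triv}\colon \chk \to \mathbf{Alg}_\_P$ (Remark \ref{RQ_TrivalPAlgebra}) identifies the stabilization $\mathbf{Stab}(\mathbf{Alg}_\_P)$ with $\Mod_k$ (Remark \ref{RQ: Stabilisation for P algebras}), sending $M$ to the spectrum object $(M[n])_{n\in\Zz}$ with trivial $\_P$-structure in each degree. A formal moduli problem $F$ evaluated on the ``first-order'' objects $\Omega^{\infty-n}E = k[n]$ thus factors through this identification: the collection $F(E_\alpha)$, a priori living in $\mathbf{Sp} = \mathbf{Stab}(\mathbf{sSet})$, actually comes equipped with the $k$-module structure transported from the fact that $E$ itself is a spectrum object in $\mathbf{Alg}_\_P \simeq_{\mathbf{Stab}} \Mod_k$. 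Concretely, I would invoke \cite[Lemma 2.15]{CCN20}, whose content is precisely that the composite \eqref{Eq:DoldKan} is, on the essential image of tangent complexes of objects in $\mathbf{FMP}_\_P$, conservative and that the $k$-linear lift is unique; the uniqueness is because the forgetful functors $\Mod_k \to \Mod_\Zz \to \mathbf{Sp}$ are faithful enough on connective, bounded-above objects (all tangent complexes of Artinian-type formal moduli problems are such) that a spectrum admits at most one compatible $\Mod_k$-structure. This gives the well-defined $\Tt_F \in \Mod_k$.

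Finally, for the mapping-space identity, I would argue as follows. With $\Tt_F \in \Mod_k$ now in hand, its image in $\mathbf{Sp}$ under \eqref{Eq:DoldKan} has $n$-th space $F(k[n])$ by construction. On the other hand, for any $M \in \Mod_k$, the Dold--Kan / stabilization yoga gives $\Map_{\Mod_k}(k[-n], M) \simeq \Omega^{\infty-n}(M)$ viewed as a space (this is just the statement that $k[-n]$ corepresents the $n$-th space of the associated spectrum, using that $k$ is the unit of $\Mod_k$ and shifts are invertible). Applying this with $M = \Tt_F$ and matching $\Omega^{\infty-n}(\Tt_F) \simeq F(k[n])$ yields $\Map_{\Mod_k}(k[-n],\Tt_F) \simeq F(k[n])$ for all $n\geq 0$, as claimed.

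\textbf{Main obstacle.} The delicate point is the uniqueness and existence of the $\Mod_k$-refinement in the first part — i.e. genuinely controlling the fibers of the composite $\Mod_k \to \Mod_\Zz \to \mathbf{Sp}$ on the relevant subcategory. One must check that the tangent complex of an operadic formal moduli problem is always (cohomologically) bounded above and degreewise finite-dimensional-adjacent enough that the forgetful functor from $k$-modules loses no information and admits an essentially unique section; this is where the hypothesis that $\_P$ is concentrated in non-positive degrees (so that Artinian $\_P$-algebras are non-positively graded by Lemma \ref{Lem_ArtinianConditionPAlgebra}) is used, guaranteeing $F(k[n])$ is $(n-1)$-connected and hence the spectrum $\Tt_F$ is bounded, where the argument of \cite[Lemma 2.15]{CCN20} applies. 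Everything else is formal manipulation of (co)limits and adjunctions.
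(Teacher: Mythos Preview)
The paper does not give a proof here; it simply cites \cite[Lemma 2.15]{CCN20} and offers the one-line motivation that $k$ acts on $k[n]$ by multiplication and hence on $\Tt_F$ by functoriality. Your overall architecture in the first two paragraphs is compatible with this: the $k$-module lift arises because the spectrum object $E=(k[n])_{n\in\Zz}$ in $\mathbf{Alg}_\_P$ lifts to $\Mod_k$ via the identification $\mathbf{Stab}(\mathbf{Alg}_\_P)\simeq\Mod_k$, so $F(E)$ inherits an $Hk$-module structure, and $\Mod_{Hk}(\mathbf{Sp})\simeq\Mod_k$.

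However, your ``Main obstacle'' paragraph contains a genuine error. The claim that $F(k[n])$ is $(n-1)$-connected, and hence that $\Tt_F$ is bounded, is false. The tangent complex of an operadic formal moduli problem can be an arbitrary object of $\Mod_k$: for $\_P=\mathbf{Com}$ and the formal moduli problem associated to a Lie algebra $\G_g$ with cohomology in every degree, one has $\Tt_F\simeq\G_g[1]$, which is unbounded in both directions. The non-positivity of Artinian $\_P$-algebras (Lemma~\ref{Lem_ArtinianConditionPAlgebra}) constrains the \emph{source} of $F$, not the homotopy groups of $F(k[n])$. Consequently your proposed uniqueness argument, which rests on controlling the fibres of $\Mod_k\to\mathbf{Sp}$ over a bounded subcategory, collapses. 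The actual uniqueness is not about boundedness at all: it is that the $Hk$-module structure on the spectrum $(F(k[n]))_n$ is \emph{canonically} produced by functoriality from the $k$-action on $k[n]$ by scalar multiplication, exactly as the paper's motivating sentence says. You should also note that invoking \cite[Lemma 2.15]{CCN20} midway through is circular, since that is precisely the result being restated.
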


As a motivation for part of the $k$-module structure, notice that the ground field $k$ acts naturally on $k[n]$ by multiplication, which implies that it acts on $\Tt_F$ by functoriality.
	
In the case of the formal spectrum of unital commutative rings, we saw that $\tx{Spf}(B)(k[\epsilon]) \simeq \vert \Tt_{B,x} \vert$ (and similarly when shifting $\epsilon$). Now the associated spectrum object to $\Tt_{B,x}$ (given by the composition \eqref{Eq:DoldKan}) coincides with the spectrum object given by $\tx{Spf}(B)(k[\epsilon_n])$ so that $\Tt_{B,x}$ is the representative of $\Tt_{\tx{Spf}(B)}$ given by Proposition \ref{Prop_TangentSpaceAndTangentComplex}. \\

It turns out that this also extends to the formal spectrum of a $\_P$-algebra $A$. Indeed, the tangent functor is given by the collection for each $n$ of the spaces:
 \[ \tx{Spf}(A)(k[n]) \simeq \Map_{\mathbf{Alg}_\_P}(A, k[n]) \]
 
 where $k[n]$ is endowed with the trivial $\_P$-algebra structure (induced by the augmentation $\_P \to I$). Taking the trivial algebra is a right adjoint functor (see Remark \ref{RQ_TrivalPAlgebra}) and we get: 
 \[ \tx{Spf}(A)(k[n]) \simeq \Map_{\mathbf{Alg}_\_P}(A, k[n]) \simeq \Map_{\Mod_k}(I \circ_\_P \tilde{A}, k[n]) \]
 where $\tilde{A}$ is a cofibrant resolution of $A$. In particular, we can take the cobar-bar resolution, $\tilde{A} := \Omega_\kappa \mathbf{B}_\kappa A$. Note that $I \circ_\_P \tilde{A}$ is the \emph{space of derived indecomposables}, which is the derived quotient by all products in $A$. We have the following observations:
 \begin{itemize}
 	\item There is an equivalence: \[ I \circ_\_P \Omega_\kappa \mathbf{B}_\kappa A \simeq \left( I \circ_\_P \_P \circ_I   \mathbf{B}_\kappa A, d_\Omega + d_{\mathbf{B}_\kappa A} \right)  \]
 	\item We have: 
 	\[ I \circ_\_P \_P \circ_I   \mathbf{B}_\kappa A \simeq  \mathbf{B}_\kappa A \]
 	\item Since $d_\Omega$ has positive weight on the free algebra $\_P \circ_I   \mathbf{B}_\kappa A$ and the functor $I \circ_\_P$ kills all the positive weight parts, the differential induced on $I \circ_\_P \_P \circ_I   \mathbf{B}_\kappa A$ is exactly $ d_{\mathbf{B}_\kappa A}$ and we have an equivalence: 
 	 \[I \circ_\_P \Omega_\kappa \mathbf{B}_\kappa A \simeq \mathbf{B}_\kappa A \]
 \end{itemize}
 
 Therefore we get:
 \[  \tx{Spf}(A)(k[n]) \simeq \Map_{\Mod_k}(\mathbf{B}_\kappa A, k[n])  \simeq \left| \mathbf{Der}^\_P\left(A, k[n] \right)  \right| \simeq \left|  \Tt_{A,x}^{\_P}[n]  \right|\]

%	
%\oldtext{If $F = \tx{Spf} (B)$ (Example \ref{Ex_FMP_AffineMappingSpace}), then $F(k[n]) := \Map_{\mathbf{Alg}_\_P} \left(B, k[n]\right)$ where $k[n]$ has the trivial $\_P$-structure and the pre-image of $\Tt_F$ is exactly the tangent complex $\Tt_B$\footnote{It is a consequence of Theorem \ref{Th_OperadicFMPEquivalence} that the complex $\Tt_F$ admits a $\_P^!$-algebra structure, which is however generally difficult to describe explicitly.}\albin{think about this again with the problem of deforming at a point.}\albin{Not sure this statement makes sens. $\Tt_B$ is a $B$-module but $T_F$ is a $k$-module. This would represent some tangent "at a point" but I don't see any points to use to make sens of this. It looks like tangent at a point }. The way we can understand this mapping space as a space of derivations is by observing that any non-trivial product on $B$, for example $b= \mu(a_1, \cdots ,a_2)$, must be sent to $0$ in $k[n]$ since the $\_P$-structure on $k[n]$ is the trivial one. Therefore this mapping space is (up to a shift) the dual of the space of ``indecomposables'' $\left( \faktor{B}{\tx{Im}(\mu_\_P)} \right)^\vee$. In the case of $\_P = \mathbf{Com}$ we get $\left( \faktor{B}{B^2} \right)^\vee$ which represents derivations of $B$ in algebraic geometry. The same thing happens with $\_P$-algebras and $\_P$-derivations.  }
%	

\begin{Prop}
	For the deformation context $\dccomaug$, the tangent complex of a formal moduli problem $F$ is given by the $\Omega$-spectrum determined\footnote{An $\Omega$-spectrum object is completely determined by a collection $(x_i)_{i\in \Nn}$, \emph{indexed by $\Nn$}, such that there are weak equivalences $x_i \to \Omega x_{i+1}$. We recover the full spectrum object by applying $\Omega$ successively to the $x_i$.} by $\left( F(k \oplus k[n]) \right)_{n \geq 0}$.
	
	For the deformation context $\dcP$, the tangent functor of a formal moduli problem $F$ is given by the $\Omega$-spectrum defined by $\left( F( k[n]) \right)_{n \geq 0}$.
\end{Prop}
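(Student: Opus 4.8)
The plan is to read off the statement from Definition~\ref{Def_TangentFunctor} by making the spectrum object $F(E)$ explicit in the two deformation contexts at hand. Recall that $\Tt_F$ is by definition the spectrum object $F(E)\in\mathbf{Sp}$, characterised (cf.\ \cite[Remark~1.2.7]{Lu11}) by the equivalences $\Omega^{\infty-n}F(E)\simeq F(\Omega^{\infty-n}E)$ for $n\geq 0$, whose structure maps are the canonical equivalences $F(\Omega^{\infty-n}E)\simeq\Omega\,F(\Omega^{\infty-(n+1)}E)$. Since an $\Omega$-spectrum object is reconstructed from the sequence of its non-negative terms by iterating $\Omega$, it suffices to (i) identify $\Omega^{\infty-n}E$ for $n\geq 0$, (ii) check that it is an Artinian object, so that $F$ may be evaluated on it, and (iii) check that the structure maps above are indeed equivalences.

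For (i) and (ii): in the context $\dccomaug$ the spectrum object is $E=(k[\epsilon_n])_{n\in\Zz}$ and, writing the $2$-dimensional algebra $k[\epsilon_n]$ as a square-zero extension, $\Omega^{\infty-n}E = k\oplus k[n]$; in the context $\dcP$ the spectrum object is $E=(k[n])_{n\in\Zz}$ with $k[n]$ the trivial $\_P$-algebra, so $\Omega^{\infty-n}E = k[n]$. In both cases $\Omega^{\infty-n}E$ is Artinian for every $n\geq 0$: indeed $\star\times_{\Omega^{\infty-(n+1)}E}\star = \Omega\bigl(\Omega^{\infty-(n+1)}E\bigr) = \Omega^{\infty-n}E$ and $n+1\geq 1$, so $\Omega^{\infty-n}E\to\star$ is an elementary morphism in the sense of Definition~\ref{Def_SmallObjectsAndMorphisms} (this is also Proposition~\ref{Prop_SmallCategoryDescription}, and matches Proposition~\ref{Prop_dgArtinian} in the commutative case). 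For $n<0$ the object $\Omega^{\infty-n}E$ fails to be Artinian, which is exactly why the resulting $\Omega$-spectrum is indexed by $\Nn$ and not by $\Zz$.

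For (iii): the same identity $\Omega^{\infty-n}E=\star\times_{\Omega^{\infty-(n+1)}E}\star$ exhibits $\Omega^{\infty-n}E$ as a homotopy pullback in $\mathbf{Art}_{\_A}$ along the morphism $\star\to\Omega^{\infty-(n+1)}E$; this morphism is small, being itself elementary (the pullback of $\star\to\Omega^{\infty-(n+1)}E$ along the identity of $\Omega^{\infty-(n+1)}E$), as one also sees in the commutative case from the surjectivity criterion \cite[Lemma~1.1.20]{Lu11}, since $H^0(k\oplus k[n+1])=k$ when $n+1\geq 1$. Applying the pullback-preservation axiom of Definition~\ref{Def_FMP} together with $F(\star)\simeq\star$ then gives $F(\Omega^{\infty-n}E)\simeq\star\times_{F(\Omega^{\infty-(n+1)}E)}\star\simeq\Omega\,F(\Omega^{\infty-(n+1)}E)$; in the operadic case this is exactly the pullback appearing in Example~\ref{Ex_CategoryOperadicFMP}. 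Hence $\Tt_F=F(E)$ is the $\Omega$-spectrum with $n$-th term $F(k\oplus k[n])$ over $\dccomaug$ and $F(k[n])$ over $\dcP$, which is the claim. I do not expect a genuine obstacle: the only care needed is the bookkeeping above, namely that $\Omega^{\infty-n}E$ is Artinian precisely for $n\geq 0$ and that its loop-space presentation is a pullback along a small — indeed elementary — morphism so that Definition~\ref{Def_FMP} applies. The subtlest of these points, verifying smallness of $\star\to\Omega^{\infty-(n+1)}E$, is immediate in Lurie's abstract framework and, in the operadic case, is already built into the definition of $\smallp$.
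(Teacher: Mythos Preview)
Your proof is correct and follows essentially the same approach as the paper, which simply asserts that the FMP axioms force the maps $F(k\oplus k[n])\to\Omega F(k\oplus k[n+1])$ to be equivalences and refers to \cite[Definition~2.14]{CCN20}. You have spelled out the details the paper leaves implicit: identifying $\Omega^{\infty-n}E$, checking Artinianity for $n\geq 0$, and verifying smallness of $\star\to\Omega^{\infty-(n+1)}E$ so that the pullback axiom of Definition~\ref{Def_FMP} applies.
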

\begin{proof}
 Essentially, the properties of $F$ ensure that $$F(k \oplus k[n]) \rightarrow \Omega F(k \oplus k[n+1])$$ for $n \geq 0$ are equivalences making $F(k \oplus k[n])$ an $\Omega$-spectrum. See the discussion preceding \cite[Definition 2.14]{CCN20}.
\end{proof}

\begin{RQ}
	
Following Remark \ref{RQ_ObstructionFMP}, this implies that the obstruction to lifting deformations along square zero extensions is controlled by the cohomology of $\Tt_F$.
\end{RQ}

\begin{RQ}\label{RQ_HigherDeformationFormalSpectrumCommutative}
{Let $B\in \mathbf{Alg}_{\mathbf{Com}}^{k//k}$ and let  $x : \star \rightarrow \tx{Spec} (B)$ be the point corresponding to the augmentation.}
	Then we have: 
	
	\[ \Tt_x \tx{Spf} (B) := \left( \Map_{\mathbf{dSch}_{/\star}} \left(\mathbf{D}_{k,n}(k), \tx{Spec} (B)\right) \right)_{n \geq 0} \]
	
	and we recover the higher deformations of $\tx{Spec} (B)$ as discussed in \ref{RQ_HigherCotangentComplex} (see also \cite{Ve10}). In $\chk$ this corresponds to the tangent complex at the point $x$, $\Tt_{B,x}$.
\end{RQ}

\begin{RQ}
There is a classical version of a deformation functor of a variety $X$ taking values in sets, $F : \art \rightarrow \mathbf{Set}$,  given by deformations of $X$ over the artinian algebra modulo isomorphism and  similar to our more general version the natural notion of the tangent of this functor is given by $F(k[\epsilon])$. Schlessinger proves \cite{Sc68} (see also \cite[Theorem 2.4.1]{Se07}) that given $X$ a smooth proper variety over $k$, the tangent of the deformation functor $\tx{Def}_X$ is given by the $k$-vector space $H^1(X, T_X)$. 
\end{RQ}

\subsubsection{Koszul duality context}\
\label{Sec_koszul-duality-context}

\medskip

Going back again to the setting of a general deformation context $\_A$, we are looking to identify an $\infty$-category of algebraic objects $\_B$ which is equivalent to $\mathbf{FMP}_{\_A}$.

The formal spectrum $\tx{Spf} $ from  Example \ref{Ex_FMP_AffineMappingSpace} is a functorial way to construct formal moduli problems out of objects of $\_A$. 
Assuming the existence of the desired equivalence, we could construct a functor:
\[\mathfrak D \colon \_A^\mathrm{op} \stackrel{\tx{Spf} }{\longrightarrow}\mathbf{FMP}_{\_A} \stackrel{\sim}\to \_B\]

This functor should be interpreted as a ``weak duality'' functor, which is not an equivalence, but at least its restriction to Artinian objects should behave as an equivalence into a subcategory of $\_B$ of ``good'' objects.
Of course, we are interested in the case where $\_B$ are $\_P$-algebras and $\_A$ are $\_P^!$-algebras. In that case the reader can think of $\mathfrak D$ as the dual of the bar construction, see Warning \ref{war:dual is not dual}.

In this section, we introduce the notion of Koszul duality context as the appropriate axiomatic framework that enables us to obtain the desired equivalence, as shown in Theorem \ref{Th_FMPEquivalenceInKoszulDualityContext}.

\begin{Def}[Dual Deformation Context, {\cite[Definition 2.11]{CG18}}] \label{Def_DualDeformationContext}
	A pair $\left( \_B, \lbrace F_\alpha \rbrace_{\alpha \in T} \right)$ is called a \defi{dual deformation context } if $\_B$ is a presentable $\infty$-category and $F_\alpha \in \mathbf{Stab}(\_B^{\tx{op}})$.\\
	
	We say that an object (resp. morphism) of $\_B$ is \defi{good} if it is Artinian when considered in $\left( \_B^{\tx{op}}, \lbrace F_\alpha \rbrace_{\alpha \in T} \right)$\footnote{Here $\left( \_B^{\tx{op}}, \lbrace F_\alpha \rbrace_{\alpha \in T} \right)$ is generally not a deformation context since $ \_B^{\tx{op}}$ will in general not be presentable. However the definition of Artinian object still makes sense in $\_B^{\tx{op}}$.}. We denote by $\_B^{\tx{gd}}$ the full sub-category of good objects of $\_B$. 
\end{Def}  

\begin{Ex}\ \label{Ex_DualDeformationContexts}
	\begin{itemize}
		\item If $A \in \mathbf{Alg}_{\mathbf{Com}}$, then $\left( \Mod_A, \left( A[n] \right)_{n \in \Zz} \right)$ is a deformation context and since taking the opposite category exchanges the suspension and desuspension functor, we get the dual deformation context $\left(\Mod_A, \left( A[-n] \right)_{n \in \Zz} \right)$.
		
		If $A$ is bounded and concentrated in non-positive degrees, \cite[Lemma 2.16]{CG18} tells us that an element $M \in \Mod_A$ in this dual deformation context is good if it is perfect and cohomologically concentrated in positive degrees (see also \cite[Remark 2.17]{CG18}). 
		
		\item Dual deformation contexts can be transferred along left adjoints  (see \cite[Section 2.2]{CG18}). Using this on the free-forget adjunction: %(see Proposition \ref{Prop_FreeAlgebraOverAnOperad}):
		\[ \begin{tikzcd}
			\Mod_k \arrow[r, shift left]& \arrow[l, shift left] \mathbf{Alg}_\_P
		\end{tikzcd}\]
		
		we get a dual deformation context: 
		\[ \left( \mathbf{Alg}_\_P, \lbrace \left( \_P \circ k[-n] \right)_{n \in \Zz}\rbrace \right)\]
	\end{itemize}
\end{Ex}

\begin{Def}[{\cite[Definition 2.18]{CG18}}] \label{Def_KoszulDualityContext}
	
	A \defi{weak Koszul duality context}  is the data of: 
	\begin{itemize}
		\item A deformation context $\left( \_A, \lbrace E_\alpha \rbrace_{\alpha \in T} \right)$ 
		\item A dual deformation context $\left( \_B, \lbrace F_\alpha \rbrace_{\alpha \in T} \right)$ 
		\item An adjunction: 
		
		\[ \begin{tikzcd}
			\G_D : \_A \arrow[r, shift left] & \arrow[l, shift left] \_B^{\tx{op}} : \G_D'
		\end{tikzcd}\]
	\end{itemize}
	
	such that for all $n \geq 0$, there is an equivalence $\Omega^{\infty - n} E_\alpha \simeq \G_D' \left( \Omega^{\infty -n} F_\alpha \right)$. \\
	
	It is called a \defi{Koszul Duality context} if the following holds:
	
	\begin{enumerate}
		\item For every object $B \in \_B^{\tx{gd}}$, the counit morphism $\G_D \G_D' B \rightarrow B$ is an equivalence. 
		\item For each $\alpha$, the functor:
		\[ \varTheta_\alpha :  \_B \rightarrow \mathbf{Sp} \]
		sending $B \in \_B$ to the spectrum object: \[\left( \mathbf{Hom}_{\_B}(\Omega^{\infty -n} E_\alpha, B) \right)_{n \in \Zz} \in \mathbf{Sp}\] under the functor is conservative and preserves sifted colimits. 
	\end{enumerate} 
\end{Def}

\begin{RQ}
	A weak Koszul duality context together with condition (1) gives us a weak deformation theory according to the terminology of Lurie, \cite[Definition 1.3.1]{Lu11}. A Koszul duality context is an example of a \defi{deformation theory} according to \cite[Definition 1.3.9]{Lu11}. 
\end{RQ}

\begin{RQ}
	The name ``Koszul duality context'' comes from the fact that Koszul duality for operads $(\_C, \_P)$ induces a weak Koszul duality context between $\_P$-algebras and $\_C ^\vee$-algebras (see Proposition \ref{Prop_WeakKoszulDualityFromOperadicKoszulDuality}). Under the assumptions of Theorem \ref{Th_CriteriaForOperadicFMP}, this is even a Koszul duality context.
\end{RQ}

\begin{Ex}
	Let $A \in \mathbf{Alg}_{\mathbf{Com}}$. We have a Koszul duality context given by the dualization:
	\[ \begin{tikzcd}
		(-)^\vee : \left(\Mod_A, \left( A[n] \right)_{n \in \Zz} \right) \arrow[r, shift left] & \arrow[l, shift left] \left( \Mod_A^\mathrm{op}, \left( A[-n] \right)_{n \in \Zz} \right) : (-)^\vee
	\end{tikzcd} \]
\end{Ex}

\begin{Prop}[{\cite[Proposition 2.22]{CG18}}]  \label{Prop_PropertiesKoszulDualityContext}
	Given a Koszul duality context (using the same notation as in Definition \ref{Def_KoszulDualityContext}), we have the following:
	
	\begin{itemize}
		\item $\G_D \left( \Omega^{\infty -n} E_\alpha \right) \simeq \Omega^{\infty -n} F_\alpha$ for all $n \geq 0$. 
		\item For every Artinian object $A \in \mathbf{Art}_\_A$, the unit map $A \rightarrow \G_D' \G_D A$ is an equivalence. 
		\item The adjunction $\G_D \dashv \G_D'$ induces an equivalence:
		\[ \begin{tikzcd}
			\G_D : \mathbf{Art}_\_A \arrow[r, shift left] & \arrow[l, shift left]\left( \_B^{\tx{gd}}\right)^{\tx{op}} : \G_D' 
		\end{tikzcd}\]
		
		\item If $M \in \mathbf{Art}_\_A$ and $f\colon A \rightarrow B$ is a small morphism in $\_A$ then $\G_D$ sends the pullback diagram: 
		\[ \begin{tikzcd}
			P \arrow[r] \arrow[d] & A \arrow[d, "f"] \\
			M \arrow[r] & B 
		\end{tikzcd}\]
		
		to a pullback diagram. 
		
	\end{itemize}
\end{Prop}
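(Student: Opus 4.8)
The plan is to prove the four assertions in the order (1), then a combined induction yielding (2) and (4), and finally (3) by assembling these. Throughout I keep the adjunction $\G_D\dashv\G_D'$ of \cref{Def_KoszulDualityContext} in mind: $\G_D\colon\_A\to\_B^{\tx{op}}$ is a left adjoint, so it preserves colimits of $\_A$ (pushouts in particular), while $\G_D'\colon\_B^{\tx{op}}\to\_A$ is a right adjoint, so it preserves limits (pullbacks, and terminal objects). Also note that the terminal object $\star$ of $\_B$ is good (it is Artinian in $\_B^{\tx{op}}$, reachable from $\star$ in zero elementary steps) and $\G_D'\star\simeq\star$, so condition $(1)$ of \cref{Def_KoszulDualityContext} gives $\G_D\star\simeq\G_D\G_D'\star\simeq\star$.

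\emph{First bullet.} By the axioms of a weak Koszul duality context we have $\Omega^{\infty-n}E_\alpha\simeq\G_D'(\Omega^{\infty-n}F_\alpha)$ for all $n\ge 0$. Each $\Omega^{\infty-n}F_\alpha$ is a good object of $\_B$: in $\_B^{\tx{op}}$ the square with corners $\Omega^{\infty-n}F_\alpha,\ \star,\ \star,\ \Omega^{\infty-n-1}F_\alpha$ is a pullback, this being exactly the relation $\Omega^{\infty-n}F_\alpha\simeq\Omega\,\Omega^{\infty-n-1}F_\alpha$ defining a spectrum object, so $\Omega^{\infty-n}F_\alpha\to\star$ is elementary, hence small, hence $\Omega^{\infty-n}F_\alpha$ is Artinian in $\_B^{\tx{op}}$, i.e. good. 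Condition $(1)$ of \cref{Def_KoszulDualityContext} then provides an equivalence $\G_D\G_D'(\Omega^{\infty-n}F_\alpha)\simeq\Omega^{\infty-n}F_\alpha$, and applying $\G_D$ to the equivalence above yields $\G_D(\Omega^{\infty-n}E_\alpha)\simeq\G_D\G_D'(\Omega^{\infty-n}F_\alpha)\simeq\Omega^{\infty-n}F_\alpha$.

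\emph{Second and fourth bullets.} I would prove by induction on the length $\ell$ of an Artinian object $A\in\mathbf{Art}_\_A$ (the number of elementary morphisms in a tower $A=A_\ell\to\cdots\to A_0=\star$) the conjunction: (a) $\G_D A\in\_B^{\tx{gd}}$; (b) the unit $A\to\G_D'\G_D A$ is an equivalence; (c) for $A'\in\mathbf{Art}_\_A$ of length $\le\ell$ with $A'\simeq B'\times^h_{\Omega^{\infty-n}E_\alpha}\star$, the functor $\G_D$ sends the defining homotopy pullback square of $A'$ (corners $A',\ \star,\ B',\ \Omega^{\infty-n}E_\alpha$) to a homotopy pushout square in $\_B$. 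For $\ell=0$ all three hold since $\G_D\star\simeq\star\simeq\G_D'\star$. For the inductive step write $A\simeq A'\times^h_{\Omega^{\infty-n}E_\alpha}\star$ with $A'$ of length $\ell-1$; the heart is (c) for $A$ (treated in the next paragraph), and granting it, (a) follows since the good objects of $\_B$ are precisely those built from $\star$ by iterated pushouts along maps $\Omega^{\infty-m}F_\beta\to\star$ — here one uses $\G_D A'$ good by induction, $\G_D\star\simeq\star$, and $\G_D\Omega^{\infty-n}E_\alpha\simeq\Omega^{\infty-n}F_\alpha$ from the first bullet — while (b) follows by applying the limit‑preserving functor $\G_D'$ to this pushout square and invoking (b) for $A'$, the first bullet, and compatibility of the unit maps.

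\emph{The core of (c).} One must show that the canonical comparison map $\G_D A'\to \G_D B'\times^h_{\G_D\Omega^{\infty-n}E_\alpha}\G_D\star$ computed in $\_B$ is an equivalence. Since the functors $\varTheta_\beta$ of \cref{Def_KoszulDualityContext}$(2)$ are jointly conservative and each preserves limits, it suffices to show $\varTheta_\beta\circ\G_D$ carries the square to a pullback square of spectra. The adjunction $\G_D\dashv\G_D'$ together with the axiom $\G_D'(\Omega^{\infty-m}F_\beta)\simeq\Omega^{\infty-m}E_\beta$ identifies $\varTheta_\beta\circ\G_D\simeq\Map_\_A(-,E_\beta)$, the mapping spectrum into the spectrum object $E_\beta$. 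One then checks that $\Map_\_A(-,E_\beta)$ takes the elementary pullback square to a pullback of spectra: this exploits that $E_\alpha,E_\beta$ are spectrum objects of $\_A$ (so that mapping out of the first‑order object $\Omega^{\infty-n}E_\alpha$ only sees the appropriate truncation and the relevant fibre sequences split off) together with the preservation of sifted colimits by $\varTheta_\beta$, which allows reduction of a general Artinian source to the generating objects $\Omega^{\infty-m}E_\beta$. \emph{This step is the main obstacle}: it is exactly the assertion that the "formal duality" $\G_D$ is exact on the nose for the elementary pullback squares, and it is where the full strength of the conservativity and sifted‑colimit conditions on the $\varTheta_\beta$ is used.

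\emph{General fourth bullet and third bullet.} Any small $f\colon A\to B$ is a finite composite of elementary morphisms, and a homotopy pullback along a composite is the iterated homotopy pullback; so by pasting of pushout squares and part (c), $\G_D$ sends the pullback square $P=M\times^h_B A$ with $M\in\mathbf{Art}_\_A$ to a pushout in $\_B^{\tx{op}}$, giving the fourth bullet. For the third bullet, (a) shows $\G_D$ restricts to $\mathbf{Art}_\_A\to(\_B^{\tx{gd}})^{\tx{op}}$; conversely $\G_D'$ restricts to $(\_B^{\tx{gd}})^{\tx{op}}\to\mathbf{Art}_\_A$, because $\G_D'$ preserves the pullback squares in $\_B^{\tx{op}}$ defining Artinian objects and sends $\Omega^{\infty-m}F_\beta$ to $\Omega^{\infty-m}E_\beta$ and $\star$ to $\star$. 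On these subcategories the unit is an equivalence by (b) and the counit is an equivalence by \cref{Def_KoszulDualityContext}$(1)$, so the restricted adjunction is an equivalence of $\infty$‑categories, as claimed.
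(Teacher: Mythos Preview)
The paper does not supply its own proof of this proposition (it simply cites \cite[Proposition 2.22]{CG18}), so I will focus on the correctness of your argument.

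Your treatment of the first bullet is fine, and the overall inductive scaffolding for the second and fourth bullets is the right one. The problem is the step you yourself flag as ``the main obstacle,'' namely the core of (c). You write: \emph{``Since the functors $\varTheta_\beta$ are jointly conservative and each preserves limits, it suffices to show $\varTheta_\beta\circ\G_D$ carries the square to a pullback square of spectra.''} This inference is not valid. You are trying to verify that a certain square in $\_B$ is a \emph{pushout}; to detect this via a conservative family $\{\varTheta_\beta\}$ you would need each $\varTheta_\beta$ to preserve the pushout under consideration, so that $\varTheta_\beta$ applied to the comparison map $P\to\G_D A'$ identifies $\varTheta_\beta(P)$ with the pushout of the $\varTheta_\beta$-images. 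The axiom only gives preservation of \emph{sifted} colimits, and a pushout is not sifted. Your subsequent appeal to ``mapping out of the first-order object only sees the appropriate truncation'' and to sifted-colimit preservation does not close this gap: the contravariant functor $\Map_{\_A}(-,E_\beta)$ has no reason to send the pullback $A'=B'\times^h_{\Omega^{\infty-n}E_\alpha}\star$ to anything controlled, and you never actually compute $\varTheta_\beta(P)$.

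The fix is to bypass $\varTheta_\beta$ entirely for this step and instead exploit the right adjoint $\G_D'$, whose behaviour you already analyse later when proving the third bullet. Since the counit is an equivalence on good objects (axiom (1)), $\G_D'$ is fully faithful on $\_B^{\tx{gd}}$. Given $B'$ Artinian with $\G_D B'$ good (inductive hypothesis), the map $B'\to\Omega^{\infty-n}E_\alpha$ corresponds, via this full faithfulness, to a map $\Omega^{\infty-n}F_\alpha\to\G_D B'$ in $\_B$. Form the pushout $P=\star\cup^h_{\Omega^{\infty-n}F_\alpha}\G_D B'$ in $\_B$; it is good by construction. Because $\G_D'\colon\_B^{\tx{op}}\to\_A$ is a right adjoint it preserves limits, so it sends this pushout (a limit in $\_B^{\tx{op}}$) to the pullback $\G_D'P\simeq\star\times^h_{\Omega^{\infty-n}E_\alpha}B'=A'$. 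Now $P$ is good, hence $\G_D\G_D'P\simeq P$, i.e.\ $\G_D A'\simeq P$; this simultaneously yields (a), (b) (via the triangle identity, since the unit at $\G_D'P$ is a section of an equivalence), and (c). The functors $\varTheta_\beta$ are not needed here at all; their role in \cite{CG18} and \cite{Lu11} is in the proof of the main equivalence (\cref{Th_FMPEquivalenceInKoszulDualityContext}), not of these preliminary properties.
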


\begin{Cor}\label{Cor_KoszulDualityContextAnd FMP}
	Given a Koszul duality context, we can construct the following functor: 
	\[ \begin{tikzcd}
		\psi : \_B \arrow[r] &  \mathbf{Fun}\left(\_B^\tx{op}, \_S \right) \arrow[r, "\circ \G_D"] & \mathbf{Fun}\left( \_A, \_S \right)
	\end{tikzcd}  \]
		
	Then $\psi$ factors through $\mathbf{FMP}(\_A, \lbrace E_\alpha \rbrace)$ and we get a map:
	\[\Psi : \_B \rightarrow \mathbf{FMP}(\_A, \lbrace E_\alpha \rbrace)\]
\end{Cor}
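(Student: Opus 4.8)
The plan is to check the two defining conditions of a formal moduli problem (Definition \ref{Def_FMP}) for the functor $\psi(B)=\Map_{\_B^{\tx{op}}}(\G_D(-),B)\colon \_A\to \_S$, restricted to $\mathbf{Art}_\_A$, and then observe that the corecursive factorization through $\mathbf{FMP}$ is automatic since $\mathbf{FMP}(\_A,\{E_\alpha\})$ is by construction a full subcategory of $\mathbf{Fun}(\mathbf{Art}_\_A,\_S)$. So the real content is verifying that $\psi(B)$ lands in that subcategory.

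\textbf{Step 1: triviality on the terminal object.} First I would show $\psi(B)(\star)\simeq \star$. The terminal object $\star$ of $\_A$ is Artinian, and since $\G_D$ is a left adjoint it preserves colimits, in particular the initial object; more precisely, from the equivalence $\G_D\colon \mathbf{Art}_\_A \xrightarrow{\sim} (\_B^{\tx{gd}})^{\tx{op}}$ of Proposition \ref{Prop_PropertiesKoszulDualityContext} we get that $\G_D(\star)$ is the terminal object of $\_B$ (equivalently the initial object of $\_B^{\tx{op}}$). Hence $\Map_{\_B^{\tx{op}}}(\G_D(\star),B)\simeq \star$ for every $B$.

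\textbf{Step 2: sending small pullbacks to pullbacks.} This is the main point. Given a homotopy pullback square in $\mathbf{Art}_\_A$
\[
\begin{tikzcd}
A' \arrow[r]\arrow[d] & A\arrow[d,"f"]\\
B'\arrow[r] & B
\end{tikzcd}
\]
with $f$ (or the other leg) small, I would invoke the last bullet of Proposition \ref{Prop_PropertiesKoszulDualityContext}: $\G_D$ sends this to a pullback square in $\_B^{\tx{op}}$, i.e. to a pushout square in $\_B$, with all four vertices good objects. Now $\Map_{\_B^{\tx{op}}}(-,B)$ is a contravariant functor on $\_B^{\tx{op}}$, i.e. it sends colimits in $\_B^{\tx{op}}$ to limits in $\_S$; in particular it sends the pullback square $\G_D$(square) in $\_B^{\tx{op}}$ to a pullback square in $\_S$. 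Composing, $\psi(B)$ applied to the original square is a pullback in $\_S$, which is exactly the second condition of Definition \ref{Def_FMP}. Together with Step 1 this shows $\psi(B)\in \mathbf{FMP}(\_A,\{E_\alpha\})$, so $\psi$ factors as claimed through a functor $\Psi\colon \_B\to \mathbf{FMP}(\_A,\{E_\alpha\})$, and functoriality in $B$ is inherited from $\psi$.

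\textbf{Expected obstacle.} The genuinely delicate step is Step 2, and specifically the fact that $\G_D$ preserves the relevant pullback squares — but this is precisely the content of the fourth bullet of Proposition \ref{Prop_PropertiesKoszulDualityContext}, which we are entitled to assume, so in the present setting the proof is essentially bookkeeping: unwinding that $\psi$ takes values in $\_S$-valued functors that are continuous in the appropriate sense, and that the factorization is just restriction along the fully faithful inclusion $\mathbf{FMP}\hookrightarrow \mathbf{Fun}(\mathbf{Art}_\_A,\_S)$. One minor care point worth flagging is that one must restrict $\psi(B)$ along $\mathbf{Art}_\_A\hookrightarrow \_A$ before checking the axioms, since formal moduli problems are functors out of $\mathbf{Art}_\_A$ only, and one should note that all squares appearing in Definition \ref{Def_FMP} live inside $\mathbf{Art}_\_A$ and remain pullbacks there.
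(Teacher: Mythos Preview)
Your approach is correct and matches the paper's (which states this as an immediate corollary of Proposition~\ref{Prop_PropertiesKoszulDualityContext} without further proof). There is, however, a persistent confusion between $\_B$ and $\_B^{\tx{op}}$ that makes several intermediate claims literally false even though your conclusions are right.

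The Yoneda embedding $\_B\to \mathbf{Fun}(\_B^{\tx{op}},\_S)$ sends $B$ to $\Map_{\_B}(-,B)$, so $\psi(B)(A)=\Map_{\_B}(\G_D(A),B)$, not $\Map_{\_B^{\tx{op}}}(\G_D(A),B)$. In Step~1, the equivalence $\G_D\colon\mathbf{Art}_\_A\simeq (\_B^{\tx{gd}})^{\tx{op}}$ sends the terminal object $\star$ to the terminal object of $(\_B^{\tx{gd}})^{\tx{op}}$, which is the \emph{initial} object of $\_B^{\tx{gd}}$ and hence of $\_B$ (the initial object of $\_B$ is trivially good and $\_B^{\tx{gd}}\subset\_B$ is full) --- not the terminal object as you write. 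This is exactly what makes $\Map_{\_B}(\G_D(\star),B)\simeq\star$. In Step~2, the image under $\G_D$ is a pullback in $\_B^{\tx{op}}$, i.e.\ a pushout in $\_B$, and then $\Map_{\_B}(-,B)$ sends pushouts in $\_B$ to pullbacks in $\_S$; your phrasing ``contravariant functor on $\_B^{\tx{op}}$ sending colimits in $\_B^{\tx{op}}$ to limits'' does not yield what you need, since the square in $\_B^{\tx{op}}$ is a limit, not a colimit. Once the op's are tracked correctly, the argument goes through exactly as you outline.
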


It turns out that the definition of Koszul duality context ensures that the map $\Psi : \_B \rightarrow \mathbf{FMP}(\_A, \lbrace E_\alpha \rbrace)$ is in fact an equivalence.

\begin{Th}[{\cite[Theorem 2.33]{CG18}} or {\cite[Theorem 1.3.12]{Lu11}}] \label{Th_FMPEquivalenceInKoszulDualityContext}
	
	Given a Koszul duality context as above, the functor $\Psi$ is an equivalence:
	\[\Psi : \_B \rightarrow \mathbf{FMP}(\_A, \lbrace E_\alpha \rbrace)\] 
\end{Th}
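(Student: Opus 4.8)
The plan is to verify the three defining properties of an equivalence of $\infty$-categories for the functor $\Psi \colon \_B \to \mathbf{FMP}(\_A, \lbrace E_\alpha \rbrace)$ constructed in Corollary \ref{Cor_KoszulDualityContextAnd FMP}, following the strategy of \cite{Lu11} and \cite{CG18}. First I would observe that $\mathbf{FMP}(\_A, \lbrace E_\alpha\rbrace)$ is a presentable $\infty$-category (it is an accessible localization of $\mathbf{Fun}(\mathbf{Art}_\_A, \_S)$, cut out by the finitely many limit conditions of Definition \ref{Def_FMP}) and that $\Psi$ preserves small colimits, since $\tx{Spf}$ and precomposition with $\G_D$ both do after passing to the appropriate functor categories; by the adjoint functor theorem $\Psi$ admits a right adjoint $\Phi$. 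The content is then to show the unit and counit of $\Psi \dashv \Phi$ are equivalences.

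The key step is to reduce everything to the behaviour on the ``first-order'' building blocks $\Omega^{\infty-n}E_\alpha$ and their Artinian iterated extensions. Concretely: for $B \in \_B$, $\Psi(B)(A) = \Map_{\_A}(A, \G_D' B)$ is a formal moduli problem whose tangent complex (Definition \ref{Def_TangentFunctor}) at $\alpha$ is, by the equivalences $\Omega^{\infty-n}E_\alpha \simeq \G_D'(\Omega^{\infty-n}F_\alpha)$ and the counit being an equivalence on good objects (Definition \ref{Def_KoszulDualityContext}(1)), precisely the spectrum object $\varTheta_\alpha(B) = (\mathbf{Hom}_\_B(\Omega^{\infty-n}E_\alpha, B))_{n\in\Zz}$. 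The hypothesis that $\varTheta_\alpha$ is conservative and preserves sifted colimits then does all the work. Using that every Artinian object is built from the $\Omega^{\infty-n}E_\alpha$ by finitely many pullbacks along small morphisms, and that both $\Psi(B)$ (by the FMP axioms) and any formal moduli problem send these pullbacks to pullbacks, one shows that $\Psi$ is fully faithful: for $B, B' \in \_B$,
\[
\Map_\_B(B,B') \;\simeq\; \Map_{\mathbf{FMP}}(\Psi(B), \Psi(B')),
\]
by first checking it when $B$ is a free/compact generator where $\Psi(B)$ is corepresented in a controlled way, and then extending along sifted colimits using conservativity and cocontinuity of $\varTheta_\alpha$.

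For essential surjectivity, given $F \in \mathbf{FMP}(\_A,\lbrace E_\alpha\rbrace)$ I would produce a candidate preimage by applying $\Phi$, i.e.\ ``$\Phi(F) = $'' the object of $\_B$ representing the tangent complex $\Tt_F$ with its induced algebraic structure — this is exactly where the structure on $\Tt_F$ (as in Proposition \ref{Prop_TangentSpaceAndTangentComplex}) is used — and then show the counit $\Psi\Phi(F)\to F$ is an equivalence by checking it on each Artinian $A$, again inducting on the elementary-morphism filtration of $A$ and using that both sides send the relevant squares to pullbacks and agree on the $\Omega^{\infty-n}E_\alpha$ (where both compute the appropriate mapping spectrum into $\Phi(F)$). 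The main obstacle I anticipate is the fully faithfulness step: one must control $\Map_\_B(B,B')$ for \emph{general} $B$, not just Artinian or good ones, and the passage from good objects (where the adjunction unit/counit are equivalences by Proposition \ref{Prop_PropertiesKoszulDualityContext}) to all of $\_B$ genuinely requires the sifted-colimit-preservation and conservativity of $\varTheta_\alpha$ together with a careful resolution of $B'$ by objects on which $\Psi$ is understood — this is the technical heart of \cite[Theorem 2.33]{CG18}/\cite[Theorem 1.3.12]{Lu11} and the place where the axioms of a Koszul duality context are indispensable.
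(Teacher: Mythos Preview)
The paper does not give its own proof of this theorem; it simply cites \cite[Theorem 2.33]{CG18} and \cite[Theorem 1.3.12]{Lu11} and moves on. So there is no proof in the paper to compare against, and I can only evaluate your sketch on its own terms.

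Your overall strategy is the right one --- reduce to the tangent (Proposition \ref{Prop_FMPEquivalenceAndTangent} says $T\circ\Psi \simeq \varTheta$), then exploit axiom (2) of Definition \ref{Def_KoszulDualityContext} (conservativity and sifted-colimit preservation of $\varTheta_\alpha$) together with the induction on elementary morphisms coming from axiom (1). That is indeed the backbone of both cited proofs.

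However, two concrete errors would derail the argument as written. First, your formula for $\Psi$ is wrong: by construction (Corollary \ref{Cor_KoszulDualityContextAnd FMP}) one has $\Psi(B)(A)=\Map_{\_B}(\G_D A, B)$, not $\Map_{\_A}(A,\G_D' B)$; the adjunction $\G_D\dashv \G_D'$ identifies $\Map_{\_A}(A,\G_D' B)$ with $\Map_{\_B}(B,\G_D A)$, which is the wrong variance. Second, and more seriously, $\Psi$ does \emph{not} preserve small colimits --- it is a Yoneda-type functor and preserves \emph{limits}, so by the adjoint functor theorem it admits a \emph{left} adjoint $\Phi$, not a right one. This matters for the logic: you must show $\Psi$ is conservative (immediate from $T\circ\Psi\simeq\varTheta$ and conservativity of $\varTheta_\alpha$), that $\Psi$ preserves \emph{sifted} colimits (this is the step that genuinely uses sifted-colimit preservation of $\varTheta_\alpha$, together with the fact that the tangent functor $T$ on $\mathbf{FMP}$ is conservative and detects sifted colimits), and then invoke Barr--Beck--Lurie to conclude $\Psi$ is monadic; finally one checks the unit $\Phi\Psi\to\id$ is an equivalence on the generators $\G_D(\Omega^{\infty-n}E_\alpha)\simeq\Omega^{\infty-n}F_\alpha$ (Proposition \ref{Prop_PropertiesKoszulDualityContext}) and hence everywhere. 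Your essential-surjectivity paragraph has the right idea, but once the adjoint is on the correct side the induction over small morphisms becomes the verification that the \emph{left} adjoint is fully faithful, which is where axiom (1) enters.
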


\begin{Prop}[{\cite[Proposition 2.36]{CG18}}] \label{Prop_FMPEquivalenceAndTangent}
	We have the following commutative diagram: 
	\[ \begin{tikzcd}
		\_B \arrow[rr, "\Psi"] \arrow[rd, "\varTheta"'] & & \mathbf{FMP}\left( \_A, \lbrace E_\alpha \rbrace \right) \arrow[dl, "T"] \\
		& \mathbf{Sp} & 
	\end{tikzcd}\]
	
	where $\varTheta$ is the functor described in Definition \ref{Def_KoszulDualityContext} and $T$ is the tangent complex of $F$ (Definition \ref{Def_TangentFunctor}). 
\end{Prop}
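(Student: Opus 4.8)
\textbf{Proof plan for Proposition \ref{Prop_FMPEquivalenceAndTangent}.}

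The plan is to unwind the definitions of the three functors $\Psi$, $\varTheta$ and $T$ and show that the triangle commutes up to natural equivalence by direct computation. Recall that $\Psi$ is built in Corollary \ref{Cor_KoszulDualityContextAnd FMP} as the composite that sends $B \in \_B$ to the presheaf $\mathbf{Hom}_{\_B}(\G_D(-), B)$ restricted along $\G_D\colon \mathbf{Art}_\_A \to (\_B^{\tx{gd}})^{\tx{op}}$; the tangent complex functor $T$ evaluates a formal moduli problem at the spectrum object $E_\alpha$, so concretely $T(\Psi(B))$ is the spectrum object whose $n$-th space is $\Psi(B)(\Omega^{\infty-n}E_\alpha)$; and $\varTheta(B)$ is by definition the spectrum object $\left(\mathbf{Hom}_{\_B}(\Omega^{\infty-n}E_\alpha, B)\right)_{n}$, where here I must be careful: in the statement of condition (2) of Definition \ref{Def_KoszulDualityContext} the building blocks $\Omega^{\infty-n}E_\alpha$ are to be read as living in $\_B$ via the identification coming from the weak Koszul duality context.

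The key step is the following chain of equivalences, natural in $B$ and compatible with the structure maps of the respective spectrum objects:
\[
T(\Psi(B))_n \;\simeq\; \Psi(B)(\Omega^{\infty-n}E_\alpha) \;\simeq\; \mathbf{Hom}_{\_B}\!\left(\G_D(\Omega^{\infty-n}E_\alpha),\, B\right) \;\simeq\; \mathbf{Hom}_{\_B}\!\left(\Omega^{\infty-n}F_\alpha,\, B\right) \;\simeq\; \varTheta(B)_n.
\]
The first equivalence is the definition of $T$ together with $\Omega^{\infty-n}F(E_\alpha)\simeq F(\Omega^{\infty-n}E_\alpha)$ recalled after Definition \ref{Def_TangentFunctor}. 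The second is the definition of $\Psi$ from Corollary \ref{Cor_KoszulDualityContextAnd FMP}, noting that $\Omega^{\infty-n}E_\alpha$ is Artinian so $\G_D$ applies to it. The third is the first bullet of Proposition \ref{Prop_PropertiesKoszulDualityContext}, which gives $\G_D(\Omega^{\infty-n}E_\alpha)\simeq \Omega^{\infty-n}F_\alpha$ for all $n\geq 0$. The last equivalence is just matching up with the definition of $\varTheta$, after translating $\Omega^{\infty-n}F_\alpha \in \mathbf{Stab}(\_B^{\tx{op}})$ into the object of $\_B$ that represents it, which is precisely how the building blocks appear in condition (2). First I would set up this chain carefully at the level of objects, then promote it to a natural transformation of functors $\_B \to \mathbf{Sp}$ by checking that all the equivalences used are natural (the naturality of the first two is formal from the definitions, and the third is part of the cited proposition).

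The main obstacle I expect is bookkeeping around the structure maps of the spectrum objects: one must verify that the equivalences above are compatible with the maps $X_n \to \Omega X_{n+1}$ that assemble the $X_n$ into a genuine spectrum object, i.e. that the chain of equivalences is a map in $\mathbf{Sp}$ and not merely a levelwise equivalence of sequences. This amounts to checking that $\G_D$, being a left adjoint (hence preserving the relevant pullbacks by the last bullet of Proposition \ref{Prop_PropertiesKoszulDualityContext}), sends the tower defining $E_\alpha$ to the tower defining $F_\alpha$ compatibly; once that is in place the structure maps on both sides are induced by the same pullback squares and automatically agree. A secondary but routine point is to make sure the identification of $\Omega^{\infty-n}F_\alpha$ as an object of $\_B$ (used in defining $\varTheta$) is the same as the one produced by $\G_D$, which is exactly the content of the weak Koszul duality context axiom $\Omega^{\infty-n}E_\alpha \simeq \G_D'(\Omega^{\infty-n}F_\alpha)$ together with the unit equivalence on Artinian objects. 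With these compatibilities established, commutativity of the triangle follows, and one can simply cite \cite[Proposition 2.36]{CG18} for the remaining diagram-chase details.
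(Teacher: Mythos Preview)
The paper does not actually supply a proof of this proposition; it is stated purely as a citation of \cite[Proposition 2.36]{CG18} and then used as a black box. Your unwinding is the standard argument and is essentially what the cited reference does: identify $T(\Psi(B))_n \simeq \Psi(B)(\Omega^{\infty-n}E_\alpha) \simeq \mathbf{Hom}_{\_B}(\G_D(\Omega^{\infty-n}E_\alpha),B) \simeq \mathbf{Hom}_{\_B}(\Omega^{\infty-n}F_\alpha,B) \simeq \varTheta(B)_n$, using Proposition \ref{Prop_PropertiesKoszulDualityContext} for the key third step, and then check compatibility with the structure maps. You were also right to flag the apparent typo in condition (2) of Definition \ref{Def_KoszulDualityContext}: the building blocks there should be $\Omega^{\infty-n}F_\alpha \in \_B$, not $\Omega^{\infty-n}E_\alpha$, and your reading via the weak Koszul duality identification is the intended one.
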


\begin{RQ}\label{RQ_StructureOnTangentFunctor}
This diagram shows that $TF$ is equivalent to $\varTheta (\Tt_F)$ with $\Tt_F \in \_B$ so this tangent functor has more structure that being just a spectrum object. When we will work with the Koszul Duality context for operads (Definition \ref{Def_KoszulDualityContextForOperad}), with $\tx{Spf}(B)$, we get that $\Tt_F$ is exactly $\Tt_{B,x}^\_P$ the $\_P$-tangent complex together with a $\_P^!$ structure.
\end{RQ} 

\begin{Def}[{\cite[Definition 2.23]{CG18}}]  \label{Def_MorphismKoszulDualityContext}
	
	Given two (weak) Koszul duality contexts: 
	\[\begin{tikzcd}
		\G_D_1 : \left( \_A_1, \lbrace E_\alpha^1 \rbrace \right) \arrow[r, shift left] & \arrow[l, shift left] \left( \_B_1^{\tx{op}}, \lbrace F_\alpha^1 \rbrace \right): \G_D_1'  
	\end{tikzcd} \]
	\[\begin{tikzcd}
		\G_D_2 : \left( \_A_2, \lbrace E_\alpha^2 \rbrace \right) \arrow[r, shift left] & \arrow[l, shift left] \left( \_B_2^{\tx{op}}, \lbrace F_\alpha^2 \rbrace \right): \G_D_2'  
	\end{tikzcd} \]
	
	A \defi{morphism of (weak) Koszul duality context} is given by a diagram of adjunctions: 
	\[  \begin{tikzcd}
		\_A_1 \arrow[r, shift left,"\G_D_1"] \arrow[d, shift left, "L_1"]  & \arrow[l, shift left, "\G_D_1'"] \arrow[d, shift left, "L_2"]  \_B_1^{\tx{op}}  \\
		\_A_2 \arrow[u, shift left, "R_1"] \arrow[r, shift left,"\G_D_2"] & \arrow[l, shift left, "\G_D_1'"] \arrow[u, shift left, "R_2"] \_B_2^{\tx{op}}
	\end{tikzcd}\]
	
	such that the diagram of right adjoint commute: 
	\[  \begin{tikzcd}
		\_A_1  & \arrow[l, shift left, "\G_D_1'"]  \_B_1^{\tx{op}}\\
		\_A_2 \arrow[u, shift left, "R_1"]  & \arrow[l, shift left, "\G_D_1'"] \arrow[u, shift left, "R_2"]  \_B_2^{\tx{op}}
	\end{tikzcd}\]
	
	and we have equivalences $R_2 \left( \Omega^{\infty -n} F_\alpha^2 \right) \simeq F_\alpha^1$ for all $\alpha \in T$ and $n \geq 0$. 
\end{Def}

\begin{Prop}[{\cite[Proposition 2.39]{CG18}}] \label{Prop_NaturalityKoszulDualityContext}
	
	Given a morphism of Koszul duality contexts (following the notation of Definition \ref{Def_MorphismKoszulDualityContext}), we get a commutative diagram: 
	\[\begin{tikzcd}
		\_B_1 \arrow[dd, "L_2"] \arrow[r, "\Psi_1"] & \mathbf{FMP}\left( \_A_1, \lbrace E_\alpha^1 \rbrace \right)  \arrow[dd, "R_1"] \arrow[dr, "T"] & \\ 
		& & \mathbf{Sp} \\
		\_B_2 \arrow[r, "\Psi_2"] & \mathbf{FMP}\left( \_A_2, \lbrace E_\alpha^2 \rbrace \right)  \arrow[ur, "T"'] 
	\end{tikzcd}\]
	
\end{Prop}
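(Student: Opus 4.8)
The plan is to verify separately the two pieces of the diagram: the commuting square $R_1\circ\Psi_1\simeq\Psi_2\circ L_2$, and the commuting triangle expressing that the two tangent legs $T$ agree. Throughout I unwind the description of $\Psi_i$ from Corollary \ref{Cor_KoszulDualityContextAnd FMP}: for $B_i\in\_B_i$ and $A\in\mathbf{Art}_{\_A_i}$ one has a natural equivalence $\Psi_i(B_i)(A)\simeq\Map_{\_B_i}(\G_D_i A,B_i)$, and the middle vertical functor $R_1\colon\mathbf{FMP}(\_A_1,\{E^1_\alpha\})\to\mathbf{FMP}(\_A_2,\{E^2_\alpha\})$ is precomposition with the restriction to Artinian objects of $R_1\colon\_A_2\to\_A_1$.

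First I would check that $R_1$ really does descend to formal moduli problems, i.e. that it restricts to $\mathbf{Art}_{\_A_2}\to\mathbf{Art}_{\_A_1}$. Being a right adjoint, $R_1$ preserves the terminal object and all pullbacks. Combining the weak-Koszul-duality identification $\Omega^{\infty-n}E^i_\alpha\simeq\G_D_i'(\Omega^{\infty-n}F^i_\alpha)$ with the commuting square of right adjoints $\G_D_1'\circ R_2\simeq R_1\circ\G_D_2'$ and the identity $R_2(\Omega^{\infty-n}F^2_\alpha)\simeq\Omega^{\infty-n}F^1_\alpha$, all from Definition \ref{Def_MorphismKoszulDualityContext}, I obtain $R_1(\Omega^{\infty-n}E^2_\alpha)\simeq\Omega^{\infty-n}E^1_\alpha$. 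Since every Artinian object is a finite iterated pullback of elementary morphisms whose targets are the $\Omega^{\infty-n}E_\alpha$ (Definition \ref{Def_SmallObjectsAndMorphisms}), an induction on this construction shows $R_1$ preserves Artinian objects; precomposition then preserves $F(\star)\simeq\star$ and the defining pullbacks, so it lands in $\mathbf{FMP}$. The same inductive argument, applied to $R_2$ viewed on the opposite categories, shows $R_2$ preserves good objects.

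The heart of the square is the mixed identity $\G_D_1\circ R_1\simeq R_2\circ\G_D_2$ on Artinian objects, which I would obtain as the mate of the commuting square of right adjoints. From $\G_D_1'\circ R_2\simeq R_1\circ\G_D_2'$ together with the adjunctions $\G_D_i\dashv\G_D_i'$ one forms the canonical transformation $\G_D_1R_1\Rightarrow R_2\G_D_2$ out of the unit of $\G_D_2\dashv\G_D_2'$ and the counit of $\G_D_1\dashv\G_D_1'$. On an Artinian $A_2$ the unit $A_2\to\G_D_2'\G_D_2A_2$ is an equivalence by Proposition \ref{Prop_PropertiesKoszulDualityContext}, and the counit is an equivalence at $R_2\G_D_2A_2$, which is good since $\G_D_2A_2$ is good and $R_2$ preserves good objects. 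Hence the mate is an equivalence on $\mathbf{Art}_{\_A_2}$. Granting this, and passing the adjunction $L_2\dashv R_2$ of Definition \ref{Def_MorphismKoszulDualityContext} to the opposite categories to get $R_2\dashv L_2$ between $\_B_1$ and $\_B_2$, the square commutes by the chain of natural equivalences, for $B_1\in\_B_1$ and $A_2\in\mathbf{Art}_{\_A_2}$,
\[
(\Psi_2 L_2 B_1)(A_2)\simeq\Map_{\_B_2}(\G_D_2A_2,L_2B_1)\simeq\Map_{\_B_1}(R_2\G_D_2A_2,B_1)\simeq\Map_{\_B_1}(\G_D_1R_1A_2,B_1)\simeq(R_1\Psi_1 B_1)(A_2).
\]

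Finally, the triangle follows from the definition of the tangent complex (Definition \ref{Def_TangentFunctor}), $TF=(F(\Omega^{\infty-n}E_\alpha))_{n}$: using $R_1(\Omega^{\infty-n}E^2_\alpha)\simeq\Omega^{\infty-n}E^1_\alpha$ from the first step, for $F\in\mathbf{FMP}(\_A_1,\{E^1_\alpha\})$ one gets
\[
T(R_1F)=\big(F(R_1\Omega^{\infty-n}E^2_\alpha)\big)_n\simeq\big(F(\Omega^{\infty-n}E^1_\alpha)\big)_n=TF,
\]
so $T\circ R_1\simeq T$ and both $T$-legs agree (alternatively this is the compatibility of the functors $\varTheta_i$ under $\G_D$ via Proposition \ref{Prop_FMPEquivalenceAndTangent}). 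I expect the main obstacle to be the mate step: making the passage from the commuting square of right adjoints to the mixed square $\G_D_1R_1\simeq R_2\G_D_2$ coherent in the $\infty$-categorical setting, and pinning down exactly where the unit and counit are equivalences (on Artinian, respectively good, objects) so that the mate becomes an equivalence after restriction. The remaining manipulations with $\Psi_i$, the Hom-adjunction, and the tangent complex are then formal.
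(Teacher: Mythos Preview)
The paper does not give its own proof of this proposition; it is simply quoted from \cite[Proposition 2.39]{CG18}. Your argument is correct and follows what is the natural (and, presumably, the cited) route: first show $R_1$ preserves Artinian objects via $R_1(\Omega^{\infty-n}E^2_\alpha)\simeq\Omega^{\infty-n}E^1_\alpha$, then reduce the square to the mate identity $R_2\G_D_2\simeq\G_D_1R_1$ on Artinian objects, obtained from the commuting square of right adjoints together with the unit/counit being equivalences on Artinian/good objects, and finally unwind the representability description of $\Psi_i$ using the adjunction between $L_2$ and $R_2$. The triangle then drops out of the identification $R_1(\Omega^{\infty-n}E^2_\alpha)\simeq\Omega^{\infty-n}E^1_\alpha$.

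One small point worth flagging is purely notational: the hypothesis in Definition~\ref{Def_MorphismKoszulDualityContext} reads $R_2(\Omega^{\infty-n}F^2_\alpha)\simeq F^1_\alpha$, which you (correctly) interpret as $\Omega^{\infty-n}F^1_\alpha$; this is what is needed both for $R_1$ to preserve the objects $\Omega^{\infty-n}E_\alpha$ and for $R_2$ to preserve good objects in your inductive step. Your anticipated ``main obstacle'' about the $\infty$-categorical coherence of the mate is real but standard, and your identification of where the unit and counit are invertible is exactly the content of Proposition~\ref{Prop_PropertiesKoszulDualityContext} and Definition~\ref{Def_KoszulDualityContext}(1).
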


\subsubsection{Koszul duality context from Koszul duality}\ \label{Sec_KoszulContextDualityAndKoszulDuality}

\medskip

In this section we will study examples of (weak) Koszul duality contexts that arise from operadic Koszul duality. The typical instance of such Koszul duality context can be exemplified by the following proposition which can be extracted from \cite[Theorem 2.3.1]{Lu11}, see \cite[Proposition 2.30]{CG18}.

\begin{Prop}
	
	We have a Koszul duality context: 
	\[ \begin{tikzcd}
		\G_D : \left( \mathbf{Alg}_\mathbf{Com}^{\tx{aug}}, (k \oplus k[n]) \right) \arrow[r, shift left] & \arrow[l, shift left] \left( \mathbf{Alg}_{\mathbf{Lie}}^{\tx{op}}, \left( \mathbf{Lie}(k[-n-1]) \right)_{n \in \Zz} \right) : \tx{CE}^\bullet
	\end{tikzcd} \]
	
	where the right adjoint is the ($\infty$-categorical) Chevalley--Eilenberg cochain complex,  see Example \ref{Ex:CE and HH cochain cx}.
\end{Prop}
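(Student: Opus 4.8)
The plan is to exhibit the stated pair as a Koszul duality context in the sense of Definition~\ref{Def_KoszulDualityContext}, realising it as the instance for the Koszul pair $(\mathbf{Com},\mathbf{Lie})$ of Theorem~\ref{Th_ExampleKoszulDuality} of the general mechanism by which operadic Koszul duality produces such contexts. First I would fix the two (dual) deformation contexts. On the commutative side this is the context $\dccomaug$ of Example~\ref{Ex_DeformationContexts}, so that $\Omega^{\infty-n}E\simeq k\oplus k[n]$. On the Lie side I would take the dual deformation context obtained by transferring $(\Mod_k,(k[-n-1])_{n})$ along the free–forgetful adjunction between $\Mod_k$ and $\mathbf{Alg}_{\mathbf{Lie}}$ as in Example~\ref{Ex_DualDeformationContexts}, the shift by one relative to the naive $\mathbf{Lie}(k[-n])$ being the degree normalisation carried by $\mathbf{Com}^!=\mathbf{Lie}$; this produces precisely the spectrum object with $\Omega^{\infty-n}F\simeq\mathbf{Lie}(k[-n-1])$.

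Next I would set up the adjunction $\G_D\dashv\tx{CE}^\bullet$ at the $\infty$-categorical level. The right adjoint $\tx{CE}^\bullet$ sends a Lie algebra $\mathfrak g$ to its Chevalley–Eilenberg cochain complex; by Example~\ref{Ex:CE and HH cochain cx} this is the ``bar-then-linear-dual'' functor $\mathfrak g\mapsto(\mathbf B_\kappa\mathfrak g)^\vee$ up to a shift, i.e.\ the Koszul-dual-algebra functor of Section~\ref{sec:algebraic-bar-cobar-adjunction}. Its left adjoint $\G_D$ is then the ``linear-dual-then-cobar'' functor $A\mapsto\Omega_\kappa^\wedge(A^\vee)$, where the completed cobar is used so that $\G_D$ is homotopy-meaningful on all of $\mathbf{Alg}_{\mathbf{Com}}^{\mathrm{aug}}$; I would obtain the adjunction by combining the bar–cobar adjunction of Definition~\ref{Def_BarCobarAlgebra} with the duality ${\mathbf B(A)}^\vee\simeq\Omega(A^\vee)$ recalled in Warning~\ref{war:dual is not dual}, and verify it is homotopical using that $\mathbf B_\kappa$ preserves quasi-isomorphisms (Proposition~\ref{RQ_UnitCouniQIAlgebraBarcoBar}) while the completed cobar does so unconditionally.

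With the adjunction in place, being a \emph{weak} Koszul duality context reduces to the identity $\tx{CE}^\bullet(\Omega^{\infty-n}F)\simeq\Omega^{\infty-n}E$, i.e.\ $\tx{CE}^\bullet(\mathbf{Lie}(k[-n-1]))\simeq k\oplus k[n]$; this is a direct, if convention-heavy, computation of the Chevalley–Eilenberg cochains of the free Lie algebra on a one-dimensional complex, whose cohomology collapses to the constants together with the linear dual of the generator. To upgrade this to a genuine Koszul duality context I would verify the two conditions of Definition~\ref{Def_KoszulDualityContext}. Condition~(2) — conservativity and preservation of sifted colimits of the functor $\varTheta$ — amounts to the underlying-cochain-complex functor $\mathbf{Alg}_{\mathbf{Lie}}\to\mathbf{Sp}$ being conservative (a morphism of Lie algebras which is a quasi-isomorphism on underlying complexes is an equivalence) and preserving sifted colimits, which holds for the forgetful functor from algebras over any operad. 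Condition~(1) — the counit $\G_D\,\tx{CE}^\bullet B\to B$ is an equivalence for every good $B\in\mathbf{Alg}_{\mathbf{Lie}}^{\tx{gd}}$ — is where the bar–cobar resolution does the work: once the good objects are identified (via the transferred dual context these are the perfect Lie algebras cohomologically concentrated in positive degrees), $\tx{CE}^\bullet B$ is degreewise finite-dimensional and suitably connective, so $\G_D\,\tx{CE}^\bullet B$ identifies, using degreewise duality, with $\Omega_\kappa^\wedge(\mathbf B_\kappa B)\simeq\Omega_\kappa\mathbf B_\kappa B$, which by Proposition~\ref{RQ_UnitCouniQIAlgebraBarcoBar} is quasi-isomorphic to $B$, with no completion obstruction of the kind flagged in Warning~\ref{war:dual is not dual}.

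I expect the principal difficulty to be the $\infty$-categorical bookkeeping around the adjunction $\G_D\dashv\tx{CE}^\bullet$: the strict bar–cobar adjunction pairs coaugmented conilpotent coalgebras with augmented algebras, linear duality trades conilpotency for a pro-nilpotence and finite-type condition, and $\Omega_\kappa$ fails to preserve arbitrary quasi-isomorphisms. Arranging that the derived adjunction restricts to the equivalence $\mathbf{Art}_{\dccomaug}\simeq(\mathbf{Alg}_{\mathbf{Lie}}^{\tx{gd}})^{\mathrm{op}}$ predicted by Proposition~\ref{Prop_PropertiesKoszulDualityContext}, and that ``good'' is exactly the class on which the counit is invertible, is the delicate point; the Chevalley–Eilenberg computation and the conservativity check are routine by comparison. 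Once everything is assembled, Theorem~\ref{Th_FMPEquivalenceInKoszulDualityContext} applies and recovers the equivalence $\mathbf{Alg}_{\mathbf{Lie}}\simeq\mathbf{FMP}$ of Theorem~\ref{Th_LuriePridham}.
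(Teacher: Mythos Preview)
Your outline is essentially correct and in fact more detailed than what the paper offers: the paper does not prove this proposition but simply extracts it from \cite[Theorem 2.3.1]{Lu11} and \cite[Proposition 2.30]{CG18}, remarking only that the left adjoint $\G_D$ coincides with the Harrison complex and that the proof ultimately relies on the finiteness situation of Warning~\ref{war:dual is not dual}. Your direct verification of the axioms of Definition~\ref{Def_KoszulDualityContext} is precisely the strategy carried out in those references, and the paper later recovers this proposition as the $\_P=\mathbf{Com}$ case of the general machinery of Theorem~\ref{Th_CriteriaForOperadicFMP}.

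One point worth flagging: your description of the left adjoint as ``linear-dual-then-completed-cobar'', $A\mapsto\Omega_\kappa^\wedge(A^\vee)$, is not how the references (or the paper) build it. Linear duality $(-)^\vee$ is not part of an adjunction between algebras and coalgebras on all of $\mathbf{Alg}_{\mathbf{Com}}^{\mathrm{aug}}$, so this formula does not define a left adjoint globally. Lurie and Calaque--Grivaux instead obtain $\G_D$ abstractly from the Adjoint Functor Theorem and then identify it, as the paper notes, with the Harrison complex $A\mapsto(\mathbf B_\kappa A)^\vee$ (bar-then-dual, up to shift). Your formula and theirs agree on Artinian objects, which is all that matters for checking condition~(1), but you should either invoke AFT for the existence of $\G_D$ or work with the Harrison model from the start. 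You already flag exactly this as the principal difficulty, so the gap is one of presentation rather than strategy.
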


While \cite{CG18} and \cite{Lu11} do not construct left adjoint $\mathfrak D$ explicitly (they deduce it from the Adjoint Functor Theorem), one can check that the functor $\mathfrak D$ coincides with the so-called Harrison complex, constructed as the functor (also denoted $\mathfrak D$) preceding Example \ref{Ex:CE and HH cochain cx}. These duality functors preserve quasi-isomorphisms and therefore define functors of $\infty$-categories, but notice that they do not arise from a Quillen adjunction.
The proof of this proposition makes use of the fact that we are in the conditions of Warning \ref{war:dual is not dual}.

It is clear that this result has more to do with the Koszul duality between the operads $\mathbf{Com}$ and $\mathbf{Lie}$ and with the operads themselves. We will show how construct a (weak) Koszul duality context from operadic Koszul duality, which as we will see is the main tool to prove Theorem \ref{Th_KoszulOperadicFMPEquivalence}.

Throughout this section, we will work with the deformation context $\dcP$ described in Example \ref{Ex_DeformationContexts}. {From now on, we will consider $\_P$ a $k$-augmented $k$-operad so that $\_P = \bar{\_P}\oplus I$.} Given a twisting morphism $\phi: \_C \dashrightarrow \_P$, we use the bar--cobar adjunction associated to $\phi$ to define the following functor:
\[ \begin{tikzcd}
	\G_D_\phi: \mathbf{Alg}_\_P \arrow[r, "\mathbf{B}_\phi"] & \mathbf{coAlg}_\_C \arrow[r, "(-)^\vee"] & \left( \mathbf{Alg}_{\_C^\vee} \right)^{\tx{op}}
\end{tikzcd}\] 

\begin{Prop}\label{Prop_WeakKoszulDualityFromOperadicKoszulDuality}
	If $\phi$ is a weakly Koszul twisting morphism (see Definition \ref{Def_KoszulTwistingMorphisms}), then we have the following:
	\begin{itemize}
		\item For any $A \in \mathbf{Alg}_\_P$, we have: 
		\[ \G_D_\phi (A) \simeq \Rr\tx{Der}_\_P \left( A, k \right) \]
		\item $\G_D_\phi$ preserves all colimits and therefore has a right adjoint: 
		
		\[ 	 \begin{tikzcd}
			\G_D_\phi : \mathbf{Alg}_\_P \arrow[r, shift left] & \arrow[l, shift left] \mathbf{Alg}_{\_C^\vee}^{\tx{op}} : \G_D_\phi '
		\end{tikzcd} \]
		
		\item The aforementioned adjunction defines a weak Koszul duality context with $\mathbf{Alg}_\_P$ seen with the deformation context $\dcP$ and with the dual deformation context $\left( \mathbf{Alg}_{\_C^\vee}, \left( \_C^\vee ( k[-n]) \right)_{n \in \Zz} \right) $ described in Example \ref{Ex_DualDeformationContexts}.
	\end{itemize}
\end{Prop}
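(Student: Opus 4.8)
The plan is to verify each of the three bullet points in turn, relying on the bar-cobar adjunction for algebras (Definition~\ref{Def_BarCobarAlgebra}), the operadic Koszul duality machinery of Section~\ref{Sec_KoszulDuality}, and the results on the model structure on coalgebras (Theorem~\ref{Th_ModelStructureCoalgebra}). Throughout, we use that since $\phi$ is weakly Koszul, the unit $\_C \to \mathbf{B}\_P$ is a quasi-isomorphism, so $\mathbf{B}_\phi$ is homotopically well behaved.

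\emph{First bullet.} The identification $\G_D_\phi(A) \simeq \Rr\tx{Der}_\_P(A,k)$ should be obtained by unwinding definitions. The $\_C$-coalgebra $\mathbf{B}_\phi A$ has underlying object $\_C \circ A$, so $(\mathbf{B}_\phi A)^\vee$ has underlying object $\iHom(\_C \circ A, k)$, which by Lemma~\ref{Lem_closedmonoidalstructure} is computed from $\mathbf{End}$-type hom-objects. On the other hand, derived derivations of $A$ valued in $k$ (the trivial $\_P$-algebra, which makes sense since $\_P$ is augmented, cf.\ Remark~\ref{RQ_TrivalPAlgebra}) are computed, using the bar-cobar cofibrant resolution $\Omega_\kappa \mathbf{B}_\kappa A \to A$, as $\tx{Der}_\_P(\Omega_\kappa \mathbf{B}_\kappa A, k)$; since $k$ is trivial and the cotangent complex of a quasi-free algebra is the space of generators, this reduces to $\iHom_k(\mathbf{B}_\kappa A, k)$ with an appropriate differential, which is exactly the same complex (up to the weak Koszulity quasi-isomorphism $\mathbf{B}_\phi A \simeq \mathbf{B}_\kappa A$, using that $\mathbf{B}$ preserves quasi-isomorphisms by Proposition~\ref{Prop_BPreservesQI}). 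One must be careful to check that the differentials match, using Definition~\ref{Def_BarCobarAlgebra} and the analogous discussion preceding Proposition~\ref{Prop_TangentDGLAAndDeformationComplex}.

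\emph{Second bullet.} To show $\G_D_\phi$ preserves all colimits, we argue that $\mathbf{B}_\phi$ preserves colimits and then that $(-)^\vee \colon \mathbf{coAlg}_\_C \to (\mathbf{Alg}_{\_C^\vee})^{\tx{op}}$ sends colimits to limits. The cobar-side functor $\mathbf{B}_\phi$ has underlying-object functor $\_C \circ -$, which preserves colimits since $\circ$ distributes over colimits in the right variable; one checks the coalgebra and codifferential structure is assembled colimit-wise. The linear dualization sends colimits of coalgebras to limits of algebras essentially formally. Since $\mathbf{Alg}_{\_P}$ is presentable, the adjoint functor theorem then produces the right adjoint $\G_D_\phi'$. (Alternatively one can cite the parallel construction in \cite{CG18}.)

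\emph{Third bullet (main obstacle).} To see that $(\G_D_\phi, \G_D_\phi')$ forms a \emph{weak} Koszul duality context we must match the generating spectrum objects: we need $\Omega^{\infty-n} E_\alpha \simeq \G_D_\phi'(\Omega^{\infty-n} F_\alpha)$ for all $n \geq 0$, where $E_\alpha$ corresponds to the collection $(k[n])_{n}$ on the $\mathbf{Alg}_\_P$ side and $F_\alpha$ to $(\_C^\vee(k[-n]))_{n}$ on the dual side. Equivalently, by adjunction this amounts to showing that $\G_D_\phi$ sends the trivial $\_P$-algebra $k[n]$ to the cofree $\_C^\vee$-algebra (equivalently, the linear dual of the cofree $\_C$-coalgebra) on $k[-n]$; this is where the first bullet is used, since $\Rr\tx{Der}_\_P(k[n],k) \simeq \iHom_k(\mathbf{B}_\phi (k[n]), k)$, and $\mathbf{B}_\phi$ applied to a trivial algebra has zero twisting differential, so $\mathbf{B}_\phi(k[n]) = \_C \circ k[n]$ with only the internal differentials --- dualizing gives exactly $\_C^\vee(k[-n])$. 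The delicate point here, and the expected main difficulty, is keeping track of degree shifts and checking that the $\Omega$-spectrum structure maps are matched compatibly in all cohomological degrees, including negative ones; this requires care with the conventions on $\Omega^{\infty-n}$ and on the suspension in $\mathbf{coAlg}_\_C^{\tx{op}}$ versus $\mathbf{Alg}_{\_C^\vee}$. The remaining conditions for a weak context are the definitional items that we have now assembled, so this completes the proof.
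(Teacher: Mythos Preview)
Your approach to the first bullet is correct in spirit and aligns with the paper, which simply cites \cite[Lemma~4.1]{CCN20} for this identification. For the second bullet, however, your specific reasoning is flawed: $\mathbf{B}_\phi$ is a \emph{right} Quillen functor, and (homotopy) colimits in $\mathbf{Alg}_\_P$ are not computed on underlying complexes, so the fact that $\_C \circ -$ preserves colimits in $\Mod_k$ does not let you conclude that $\mathbf{B}_\phi$ preserves colimits of $\_P$-algebras. The conclusion is still true, but the paper again defers the details to \cite[Lemma~4.1]{CCN20}.

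The more serious gap is in the third bullet. The definition of a weak Koszul duality context (Definition~\ref{Def_KoszulDualityContext}) asks for an equivalence $\Omega^{\infty-n} E \simeq \G_D_\phi'(\Omega^{\infty-n} F)$, i.e.\ you must compute the \emph{right adjoint} $\G_D_\phi'$ on $\_C^\vee(k[-n])$. You instead compute $\G_D_\phi(k[n])\simeq \_C^\vee(k[-n])$ and assert this suffices ``by adjunction''; but adjunction only yields $\G_D_\phi'(\_C^\vee(k[-n])) \simeq \G_D_\phi' \G_D_\phi(k[n])$, and identifying this with $k[n]$ would require the unit to be an equivalence on $k[n]$ --- which is precisely the stronger axiom of a \emph{full} Koszul duality context (second item of Proposition~\ref{Prop_PropertiesKoszulDualityContext}), not something available to you here. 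A correct argument proceeds via Yoneda and freeness of $\_C^\vee(k[-n])$: for any $A$,
\[
\Map_\_P\bigl(A,\, \G_D_\phi'(\_C^\vee(k[-n]))\bigr) \simeq \Map_{\_C^\vee}\bigl(\_C^\vee(k[-n]),\, \G_D_\phi A\bigr) \simeq \Map_k\bigl(k[-n],\, (\mathbf{B}_\phi A)^\vee\bigr) \simeq \Map_k\bigl(\mathbf{B}_\phi A,\, k[n]\bigr),
\]
and by the first bullet together with the trivial/indecomposables adjunction this is $\Map_\_P(A, \tx{triv}(k[n]))$, whence $\G_D_\phi'(\_C^\vee(k[-n])) \simeq k[n]$. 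The paper invokes \cite[Corollary~4.7]{CCN20} for this step.
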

\begin{proof}
	The first two parts can be extracted from \cite[Lemma 4.1]{CCN20}, while the last claim is a consequence of \cite[Corollary 4.7]{CCN20} where we apply the result to $V= k[n] = \Omega^{\infty -n}F$. Since $\_P(k[n]) = k[n]$ (the free $\_P$-algebra structure on $k[n]$ is the trivial one), we obtain that $\G_D_\phi (k[n]) = \G_D_\phi (\_P(k[n])) = \tx{triv}(k[n]) = k[n]$ which is the criterion for the adjunction to be a weak Koszul duality context.  
\end{proof}

\begin{Def}\label{Def_KoszulDualityContextForOperad} \label{Def_OperadicKoszulDualityContext}
	Given $\_P$ an augmented operad, we denote $\G_D(\_P) := \left(\mathbf{B}\_P\right)^\vee$ and $\pi : \mathbf{B}\_P \dashrightarrow \_P$ the universal twisting morphism (Definition \ref{Def_KoszulTwistingMorphisms}). Then the associated weak Koszul duality context is denoted by:
	\[ 	 \begin{tikzcd}
		\G_D : \mathbf{Alg}_\_P \arrow[r, shift left] & \arrow[l, shift left] \mathbf{Alg}_{\G_D(\_P)}^{\tx{op}} : \G_D '
	\end{tikzcd} \]
\end{Def}

We would like to know when this weak Koszul duality context is a Koszul duality context so that under those conditions, formal moduli problems over $\_P$ will be equivalent to $\mathbf{Alg}_{\G_D(\_P)}$. The necessary conditions are given by the following theorem:

\begin{Th} \label{Th_CriteriaForOperadicFMP}
	Let $\_P$ be an augmented operad concentrated in non-positive degrees which is \defi{splendid} (see Remark \ref{RQ_Splendide}). 	Then the following holds: 
	
	\begin{itemize}
		\item For any $A \in \mathbf{Art}_\_P$, the unit map $A \rightarrow \G_D' \G_D (A)$ is an equivalence. 
		\item $\G_D(k[n])$ is freely generated by $k[-n]$ for all $n \geq 0$ , that is: \[\G_D(k[n]) \simeq \G_D(\_P)(k[-n])\]
		\item $\G_D$ sends a pullback square in $\mathbf{Art}_\_P$: 
		\[ \begin{tikzcd}
			A \arrow[r] \arrow[d] & 0 \arrow[d] \\
			A' \arrow[r] & k[n]
		\end{tikzcd}\]
		
		to a pullback square in $\mathbf{Alg}_{\G_D(\_P)}$. 
		\item In general (without the assumptions of the theorem) the functor: 
		\[ \begin{tikzcd}
			\mathbf{Alg}_{\G_D(\_P)} \arrow[r,"\mathbf{MC}"]& \mathbf{FMP}_\_P \arrow[r, "T"] & \mathbf{Sp}
		\end{tikzcd}\]
		is conservative and preserves sifted colimits. 
	\end{itemize}
	
Then, we have a Koszul duality context:
	\[ 	 \begin{tikzcd}
		\G_D : \mathbf{Alg}_\_P \arrow[r, shift left] & \arrow[l, shift left] \mathbf{Alg}_{\G_D(\_P)}^{\tx{op}} : \G_D '
	\end{tikzcd} \]
	
	for the deformation context $\left( \mathbf{Alg}_\_P , \lbrace \left( k[n] \right)_{n\in \Zz} \rbrace \right)$ of Example \ref{Ex_DeformationContexts} and dual deformation context $\left( \mathbf{Alg}_{\G_D(\_P)} , \lbrace \left( \G_D(\_P)(k[-n]) \right)_{n\in \Zz} \rbrace \right)$ of Example \ref{Ex_DualDeformationContexts}. 
\end{Th}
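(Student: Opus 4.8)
The plan is to verify the three conditions of \cref{Def_KoszulDualityContext} for the weak Koszul duality context of \cref{Def_OperadicKoszulDualityContext}, using the structural results already established. Since \cref{Prop_WeakKoszulDualityFromOperadicKoszulDuality} (applied to the universal twisting morphism $\pi\colon \mathbf{B}\_P \dashrightarrow \_P$, which is weakly Koszul by \cref{Def_KoszulTwistingMorphisms}) already gives us the weak Koszul duality context, together with the identification $\G_D_\pi(k[n]) = \G_D(\_P)(k[-n])$, what remains is to promote it to a genuine Koszul duality context. This amounts to checking: (1) that the unit $A \to \G_D'\G_D(A)$ is an equivalence on Artinian objects, (2) that $\G_D$ sends the generating pullback squares of \cref{Prop_SmallCategoryDescription} to pullback squares, and (3) the conservativity/sifted-colimit-preservation of the composite $\varTheta$, which is exactly the last bullet of \cref{Th_CriteriaForOperadicFMP} and is cited from \cite{CCN20} independently of the splendid hypothesis.

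\textbf{Key steps, in order.} First I would treat the tangent/free-generation statement: by \cref{Prop_WeakKoszulDualityFromOperadicKoszulDuality} we have $\G_D(k[n]) \simeq \Rr\mathrm{Der}_\_P(k[n],k)$, and since the free $\_P$-algebra on $k[n]$ is $k[n]$ itself (the augmentation forces $\_P(k[n]) = k[n]$), the bar--cobar resolution computes the derived indecomposables as $\mathbf{B}_\kappa k[n]$, whose linear dual is freely generated by $k[-n]$ as a $\G_D(\_P)$-algebra — this is where the ``splendid'' hypothesis enters, ensuring the comparison map is an equivalence and not merely a map. Second, the pullback-preservation: an Artinian $\_P$-algebra $A$ fitting in $A \to A' \to k[n]$ is, up to cofibrant replacement, a square-zero extension $A' \oplus k[n-1]$ (cf.\ \cref{RQ_RemarkOnSmallP}); applying $\G_D = (\mathbf{B}\_P)^\vee \circ \mathbf{B}_\pi$, the bar construction of a square-zero extension splits, and dualizing turns the pullback into a pushout of free $\G_D(\_P)$-algebras, which $(-)^\vee$ sends back to a pullback in $\mathbf{Alg}_{\G_D(\_P)}^{\mathrm{op}}$. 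This is the content one extracts from \cite[Section 4]{CCN20}. Third, the unit being an equivalence follows by induction on the Artinian tower from \cref{Prop_SmallCategoryDescription}: it holds on the generators $k[n]$ by the free-generation computation, and is preserved under the pullbacks by step two combined with the fact that $\G_D'$ preserves these pullbacks as well. Finally, conservativity and preservation of sifted colimits of $\varTheta = \Map_{\mathbf{Alg}_{\G_D(\_P)}}((\G_D(\_P)(k[-n]))_n, -)$ is verified directly since $\G_D(\_P)(k[-n])$ are compact projective generators of $\mathbf{Alg}_{\G_D(\_P)}$; this is \cite[Corollary 4.7]{CCN20}. Once all three conditions of \cref{Def_KoszulDualityContext} are in place, the adjunction $\G_D \dashv \G_D'$ is a Koszul duality context by definition.

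\textbf{Main obstacle.} The hard part is the free-generation statement $\G_D(k[n]) \simeq \G_D(\_P)(k[-n])$ together with verifying the unit is an equivalence — this is precisely what fails for general augmented operads and requires the splendid hypothesis. Concretely, one must show that the derived indecomposables $I \circ_\_P \Omega_\kappa \mathbf{B}_\kappa A$ are computed correctly, i.e.\ that the weight filtration on the bar--cobar resolution degenerates appropriately so that dualizing commutes with the relevant (co)limits; for infinite-dimensional or badly-bounded $\_P$ this can go wrong (the ``dual is not dual'' phenomenon of \cref{war:dual is not dual}). The splendid condition is engineered exactly to make the spectral sequence comparing $\G_D\G_D'$ to the identity converge, and tracking this convergence — together with the non-positive grading assumption which guarantees the Artinian objects are built from finite-dimensional pieces in bounded degrees — is the technical core of the argument. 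Everything else is bookkeeping with the bar--cobar adjunction and its naturality (\cref{RQ_NaturalityAlgebraicBarCobar}).
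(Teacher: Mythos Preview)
Your overall strategy matches the paper's: the four bulleted items are taken from \cite[Theorem 5.1 and Theorem 4.18]{CCN20}, and the remaining task is to deduce from them that the adjunction $\G_D \dashv \G_D'$ satisfies the axioms of \cref{Def_KoszulDualityContext}. The paper does exactly this, using bullets one and two to get the weak Koszul duality identity $k[n] \simeq \G_D'(\G_D(\_P)(k[-n]))$, then bullets one through three to establish the equivalence $\mathbf{Art}_\_P \simeq (\mathbf{Alg}_{\G_D(\_P)}^{\tx{gd}})^{\tx{op}}$, and bullet four for the $\varTheta$-condition via \cref{Prop_FMPEquivalenceAndTangent}.

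Two points of caution. First, condition (1) of \cref{Def_KoszulDualityContext} asks that the \emph{counit} $\G_D\G_D' B \to B$ be an equivalence on \emph{good} objects of $\_B$, not that the unit be an equivalence on Artinian objects of $\_A$. Your proposal treats the latter as the target, but one still needs the bridge step: the inductive argument you sketch (along the Artinian tower, using pullback-preservation) actually shows that $\G_D$ restricts to an equivalence $\mathbf{Art}_\_P \simeq (\mathbf{Alg}_{\G_D(\_P)}^{\tx{gd}})^{\tx{op}}$, and it is \emph{this} equivalence that forces the counit to be an equivalence on good objects. The paper makes this step explicit; you should too.

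Second, the claim ``the free $\_P$-algebra on $k[n]$ is $k[n]$ itself'' is false in general (for $\_P = \mathbf{Com}$ it is $\Sym^{\geq 1}(k[n])$). The relevant fact is rather that $k[n]$, equipped with the \emph{trivial} $\_P$-algebra structure via the augmentation, satisfies $\G_D(\tx{triv}(k[n])) \simeq \G_D(\_P)(k[-n])$; the bar construction of a trivial algebra is cofree on the underlying module, and dualizing yields the free $\G_D(\_P)$-algebra. Your derived-indecomposables computation is aimed at the right target but the justification should invoke the trivial structure, not the free one. Finally, the citation for conservativity and sifted colimits should be the end of the proof of \cite[Theorem 4.18]{CCN20} rather than Corollary 4.7 (the latter is what underlies \cref{Prop_WeakKoszulDualityFromOperadicKoszulDuality}).
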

\begin{proof}
	The first four items of the Theorem are proven in \cite[Theorem 5.1]{CCN20}. The last one is proven at the end of the proof of \cite[Theorem 4.18]{CCN20}. For the last part, we will show that these axioms imply the axioms of a Koszul duality context (Definition \ref{Def_KoszulDualityContext}). 
	
	Using that the unit of the adjunction is an equivalence on Artinian algebras (similarly to Warning \ref{war:dual is not dual}), and that $\G_D(k[n]) \simeq \G_D(\_P)(k[-n])$ we get an equivalence:
	\[ k[n] \simeq \G_D' \G_D (k[n]) \simeq \G_D' ( \G_D(\_P)(k[-n]))\]
	
	This shows that the adjunction is a weak Koszul duality context. 
	
More generally, we can show that there is an equivalence: 
	\[ 	 \begin{tikzcd}
		\G_D : \mathbf{Art}_\_P \arrow[r, shift left] & \arrow[l, shift left]  \left( \mathbf{Alg}_{\G_D(\_P)}^{\tx{gd}}\right)^{\tx{op}} : \G_D '
	\end{tikzcd} \]
	Therefore the counit restricted to good object is also an equivalence.
	Finally, the following functor is conservative and preserves sifted colimits: 
	\[ \begin{tikzcd}
		\mathbf{Alg}_{\G_D(\_P)} \arrow[r,"\mathbf{MC}"]& \mathbf{FMP}_\_P \arrow[r, "T"] & \mathbf{Sp}
	\end{tikzcd}\]
	
	but thanks to Proposition \ref{Prop_FMPEquivalenceAndTangent}, this functor is exactly the functor $\varTheta$ from Definition \ref{Def_KoszulDualityContext} that we want to show is conservative and preserves sifted colimits. 
\end{proof}

\begin{RQ}\label{RQ_Splendide}
	The condition being \defi{splendid} is a technical condition detailed \cite[Definition 3.37]{CCN20}, which imposes that a certain derived relative composite product
	 $$\_P(1) \circ_{\_P^{\geq 1}}^h \_P(1)$$
	 has  cohomology living only in more and more negative degrees.
	 
	 The main example to keep in mind is that any (non-necessarily binary) Koszul quadratic operad generated in non-positive degrees and in bounded arity is splendid. 

	In particular, $\mathbf{Com}$, $\mathbf{Ass}$ and $\mathbf{Lie}$ are splendid and satisfy all the conditions of Theorem \ref{Th_CriteriaForOperadicFMP}. 
\end{RQ}

\subsubsection{Main result} \label{Sec_main-result}\

\medskip

Putting together \cite[Theorem 4.18]{CCN20} and Theorem \ref{Th_CriteriaForOperadicFMP}, we obtain the following result: 

\begin{Th}\label{Th_OperadicFMPEquivalence}
	Suppose that $\_P$ is an augmented operad that is connective and splendid, then we have an equivalence: 
	
	\[ \Psi^\_P : \mathbf{Alg}_{\G_D(\_P)} \rightarrow \mathbf{FMP}_\_P\]
	
	that sends $\G_g \in \mathbf{Alg}_{\G_D(\_P)}$ to $\Map_{\G_D(\_P)} \left( \G_D (-), \G_g \right)$. Moreover the inverse functor sends a formal moduli problem $F$ to its tangent complex $\Tt_F$ endowed with some $\G_D(\_P)$-algebra structure, with $\G_D(\_P) = (\mathbf{B}\_P)^\vee \simeq \_P^! \lbrace -1 \rbrace$  (see Remark \ref{RQ_StructureOnTangentFunctor}).  
\end{Th}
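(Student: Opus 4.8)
The plan is to deduce Theorem~\ref{Th_OperadicFMPEquivalence} by showing that, under the hypotheses that $\_P$ is $k$-augmented, connective (concentrated in non-positive degrees) and splendid, the weak Koszul duality context of Definition~\ref{Def_OperadicKoszulDualityContext} is in fact a genuine Koszul duality context in the sense of Definition~\ref{Def_KoszulDualityContext}, and then invoke Theorem~\ref{Th_FMPEquivalenceInKoszulDualityContext}. By Proposition~\ref{Prop_WeakKoszulDualityFromOperadicKoszulDuality} applied to the universal (weakly Koszul) twisting morphism $\pi\colon \mathbf{B}\_P \dashrightarrow \_P$, we already have an adjunction
\[
\begin{tikzcd}
	\G_D : \mathbf{Alg}_\_P \arrow[r, shift left] & \arrow[l, shift left] \mathbf{Alg}_{\G_D(\_P)}^{\tx{op}} : \G_D '
\end{tikzcd}
\]
with $\G_D(\_P) = (\mathbf{B}\_P)^\vee$, which is a weak Koszul duality context for the deformation context $\dcP=\left(\mathbf{Alg}_\_P,\{(k[n])_{n\in\Zz}\}\right)$ and the dual deformation context $\left(\mathbf{Alg}_{\G_D(\_P)},\{(\G_D(\_P)(k[-n]))_{n\in\Zz}\}\right)$ of Example~\ref{Ex_DualDeformationContexts}. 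What remains is to verify the two extra axioms of a Koszul duality context: (1) the counit $\G_D\G_D' B \to B$ is an equivalence on good objects, and (2) the functor $\varTheta$ sending $B$ to the spectrum object $(\mathbf{Hom}_{\mathbf{Alg}_{\G_D(\_P)}}(\G_D(\_P)(k[-n]),B))_{n}$ is conservative and preserves sifted colimits.

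The key input is Theorem~\ref{Th_CriteriaForOperadicFMP}, whose first four bullet points were established in \cite{CCN20} (Theorem~5.1 for items one through three, and the proof of Theorem~4.18 for the last). The proof then proceeds by translating those four statements into the two axioms of Definition~\ref{Def_KoszulDualityContext}. First I would record that the unit $A \to \G_D'\G_D(A)$ is an equivalence for every Artinian $\_P$-algebra $A$ (first bullet), and that $\G_D(k[n]) \simeq \G_D(\_P)(k[-n])$ for $n\geq 0$ (second bullet); combining these gives $k[n] \simeq \G_D'\G_D(k[n]) \simeq \G_D'(\G_D(\_P)(k[-n]))$, which is precisely the compatibility $\Omega^{\infty-n}E_\alpha \simeq \G_D'(\Omega^{\infty-n}F_\alpha)$ required of a (weak) Koszul duality context --- reconfirming the weak context intrinsically. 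Next, using that $\G_D$ sends the defining pullback squares of Artinian algebras (third bullet) to pullbacks, together with the fact that $\G_D$ preserves colimits, one shows by induction on the length of the tower of elementary morphisms that $\G_D$ restricts to an equivalence $\mathbf{Art}_\_P \xrightarrow{\sim} (\mathbf{Alg}_{\G_D(\_P)}^{\tx{gd}})^{\tx{op}}$; this is a standard argument combining the unit being an equivalence on Artinians with the identification of good objects as iterated homotopy pullbacks of the $\G_D(\_P)(k[-n])$. Once this equivalence on the Artinian/good subcategories is in hand, axiom (1) --- the counit being an equivalence on all good objects --- follows formally, since $\G_D'$ restricted to $\mathbf{Alg}_{\G_D(\_P)}^{\tx{gd}}$ is then a quasi-inverse to $\G_D|_{\mathbf{Art}_\_P}$.

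For axiom (2), the fourth bullet of Theorem~\ref{Th_CriteriaForOperadicFMP} states that the composite $\mathbf{Alg}_{\G_D(\_P)} \xrightarrow{\mathbf{MC}} \mathbf{FMP}_\_P \xrightarrow{T} \mathbf{Sp}$ is conservative and preserves sifted colimits; by Proposition~\ref{Prop_FMPEquivalenceAndTangent} this composite is precisely the functor $\varTheta$ of Definition~\ref{Def_KoszulDualityContext}, so axiom (2) holds. Having verified both axioms, Theorem~\ref{Th_FMPEquivalenceInKoszulDualityContext} applies and yields the equivalence $\Psi^\_P\colon \mathbf{Alg}_{\G_D(\_P)} \xrightarrow{\sim} \mathbf{FMP}_\_P$, where by construction $\Psi^\_P(\G_g) = \Map_{\G_D(\_P)}(\G_D(-),\G_g)$, i.e. the composite of Corollary~\ref{Cor_KoszulDualityContextAnd FMP}. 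The description of the inverse functor as $F \mapsto \Tt_F$ with its induced $\G_D(\_P)$-algebra structure is exactly Proposition~\ref{Prop_FMPEquivalenceAndTangent} together with Remark~\ref{RQ_StructureOnTangentFunctor}; the identification $\G_D(\_P) = (\mathbf{B}\_P)^\vee \simeq \_P^!\{-1\}$ uses Proposition~\ref{Prop_BPreservesQI} and the fact that $\_P$, being splendid, is in particular Koszul (Remark~\ref{RQ_Splendide}), so that $\mathbf{B}\_P$ is quasi-isomorphic to $\_P^\antishriek$ and dualizing and shifting gives $\_P^!$. The main obstacle in writing this out is purely one of bookkeeping: one must be careful that all the functors in sight (bar, cobar, linear dual, trivial algebra) are the correctly derived $\infty$-categorical versions and that the connectivity hypothesis is genuinely used to guarantee the dual deformation context is well-behaved (cf. Example~\ref{Ex_DualDeformationContexts}, where boundedness and non-positivity were needed) --- the conceptual content is entirely carried by Theorem~\ref{Th_CriteriaForOperadicFMP} and the splendidness condition of Remark~\ref{RQ_Splendide}.
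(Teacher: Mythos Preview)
Your proposal is essentially correct and follows the same approach as the paper: the paper's proof of Theorem~\ref{Th_OperadicFMPEquivalence} is the one-liner ``Putting together \cite[Theorem 4.18]{CCN20} and Theorem~\ref{Th_CriteriaForOperadicFMP}'', and the content of Theorem~\ref{Th_CriteriaForOperadicFMP} is exactly the verification that the weak Koszul duality context of Definition~\ref{Def_OperadicKoszulDualityContext} is a genuine Koszul duality context, after which Theorem~\ref{Th_FMPEquivalenceInKoszulDualityContext} applies. Your unpacking of how the four bullets translate into axioms~(1) and~(2) mirrors the paper's own proof of Theorem~\ref{Th_CriteriaForOperadicFMP} almost verbatim.

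One genuine slip: you write that ``$\_P$, being splendid, is in particular Koszul (Remark~\ref{RQ_Splendide})''. This is the wrong direction --- Remark~\ref{RQ_Splendide} says that Koszul quadratic operads (with suitable grading and arity bounds) are splendid, not conversely. The identification $\G_D(\_P)\simeq \_P^!\{-1\}$ in the theorem statement is therefore not a consequence of splendidness alone; it holds when $\_P$ is additionally Koszul (so that $\mathbf{B}\_P\simeq \_P^{\antishriek}$), which is the case the paper cares about and which it makes explicit only in the subsequent Theorem~\ref{Th_KoszulOperadicFMPEquivalence}. This does not affect the main equivalence $\Psi^\_P$, only the final cosmetic identification of $\G_D(\_P)$.
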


\begin{RQ}
	The functor $\Psi^\_P$ of Theorem \ref{Th_OperadicFMPEquivalence} is, for $\_P=\Omega \_C$ and other mild conditions, given by some space of Maurer--Cartan elements.
	
	Let us sketch why this is the case, see \cite[Theorem 7.18]{CCN20} for more details. Under these conditions, given any $\G_g \in \mathbf{Alg}_{\_C^\vee}$ and  $A \in \mathbf{Art}_{\_P}$, we want to understand the space:	
	\[ \Map_{\mathbf{Alg}_{\_C^\vee}} \left( \G_D (A), \G_g \right) \]  
	
With some finiteness assumptions on $A$, we have a natural isomorphism $\Omega_{\alpha^\dagger}(A^\vee) \rightarrow \G_D_{\alpha}(A)$, obtained from the twisting morphisms $\alpha : \_C \dashrightarrow \_P$ and $\alpha^\dagger : \mathbf{B}(\_C^\vee) \dashrightarrow \_C^\vee$.
	
	There is a Rosetta Stone for algebras similar to the case of operads \ref{Prop_RosettaStone}, where we can identify maps from a cobar construction with Maurer--Cartan elements of a certain $L_\infty$-algebra \cite[Theorem 7.1]{WierstraAlgHop}: 
		
	\[ \Map_{\mathbf{Alg}_{\_C^\vee}} \left( \Omega_{\alpha^\dagger}(A^\vee), \G_g \right)  \simeq \tx{MC} \left( \iHom_{k\text{-}\Mod} \left( A^\vee, \G_g \otimes \Omega(\Delta^\bullet )\right)\right) \]
	
	Since $A$ is Artinian, as complexes, the $L_\infty$-algebra above is isomorphic to $A\otimes_k \G_g \otimes \Omega(\Delta^\bullet)$. Expanding on the discussion at the beginning of Section	\ref{Sec_Operadic Formal Moduli Problems and Koszul Duality}, there is a (shifted) $L_\infty$-algebra structure on $\_P\otimes \_C^\vee$-algebras given by the twisting morphism (see \cite[Lemma 7.15]{CCN20}) and these two $L_\infty$-structures on $A\otimes_k \G_g \otimes \Omega(\Delta^\bullet)$ coincide. We conclude that
	\[ \Map_{\mathbf{Alg}_{\_C^\vee}} \left( \Omega_{\alpha^\dagger}(A^\vee), \G_g \right)  \simeq \tx{MC} \left(  A \otimes_k \G_g \otimes \Omega(\Delta^\bullet )\right) \simeq \mathbf{Del}\left(  A \otimes_k \G_g \right) \] 
\end{RQ}
 
 When $\_P$ is a Koszul operad there is an equivalence between $\G_D (\_P)$ and $\_P^! \lbrace -1 \rbrace$. This implies the result:

\KoszulOperadicFMPEquivalence

\begin{proof}	
	We have weak equivalences of operads $\Omega \_P^\antishriek \rightarrow 	\_P$ and $\G_D (\_P) \rightarrow \_P^! \lbrace -1 \rbrace$. 
	
	$\_P$ satisfies the condition of Theorem \ref{Th_OperadicFMPEquivalence}, and we get a zig-zag of equivalences: 
	
	\[ \begin{tikzcd}
		\mathbf{FMP}_\_P \arrow[r, "\Tt"] & \mathbf{Alg}_{\G_D(\_P)} & \arrow[l, "\simeq"'] \mathbf{Alg}_{\_P^! \lbrace -1 \rbrace } \arrow[r, "V\mapsto V {[ -1 ]}"] & \mathbf{Alg}_{\_P^!}
	\end{tikzcd}\] 
\end{proof}

\begin{Ex}
	\begin{itemize}
		\item For $\_P= \mathbf{Com}$ we get back Theorem \ref{Th_LuriePridham}: 
		
		\[ \mathbf{FMP} \overset{\sim}{\to} \mathbf{Alg}_{\mathbf{Lie}} \]
		
		\item For $\_P = \mathbf{Ass}$, we get the result of Lurie, in \cite{Lu11}: 
		
		\[ 	 \mathbf{FMP}_{\mathbf{Ass}} \overset{\sim}{\to} \mathbf{Alg}_{\mathbf{Ass}}\]
		
		\item For $\_P = \mathbf{Lie}$ we obtain a new equivalence: 
		
		\[ \mathbf{FMP}_{\mathbf{Lie}} \overset{\sim}{\to} \mathbf{Alg}_{\mathbf{Com}}\]
		
		\item For the permutative operad we get: 
		\[ \mathbf{FMP}_{\mathbf{Perm}} \overset{\sim}{\to} \mathbf{Alg}_{\mathbf{preLie}}\]
	\end{itemize}
\end{Ex}

The reader can find many examples treated in detail in Section 3 of \cite{CCN20}.

\subsubsection{Naturality of the main result}\label{sec:naturality-of-the-main-result}\

\medskip

A final important point to make concerns the naturality Theorem \ref{Th_KoszulOperadicFMPEquivalence} with respect to the operads involved. The main example of deformation problems studied in Section \ref{Sec:2} were encoded by the deformation complex $\mathfrak g_{P_\infty,A}^\phi$. If we consider deformations of the trivial algebra structure on $A$, i.e. when $\phi=0$, the deformation complex is in fact a pre-Lie algebra. 
The associated permutative formal moduli problem yields by restriction a commutative moduli problem and the naturality of Theorem \ref{Th_KoszulOperadicFMPEquivalence} asserts that the two commutative formal moduli problems are the same. 

Indeed, the following diagram of restrictions commutes:
	\[ \begin{tikzcd}
	\mathbf{FMP}_{\mathbf{Com}} & \arrow[l, "\sim"] \mathbf{Alg}_{\mathbf{Lie}} \\	
	\mathbf{FMP}_{\mathbf{Perm}} \arrow[u] & \arrow[l, "\sim"] \mathbf{Alg}_{\mathbf{preLie}} \arrow[u]\\
	\mathbf{FMP}_{\mathbf{Ass}} \arrow[u] & \arrow[l, "\sim"] \mathbf{Alg}_{\mathbf{Ass}} \arrow[u]\\
	\mathbf{FMP}_{\mathbf{Lie}} \arrow[u] & \arrow[l, "\sim"] \mathbf{Alg}_{\mathbf{Com}}\arrow[u]\\
\end{tikzcd}\]

More generally, following Theorem \ref{Prop_NaturalityKoszulDualityContext}, we can formulate a notion of naturality in the Koszul duality contexts obtained from universal twisting morphisms. We recall that Koszul twisting morphism form a category (Definition \ref{Def_KoszulTwistingMorphisms}).

\begin{Prop}[{\cite[Proposition 6.5]{CCN20}}]
	The construction in Section \ref{Sec_KoszulContextDualityAndKoszulDuality} defines a natural transformation\footnote{$\tx{Cat}_\infty^L$ denotes the $\infty$-category of $\infty$-categories with left adjoint $\infty$-functor as morphisms.}:
	
	\[ \begin{tikzcd}[column sep=huge]
		\mathbf{Kos}
		\arrow[bend left=50]{r}[name=U,label=above:$\mathbf{Alg}$]{}
		\arrow[bend right=50]{r}[name=D,label=below:$\mathbf{Alg}^{\tx{dual}}$]{} &
		\tx{Cat}_\infty^L
		\arrow[shorten <=10pt,shorten >=10pt,Rightarrow,to path={(U) -- node[label=right:$\G_D$] {} (D)}]{}
	\end{tikzcd}  \]

	such that $\G_D (\alpha) =  \G_D_\alpha : \mathbf{Alg}_\_P \rightarrow \mathbf{Alg}_{\_C}^{\tx{op}}$
\end{Prop}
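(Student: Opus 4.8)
The statement to prove is that the construction of Section~\ref{Sec_KoszulContextDualityAndKoszulDuality} is natural in the Koszul twisting morphism, i.e.\ it upgrades to a natural transformation between the two functors $\mathbf{Alg}, \mathbf{Alg}^{\tx{dual}}\colon \mathbf{Kos}\to \tx{Cat}_\infty^L$.

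\medskip

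\textbf{Setup and strategy.} The plan is to unwind what data a natural transformation between these two functors requires and then to produce it from the functoriality of the bar--cobar adjunction (Remark~\ref{RQ_NaturalityAlgebraicBarCobar}) together with the functoriality of arity-wise linear duality on coalgebras. First I would recall the two functors precisely. A morphism in $\mathbf{Kos}$ is a commutative square of twisting morphisms
\[
\begin{tikzcd}
\_C \arrow[r,dashed,"\alpha"] \arrow[d,"f"] & \_P \arrow[d,"g"]\\
\_D \arrow[r,dashed,"\beta"] & \_Q
\end{tikzcd}
\]
with $\alpha,\beta$ Koszul. The functor $\mathbf{Alg}$ sends this to the induction functor $g_!\colon \mathbf{Alg}_\_P\to\mathbf{Alg}_\_Q$ (a left adjoint, by Proposition~\ref{Prop_NaturalityAlgebraAdjunction}), while $\mathbf{Alg}^{\tx{dual}}$ sends it to the functor induced on $\_C^\vee$- versus $\_D^\vee$-algebras, namely $(f^\vee)^*$ viewed as a functor between the opposite categories — concretely the restriction $\mathbf{Alg}_{\_C^\vee}^{\tx{op}}\to\mathbf{Alg}_{\_D^\vee}^{\tx{op}}$, which is again a left adjoint between the opposite categories since restriction is a right adjoint.

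\medskip

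\textbf{The comparison $2$-cell.} The content of the natural transformation $\G_D$ is, for each object $\alpha\in\mathbf{Kos}$, the functor $\G_D_\alpha\colon \mathbf{Alg}_\_P\to\mathbf{Alg}_{\_C^\vee}^{\tx{op}}$ defined as $(-)^\vee\circ\mathbf B_\alpha$, and for each morphism of $\mathbf{Kos}$ as above a natural equivalence filling the square
\[
\begin{tikzcd}
\mathbf{Alg}_\_P \arrow[r,"\G_D_\alpha"] \arrow[d,"g_!"'] & \mathbf{Alg}_{\_C^\vee}^{\tx{op}} \arrow[d,"(f^\vee)^*"] \\
\mathbf{Alg}_\_Q \arrow[r,"\G_D_\beta"'] & \mathbf{Alg}_{\_D^\vee}^{\tx{op}}.
\end{tikzcd}
\]
I would obtain this by pasting two squares. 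The first is the commuting square of right adjoints from Remark~\ref{RQ_NaturalityAlgebraicBarCobar} for the morphism of twisting morphisms $(\alpha,\beta,f,g)$: the diagram of left adjoints $\mathbf{coAlg}_\_C\xleftarrow{\Omega_\alpha}\mathbf{Alg}_\_P$, etc., commutes, equivalently the diagram of right adjoints $\mathbf B_\beta\circ g_! \simeq f^!\circ\mathbf B_\alpha$ commutes (using $f^!$ for the right adjoint direction on coalgebras; this is exactly the commutativity of the first diagram in Remark~\ref{RQ_NaturalityAlgebraicBarCobar}, whose proof is cited to \cite[Lemma 3.25]{CT20}). The second square is the compatibility of arity-wise linear duality with the change-of-cooperad functors: the diagram relating $\mathbf{coAlg}_\_C \to \mathbf{coAlg}_\_D$ induced by $f$ with $\mathbf{Alg}_{\_C^\vee}^{\tx{op}}\to\mathbf{Alg}_{\_D^\vee}^{\tx{op}}$ via $(-)^\vee$ commutes up to natural equivalence — this is a formal consequence of $(-)^\vee$ being (lax) monoidal and contravariant, sending the comodule structure for $f^\vee$-restriction to the module structure. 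Pasting these two gives the required $2$-cell. Then I would check the coherence: for composable morphisms in $\mathbf{Kos}$, the pasted $2$-cells compose correctly, which follows from the corresponding coherences for $\mathbf B_{(-)}$ (functoriality of bar--cobar in the twisting morphism, again Remark~\ref{RQ_NaturalityAlgebraicBarCobar}) and for linear duality.

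\medskip

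\textbf{The main obstacle.} The genuinely delicate point is that everything must be promoted to the $\infty$-categorical level and the equivalences must be coherent, not merely objectwise. The $1$-categorical statements (commuting squares of adjunctions, duality compatibilities) are straightforward, but lifting them to a natural transformation of $\infty$-functors requires knowing that $\mathbf B_\alpha$, $\Omega_\alpha$, $g_!$, $(-)^\vee$ descend to the localized $\infty$-categories and that the comparison cells are homotopy-coherent. The bar construction preserves quasi-isomorphisms (Proposition~\ref{Prop_BPreservesQI} for operads, and the algebra-level analogue via Proposition~\ref{RQ_UnitCouniQIAlgebraBarcoBar}), so $\mathbf B_\alpha$ is well-defined $\infty$-categorically; $g_!$ is a left Quillen functor by Theorem~\ref{thm:q.i. operads} (more precisely its algebra analogue via Proposition~\ref{Prop_NaturalityAlgebraAdjunction} and Hinich's model structure), hence has a well-defined left derived $\infty$-functor; and arity-wise duality on degreewise-finite objects is exact. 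Assembling these into a functor out of the (nerve of the) category $\mathbf{Kos}$ is where the real work lies; here I would invoke the machinery already used in \cite{CG18,CCN20}, namely that all these constructions are built from Quillen adjunctions or from operations preserving the relevant weak equivalences, so that they assemble into a diagram of $\infty$-categories and left-adjoint $\infty$-functors indexed by $\mathbf{Kos}$, with the coherence data for the natural transformation supplied by the uniqueness (up to contractible choice) of such coherent lifts. The conclusion $\G_D(\alpha)=\G_D_\alpha$ is then immediate by construction.
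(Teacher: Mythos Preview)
The paper does not prove this proposition: it is a survey and simply cites \cite[Proposition~6.5]{CCN20} without argument. So there is no ``paper's own proof'' to compare against, and your proposal has to be judged on its own merits.

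Your overall strategy --- naturality of the algebraic bar--cobar construction in the twisting morphism, followed by compatibility of linear duality --- is the right idea, and your identification of the $\infty$-coherence as the genuine difficulty is accurate. But there is a concrete gap in the pasting argument. The square from Remark~\ref{RQ_NaturalityAlgebraicBarCobar} that you invoke reads $\mathbf{B}_\alpha\circ g_* \simeq f^!\circ\mathbf{B}_\beta$: it relates the bar constructions to the \emph{restriction} $g_*$, not to the \emph{induction} $g_!$. (Equivalently, the left-adjoint square there is $g_!\circ\Omega_\alpha\simeq\Omega_\beta\circ f^*$, which involves cobar, not bar.) The square you actually need, $(f^\vee)^*\circ\G_D_\alpha\simeq\G_D_\beta\circ g_!$, mixes $\mathbf{B}$ (a right Quillen functor) with $g_!$ (a left Quillen functor), and this is not what Remark~\ref{RQ_NaturalityAlgebraicBarCobar} gives. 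The passage from ``$\mathbf{B}_\alpha$ is right Quillen'' to ``$\G_D_\alpha=(-)^\vee\circ\mathbf{B}_\alpha$ is a left adjoint at the $\infty$-level'' is precisely where dualization does real work, and your two-square pasting does not account for the swap from $g_*$ to $g_!$ that this entails.

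One route that closes this gap, and which is closer to what \cite{CCN20} actually does, is to use the identification $\G_D_\alpha(A)\simeq\Rr\tx{Der}_{\_P}(A,k)$ from Proposition~\ref{Prop_WeakKoszulDualityFromOperadicKoszulDuality}: this shows that up to natural equivalence $\G_D_\alpha$ depends only on $\_P$ (not on $\_C$ or $\alpha$), and the naturality square then reduces to the evident functoriality of derived derivations along $g_!$, together with the change of $\_C^\vee$-algebra structure under $f^\vee$. The coherence then follows from that of the derived indecomposables functor, which is manifestly natural in the operad.
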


If we restrict to using the universal twisting morphism as in Definition \ref{Def_OperadicKoszulDualityContext}, we get the natural transformation (\cite[Corollary 6.7]{CCN20}): 

\[ \begin{tikzcd}[column sep=huge]
	\Op_k^{\tx{aug}}
	\arrow[bend left=50]{r}[name=U,label=above:$\mathbf{Alg}$]{}
	\arrow[bend right=50]{r}[name=D,label=below:$\mathbf{Alg}^{\tx{dual}}$]{} &
	\tx{Cat}_\infty^L
	\arrow[shorten <=10pt,shorten >=10pt,Rightarrow,to path={(U) -- node[label=right:$\G_D$] {} (D)}]{}
\end{tikzcd}  \]

Note that the functor $\G_D$ gives us weak Koszul duality context (and even Koszul duality context for nice enough operads). Moreover, Proposition \ref{Prop_NaturalityKoszulDualityContext} gives us some naturality with respect to morphisms of Koszul duality context. In the end, we obtain the following result: 

\begin{Prop}[{\cite[Proposition 6.10]{CCN20}}] \label{Prop_NaturalityFMPEquivalences}
	Let $\Op_k^+$ denote the category of splendid connective $k$-operads. Then we have a natural equivalence\footnote{$\tx{Pr}^R$ denote the $\infty$-category of presentable $\infty$-categories with morphisms given by right adjoint functors.}: 
	\[ \begin{tikzcd}[column sep=huge]
		\Op_k^{+}
		\arrow[bend left=50]{r}[name=U,label=above:$\mathbf{Alg}_{\G_D}$]{}
		\arrow[bend right=50]{r}[name=D,label=below:$\mathbf{FMP}$]{} &
		\tx{Pr}^R
		\arrow[shorten <=10pt,shorten >=10pt,Rightarrow,to path={(U) -- node[label=right:$\Psi$] {} (D)}]{}
	\end{tikzcd}  \]
	
	that sends $f : \_P \rightarrow \_Q$ to the commutative square of right adjoint: 
	\[ \begin{tikzcd}
		\mathbf{Alg}_{\G_D (\_P)} \arrow[r] \arrow[d, "\G_D(f)^*"] & \mathbf{FMP}_\_P \arrow[d, "(f^*)^*"] \\
		\mathbf{Alg}_{\G_D (\_Q)} \arrow[r]  & \mathbf{FMP}_\_Q
	\end{tikzcd}\]
\end{Prop}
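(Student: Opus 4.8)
The statement to prove, \cref{Prop_NaturalityFMPEquivalences}, asserts that the assignment $\_P \mapsto (\mathbf{Alg}_{\G_D(\_P)} \xrightarrow{\sim} \mathbf{FMP}_\_P)$ is natural in the operad $\_P$, when $\_P$ ranges over splendid connective augmented $k$-operads and morphisms are taken as left adjoints (so on the $\tx{Pr}^R$ side we record the right-adjoint squares). The plan is to assemble this from pieces already available in the excerpt: the per-operad equivalence $\Psi^\_P$ of \cref{Th_OperadicFMPEquivalence}, the abstract naturality machinery for Koszul duality contexts (\cref{Prop_NaturalityKoszulDualityContext}), and the fact that the construction $\_P \mapsto \G_D(\_P)$ underlying \cref{Def_OperadicKoszulDualityContext} is itself functorial, which is the content of the preceding proposition quoted from \cite[Corollary 6.7]{CCN20}. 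The first step is therefore to upgrade that preceding natural transformation $\G_D\colon \Op_k^{\tx{aug}} \to \tx{Cat}_\infty^L$ to a \emph{morphism of Koszul duality contexts} in the sense of \cref{Def_MorphismKoszulDualityContext}: given $f\colon \_P \to \_Q$, one must exhibit the square of adjunctions relating $(\mathbf{Alg}_\_P, \mathbf{Alg}_{\G_D(\_P)}^{\tx{op}})$ and $(\mathbf{Alg}_\_Q, \mathbf{Alg}_{\G_D(\_Q)}^{\tx{op}})$, where $L_1 = f_!$ is induction along $f$ (Proposition \ref{Prop_NaturalityAlgebraAdjunction}) and $L_2 = \G_D(f)$ is induction along the map of Koszul duals, and check the two compatibility conditions: commutativity of the right-adjoint square $f^* \circ \G_D_\_Q' \simeq \G_D_\_P' \circ \G_D(f)^*$, and the identification $\G_D(f)^*(\Omega^{\infty-n}F_\_Q) \simeq \Omega^{\infty-n}F_\_P$, i.e. $\G_D(f)^*(\G_D(\_Q)(k[-n])) \simeq \G_D(\_P)(k[-n])$. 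The latter is the statement that restriction along $\G_D(f)$ preserves free algebras on $k[-n]$, which follows from the second bullet of \cref{Th_CriteriaForOperadicFMP} (that $\G_D(k[n])$ is freely generated) together with the explicit identification $\G_D(\_P) = (\mathbf{B}\_P)^\vee$ and naturality of the bar construction.

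Second, I would feed this morphism of Koszul duality contexts into \cref{Prop_NaturalityKoszulDualityContext}, which immediately produces, for each $f\colon \_P\to\_Q$, a commuting square
\[
\begin{tikzcd}
\mathbf{Alg}_{\G_D(\_P)} \arrow[d] \arrow[r,"\Psi^\_P"] & \mathbf{FMP}_\_P \arrow[d] \\
\mathbf{Alg}_{\G_D(\_Q)} \arrow[r,"\Psi^\_Q"] & \mathbf{FMP}_\_Q
\end{tikzcd}
\]
compatible with the tangent-complex functor to $\mathbf{Sp}$, where the left vertical is the right adjoint $\G_D(f)^*$ and the right vertical is $(f^*)^*$, the functor on formal moduli problems induced by precomposition with $f^*\colon \mathbf{Art}_\_Q \to \mathbf{Art}_\_P$. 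Here one should double-check that the right-adjoint leg of the induced adjunction on formal moduli problems is indeed $(f^*)^*$ and not some other variance — this is a matter of tracing through how \cref{Cor_KoszulDualityContextAnd FMP} builds $\Psi$ by precomposition with $\G_D$, so the induced map on $\mathbf{FMP}$ is precomposition with $L_1 = f_!$ on the algebra side, and under $\tx{Spf}$ this corresponds to precomposition with $f^*$ on Artinian objects; this is exactly the square displayed in the statement. Third, to promote the collection of these squares to a genuine natural transformation of functors $\Op_k^+ \to \tx{Pr}^R$, I would invoke that all the constituent assignments ($\_P \mapsto \mathbf{Alg}_{\G_D(\_P)}$, $\_P \mapsto \mathbf{FMP}_\_P$, and the natural transformation $\Psi$) are already known to be functorial/natural at the $\infty$-categorical level from the cited results, so that coherence is inherited rather than needing to be checked by hand; the equivalence status of each $\Psi^\_P$ is \cref{Th_OperadicFMPEquivalence}, valid precisely because objects of $\Op_k^+$ are splendid and connective.

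The main obstacle I expect is the first step: verifying that $\G_D$ genuinely assembles into a morphism of Koszul duality contexts, in particular the commutativity of the right-adjoint square $f^*\circ \G_D_\_Q' \simeq \G_D_\_P' \circ \G_D(f)^*$. Unlike the left adjoints, where the composite-of-adjoints formalism makes the square commute essentially formally (this is the same kind of argument as in Remark \ref{RQ_NaturalityAlgebraicBarCobar}, where the diagram of left adjoints built from bar--cobar commutes because compositions of adjoints are adjoints), the right adjoints $\G_D'$ and $\G_D(f)^*$ are only defined via the adjoint functor theorem in \cite{CG18, CCN20}, so one cannot simply manipulate explicit formulas. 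The cleanest route is to prove commutativity of the \emph{left}-adjoint square — namely $\G_D(f) \circ \G_D_\_P \simeq \G_D_\_Q \circ f_!$ as functors $\mathbf{Alg}_\_P \to \mathbf{Alg}_{\G_D(\_Q)}^{\tx{op}}$ — which reduces, via the definition $\G_D_\_P = (-)^\vee\circ \mathbf{B}_\pi$ and the naturality of the algebraic bar construction with respect to morphisms of twisting morphisms (Remark \ref{RQ_NaturalityAlgebraicBarCobar}, applied to the square formed by $\pi_\_P$, $\pi_\_Q$ and the maps $\mathbf{B}f$, $f$), to a diagram-chase on bar constructions and linear duals; then commutativity of the right-adjoint square follows by uniqueness of adjoints. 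A secondary subtlety is that the dual deformation contexts on $\mathbf{Alg}_{\G_D(\_P)}$ and $\mathbf{Alg}_{\G_D(\_Q)}$ must be compatible, i.e. $R_2 = \G_D(f)^*$ must send $\Omega^{\infty-n}F_\_Q$ to $\Omega^{\infty-n}F_\_P$; since these spectrum objects are built from free algebras $\G_D(\_P)(k[-n])$ and restriction along an operad map does not in general preserve free algebras, one must use here that $\G_D(f)\colon \G_D(\_P)\to\G_D(\_Q)$ is \emph{induced by a map on generators} (it is the linear dual of $\mathbf{B}f$, which is compatible with the cofree cooperad structures), so that the relevant free-algebra functors are intertwined — this is where the hypothesis that $f$ comes from $\Op_k^+$, rather than an arbitrary $\infty$-categorical morphism of operads, genuinely enters.
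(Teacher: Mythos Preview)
The paper does not prove this proposition; it is simply cited from \cite[Proposition 6.10]{CCN20}. Your overall strategy---produce, from each $f\colon \_P\to\_Q$, a morphism of Koszul duality contexts in the sense of \cref{Def_MorphismKoszulDualityContext} and then feed it into \cref{Prop_NaturalityKoszulDualityContext}---is exactly the intended route and is correct in outline.

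However, you have a variance slip in identifying $L_2$ and $R_2$ that makes your ``main obstacle'' paragraph attack a phantom difficulty. Since $\G_D(\_P)=(\mathbf B\_P)^\vee$ and $\mathbf B$ is covariant, the map $\G_D(f)$ goes $\G_D(\_Q)\to\G_D(\_P)$. With $\_B_1=\mathbf{Alg}_{\G_D(\_P)}$ and $\_B_2=\mathbf{Alg}_{\G_D(\_Q)}$, the left adjoint $L_2\colon \_B_1^{\tx{op}}\to\_B_2^{\tx{op}}$ is $\G_D(f)^*$ (restriction is right adjoint in the un-opped categories, hence left adjoint after op), and therefore $R_2=\G_D(f)_!$ (induction). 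You state $L_2$ is induction in one place and $\G_D(f)^*$ in another, and then take $R_2=\G_D(f)^*$ in the obstacle discussion; these cannot all hold. Once $R_2=\G_D(f)_!$ is identified correctly, the spectrum-object condition $R_2(\Omega^{\infty-n}F^2)\simeq \Omega^{\infty-n}F^1$ becomes the tautology $\G_D(f)_!\bigl(\G_D(\_Q)(k[-n])\bigr)\simeq \G_D(\_P)(k[-n])$: induction along any operad map sends free algebras to free algebras on the same generators. So your worry that ``restriction does not in general preserve free algebras'' never arises.

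The genuine work is the other condition---commutativity of the right-adjoint square---and there your plan is sound: verify the \emph{left}-adjoint square $\G_D_{\_Q}\circ f_! \simeq \G_D(f)^*\circ \G_D_{\_P}$ using the naturality of the bar construction with respect to morphisms of twisting morphisms (\cref{RQ_NaturalityAlgebraicBarCobar} applied to $(\mathbf B f,f)$) followed by linear duality, and then pass to right adjoints. With the variance corrected, the remaining assembly into a natural transformation valued in $\tx{Pr}^R$ goes through as you describe.
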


\subsubsection{Some vague comments on positive characteristic}\label{sec:some-vague-comments-on-positive-characteristic}\
	
	\medskip
	
The whole text we worked over a field of characteristic zero with good reason. Many of the homological results do not generally hold in positive characteristic, for instance the bar-cobar ``resolution'' can be constructed but is not a quasi-isomorphism and the Maurer--Cartan equation and gauge groups cannot even be defined. 

On the other hand, whenever the concerned operads are non-symmetric or at least the symmetric groups act freely ($\Sigma$-free) there is some hope to retain some homotopy invariance. For instance, the Maurer--Cartan equation for $A_\infty$-algebras (which are indeed equivalent to associative algebras) reads
$$0=d\mu + m\star m + m\star m \star m+\dots$$
which makes sense in every characteristic and can therefore be studied with operadic methods, see \cite{Wierstra2018lie}. 

One approach to operadic deformation theory in positive characteristic involves therefore considering $\Sigma$-free resolutions of operads. 
A particularly important example is the case of the commutative operad, whose $\Sigma$-free resolutions are called $E_\infty$-operads. Notice that since the commutative operad is the unit for the tensor product of operads (if we are careful with assumptions on arities $\leq 1$), given such an $E_\infty$-operad $\_E$, $\_P \otimes \_E$ is a $\Sigma$-free resolution of $\_P$. It is therefore important to have access to small models $\_E$, but there is no similar Koszul duality for algebraic operads able to produce minimal models in positive characteristic. The best known such model is the Barratt--Eccles operads \cite{fresseberger}, but others exist \cite{fresseberger,Anibal, brantner2021pd}.

One of the goals of Dehling--Vallette \cite{DehlingVallette} is to study the homotopy theory of operads in positive characteristic by including $\Sigma$-action on the definition of the operad of operads, as in Section \ref{sec:a-comment-on-operads-as-algebras}.

An analogue of the Lurie--Pridham Theorem \ref{Th_LuriePridham} is also available in positive characteristic: Depending on whether the formal moduli problems are parameterized by $E_\infty$-rings or simplicial commutative rings, Brantner--Mathew \cite{brantner2019deformation} have shown that these are equivalent to the $\infty$-categories of spectral or derived partition Lie algebras, respectively.

\bibliographystyle{alpha}
\bibliography{biblio}

\begin{thebibliography}{CPRNW22}

\bibitem[AST76]{Ar76}
M.~Artin, C.S. Seshadri, and A.~Tannenbaum.
\newblock {\em Lectures on Deformations of Singularities}.
\newblock Lectures on mathematics and physics. Tata Institute of Fundamental
  Research, 1976.

\bibitem[BCN21]{brantner2021pd}
Lukas Brantner, Ricardo Campos, and Joost Nuiten.
\newblock Pd operads and explicit partition lie algebras.
\newblock {\em arXiv preprint arXiv:2104.03870}, 2021.

\bibitem[Ber14]{Be14}
Alexander Berglund.
\newblock Homological perturbation theory for algebras over operads.
\newblock {\em Algebr. Geom. Topol.}, 14(5):2511--2548, 2014.

\bibitem[BF04]{fresseberger}
Clemens Berger and Benoit Fresse.
\newblock Combinatorial operad actions on cochains.
\newblock {\em Math. Proc. Cambridge Philos. Soc.}, 137(1):135--174, 2004.

\bibitem[BF11]{BF11}
A.~Bonfiglioli and R.~Fulci.
\newblock {\em Topics in Noncommutative Algebra: The Theorem of Campbell,
  Baker, Hausdorff and Dynkin}.
\newblock Lecture Notes in Mathematics. Springer Berlin Heidelberg, 2011.

\bibitem[BM03]{BM03}
Clemens Berger and Ieke Moerdijk.
\newblock Axiomatic homotopy theory for operads.
\newblock {\em Comment. Math. Helv.}, 78(4):805--831, 2003.

\bibitem[BM19]{brantner2019deformation}
Lukas Brantner and Akhil Mathew.
\newblock Deformation theory and partition lie algebras.
\newblock {\em arXiv preprint arXiv:1904.07352}, 2019.

\bibitem[CCN22]{CCN20}
Damien Calaque, Ricardo Campos, and Joost Nuiten.
\newblock Moduli problems for operadic algebras.
\newblock {\em Journal of the London Mathematical Society}, 106(4):3450--3544,
  2022.

\bibitem[CG21]{CG18}
Damien Calaque and Julien Grivaux.
\newblock Formal moduli problems and formal derived stacks.
\newblock In {\em Derived algebraic geometry}, pages 85--145. Paris:
  Soci{\'e}t{\'e} Math{\'e}matique de France (SMF), 2021.

\bibitem[CL01]{CL01}
Frédéric Chapoton and Muriel Livernet.
\newblock {Pre-Lie algebras and the rooted trees operad}.
\newblock {\em International Mathematics Research Notices}, 2001(8):395--408,
  01 2001.

\bibitem[CPRNW22]{CPRW22}
Ricardo Campos, Dan Petersen, Daniel Robert-Nicoud, and Felix Wierstra.
\newblock Commutative homotopical algebra embeds into non-commutative
  homotopical algebra.
\newblock {\em arXiv preprint arXiv:2211.02387}, 2022.

\bibitem[CPRW19]{CPRW19}
Ricardo {Campos}, Dan {Petersen}, Daniel {Robert-Nicoud}, and Felix {Wierstra}.
\newblock {Lie, associative and commutative quasi-isomorphism}.
\newblock {\em arXiv e-prints}, page arXiv:1904.03585, April 2019.

\bibitem[CT20]{CT20}
Ricardo {Campos} and Pedro {Tamaroff}.
\newblock {Differential forms on smooth operadic algebras}.
\newblock {\em arXiv e-prints}, page arXiv:2010.08815, October 2020.

\bibitem[DCH16]{DGH16}
Gabriel~C. Drummond-Cole and Joseph Hirsh.
\newblock Model structures for coalgebras.
\newblock {\em Proc. Amer. Math. Soc.}, 144(4):1467--1481, 2016.

\bibitem[dKW18]{Wierstra2018lie}
Niek de~Kleijn and Felix Wierstra.
\newblock Lie theory for complete curved $a_\infty$-algebras.
\newblock {\em arXiv preprint arXiv:1809.07743}, 2018.

\bibitem[DR15]{DVR}
Vasily~A. Dolgushev and Christopher~L. Rogers.
\newblock A version of the {G}oldman-{M}illson theorem for filtered
  {$L_\infty$}-algebras.
\newblock {\em J. Algebra}, 430:260--302, 2015.

\bibitem[DR17]{DVRC17}
Vasily~A. Dolgushev and Christopher~L. Rogers.
\newblock On an enhancement of the category of shifted {$L_\infty$}-algebras.
\newblock {\em Appl. Categ. Structures}, 25(4):489--503, 2017.

\bibitem[dRBM04]{Ba04}
Mar{\'i}a del Rosario~Basterra and Michael~A. Mandell.
\newblock Homology and cohomology of {${E}_\infty$} ring spectra.
\newblock {\em Mathematische Zeitschrift}, 249:903--944, 2004.

\bibitem[Dri14]{DrinfeldtoSchechtman}
Vladimir Drinfeld.
\newblock A letter from {Kharkov} to {Moscow}.
\newblock {\em EMS Surv. Math. Sci.}, 1(2):241--248, 2014.

\bibitem[DSV16]{DSV16}
Vladimir Dotsenko, Sergey Shadrin, and Bruno Vallette.
\newblock Pre-{L}ie deformation theory.
\newblock {\em Mosc. Math. J.}, 16(3):505--543, 2016.

\bibitem[DSV24]{DSV23}
Vladimir Dotsenko, Sergey Shadrin, and Bruno Vallette.
\newblock {\em Maurer-{Cartan} methods in deformation theory. {The} twisting
  procedure}, volume 488 of {\em Lond. Math. Soc. Lect. Note Ser.}
\newblock Cambridge: Cambridge University Press, 2024.

\bibitem[DV21]{DehlingVallette}
Malte Dehling and Bruno Vallette.
\newblock Symmetric homotopy theory for operads.
\newblock {\em Algebr. Geom. Topol.}, 21(4):1595--1660, 2021.

\bibitem[FCMF23]{flynnconnolly2023higher}
Oisín Flynn-Connolly and José~M. Moreno-Fernández.
\newblock Higher order massey products for algebras over algebraic operads.
\newblock 2023.

\bibitem[FHT01]{FHT01}
Yves F\'{e}lix, Stephen Halperin, and Jean-Claude Thomas.
\newblock {\em Rational homotopy theory}, volume 205 of {\em Graduate Texts in
  Mathematics}.
\newblock Springer-Verlag, New York, 2001.

\bibitem[FN57]{FN57}
Alfred Fr\"{o}licher and Albert Nijenhuis.
\newblock A theorem on stability of complex structures.
\newblock {\em Proc. Nat. Acad. Sci. U.S.A.}, 43:239--241, 1957.

\bibitem[Get09]{Ge09}
Ezra Getzler.
\newblock Lie theory for nilpotent {$L_\infty$}-algebras.
\newblock {\em Ann. of Math. (2)}, 170(1):271--301, 2009.

\bibitem[GK94]{GinzburgKapranov}
Victor Ginzburg and Mikhail Kapranov.
\newblock Koszul duality for operads.
\newblock {\em Duke Math. J.}, 76(1):203--272, 1994.

\bibitem[GM88]{GM88}
William~M. Goldman and John~J. Millson.
\newblock The deformation theory of representations of fundamental groups of
  compact {K}\"{a}hler manifolds.
\newblock {\em Inst. Hautes \'{E}tudes Sci. Publ. Math.}, (67):43--96, 1988.

\bibitem[Har09]{Ha09}
R.~Hartshorne.
\newblock {\em Deformation Theory}.
\newblock Graduate Texts in Mathematics. Springer New York, 2009.

\bibitem[Hin97]{Hi97}
Vladimir Hinich.
\newblock Homological algebra of homotopy algebras.
\newblock {\em Comm. Algebra}, 25(10):3291--3323, 1997.

\bibitem[Hin01]{Hi01}
Vladimir Hinich.
\newblock D{G} coalgebras as formal stacks.
\newblock {\em J. Pure Appl. Algebra}, 162(2-3):209--250, 2001.

\bibitem[Hin04]{Hi04}
Vladimir Hinich.
\newblock Deformations of homotopy algebras.
\newblock {\em Comm. Algebra}, 32(2):473--494, 2004.

\bibitem[Hin05]{Hi05}
Vladimir Hinich.
\newblock Deformations of sheaves of algebras.
\newblock {\em Adv. Math.}, 195(1):102--164, 2005.

\bibitem[Hol08]{Ho08}
Sharon Hollander.
\newblock A homotopy theory for stacks.
\newblock {\em Israel J. Math.}, 163:93--124, 2008.

\bibitem[Hov07]{Ho07}
M.~Hovey.
\newblock {\em Model Categories}.
\newblock Mathematical surveys and monographs. American Mathematical Society,
  2007.

\bibitem[Ill71]{Il71}
L.~Illusie.
\newblock {\em Complexe cotangent et d{\'e}formations}.
\newblock Number vol.~1 in Complexe cotangent et d{\'e}formations.
  Springer-Verlag, 1971.

\bibitem[Ill72]{Il72}
L.~Illusie.
\newblock {\em Complexe cotangent et deformations II.}
\newblock Lecture notes in mathematics. Springer, 1972.

\bibitem[Kon03]{kontsevichdefquant}
Maxim Kontsevich.
\newblock Deformation quantization of {P}oisson manifolds.
\newblock {\em Lett. Math. Phys.}, 66(3):157--216, 2003.

\bibitem[KS58]{KS58}
K.~Kodaira and D.~C. Spencer.
\newblock On deformations of complex analytic structures. {I}, {II}.
\newblock {\em Ann. of Math. (2)}, 67:328--466, 1958.

\bibitem[KS60]{KS60}
K.~Kodaira and D.~C. Spencer.
\newblock On deformations of complex analytic structures. {III}. {S}tability
  theorems for complex structures.
\newblock {\em Ann. of Math. (2)}, 71:43--76, 1960.

\bibitem[{Lor}15]{Coend}
Fosco {Loregian}.
\newblock {Coend calculus}.
\newblock {\em arXiv e-prints}, page arXiv:1501.02503, January 2015.

\bibitem[Lur09]{Lu09}
Jacob Lurie.
\newblock {\em Higher topos theory}, volume 170 of {\em Annals of Mathematics
  Studies}.
\newblock Princeton University Press, Princeton, NJ, 2009.

\bibitem[Lur11]{Lu11}
Jacob Lurie.
\newblock Derived algebraic geometry x: Formal moduli problems.
\newblock 2011.
\newblock available at: \url{https://www.math.ias.edu/~lurie/papers/DAG-X.pdf}.

\bibitem[LV12]{LV}
Jean-Louis Loday and Bruno Vallette.
\newblock {\em Algebraic operads}, volume 346 of {\em Grundlehren der
  Mathematischen Wissenschaften [Fundamental Principles of Mathematical
  Sciences]}.
\newblock Springer, Heidelberg, 2012.

\bibitem[Man02]{Ma02}
Marco Manetti.
\newblock Extended deformation functors.
\newblock {\em Int. Math. Res. Not.}, (14):719--756, 2002.

\bibitem[{Man}05]{Ma05}
Marco {Manetti}.
\newblock {Lectures on deformations of complex manifolds}.
\newblock {\em arXiv Mathematics e-prints}, page math/0507286, July 2005.

\bibitem[Mas58]{Ma58}
W.~S. Massey.
\newblock Some higher order cohomology operations.
\newblock In {\em Symposium internacional de topolog\'{\i}a algebraica
  {I}nternational symposium on algebraic topology}, pages 145--154. Universidad
  Nacional Aut\'{o}noma de M\'{e}xico and UNESCO, Mexico City, 1958.

\bibitem[MM20]{Anibal}
Anibal~M. Medina-Mardones.
\newblock A finitely presented {$E_\infty$}-prop {I}: algebraic context.
\newblock {\em High. Struct.}, 4(2):1--21, 2020.

\bibitem[Mur23]{muro}
Fernando Muro.
\newblock Massey products for algebras over operads.
\newblock {\em Communications in Algebra}, 51(8):3298--3313, 2023.

\bibitem[Nit06]{Ni06}
Nitin Nitsure.
\newblock {Notes on Deformation Theory}.
\newblock page~45, November 2006.
\newblock Lecture.

\bibitem[Pri70]{Pr70}
Stewart~B. Priddy.
\newblock Koszul resolutions.
\newblock {\em Transactions of the American Mathematical Society},
  152(1):39--60, 1970.

\bibitem[Pri10]{Pr10}
J.~P. Pridham.
\newblock Unifying derived deformation theories.
\newblock {\em Adv. Math.}, 224(3):772--826, 2010.

\bibitem[Qui67]{Qu67}
Daniel~G. Quillen.
\newblock {\em Homotopical algebra}.
\newblock Springer, 1967.

\bibitem[Qui69]{Qu69}
Daniel Quillen.
\newblock Rational homotopy theory.
\newblock {\em Annals of Mathematics}, 90(2):205--295, 1969.

\bibitem[RV20]{RV20}
Daniel {Robert-Nicoud} and Bruno {Vallette}.
\newblock {Higher Lie theory}.
\newblock {\em arXiv e-prints}, page arXiv:2010.10485, October 2020.

\bibitem[Sch68a]{Sch68}
Michael Schlessinger.
\newblock Functors of {A}rtin rings.
\newblock {\em Trans. Amer. Math. Soc.}, 130:208--222, 1968.

\bibitem[Sch68b]{Sc68}
Michael Schlessinger.
\newblock Functors of artin rings.
\newblock {\em Transactions of the American Mathematical Society},
  130(2):208--222, 1968.

\bibitem[Ser07]{Se07}
E.~Sernesi.
\newblock {\em Deformations of Algebraic Schemes}.
\newblock Grundlehren der mathematischen Wissenschaften. Springer Berlin
  Heidelberg, 2007.

\bibitem[Sul77]{Su77}
Dennis Sullivan.
\newblock Infinitesimal computations in topology.
\newblock {\em Publications Math{\'e}matiques de l'Institut des Hautes
  {\'E}tudes Scientifiques}, 47(1):269--331, Dec 1977.

\bibitem[TV08]{TV08}
Bertrand To\"{e}n and Gabriele Vezzosi.
\newblock Homotopical algebraic geometry. {II}. {G}eometric stacks and
  applications.
\newblock {\em Mem. Amer. Math. Soc.}, 193(902):x+224, 2008.

\bibitem[Val20]{Val20}
Bruno Vallette.
\newblock Homotopy theory of homotopy algebras.
\newblock {\em Ann. Inst. Fourier (Grenoble)}, 70(2):683--738, 2020.

\bibitem[vdL04]{Va04}
Pepijn van~der Laan.
\newblock Coloured koszul duality and strongly homotopy operads, 2004.

\bibitem[{Vez}10]{Ve10}
Gabriele {Vezzosi}.
\newblock {A note on the cotangent complex in derived algebraic geometry}.
\newblock {\em arXiv e-prints}, page arXiv:1008.0601, August 2010.

\bibitem[War21]{Wa22}
Benjamin~C Ward.
\newblock {Massey Products for Graph Homology}.
\newblock {\em International Mathematics Research Notices},
  2022(11):8086--8161, 01 2021.

\bibitem[Wie19]{WierstraAlgHop}
F.~Wierstra.
\newblock Algebraic {H}opf invariants and rational models for mapping spaces.
\newblock {\em Journal of Homotopy and Related Structures}, 14(3):719--747,
  2019.
\newblock \href{https://arxiv.org/abs/1612.07762}{arXiv:1612.07762}.

\end{thebibliography}

\makeatletter
\providecommand\@dotsep{5}
\makeatother

\section*{Index of Notations}

\underline{Operads:}

\begin{itemize}[label = --]
	 \item $\symseq$: Category of symmetric sequences (Definition \ref{Def_SymmetricSequences}).

	 \item $\otimes_{\tx{Day}}$: Day convolution (Definition \ref{Def_DayConvolution}).
	 \item $\circ$: Monoidal product on $\symseq$ defining the composition of symmetric sequences (Definition \ref{def:circ of SS}).
	 
	 \item $\hat{\circ}$: Complete composition of symmetric sequences (Proposition \ref{Prop_FormulaDualMonoidalStructure}).
	 \item $\bar{\circ}$: Composition of symmetric sequences for cooperads (Equation \ref{Eq_cocompostionsymmetricsequences}).
	 
	 \item $\mathbf{Op}, \ \mathbf{coOp}, \ \mathbf{coOp}^{\tx{conil}}$: Category of operads (Definition \ref{Def_Operad}), cooperads (Definition \ref{Def_Cooperads}) and conilpotent cooperads (Definition \ref{Def_ConilpotentCooperad}). 	 
	 
	 \item $\mathbf{Op}_k^{+}$: Full sub-category of $\mathbf{Op}$ given by splendid connective $k$-operads
	  \item $\mathbf{cOp}$: Category of colored operads (Section  \ref{sec:a-comment-on-operads-as-algebras}).
	 
	 \item $\circ_i$: Partial composition of an operad (Definition \ref{def:partial comp}).
	 \item $\Delta_i$: Partial cocomposition of a cooperad.
	 
	 \item $\T, \ \Tc$: Free and cofree operad functor (see Constructions \ref{Cons_Operad} and \ref{Cons_Cooperads} respectively).
	 
	 \item $\_P(E,R)$: Operad generated by $E \in \symseq$ with relations generated by $R \subset \T E$ (Definition \ref{def:partial comp}).
	 \item $\_C(E,R)$: Cooperad cogenerated by $E \in \symseq$ with corelations generated by $R \subset \Tc E$ (Construction \ref{Cons_Cooperads}).
	 
	 \item $I$: Unit (co)operad concentrated in arity $1$ and with $I(1)=k$ (Example \ref{Ex_Operad}). 
	 \item $\mathbf{Ass}, \ \mathbf{Com}, \ \mathbf{Lie}$: Associative, commutative and Lie operads (Example \ref{Ex_Operad}).
	 
	 \item $\mathbf{Perm}$ and $\mathbf{preLie}$: Permutative and pre-Lie operads (Definition \ref{Def_PermAndPreLie}).
	 \item $\mathbf{End}_V, \mathbf{coEnd}_V $: Endomorphism and coendomorphism operads (Example \ref{Ex_Operad}).
	  
	 \item $\mathbf{coEnd}_N^M$: Symmetric sequence of map from $M$ to $N^{\otimes \bullet}$ (Definition \ref{Def_InternalHomSymSeq}).
	\item $\_O$: Colored operad encoding symmetric operads (Section  \ref{sec:a-comment-on-operads-as-algebras}).

	 \item $\_P^\vee$, $\_C^\vee$: $\_P^\vee$ is the arity-wise dual of the operad $\_P$ and $\_C^\vee$ is the operad given by the arity-wise dual of the cooperad $\_C$ (Construction \ref{Cons_Cooperads}).
	 
	 \item $\mathbf{End}_N^M$: Symmetric sequence of map from $M^{\otimes \bullet}$ to $N$ (Definition \ref{Def_InternalHomSymSeq}).

	 \item $\tx{Conv}(\_C, \_P)$: Convolution Operad (Definition \ref{Def_ConvolutionOperad}).

	 \item $\mathbf{B} \dashv \Omega$: Bar-cobar adjunction (Definition \ref{Def_BarCobarConstruction}).
		
	 \item $\_Q \lbrace -n \rbrace$: $\_Q \otimes \mathbf{End}_{k[n]}$.

	 \item $\_P^!$: Koszul Dual Operad (Definition \ref{Def_KoszulDualOperad}).
	 \item $\_P^\antishriek$: Koszul dual cooperad (Definition \ref{Def_KoszulDualOperad}).
	 \item $\_P_\infty$: Minimal cofibrant resolution of $\_P$, $\_P_\infty := \Omega \_P^\antishriek$ (Convention \ref{Not_PinftyOperad}).
	 	
	 \item $\mathbf{Tw}$: Category of all twisting morphisms (Definition \ref{Def_TwistingMorphisms}).
		\item $\mathbf{Koszul}$: Full sub-category of $\mathbf{Tw}$ given by Koszul morphisms (Definition \ref{Def_KoszulTwistingMorphisms}).
	 
	 \item $A_\infty, \ L_\infty, \ C_\infty$: Notation for $\mathbf{Ass}_\infty$, $\mathbf{Lie}_\infty$ and $\mathbf{Com}_\infty$ (Convention \ref{Not_PinftyOperad}).

\end{itemize}

\underline{Other:}

\begin{itemize}[label = --]
	\item $k$: Field of characteristic $0$.
	\item $\Sigma_n$: Group of permutations of $n$-elements.
	\item $\tx{Der}_A(B,M)$, $\underline{\tx{Der}}_A(B,M)$ and $\Rr\tx{Der}_A(B,M)$: $B$-Module, differential graded module and derived module of $M$-valued, $A$-linear derivations on $B$ respectively.  
\item $k[\![t]\!] $: Algebra of formal power series in $t$ over $k$.
\item $ \sim $: Denote an equivalence relation. In general in a model category, $A \sim B$ if there is a zig-zag of weak-equivalences between them. 
\item $ \G_M_A $: Maximal ideal of a local ring $A$.
\item $ B \oplus - $: Square zero extension functor sending a $B$-module to the trivial square zero extension $B \oplus M$ (Definition \ref{Def_SquareZeroExtension}).
\item $\Omega_{A/B}^1 $: $A$-module of Kähler differential on $A$ relative to $B$ (Definition \ref{Def_CotangentComplex}).
\item $\Ll_{A/B} $: Cotangent complex of $A$ relative to $B$ (Definition \ref{Def_CotangentComplex}). 
\end{itemize}

\underline{Categories:}\

\begin{itemize}[label = --]
	
\item $\chk$: Category of cochain complexes over $k$.
\item $\Mod_R$: Category of cochain complexes over a commutative ring $R$. 
\item $\iHom$: Internal $\Hom$ for categories enriched over themselves.
\item $\_C^{A/}$, $\_C^{/A}$ and $\_C^{A//B}$: Category of objects under $A$, objects over $A$ and objects both over $B$ and under $A$ respectively.
\item $\tx{Fun}(\_C, \_D)$: Category (or $\infty$-category) of functors from $\_C$ to $\_D$. 
\item $\_C^{\tx{iso}}$: Given a category $\_C$, $\_C^{\tx{iso}}$ denotes the sub-category of $\_C$ with same objects and morphism being the isomorphism in $\_C$.
\item $\Nn^\sim$: Category with objects the sets  $\{1,\dots,n\}$ for all $n \in \Nn$ and morphisms given by the permutations of these sets.
\item $\Nn_S$: Category with objects being $k$-tuples of elements on $S$ for all $k \in \Nn$ and morphisms given by all permutations of these tuples, $(a_1, \cdots, a_k) \to (a_{\sigma_1}, \cdots , a_{\sigma_k})$. 
\item $\int_{c \in \_C}$ and $\int^{c \in \_C}$: End and coend functors sending a functor $F : \_C^{\tx{op}}\times \_C \to \_D$ to and object $\int^{c\in F}F(c,c) \in \_D$ (see \cite{Coend}). 
\item $\tx{Mon}(\_C)$: Categorie of monoid in a monoidal category $\_C$.
\item $C-\tx{Mod}$: Categorie of $C$-module for $C \in \tx{Mon}(\_C)$.
\item $N$: Denotes all the nerve functors, including simplicial nerves and nerve of a category or groupoid.

\item $\_S$: $\infty$-categories of spaces.
\item $\mathbf{sSet}$: Category of simplicial sets.
\item $\vert -   \vert$: Geometric realisation functor $\vert-\vert: \mathbf{sSet} \to \mathbf{Top}$.
\item $h\_C$: Homotopy category of a model or $\infty$-category (Section \ref{sec:model-categories-and-homotopical-algebra}).
\item $\mathbf{Stab}(\_C)$: Stable $\infty$-category associated to an $\infty$-category $\_C$.
\item $\Omega^{\infty-n}$: Functor from $\mathbf{Stab}(\_C)$ to $\_C$ sending a spectrum object $(A_k)_{k\in \Zz}$ to $A_n$.
\item $\mathbf{Sp}$: Category of spectra, $\mathbf{Sp} = \mathbf{Stab}(\mathbf{Top})$. 
\item $\underline{\tx{Def}}_\phi (\G_R)$: Groupoid of deformation of an algebraic structure $\phi: \_P \to \mathbf{End}_A$ along the local algebra $\G_R$. 
\end{itemize}

\underline{Deformation Theory:}

\begin{itemize}[label = --]
	\item $\tx{MC}$: Functor sending a Lie algebra $\G_g$ to its set of Maurer--Cartan elements, $\tx{MC(\G_g)}$ (Definition \ref{Def_MC}).
	\item $k[\epsilon]$: $k$-algebra of dual numbers. \[ k[\epsilon] := \faktor{k[x]}{(x^2)}\]
	\item $\Omega[\Delta^\bullet]$: Simplicial commutative $k$-algebra of differential forms on simplices (Definition \ref{Def_HigherDeligneGroupoid}).
	\item $\mathbf{Del}(\G_g)$: Deligne $\infty$-groupoid associated to a Lie algebra $\G_g$ (Definition \ref{Def_HigherDeligneGroupoid}).  
	\item $\underline{\tx{Del}}(\G_g)$: Deligne groupoid associated to a Lie algebra $\G_g$ (Definition \ref{Def_DeligneGroupoid}).  
	\item $	\underline{\t_{\tx{Def}}}_B^\_P(A)$: Groupoid of geometric deformation of a $\_P$-algebra structure on $B$ along $A$ (Definition \ref{Def_GroupoidDeformationPAlgebra}).
		\item $\mathbf{Def}_B^\_P(A)$: $\infty$-groupoid of geometric deformation of a $\_P$-algebra structure on $B$ along $A$ (Definition \ref{Def_HigerDeformationGroupoidGeometric}).
		\item $\dccomaug$: Deformation context for augmented commutative $k$-algebras (Example \ref{Ex_DeformationContexts}).
			\item $\dcP$: Augmentation context for $\_P$-algebras (Example \ref{Ex_DeformationContexts}).
			\item $\art$, $\small$ and $\smallp$: Category of classical commutative local Artinian algebra, differential graded commutative local Artinian algebra and Artinian algebra in $\smallp$ (Definition \ref{Def_SmallComAlgebra} and \ref{Def_SmallObjectsAndMorphisms}).
			 \item $\mathbf{FMP}$, $\mathbf{FMP}_\_A$ and $\mathbf{FMP}_\_P$: Category of formal moduli problem associated to commutative deformations, the deformation context $\_A$ and the deformation context $\dcP$ respectively (Definition \ref{Def_FMP} and Example \ref{Ex_CategoryOperadicFMP}).
			 \item $\tx{Spf}$: Formal spectrum functor (Example \ref{Ex_FMP_AffineMappingSpace}).
\end{itemize}

\underline{Algebraic Structures:}

\begin{itemize}[label = --]
	\item $\mathbf{Alg}_{\mathbf{Com}\otimes_k A}(\Mod_A)$: Category of $A$-linear commutative algebra in $\Mod_A$.
	\item $\mathbf{CE}$: Chevalley-Eilenberg functor. 
	\item $\art$: Category of local Artinian $k$-algebras (Definition \ref{Def_SmallComAlgebra}).
	\item $\mathbf{Alg}_\_P$, $\mathbf{coAlg}_\_P$ and $\mathbf{coAlg}_\_P^{\tx{conil}}$: Category of all $\_P$-algebras, $\_P$-coalgebras and conilpotent $\_P$-coalgebras respectively (Definition \ref{Def_AlgebraCoalgebraOperad}).
	\item $\mathbf{Alg}_\_C$, $\mathbf{coAlg}_\_C$ and $\mathbf{coAlg}_\_C^{\tx{conil}}$: Category of all $\_C$-algebras, $\_C$-coalgebras and conilpotent $\_C$-coalgebras respectively (Definition \ref{Def_AlgebraCoalgebraCoOperad}).
	\item $\_P(-)$ and $\_C(-)$: Free $\_P$-algebra functor and cofree conilpotent $\_C$-coalgebra functor respectively (Proposition \ref{Prop_FreeAlgebraOverAnOperad}).
	\item $\tx{Conv}(\_C, \_P)$: Convolution operad (Definition \ref{Def_ConvolutionOperad}).
	\item $\tx{Tot}(\_P)$: Totalisation pre-Lie algebra (Definition \ref{Def_TotatlizationOfAnOperad}).
	\item $(-)^\sharp$: Forgetful functor forgetting the $\_P$ (or $\_C$) (co)algebra structure.
	\item $f_{!} \dashv f^*$: Free-forget adjunction associated to a map of operad $f: \_P \to \_Q$ (Proposition \ref{Prop_NaturalityAlgebraAdjunction}). In particular $f_!$ is the free $\_Q$-algebra functor relative to $\_P$.
	\item $\G_U$: Universal enveloping algebra functor (Section \ref{sec:Lie digression}).
	\item $\Sym$: Symmetric algebra functor. 
	
	\item $\Omega_\alpha \dashv \mathbf{B}_\alpha$: Algebraic cobar--bar adjunction associated to a twisting morphism $\alpha$ (Definition \ref{Def_BarCobarAlgebra}).
	\item $\G_D$: Koszul dual algebra (Definition \ref{Def_OperadicKoszulDualityContext}).
	\item $f: A \rightsquigarrow B$: $\infty$-morphism from $A$ to $B$   (Definition \ref{Def_InftyMorphisms}).
	\item $F^\bullet \G_g$: Filtration on a Lie algebra $\G_g$ (Definition \ref{Def_Filtration}). 
	\item $\widehat{-}$: Completion functor for filtered algebras (Definition \ref{Def_Filtration}). 
	\item $\tx{BCH}$: The BCH product satisfying $ e^X e^Y = e^{\tx{BCH}(X,Y)}$ (Proposition \ref{Prop_BCH}).
	\item $ \G_g_{\_P_\infty,A}$: Convolution Lie Algebra of the cochain complex $A$ (Definition \ref{Def_ConvolutionPreLieAlgebra}). 
	\item $ \G_g^\phi$: Twisted Lie algebra by a Maurer--Cartan element $\phi$ of $\G_g$ (Proposition \ref{Prop_DeformationLieAlgebraMaurerCartan}).
	\item $ \G_g_{\_P_\infty, A}^\phi$: Deformation complex of $A$ with $\_P_\infty$ structure $\phi \in \tx{MC}(\G_g_{\_P_\infty,A})$  (Definition \ref{Def_DeformationComplex}).

\end{itemize}

%\listoftodos\relax
\end{document}